\documentclass[11pt,a4paper]{amsart}
\usepackage{todonotes}
\usepackage[T1]{fontenc}
\usepackage[utf8]{inputenc}
\usepackage{lmodern}
\usepackage{amsmath,amssymb,amsthm}
\usepackage[margin=27mm,headheight=14pt]{geometry}
\usepackage{microtype}
\usepackage{xcolor}
\definecolor{linkgreen}{RGB}{28,82,65}
\usepackage[colorlinks=true,linkcolor=linkgreen,urlcolor=linkgreen,bookmarksopen=true]{hyperref}
\newtheorem{theorem}{Theorem}[section]
\newtheorem{proposition}[theorem]{Proposition}
\newtheorem{lemma}[theorem]{Lemma}
\newtheorem{corollary}[theorem]{Corollary}
\newtheorem{swtheorem}{Theorem}

\theoremstyle{remark}
\newtheorem{remark}[theorem]{Remark}

\newcommand{\ord}{\operatorname{ord}}

\numberwithin{equation}{section}
\allowdisplaybreaks[1]

\title[Almost-holomorphic harmonic map flow]{Refined analysis on finite-time singularities of almost-holomorphic harmonic map flow}

\author[Y. He]{Yihan He}
\address{School of Mathematical Sciences, Xiamen University\\
	Fujian, 361005, P. R. China}
\email{heyihan@stu.xmu.edu.cn}

\author[Z. Li]{Ze Li}
\address{School of Mathematics and Statistics, Ningbo University,
	Ningbo 315211, Zhejiang, China}
\email{lize@nbu.edu.cn}

\author[C. Song]{Chong Song}
\address{School of Mathematical Sciences, Xiamen University\\
	Fujian, 361005, P. R. China}
\email{songchong@xmu.edu.cn}

\subjclass[2020]{Primary 58E20; Secondary 35K55, 35B44, 53C43}

\keywords{Harmonic map heat flow, finite time singularity, blow-up analysis, almost-holomorphic map}

\thanks{Y. He and C. Song are partially supported by NSFC (No. 12371061 and No. 12526102) and Natural Science Foundation of Fujian Province (No. 2026J011003). Z. Li is partially supported by the NSFC (No. 12422110 and No. 12371244) and by Ningbo Youth Leading Talent Project (No. 2024QL046).}

\date{\today}

\begin{document}
	\maketitle

	\begin{abstract}
		Building on the work of Song--Waldron~\cite{SongWaldron2024}, we establish stronger regularity and refined blow-up analysis results for almost-holomorphic harmonic map flow from a closed surface into a compact K\"ahler manifold with nonnegative holomorphic bisectional curvature. At each finite-time singularity, we improve the outer energy scale to $O((T-t)/\sqrt{|\ln(T-t)|})$. We also prove that both the stress-energy tensor and the tension field converge strongly in $L^2$ to those of the body map, by ruling out concentration of their $L^2$ energy near the singular points. Consequently, the body map belongs to $W^{2,2}$. Finally, an explicit gap theorem for the antiholomorphic energy of nonholomorphic harmonic spheres yields an explicit admissible threshold for the initial antiholomorphic energy under which these conclusions hold.
	\end{abstract}

	\tableofcontents
	
	\section{Introduction}

	\subsection{Background}
	
	The analysis of singularities in harmonic map heat flow from surfaces has developed over more than four decades and has played an important role in the study of geometric evolution equations. Struwe~\cite{Struwe1985} established global weak existence and regularity away from finitely many singular points, while Chang--Ding--Ye~\cite{ChangDingYe1992} constructed the first examples of finite-time singularities. During the 1990s, the work of Ding--Tian~\cite{DingTian1995}, Qing~\cite{Qing1995}, Wang~\cite{Wang1996}, Qing--Tian~\cite{QingTian1997} and Lin--Wang~\cite{LinWang1998} led to a detailed understanding of energy concentration and bubbling in harmonic map heat flow from surfaces. Through energy identities and bubble-tree analysis, these works showed that, at finite or infinite singular times, the flow admits a decomposition along suitably chosen time sequences into a body map and finitely many harmonic spheres called bubbles. The limiting energy is exactly the sum of the energies of these components, with no energy lost in the neck regions.  For a comprehensive account of the classical theory of harmonic maps and their heat flows, we refer the reader to the monograph of Lin--Wang~\cite{LinWang2008}.
	
	These advances established a detailed picture of energy concentration, but left important aspects of the flow near singularities unresolved. One concerns convergence along the full time variable and uniqueness of the bubbling configuration. Another problem is the continuity of the body map and the no-neck property at finite-time singularities.
	
	For general smooth targets, Topping's winding examples~\cite{Topping2004a} show that the body map can be discontinuous and the bubbles	can depend on the chosen time sequence. These examples motivated his conjecture on the continuity and no-neck for real-analytic targets, which are still largely open.\footnote{During the preparation of the current manuscript, we noticed that an arXiv preprint of Rupflin--Waldron--Woodward~\cite{RupflinWaldronWoodward2026} claimed a solution to Topping's conjecture in the case that at most one bubble forms at each singular point.} In another paper~\cite{Topping2024b}, Topping proved both properties if the Dirichlet energy $E(u(t))$ has a H\"older continuous extension to $[0, T]$.  
	
	For spherical targets, these questions have received more complete answers under equivariant symmetry. In the scalar equivariant class of maps from $\mathbb R^2$ to $S^2$, Jendrej--Lawrie~\cite{JendrejLawrie2023} established a unique bubble decomposition along the full time variable, at both finite and infinite time. At finite singular times, this also gives continuity of the body map and the no-neck property. Without the symmetry assumption, Jendrej--Lawrie--Schlag~\cite{JendrejLawrieSchlag2025} proved a weaker result that the flow approaches the family of multi-bubble configurations along the full time variable. Consequently, every time sequence approaching the maximal existence time admits a bubbling subsequence, although their result does not establish uniqueness of the limiting bubble configuration or control oscillation on the necks.
	
	The K\"ahler setting provides another useful structure to address these questions, through the holomorphic and antiholomorphic decomposition of the differential. This structure allows assumptions on one component without requiring the total energy to be small. Topping~\cite{Topping1997} exploited this structure to establish uniform-in-time rigidity at infinite time for harmonic map heat flow from $S^2$ to $S^2$, when the limiting body map and all bubbles are holomorphic, or all antiholomorphic. Liu--Yang~\cite{LiuYang2010} extended this rigidity theory to flows from $S^2$ into compact K\"ahler manifolds with nonnegative holomorphic bisectional curvature.
	
	This structure also leads to improved regularity at finite-time singularities. Under a smallness assumption on the antiholomorphic or holomorphic energy, Song--Waldron~\cite{SongWaldron2024} proved continuity of the body map and the no-neck property for flows into such K\"ahler targets, together with a quantitative upper bound for the outer energy scale. These results provide the starting point for the present paper; we recall them after fixing our notation.

	\subsection{Settings and known results}
	
	Let $(\Sigma,g,j)$ be a closed oriented Riemannian surface and let $(N,h,J)$ be a compact K\"ahler manifold. Unless explicitly stated otherwise, we assume $N$ has nonnegative holomorphic bisectional curvature. We also fix an isometric embedding $N\hookrightarrow\mathbb R^m$ when needed.
	
	For $0<T<\infty$, we consider a smooth harmonic map heat flow $u:\Sigma\times[0,T)\longrightarrow N$ satisfying
	\[
	\partial_tu=\mathcal T(u):=\operatorname{tr}_g\nabla du.
	\]
	The energy density, energy, and stress-energy tensor are denoted by
	\[
	e(u)=\frac12|du|^2,\qquad
	E(u;U)=\int_U e(u)\,dV,\qquad
	S(u)=u^*h-e(u)g.
	\]
	We write $E(u)=E(u;\Sigma)$ and $E_0=E(u(0))$. The real antiholomorphic differential is
	\[
	\Phi(u)=\frac12(du+J\circ du\circ j),
	\qquad e_{\bar\partial}(u)=\frac12|\Phi(u)|^2,
	\qquad E_{\bar\partial}(u;U)=\int_U e_{\bar\partial}(u)\,dV.
	\]
	
	Let $u_T$ be the terminal body map and let $\mathcal S\subset\Sigma$ be the finite singular set at terminal time $T$. Thus $u(t)\rightharpoonup u_T$ in $W^{1,2}(\Sigma)$ and $u(t)\to u_T$ smoothly on compact subsets of $\Sigma\setminus\mathcal S$.

	For $p\in\Sigma$ and $0<a<R$ below the injectivity radius, let $B_r(p)$ denote a geodesic ball and $U_a^R(p)=B_R(p)\setminus\overline{B_a(p)}$ denote an annulus. Following Song--Waldron, define
	
	\begin{equation}\label{eq:outer-energy-scale}
		\lambda_{\varepsilon,\rho,p}(t)
		=\inf\left\{\lambda\in[0,\rho]:
		\sup_{\lambda<r<\rho}E(u(t);U_{r/2}^r(p))<\varepsilon\right\}.
	\end{equation}
	In particular, if no $\lambda<\rho$ satisfies the condition in \eqref{eq:outer-energy-scale}, then $\lambda=\rho$ does so vacuously.
	
	The following statement records the main conclusions of \cite[Theorem~1.1 and Corollary~4.6]{SongWaldron2024}.
	
	\begin{swtheorem}[Song-Waldron~\cite{SongWaldron2024}]\label{thm:song-waldron}
		There is a constant $\delta_{\mathrm{SW}}=\delta_{\mathrm{SW}}(N)>0$ such that the following holds. Let $u:\Sigma\times[0,T)\to N$ be a smooth harmonic map heat flow with $T<\infty$ and
		\begin{equation}\label{assmption:SW}
			E_{\bar\partial}(u(0))<\delta_{\mathrm{SW}}.
		\end{equation}
		Then, for every sequence $t_n\uparrow T$, there is a subsequence admitting a finite bubble-tree decomposition with body $u_T$ and holomorphic spheres $\omega_1,\ldots,\omega_m$, such that
		\begin{itemize}
			\item The body map satisfies $u_T\in C^\alpha(\Sigma,N)$ for every $0<\alpha<1$.
			\item The rescaled maps converge strongly in $W^{1,2}_{\mathrm{loc}}$ on each regular bubble region and
			\begin{equation}\label{eq:song-waldron-energy-identity}
				E_*:=\lim_{t\uparrow T}E(u(t))=E(u_T)+\sum_{\alpha=1}^m E(\omega_\alpha).
			\end{equation}
			\item There is no neck oscillation: the images of the bubbles and the body map form a connected set.
			\item For every blow-up point $p\in\mathcal S$, every $\varepsilon>0$, and all sufficiently small $\rho>0$,
			\begin{equation}\label{eq:song-waldron-outer-scale}
				\lambda_{\varepsilon,\rho,p}(t)=O(T-t)\qquad(t\uparrow T).
			\end{equation}
		\end{itemize} 
	\end{swtheorem}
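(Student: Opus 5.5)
This is the Song--Waldron result, quoted here from \cite{SongWaldron2024}, so I would reconstruct their argument rather than invent a new one. The plan is to combine a Bochner-type evolution inequality for the antiholomorphic energy density with the classical bubble-tree machinery.

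\textbf{Step 1: parabolic inequality for $e_{\bar\partial}$.} Under nonnegative holomorphic bisectional curvature, $e_{\bar\partial}(u)$ satisfies
\[
(\partial_t-\Delta)e_{\bar\partial}\le C\,e_{\bar\partial},
\]
with $C$ depending only on the Gauss curvature of $\Sigma$. The curvature of $N$ enters with a good sign, which is the Toledo / Liu--Yang computation. Integrating shows that $E_{\bar\partial}(u(t))\le e^{Ct}E_{\bar\partial}(u(0))$ stays small on $[0,T)$.

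Moreover, $E(u)-E_{\bar\partial}(u)$ equals the homotopy-invariant $\int u^*\omega$ up to a factor. Hence $E-2E_{\bar\partial}$ is constant along the flow, so $E_{\bar\partial}$ is nonincreasing. This already gives $E_{\bar\partial}(u(t))\le\delta_{\mathrm{SW}}$ for all $t$.

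\textbf{Step 2: bubbles are holomorphic.} Any bubble $\omega$ is a harmonic sphere with $E_{\bar\partial}(\omega)<\delta_{\mathrm{SW}}$. A gap theorem for harmonic spheres in such targets says that a nonholomorphic harmonic sphere has antiholomorphic energy at least some $\delta_0(N)>0$. Taking $\delta_{\mathrm{SW}}<\delta_0$ forces every bubble to be holomorphic.

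\textbf{Step 3: the neck analysis, which is the main obstacle.} On a neck annulus $U_{\lambda}^{\rho}(p)$, write the energy as
\[
E=E_\partial+E_{\bar\partial}.
\]
The $\bar\partial$ part is small globally. By the $\varepsilon$-regularity from Step 1, it is also small pointwise in the scale-invariant sense, so it decays across the neck.

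For the $\partial$ part I would use a Pohozaev / Hopf-differential argument in which $\partial_t u$ is the error term. One has $\int_0^T\|\partial_tu\|_{L^2}^2\le E_0$. Combining this with the smallness of $\Phi$ should control the angular energy on the necks. An ODE / three-circle inequality on dyadic annuli then gives exponential decay of the energy along the neck.

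Summing these decay bounds yields:
\begin{itemize}
\item the energy identity \eqref{eq:song-waldron-energy-identity};
\item a bound on the neck length, hence no oscillation;
\item Hölder continuity of $u_T$ for every $\alpha<1$, via Morrey decay $E(u_T;B_r)\le Cr^{2\alpha}$.
\end{itemize}

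\textbf{Step 4: the outer scale.} By the monotonicity formula and the holomorphic structure, the energy loss near $p$ satisfies
\[
E(u(t);B_\rho(p))-E(u_T;B_\rho(p))\to E(\text{bubbles}).
\]
At a bubble of scale $\lambda$ the energy flux through $\partial B_r$ is controlled by $\|\partial_tu\|^2$. A local energy inequality,
\[
\frac{d}{dt}\int\phi\, e\le C\int|\nabla\phi|^2 e,
\]
with a cutoff at scale $r$, shows that energy $\varepsilon$ sitting on an annulus at radius $r$ can only leave by time $T$ if $r^2\lesssim T-t$.

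Refining with the holomorphicity of $u$ on the annulus, I expect the energy outside radius $r$ to change linearly at rate $\sim r^{-1}\cdot r$. Comparing with the time remaining should give $\lambda\le C(T-t)$.

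This last linear-rate step is the delicate one. I would first try an explicit radial test function $\phi(x)=\min(1,|x|/\lambda)$ in the localized energy identity, together with the decay from Step 3.
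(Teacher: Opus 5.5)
Your outline has a genuine gap: you never obtain the estimate that drives the whole Song--Waldron argument. That estimate is a \emph{uniform} bound $|\Phi(u)|\le B$ on $\Sigma\times[t_0,T)$. Via $|S|^2\le 8e_\partial e_{\bar\partial}$ (Lemma~\ref{lem:real-stress-identities}) it gives the uniform stress bound $\sup_{t<T}\|S(u(t))\|_{L^2}\le CB\sqrt{E_0}=:\mathfrak s$. The paper quotes Theorem~\ref{thm:song-waldron} rather than reproving it, but Proposition~\ref{prop:song-waldron-under-bounded-differential} shows that every conclusion comes from this bound.

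Step~1 as stated is false. In \eqref{eq:antiholomorphic-bochner} only the bisectional term has the good sign. The term $H_N(X)e_{\bar\partial}^2$ has $H_N(X)=B_N(X,X)\ge0$, so one only gets $(\partial_t-\Delta)e_{\bar\partial}\le C(e_{\bar\partial}+e_{\bar\partial}^2)$. A linear inequality would bound $\Phi$ by the maximum principle with no smallness assumption. That fails: reversing the domain orientation of a flow with a holomorphic bubble produces an antiholomorphic one. Your monotonicity of $E_{\bar\partial}$ via $E=2E_{\bar\partial}+\kappa$ is fine.

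The superlinear term is exactly why $\delta_{\mathrm{SW}}$ is needed. Small $E_{\bar\partial}$ on balls of a \emph{fixed} radius gives, by $\varepsilon$-regularity, the uniform bound of Proposition~\ref{prop:antiholomorphic-regularity}. This is much stronger than the scale-invariant smallness $r|\Phi|\lesssim\sqrt\delta$ you extract, which gives no decay along a neck. Your Step~2 via a gap theorem is a valid alternative, but once $|\Phi|\le B$ holds it is immediate: the rescaled antiholomorphic differential is $O(\lambda_n)$.

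Step~4 cannot produce the linear rate, for three reasons. First, the first-order inequality $\frac{d}{dt}\int\phi e\le C\int|\nabla\phi|^2e$ only allows growth at rate $\lesssim E_0/r^2$, hence only $\lambda\lesssim\sqrt{(T-t)E_0/\varepsilon}$. Second, the heuristic rate ``$r^{-1}\cdot r$'' is not derived and, being scale-free, cannot determine $\lambda$. Third, $\min(1,|x|/\lambda)$ has a Hessian with a singular part on $\{|x|=\lambda\}$, so it cannot be paired against $S$.

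The missing mechanism is the double-divergence identity \eqref{eq:cutoff-stress-identity}. For a smooth cutoff $\chi$ at scale $r$ it gives $\frac{d}{dt}\int\chi e\le\mathfrak s\|\nabla^2\chi\|_{L^2}\lesssim\mathfrak s/r$. At least about $\varepsilon/2$ of energy must enter $B_{\lambda/2}(p)$ before $T$, which forces $\lambda\lesssim\mathfrak s(T-t)/\varepsilon$; this is Proposition~\ref{prop:outer-scale-propagation}.

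Step~3 has the same problem. The bound $\int_0^T\|\partial_tu\|_2^2\le E_0$ does not control $\|\mathcal T(u(t_n))\|_{L^2}$ along an arbitrary sequence. The Pohozaev error on a neck of outer radius $r$ is of order $r\|\mathcal T(u(t_n))\|_{L^2}\sqrt{E_0}$, which need not be small on the necks attached to the body. So the classical argument yields the energy identity only along well-chosen sequences, not for every $t_n\uparrow T$.

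Song--Waldron's neck estimate (Proposition~\ref{prop:two-tail-neck-estimate}) avoids this. It holds at every time $t\in[t_1+4R^2,T)$ and assumes only $|\Phi|\le B$ and smallness of the dissipation $\int_{t_1}^T\|\mathcal T\|_2^2$, with no slicewise tension bound. Combined with stress-based control of the inner scale, it gives the two-sided decay $r|du|\le C[(\lambda/r)^\beta+(r/R)^\beta]$. From this follow the no-neck property and $u_T\in C^\alpha$ for all $\alpha<1$. The energy identity along every sequence comes from the bubble-tree extraction of \cite{SongWaldron2024}, which applies to arbitrary $t_n\uparrow T$ because of the uniform $L^2$ stress bound.
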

	
	The same results hold with antiholomorphic energy replaced by holomorphic energy in assumption \eqref{assmption:SW}. A map satisfying \eqref{assmption:SW} is called an almost-holomorphic map. Since the antiholomorphic energy is nonincreasing, a harmonic map heat flow starting with an almost-holomorphic map remains almost-holomorphic throughout its smooth existence interval and is called an almost-holomorphic flow. 
	
	Theorem~\ref{thm:song-waldron} provides improved regularity at finite-time singularities under the almost-holomorphic assumption. These results were extended to the dissipative Landau--Lifshitz flow from $\mathbb R^2$ to $S^2$ by Li--Song~\cite{LiSong2025} and to two-dimensional Yang-Mills-Higgs heat flow by Chen-Song~\cite{ChenSong2026}. Further progress on the regularity of the body map has been made by Waldron. For general compact targets, he proved that a strict type-II bound on the outer energy scale implies H\"older continuity of the body map~\cite{Waldron2023b}. More recently, he established a weighted \L{}ojasiewicz criterion for continuity of the body map and the no-neck property, and verified it for flows from $\mathbb R^2$ into $S^2$ without an almost-holomorphic assumption~\cite{Waldron2025}. His related work on \L{}ojasiewicz inequalities gives polynomial convergence at infinite time for flows from $S^2$ to $S^2$ whose body map is nonconstant~\cite{Waldron2023a}.
	
	In this paper, we aim to answer further questions left by Theorem~\ref{thm:song-waldron}, including an improved estimate of the energy scales, the non-concentration of the $L^2$-energy of the stress and tension field, and an explicit admissible antiholomorphic-energy threshold in \eqref{assmption:SW}.
	
	\subsection{Main results}
	
	For a harmonic map heat flow $u:\Sigma\times[0,T)\to N$, we consider the uniform pointwise bound
	\[
	|\Phi(u)|\le B
	\quad\text{on }\Sigma\times[0,T)
	\quad\text{for some }B<\infty.
	\tag{H}\label{assumption:H}
	\]
	Together with the nonnegative holomorphic bisectional curvature, these are the standing assumptions of our main results. In fact, the small antiholomorphic energy assumption implies \eqref{assumption:H}, and Theorem~\ref{thm:song-waldron} remains valid under \eqref{assumption:H} (see Proposition~\ref{prop:song-waldron-under-bounded-differential} below). In particular, every bubble arising at a singularity is holomorphic. This common orientation is also consistent with the hypotheses in the rigidity results of Topping~\cite{Topping1997} and Liu--Yang~\cite{LiuYang2010}.

	Our first result improves the outer-scale bound $O(T-t)$ in Theorem~\ref{thm:song-waldron}. In the spherical blow-up constructions of Rapha\"el--Schweyer~\cite{RaphaelSchweyer2013} and D\'avila--del Pino--Wei~\cite{DavilaDelPinoWei2020}, the concentration scale is proportional to $(T-t)/|\ln(T-t)|^2$, suggesting a sharper bound than \eqref{eq:song-waldron-outer-scale}. The following theorem provides a positive answer.
	
	\begin{theorem}\label{thm:outer-scale-estimates}
		Under the standing assumptions, for every blow-up point $p\in\mathcal S$, every $\varepsilon>0$, and all sufficiently small $\rho>0$,
		\begin{equation}\label{eq:outer-scale-log-improvement}
			\lambda_{\varepsilon,\rho,p}(t)
			=O\!\left(\frac{T-t}{\sqrt{|\ln(T-t)|}}\right)\qquad(t\uparrow T).
		\end{equation}
		In particular, $\lambda_{\varepsilon,\rho,p}(t)=o(T-t)$.
	\end{theorem}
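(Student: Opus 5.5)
Under \eqref{assumption:H} and nonnegative holomorphic bisectional curvature, I would combine the Song--Waldron argument for \eqref{eq:song-waldron-outer-scale} with an extra logarithmic gain coming from the tension field. First I would recall why $O(T-t)$ holds. The antiholomorphic density satisfies a Bochner-type inequality $(\partial_t-\Delta)e_{\bar\partial}\le C e_{\bar\partial}$, and \eqref{assumption:H} gives $e_{\bar\partial}\le B^2/2$. Near $p$ the map is almost holomorphic, and holomorphic maps have vanishing tension, so the energy density on the annuli $U_{r/2}^r(p)$ is controlled by the antiholomorphic part and the tension. Integrating $\partial_t u=\mathcal T(u)$ over the annulus then shows that energy $\varepsilon$ can sit at scale $r$ at time $t$ only if $r\lesssim T-t$. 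I would make this quantitative through the local energy identity
\[
\frac{d}{dt}E(u;\phi)= -\int|\mathcal T(u)|^2\phi+\int\langle \mathcal T(u),du(\nabla\phi)\rangle ,
\]
paired with the Pohozaev/stress identity $\operatorname{div}S(u)=\langle\mathcal T(u),du\rangle$. Testing the latter against the radial field $x\,\chi(|x|)$ expresses the energy on an annulus, minus the holomorphic (topological) part, through $\int\langle\mathcal T(u),x\cdot\nabla u\rangle$.

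The key new step is to replace the single-scale estimate by a sum over dyadic annuli between $\lambda(t)$ and $\rho$. Holomorphic bubbles have $|x\cdot\nabla u|\sim \lambda/|x|$ in the neck, whose squared integral over $U_{\lambda}^{\rho}$ grows like $\log(\rho/\lambda)$. Applying Cauchy--Schwarz with this logarithmic weight, I expect an inequality of the form
\[
\varepsilon\lesssim \int_{U_{\lambda/2}^{\lambda}}|\langle\mathcal T(u),x\cdot\nabla u\rangle|
\lesssim \lambda\,\|\mathcal T(u)\|_{L^2(B_\rho)}\Big(\int\frac{|du|^2|x|^2}{\lambda^2}\Big)^{1/2}.
\]
Combining this with the tension bound $\int_t^T\|\mathcal T(u)\|^2_{L^2}\,ds\to0$ and a differential inequality $|\tfrac{d}{dt}\lambda^2|\lesssim \lambda\|\mathcal T\|\cdots$ should give $\lambda(t)^2\log(1/\lambda)\lesssim (T-t)^2$. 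Inverting with $\log(1/\lambda)\approx|\ln(T-t)|$ (justified by the a priori $\lambda=O(T-t)$) yields \eqref{eq:outer-scale-log-improvement}. The final assertion $\lambda_{\varepsilon,\rho,p}(t)=o(T-t)$ is then immediate.

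The hard step is producing the $\log$ factor rigorously. One must show that the neck region between the bubble scale and $\rho$ carries a contribution $\int|x\cdot\nabla u|^2/|x|^2\gtrsim \log(\rho/\lambda)$, or equivalently that the time derivative of a logarithmically weighted energy is controlled by $\|\mathcal T\|_{L^2}$. I would first try the weight $\phi=\log(\rho/|x|)$ cut off at scale $\lambda$ in the local energy identity. The near-holomorphicity from \eqref{assumption:H}, via $|\Phi|\le B$ and the Bochner subsolution, makes the antiholomorphic error $O(\rho^2\log)$, so it is harmless. Nonnegative bisectional curvature enters only through that subsolution property and through the holomorphicity of the bubbles (Proposition~\ref{prop:song-waldron-under-bounded-differential}).
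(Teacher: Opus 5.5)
There is a genuine gap: the logarithmic gain is never actually produced, and the mechanism you propose for it cannot work. Your displayed chain begins with the fixed-time inequality $\varepsilon\lesssim\int_{U_{\lambda/2}^{\lambda}}|\langle\mathcal T(u),x\cdot\nabla u\rangle|$, and this is false. The right-hand side is scale invariant and vanishes for holomorphic configurations, while the annulus still carries energy $\ge\varepsilon$. Near $p$ at scale $\lambda$ the flow is asymptotically of exactly this type, since the bubbles are holomorphic and the rescaled tension tends to zero. The radial Pohozaev identity only measures $S_{rr}=\tfrac12(|u_r|^2-r^{-2}|u_\theta|^2)$, which vanishes for holomorphic maps; it says nothing about the energy itself. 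Energy at scale $\lambda$ is tied to tension only after you integrate the localized energy identity from $t$ to $T$ and compare with the terminal Dirac mass $\Theta_p\delta_p$. This gives $\lambda^2\lesssim(T-t)\int_t^T\int_{B_{\lambda/2}(p)}|\mathcal T|^2\,dV\,ds$ (Proposition~\ref{prop:local-tension-scale}). The whole problem is then to show that this local spacetime tension is $O((T-t)/|\ln(T-t)|)$ rather than $O(T-t)$. The neck profile cannot supply that. The neck energy tends to zero, and $\int|x\cdot\nabla u|^2/|x|^2\le 2E_0$ cannot be $\gtrsim\ln(\rho/\lambda)$. Moreover, a factor $\ln(\rho/\lambda)$ entering as a weight on the right of a Cauchy--Schwarz bound only weakens the resulting control of $\lambda$. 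Finally, the differential inequality for $\lambda^2$ (an infimum, not a differentiable quantity) and the relation $\lambda^2\ln(1/\lambda)\lesssim(T-t)^2$ are asserted, not derived.

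The missing idea is that the tension is a derivative of the antiholomorphic differential: $\mathcal T=2\operatorname{div}\Phi$, so $|\mathcal T|^2\le 8|\nabla\Phi|^2$. The Bochner identity for $e_{\bar\partial}$ then controls $\int_t^T\!\int|\nabla\Phi|^2$ with a logarithmic gain. When you reduce Bochner to $(\partial_t-\Delta)e_{\bar\partial}\le Ce_{\bar\partial}$, you discard precisely this good term. Instead, keep it and drop $2B_N(X,Y)e_\partial e_{\bar\partial}\ge0$ (this is where nonnegative bisectional curvature is really used). Then use $e_{\bar\partial}\le B^2/2$ and test against the two-dimensional logarithmic cutoff $\chi_{a,R}$ of Lemma~\ref{lem:logarithmic-cutoff}. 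This gives $\int_t^T\int_{D_a}|\nabla\Phi|^2\lesssim(R^2+(T-t))/\ln(R/a)$. The radii must be $a=K(T-t)$, so that $D_a\supset B_{\lambda/2}(p)$ by the a priori bound $\lambda=O(T-t)$. They must also satisfy $R=\sqrt{T-t}$, because the initial term $\int\chi\,e_{\bar\partial}(t)\lesssim B^2R^2/\ln(R/a)$ has to be $O(T-t)$. With a fixed outer radius $\rho$, as in your dyadic sum from $\lambda$ to $\rho$, the antiholomorphic term you call harmless is of order $\rho^2$ up to logarithms and swamps $T-t$. With the correct radii the local tension is $\lesssim(T-t)/|\ln(T-t)|$, and the scale estimate above yields $\lambda^2\lesssim(T-t)^2/|\ln(T-t)|$. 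So the logarithm comes from the logarithmic capacity of a point in two dimensions, applied to the bounded density $e_{\bar\partial}$, not from the geometry of the neck.
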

	
	Our second result establishes strong $L^2$ convergence of both the stress-energy tensor and the tension field at the singular time. An interesting question raised by W. Ding in his lecture notes~\cite{Ding1998} asks whether the $L^2$ norm of the tension field remains bounded as a finite-time singularity is approached. Under our assumptions, we answer this question affirmatively by ruling out concentration of the $L^2$ tension energies.
	
	\begin{theorem}\label{thm:terminal-convergence}
		Under the standing assumptions, 
		\begin{itemize}
			\item The stress-energy tensors satisfy
			\begin{equation}\label{eq:terminal-stress-convergence}
				S(u(t))\longrightarrow S(u_T)
				\quad\text{strongly in }L^2(\Sigma)\quad(t\uparrow T).
			\end{equation}
			\item The tension field satisfies
			\begin{equation}\label{eq:terminal-tension-convergence}
				\mathcal T(u(t))\longrightarrow \mathcal T(u_T)
				\quad\text{strongly in }L^2(\Sigma)\quad(t\uparrow T). 
			\end{equation}
			In particular, $L^2$-tension energy is uniformly bounded on $[0,T)$ and
			$$ \lim_{t\uparrow T}\|\mathcal T(u(t))\|_2 = \|\mathcal T(u_T)\|_2. $$
		\end{itemize}
	\end{theorem}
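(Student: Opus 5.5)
Away from $\mathcal S$ the flow converges smoothly to $u_T$, so on every compact $K\subset\Sigma\setminus\mathcal S$ both $S(u(t))\to S(u_T)$ and $\mathcal T(u(t))\to\mathcal T(u_T)$ in $L^2(K)$. Strong $L^2$ convergence on $\Sigma$ is therefore equivalent to a non-concentration statement at each $p\in\mathcal S$:
\[
\lim_{r\to0}\limsup_{t\uparrow T}\int_{B_r(p)}\bigl(|S(u(t))|^2+|\mathcal T(u(t))|^2\bigr)\,dV=0 .
\]
The same estimate will also show that $S(u_T),\mathcal T(u_T)\in L^2$. In particular $u_T\in W^{2,2}$ will follow later from elliptic theory.

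The plan uses the Kähler decomposition $du=\partial u+\bar\partial u$. In a local conformal coordinate $z$, the stress-energy tensor is given, up to constants, by the real part of the Hopf-type quadratic differential $\langle u_z,u_z\rangle\,dz^2$. Under the Kähler structure,
\[
\langle u_z,u_z\rangle=2\langle\partial_z u,\bar\partial_z u\rangle .
\]
This gives the pointwise bound $|S(u)|\le C|\partial u|\,|\Phi(u)|\le C|du|\,|\Phi(u)|$. Hypothesis \eqref{assumption:H} then yields $|S(u)|^2\le CB^2|du|^2$. Consequently
\[
\int_{B_r(p)}|S(u(t))|^2\le CB^2 E(u(t);B_r(p)).
\]
That is bounded, but it does not tend to zero, because energy concentrates at $p$. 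So I need more: the aim is to show that $|\Phi(u)|$ itself is small where $|du|$ is large.

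Here I will use the Bochner inequality for the antiholomorphic energy. Nonnegative holomorphic bisectional curvature gives
\[
(\partial_t-\Delta)e_{\bar\partial}\le C e_{\bar\partial}
\]
on a closed surface, with $C$ depending only on the curvature of $\Sigma$. Combined with \eqref{assumption:H} and the fact that every bubble is holomorphic, this should give $e_{\bar\partial}(u(t))\to e_{\bar\partial}(u_T)$ locally uniformly on the bubble regions, with $\Phi\to0$ in rescaled coordinates. The refined version I want is
\[
\int_{B_r(p)}|\Phi(u(t))|^2|du|^2\to0 .
\]
I plan to split $B_r(p)$ into three regions:
\begin{itemize}
\item the bubble region $B_{R\lambda(t)}(p)$, where $\lambda(t)$ is the outer scale;
\item the neck $U_{R\lambda}^{\delta}(p)$;
\item the body region $U_\delta^r(p)$, where smooth convergence is available.
\end{itemize}

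On the bubble region, parabolic mean-value estimates for $e_{\bar\partial}$ on parabolic cylinders of size $\sqrt{T-t}$ should give
\[
\sup e_{\bar\partial}\lesssim (T-t)^{-1}\int_{t-(T-t)}^{t}\!\int E_{\bar\partial}
\]
localized to the cylinder. The weighted integral $\int |\Phi|^2|du|^2$ over $B_{R\lambda}$ is then bounded by $\sup|\Phi|^2\cdot E$. Smallness must come from $\lambda=o(T-t)$ (Theorem~\ref{thm:outer-scale-estimates}): the bubble lives on a scale much smaller than the parabolic scale. Hence $|\Phi|$ on that ball is controlled by the average of $e_{\bar\partial}$ over a parabolic ball of radius $\sqrt{T-t}\gg\lambda$. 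That average tends to $0$, since $E_{\bar\partial}$ does not concentrate: the bubbles are holomorphic and the energy identity holds for $E_{\bar\partial}$ as well.

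On the neck region I will use the no-neck/energy decay of Theorem~\ref{thm:song-waldron}, namely an exponential decay of energy on dyadic annuli. This makes $\int|du|^2$ over the neck small, and then \eqref{assumption:H} suffices there.

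For the tension field I will use the identity
\[
\operatorname{div}S(u)=-\langle\mathcal T(u),du\rangle .
\]
A more useful route is the Kähler identity $\mathcal T(u)=c\,\bar\partial^*\Phi$-type, i.e. $\mathcal T(u)=4\,\partial_{\bar z}$-derivative of $\bar\partial$-part up to conformal factor. Integrating by parts against a cutoff, together with the evolution
\[
\frac{d}{dt}E_{\bar\partial}=-\int|\nabla\Phi|^2+\text{curvature terms}\le0,
\]
should give local $L^2$ control of $\mathcal T$ by $\int|\nabla\Phi|^2$ and $\int|\Phi|^2|du|^2$. I then aim for a localized estimate
\[
\int\varphi^2|\mathcal T|^2\le C\int\varphi^2|\Phi|^2|du|^2+C\int|\nabla\varphi|^2|\Phi|^2+\text{(time-derivative of localized }E_{\bar\partial}).
\]
Integrating over short time intervals and using pointwise control from the heat inequality should make $\sup_t$ of the right-hand side small near $p$.

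The main obstacle is the time-derivative term. Turning an integrated-in-time bound into a bound at every $t$ requires monotonicity of $\int\varphi^2|\mathcal T|^2$ up to errors. I would try the standard evolution
\[
\partial_t|\mathcal T|^2\le\Delta|\mathcal T|^2+C|du|^2|\mathcal T|^2 .
\]
The $|du|^2$ weight is the problematic term. I would handle it through the Kähler splitting $\mathcal T\sim\bar\partial^*\Phi$, so that the curvature term is paired with $|\Phi|$ and becomes small by the estimates above.
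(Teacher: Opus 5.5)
There is a genuine gap on the bubble regions, and the tension argument is missing its key step on the necks. Your reduction to non-concentration at each $p\in\mathcal S$, the bound $|S|^2\le CB^2e$, and its use on the necks (where the energy vanishes) are fine and agree with the paper.

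\emph{Stress on the bubbles.} Your argument rests on $\int_{B_r(p)}|\Phi(u(t))|^2|du|^2\,dV\to0$, i.e.\ on $|\Phi|$ becoming small where the bubble lives, and this is false in general. After rescaling, $\Phi(v_n)\to0$ only trivially. The relevant object is $\Phi_n=\Phi(v_n)/\lambda_n$, whose $g_n$-norm equals $|\Phi(u)|_g$ at the corresponding point, so (H) makes it merely bounded. Its limit $\Phi_\infty$ is a bounded ancient solution of $\nabla_s\Phi_\infty-\Delta_\nabla\Phi_\infty+\mathcal R_{\Psi(\omega)}\Phi_\infty=0$. One can show it is parallel with $\mathcal R_{\Psi(\omega)}\Phi_\infty=0$, but not that it vanishes. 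For instance, take $N=S^2\times T^2$ with the bubble in the $S^2$ factor and the smooth $T^2$ component having $\Phi\neq0$ at $p$; then $\int_{\text{bubble}}|\Phi|^2|du|^2$ stays above roughly $2|\Phi_{T^2}(p)|^2E(\omega)>0$.

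Your mechanism does not give smallness either. The mean-value inequality bounds $\sup|\Phi|^2$ by $C\rho^{-2}\sup_sE_{\bar\partial}(u(s);B_\rho)$, which is the \emph{average density} of $e_{\bar\partial}$. (H) keeps this bounded, but nothing makes it small: non-concentration of $E_{\bar\partial}$ only says $E_{\bar\partial}(B_\rho)\to0$. The improvement $\lambda=o(T-t)$ plays no role here.

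What actually kills $S$ on a bubble is a cancellation in $S=h(\Phi\otimes\Psi)+h(\Psi\otimes\Phi)$. One has $S(v_n)/\lambda_n\to h(\Phi_\infty\otimes d\omega)+h(d\omega\otimes\Phi_\infty)$. For a bounded parallel section $X$ of $\omega^*TN$, the one-form $h(d\omega,X)$ is closed and co-closed on $S^2$, hence zero. Reaching this needs the ingredients you are missing:
\begin{itemize}
\item spacetime convergence of $v_n$ to the stationary map $\omega$;
\item removability of the concentration points for $\Phi_\infty$;
\item a Liouville theorem: bounded ancient solutions are stationary (Caccioppoli plus the mean-value inequality for $|\partial_s\Phi_\infty|^2$), and then parallel (logarithmic cutoff on $\mathbb R^2$).
\end{itemize}
All of this must be applied on every component of the bubble tree, not only at the outer scale.

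\emph{Tension.} Here the proposal is not yet an argument. Your localized inequality is circular, because $-\frac{d}{dt}\int\varphi^2e_{\bar\partial}=\frac12\int\varphi^2|\mathcal T|^2+\int\langle\mathcal T,\Phi(\nabla\varphi^2)\rangle$. Integrating in time only yields spacetime bounds of the type in Lemma~\ref{lem:log-local-dissipation}, not bounds at each $t$. The evolution of $|\mathcal T|^2$ has a potential of size $\lambda^{-2}$ on the bubble, which you propose to absorb using the smallness of $|\Phi|$ that is not available.

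In the paper the bubble part is immediate once $\Phi_\infty$ is parallel: $\mathcal T(v_n)/\lambda_n=2\operatorname{div}_{g_n}\Phi_n\to0$, and scaling gives $\int_{z_n+\lambda_nK}|\mathcal T(u(t_n))|^2\to0$.

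The necks, which you do not treat for the tension, are the genuinely hard part. Small energy is useless there. From $|\Phi|\le B$, interior estimates give only $|\nabla\Phi|\le CB/r$, whose square integrates to about $CB^2\ln(R/a)\to\infty$ over a neck $\{a<r<R\}$. One needs the improved bound $|\nabla\Phi(x,t_*)|\lesssim B\bigl[1/(r\ln(r/a))+r^{-1}(r/R)^\gamma\bigr]$. The paper gets it as follows:
\begin{itemize}
\item Treat the components of $\Phi$ as solutions of a heat equation on a backward annulus with moving inner boundary.
\item The forcing is $\lesssim r^{-2}[(\lambda/r)^\beta+(r/R)^\beta]$, which comes from the Song--Waldron neck decay combined with outer-scale propagation.
\item Conclude with logarithmic, half-disk and $\operatorname{erf}$ barriers and a dyadic decomposition of the forcing.
\end{itemize}
This yields $\int_{\mathrm{neck}}|\mathcal T|^2\le CB^2[1/\ln K+1/\ln(1/\delta)+\delta^{\beta/4}]$.
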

	
	As a consequence of the tension convergence in Theorem~\ref{thm:terminal-convergence}, we obtain the $W^{2,2}$-regularity of the body map and the one-sided first-order expansions of the energy functional. In particular, the time function $t\mapsto E(u(t))$, extended by $E(T)=E_*$, is Lipschitz continuous on $[0,T]$, verifying the H\"older continuity criterion of Topping~\cite{Topping2024b} under our assumptions.
	
	\begin{corollary}\label{cor:terminal-expansions}
		Under the standing assumptions, 
		\begin{itemize}
			\item The body map satisfies
			\[
			u_T\in W^{2,2}(\Sigma,N).
			\]
			\item The energy functional satisfies
			\begin{equation*}
				E(u(t))=E_*+(T-t)\|\mathcal T(u_T)\|_2^2+o(T-t).
			\end{equation*}
		\end{itemize}		
	\end{corollary}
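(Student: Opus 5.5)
The corollary follows from Theorem~\ref{thm:terminal-convergence} together with the energy identity $\frac{d}{dt}E(u(t))=-\|\mathcal T(u(t))\|_2^2$ for the smooth flow on $[0,T)$. We treat the two items separately.

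\emph{$W^{2,2}$-regularity of the body map.}
By Theorem~\ref{thm:terminal-convergence}, $\mathcal T(u_T)$ is the strong $L^2$ limit of $\mathcal T(u(t))$. Hence $\mathcal T(u_T)\in L^2$, understood as the distributional tension of the $W^{1,2}$ map $u_T$. One point to check is that no defect measure appears at $\mathcal S$. This holds because $\mathcal T(u(t))=\Delta u+A(u)(du,du)$ in the embedding, and away from $\mathcal S$ the convergence is smooth. The $L^2$ convergence on all of $\Sigma$ forces the limit to be an $L^2$ function.

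Write $\Delta u_T=\mathcal T(u_T)-A(u_T)(du_T,du_T)$. By Theorem~\ref{thm:song-waldron}, $u_T\in C^\alpha$, and $du_T\in L^2$. We then run a standard bootstrap. Using the Hölder continuity, localize to small balls where the oscillation of $u_T$ is small. There $|A(u_T)(du_T,du_T)|\lesssim|du_T|^2$, which belongs to a Morrey space with small norm; a Caccioppoli/Morrey argument, or the Rivière-type structure, gives $du_T\in L^p$ for some $p>2$. Then $\Delta u_T\in L^{q}$ with $q=\min(2,p/2)>1$, so $u_T\in W^{2,q}$ and $du_T\in L^{r}$ for larger $r$. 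Iterating until $|du_T|^2\in L^2$ yields $\Delta u_T\in L^2$, hence $u_T\in W^{2,2}$ by $L^2$ elliptic estimates. This bootstrap is the main technical step, and the Hölder continuity of $u_T$ is what makes it work. Alternatively, one could note that $\|\nabla^2 u(t)\|_2$ is controlled by $\|\mathcal T(u(t))\|_2$ plus $\|du\|_4^2$, but near bubbles $\|du\|_4$ blows up, so the elliptic bootstrap on $u_T$ directly is the route to take.

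\emph{Energy expansion.}
For $t<s<T$, integrate the energy identity:
\[
E(u(t))-E(u(s))=\int_t^s\|\mathcal T(u(\tau))\|_2^2\,d\tau .
\]
Letting $s\uparrow T$ and using the definition of $E_*$ gives
\[
E(u(t))=E_*+\int_t^T\|\mathcal T(u(\tau))\|_2^2\,d\tau .
\]
By Theorem~\ref{thm:terminal-convergence}, $\|\mathcal T(u(\tau))\|_2^2\to\|\mathcal T(u_T)\|_2^2$ as $\tau\uparrow T$, and the integrand is uniformly bounded. Therefore
\[
\int_t^T\|\mathcal T(u(\tau))\|_2^2\,d\tau=(T-t)\|\mathcal T(u_T)\|_2^2+o(T-t).
\]
The uniform bound on the integrand also gives Lipschitz continuity of $t\mapsto E(u(t))$ on $[0,T]$.
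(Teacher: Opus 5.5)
\textbf{Comparison with the paper's proof.} Your energy expansion is the same argument the paper uses. For $u_T\in W^{2,2}$ you take a genuinely different route. The paper uses only $\mathcal T(u_T)\in L^2(\Sigma\setminus\mathcal S)$, obtained by Fatou from tension nonconcentration. It then quotes the Li--Wang removable-singularity theorem for finite-energy maps that are smooth off finitely many points and have $L^2$ tension. That theorem needs no continuity of $u_T$ and handles the punctures internally.

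You instead import the H\"older continuity of $u_T$, available under \eqref{assumption:H} via Proposition~\ref{prop:song-waldron-under-bounded-differential}. You then run the classical small-oscillation argument:
\begin{itemize}
\item a Caccioppoli inequality, absorbing the quadratic term by small oscillation;
\item Morrey decay or Gehring to get $du_T\in L^p$ for some $p>2$;
\item the $W^{2,q}$ bootstrap until $|du_T|^2\in L^2$.
\end{itemize}
This is sound and more elementary: continuity makes the quadratic term perturbative, so no H\'elein/Rivi\`ere structure is needed. The cost is that it depends on the Song--Waldron regularity of the body map, whereas the paper's citation applies to any such map. Your Caccioppoli step tests the distributional equation with $(u_T-\bar u)\eta^2$. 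This is legitimate because $u_T$ is continuous, so mollifications of the test function converge uniformly while the right-hand side is in $L^1$.

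\textbf{The no-defect step is mis-justified.} You need that $\Delta u_T=\mathcal T(u_T)-A(u_T)(du_T,du_T)$ holds in $\mathcal D'(\Sigma)$, i.e.\ with no point masses at $\mathcal S$. Strong $L^2$ convergence of $\mathcal T(u(t))$ does not give this. In $\mathcal T(u(t))=\Delta u(t)+A(u(t))(du(t),du(t))$, the Laplacian converges distributionally to $\Delta u_T$. The quadratic term, however, converges only as measures and may a priori carry a defect at $\mathcal S$; the scalar density $|du(t)|^2\,dV$ certainly does. So the distributional limit of the right-hand side is not automatically the distributional tension of $u_T$.

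The correct reason is that points have zero $W^{1,2}$-capacity in dimension two. Take a test function $\phi$ and $\zeta_\epsilon=1-\chi_{\epsilon^2,\epsilon}$ from Lemma~\ref{lem:logarithmic-cutoff}. Integrate by parts against $\zeta_\epsilon\phi$ away from $\mathcal S$ and let $\epsilon\to0$. The right-hand side lies in $L^1$, so that side converges by dominated convergence. The error term $\int\phi\,\langle du_T,d\zeta_\epsilon\rangle$ is bounded by $\|\phi\|_\infty\|du_T\|_{L^2(D_\epsilon)}\|d\zeta_\epsilon\|_{L^2}$, which tends to $0$. This uses only $u_T\in W^{1,2}$ and $\mathcal T(u_T)\in L^2(\Sigma\setminus\mathcal S)$. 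With this replacement your argument is complete.
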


	Our last result provides an explicit gap for the energy of a holomorphic sphere and for the antiholomorphic energy of a nonholomorphic harmonic sphere, which gives a lower bound for the optimal constant $\delta_{\mathrm{SW}}$ in Theorem~\ref{thm:song-waldron}. The gap theorem can be compared with the classical result of Wood~\cite{Wood1979} for harmonic maps into $\mathbb{CP}^n$ and the gap theorem for Yang-Mills fields of Gursky--Kelleher--Streets~\cite{GurskyKellehrtStreets2018}. To state the result, write
	\[
	H_*:=\max_{q\in N,\,0\ne X\in T_qN}H_N(X),
	\qquad \delta_N:=\frac{4\pi}{H_*},
	\]
	where $H_N$ is the holomorphic sectional curvature, normalized to be one on the unit round sphere. As there are no harmonic spheres if $H_N$ is nonpositive, we assume $H_*>0$ in the following theorem. Note this result is independent of our standing assumptions stated above. 
	
	\begin{theorem}\label{thm:antiholomorphic-energy-threshold}
		Let $N$ be a compact K\"ahler manifold with nonnegative holomorphic bisectional curvature and $H_*>0$. Then every nonconstant harmonic sphere $\omega:S^2\to N$ satisfies exactly one of the following:
		\begin{itemize}
			\item $\omega$ is holomorphic and 
			\begin{equation}\label{eq:thm-holo-gap}
				E_{\partial}(\omega)\ge\delta_N.
			\end{equation}
			\item $\omega$ is nonholomorphic and
			\begin{equation}\label{eq:thm-gap}
				E_{\bar\partial}(\omega)\ge\delta_N.
			\end{equation}
		\end{itemize}
	\end{theorem}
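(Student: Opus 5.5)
The plan is to use Bochner-type formulas for the holomorphic and antiholomorphic energy densities of a harmonic map, combined with an isoperimetric/Sobolev-free argument on $S^2$ via the degree of a line bundle section, or more concretely an integral identity obtained by integrating $\Delta \log|\partial\omega|$ over the sphere. Write $\partial\omega$ and $\bar\partial\omega$ for the $(1,0)$ and $(0,1)$ parts of $d\omega$, viewed as sections of $K_{S^2}\otimes\omega^*T^{1,0}N$ and $\bar K_{S^2}\otimes\omega^*T^{1,0}N$. For a harmonic map, $\bar\partial\omega$ is antiholomorphic and $\partial\omega$ is holomorphic in the appropriate sense (the harmonic map equation reads $\nabla_{\bar z}\omega_z=0$), so their zeros are isolated with positive multiplicity, unless the section vanishes identically. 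In particular, $\omega$ is holomorphic if and only if $\bar\partial\omega\equiv 0$, and the two alternatives are exclusive and exhaustive: if $\omega$ is holomorphic, nonconstancy forces $\partial\omega\not\equiv0$; otherwise $\bar\partial\omega\not\equiv0$.

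The core step is the classical Bochner formula (Eells--Wood/Toledo type). Away from zeros,
\[
\Delta\log|\bar\partial\omega| = K_{S^2} - \frac{R(\omega_z,\omega_{\bar z},\omega_{\bar z}\text{-terms})}{\cdots},
\]
more precisely $\Delta \log|\bar\partial\omega|^2 = 2K_{S^2} + (\text{curvature term})$. Here the curvature term is $-R^N(\cdot)$ evaluated on $\partial\omega$ and $\bar\partial\omega$, and it splits into a bisectional curvature term in the mixed directions, of the form $-2(|\partial\omega|^2-\ldots)$, and a holomorphic sectional curvature term controlled by $H_*|\bar\partial\omega|^2$. I would derive it in a local conformal coordinate $z$ by computing $\partial_z\partial_{\bar z}\log|\omega_{\bar z}|^2$ using $\nabla_{z}\omega_{\bar z}=0$ and the Kähler symmetries of $R$. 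Then I would drop the mixed bisectional term by using nonnegativity of holomorphic bisectional curvature, keeping the sign that helps. The target inequality is
\[
\Delta\log|\bar\partial\omega|^2\ \ge\ 2K_{S^2} - c\,H_*\,|\bar\partial\omega|^2
\]
in the distributional sense, with a universal constant $c$ fixed by the normalization $H=1$ on the round sphere. The zeros contribute positive Dirac masses, which also go in the favorable direction.

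Next I would integrate over $S^2$. The left side integrates to a nonnegative quantity: it is zero in the smooth part plus $4\pi\sum(\text{multiplicities})\ge0$, once the sign of the Dirac contributions is taken into account. Gauss--Bonnet gives $\int 2K=8\pi$. Hence $cH_*\int|\bar\partial\omega|^2\ge 8\pi$, which translates into $E_{\bar\partial}(\omega)\ge 4\pi/H_*=\delta_N$ once the constants match $e_{\bar\partial}=\tfrac12|\Phi|^2$. The holomorphic case is identical with $\partial\omega$ in place of $\bar\partial\omega$. In that case $\bar\partial\omega=0$, so the mixed term vanishes outright. The sharpness check is a degree-one holomorphic map $S^2\to S^2$, which has $E=4\pi=\delta_{S^2}$.

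The main obstacle is the curvature computation. I need the mixed term to have a sign that bisectional nonnegativity can discard, rather than one that needs an upper bound. I also need the pure term to be exactly the holomorphic sectional curvature in the direction of $\omega_{\bar z}$, which requires the Kähler identity $R(X,\bar X,X,\bar X)=H(X)|X|^4$ and care with conventions. If the mixed term comes with the wrong sign for $\bar\partial\omega$, I would instead run the argument on whichever of $\partial\omega$ and $\bar\partial\omega$ gives a favorable sign. For the nonholomorphic case, I would use that on $S^2$ a harmonic map has vanishing Hopf differential, so it is conformal with $|\partial\omega|=|\bar\partial\omega|$ pointwise in norm, up to orientation. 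This lets me convert a bound on one component into a bound on the other.
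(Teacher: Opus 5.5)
Your route is essentially the paper's. With $\tilde g=e_{\bar\partial}g$ and $\varphi=\tfrac12\ln e_{\bar\partial}$ one has $K_{\tilde g}\,dV_{\tilde g}=(K-\Delta\varphi)\,dV$. So the paper's inequality $K_{\tilde g}\le H_*$, read off from the stationary identity $0=\tfrac12|\nabla_{\tilde g}\Phi|^2+K_{\tilde g}+B_N(X,Y)e_{\partial,\tilde g}-H_N(X)$, is exactly your target $\Delta\ln e_{\bar\partial}\ge 2K-2H_*e_{\bar\partial}$ on $S^2\setminus Z$.

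Two remarks on your curvature step. The mixed bisectional term does enter with the favorable sign, so $B_N\ge0$ lets you drop it. Your displayed ``identity'' omits a nonnegative gradient (Kato-type) term, which is harmless for the same reason. In the holomorphic case, running the argument on $\partial\omega$ is the orientation-reversed form of the same identity. It is equivalent to the paper's composition with an orientation-reversing automorphism of $S^2$.

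The integration step, however, is wrong as you state it, and as stated it does not give the bound. Near a zero of order $m_p$ one has $\ln|\bar\partial\omega|^2=2m_p\ln|z|+\text{smooth}$. So this function is in $L^1(S^2)$, and its distributional Laplacian has total integral $0$ on the closed sphere. The Dirac part contributes $+4\pi\sum_pm_p$, so the regular part integrates to $-4\pi\sum_pm_p$, not to zero.

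What you need is an \emph{upper} bound on the integral of the left-hand side. If that integral really were $4\pi\sum_pm_p\ge0$, your inequality would only yield $2H_*E_{\bar\partial}\ge8\pi-4\pi\sum_pm_p$. This is empty once $\sum_pm_p\ge2$, which is the typical case: an antiholomorphic branched cover $S^2\to S^2$ of degree $d$ has $\sum_pm_p=2d-2$.

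There are two fixes. You can pair the distributional inequality with $1$, which gives $0\ge8\pi-2H_*E_{\bar\partial}$ directly. Alternatively, as in Lemma~\ref{lem:harmonic-one-form-zeros}, integrate the pointwise inequality over $S^2$ minus $\varepsilon$-disks and compute the boundary flux as in \eqref{eq:7-2}. The latter gives the stronger bound $H_*E_{\bar\partial}\ge2\pi(2+\sum_pm_p)$ of Theorem~\ref{thm:antiholomorphic-gap-general}.

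Finally, discard the fallback in your last paragraph. Vanishing of the Hopf differential on $S^2$ means $S(\omega)=0$, i.e.\ weak conformality, not $|\partial\omega|=|\bar\partial\omega|$. For instance, a holomorphic sphere is conformal with $\bar\partial\omega\equiv0\ne\partial\omega$. The fallback is not needed anyway, since the bisectional term already has the right sign.
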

	
	\begin{remark}
		The gap $\delta_N$ is optimal in the sense that it equals $4\pi$ if $N$ is the standard sphere or more generally $\mathbb{CP}^n$. Note that harmonic spheres in $\mathbb{CP}^n$ for $n\ge2$ need not be holomorphic or antiholomorphic, as exhibited by Eells--Wood~\cite{EellsWood1983}. 
	\end{remark}
	
	In fact, we prove a more general antiholomorphic energy gap theorem for closed surfaces in Section~\ref{sec:antiholomorphic-energy-gap}. In Appendix~\ref{sec:gap-rigidity}, we record a geometric characterization of the equalities, together with a Riemannian analog of the gap theorem.
	
	It follows from \eqref{eq:thm-gap} that the optimal threshold in Theorem~\ref{thm:song-waldron} satisfies 
	\[ \delta_{\mathrm{SW}}\ge \delta_N. \]
	Combining Theorem~\ref{thm:antiholomorphic-energy-threshold} with the preceding results gives the following consequence of the explicit energy threshold.
	
	\begin{corollary}\label{cor:gap-flow-app}
		Let $N$ satisfy the geometric assumptions of Theorem~\ref{thm:antiholomorphic-energy-threshold}. If a smooth harmonic map heat flow $u:\Sigma\times[0,T)\to N$ satisfies
		\begin{equation}\label{eq:initial-gap-threshold}
			E_{\bar\partial}(u(0))<\delta_N,
		\end{equation}
		then \eqref{assumption:H} holds. Consequently, the conclusions of Theorems~\ref{thm:outer-scale-estimates} and~\ref{thm:terminal-convergence} and Corollary~\ref*{cor:terminal-expansions} hold at every finite terminal time of a smooth interval of the flow.
	\end{corollary}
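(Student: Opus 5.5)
The plan is to show that \eqref{eq:initial-gap-threshold} gives the pointwise bound \eqref{assumption:H}. Once this holds, the second sentence is immediate: the standing assumptions are then satisfied, and Theorems~\ref{thm:outer-scale-estimates}, \ref{thm:terminal-convergence} and Corollary~\ref{cor:terminal-expansions} apply at each finite terminal time.

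The argument for \eqref{assumption:H} is by contradiction through blow-up. Suppose $\sup_{\Sigma\times[0,T)}|\Phi(u)|=\infty$. Since $u$ is smooth on each $\Sigma\times[0,T']$ with $T'<T$, the blow-up must occur as $t\uparrow T$. Choose points $(x_n,t_n)$ with $t_n\uparrow T$ and
\[
|\Phi(u)|(x_n,t_n)\ge \tfrac12\sup_{\Sigma\times[0,t_n]}|\Phi(u)|\to\infty .
\]
This is a standard point-picking, and one may refine it to a parabolic maximal-point selection. Rescale parabolically by $r_n=|\Phi(u)|(x_n,t_n)^{-1}$ and write $v_n(y,s)=u(\exp_{x_n}(r_ny),t_n+r_n^2s)$. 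Then $v_n$ solves the flow with $|\Phi(v_n)|(0,0)=1$ and $|\Phi(v_n)|\le2$ for $s\le0$.

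The key point is to control the full gradient of $v_n$, not just $\Phi$. For this I use the nonnegative holomorphic bisectional curvature. The Bochner formula for $\partial u$ and $\bar\partial u$ along the flow gives a subsolution inequality for the holomorphic energy density, modulo terms that involve $|\Phi|$. This is the mechanism Song--Waldron use, and it is also the one behind Proposition~\ref{prop:song-waldron-under-bounded-differential}. If $|dv_n|$ blows up near $(0,0)$, a second rescaling produces a nonconstant holomorphic bubble on which $\Phi$ vanishes in the limit. That does not contradict anything by itself, but it localizes the antiholomorphic concentration away from it. So I expect that after a suitable choice of scale one gets a limit $v_\infty$ on $\mathbb R^2\times(-\infty,0]$ that is smooth with $|\Phi(v_\infty)|(0,0)=1$.

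Energy monotonicity, $\int_0^T\|\partial_tu\|_2^2<\infty$, gives $\partial_s v_\infty\equiv0$ in $L^2$ on compact backward regions. Hence $v_\infty(\cdot,0)$ is a finite-energy harmonic map $\mathbb R^2\to N$, which extends to a harmonic sphere $\omega$ by removable singularities. This $\omega$ is nonholomorphic, because $\Phi(\omega)(0)\ne0$. Since $E_{\bar\partial}$ is nonincreasing along the flow and scale invariant, lower semicontinuity gives
\[
E_{\bar\partial}(\omega)\le E_{\bar\partial}(u(0))<\delta_N .
\]
This contradicts \eqref{eq:thm-gap} in Theorem~\ref{thm:antiholomorphic-energy-threshold}. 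Therefore \eqref{assumption:H} holds.

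I expect the main obstacle to be the compactness step: $\Phi$ is bounded after rescaling, but the holomorphic part of the gradient may still concentrate. I would first try to handle this as Song--Waldron do under \eqref{assmption:SW}, where the only role of $\delta_{\mathrm{SW}}$ is to exclude nonholomorphic harmonic spheres with small $\bar\partial$-energy. The idea is to check that their $\varepsilon$-regularity for $|\Phi|$ needs only that no such sphere carries $\bar\partial$-energy below $\delta_N$, which is exactly what \eqref{eq:thm-gap} supplies. In that case the threshold $\delta_{\mathrm{SW}}$ can be replaced by $\delta_N$ throughout.
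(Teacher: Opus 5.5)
There is a genuine gap, and it sits exactly in the compactness step you flag as the main obstacle; that step carries the whole difficulty. After rescaling by $r_n=|\Phi(u)|(x_n,t_n)^{-1}$, vanishing dissipation gives, as in Lemma~\ref{lem:rescaled-spacetime-convergence}, smooth convergence $v_n\to\omega$ only on compact subsets of $(\mathbb R^2\setminus Z)\times(-\infty,0]$ for some finite set $Z$. Nothing in your argument excludes $0\in Z$, i.e.\ a holomorphic bubble forming at scale $\ll r_n$ at the normalization point. In that case $|\Phi(v_n)|(0,0)=1$ says nothing about $\Phi(\omega)$. A priori $\Phi(v_n)$ could be of order one only on sets collapsing onto $Z$; then $\omega$ would be holomorphic and there is no contradiction. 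The ``second rescaling'' and the ``suitable choice of scale'' giving $|\Phi(v_\infty)|(0,0)=1$ are asserted, not constructed.

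The fallback in your last paragraph misreads the input. Proposition~\ref{prop:antiholomorphic-regularity} requires local smallness of $E_{\bar\partial}$ below a small $\delta_0$. So replacing $\delta_{\mathrm{SW}}$ by $\delta_N$ first requires proving that antiholomorphic energy does not concentrate at $\mathcal S$. That is exactly what the paper supplies:
\begin{itemize}
\item classical bubbling along some sequence $t_n\uparrow T$;
\item the split identity $\lim_{r\downarrow0}\lim_{n}E_{\bar\partial}(u(t_n);B_r(p))=\sum_jE_{\bar\partial}(\omega_{p,j})$, obtained from vanishing neck energy;
\item the gap \eqref{eq:thm-gap}, which forces every bubble to be holomorphic;
\item a cutoff estimate for $e_{\bar\partial}$, which makes the limit independent of the sequence;
\item and finally Proposition~\ref{prop:antiholomorphic-regularity}.
\end{itemize}

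Your blow-up route can be closed, but it needs one idea you do not have. Since $B_N\ge0$, the term $-\langle\mathcal R_{\Psi}\Phi,\Phi\rangle$ in the Bochner formula \eqref{eq:antiholomorphic-bochner} is nonpositive no matter how large $\Psi$ becomes. Hence $|\Phi(v_n)|\le2$ yields $(\partial_s-\Delta_{g_n})|\Phi(v_n)|^2\le C|\Phi(v_n)|^2$ on the entire rescaled domain, including across the concentration points. The parabolic mean-value inequality (the metrics $g_n$ are uniformly close to Euclidean) then gives $\int_{-1}^0\int_{D_1}|\Phi(v_n)|^2\ge c>0$ uniformly in $n$. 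Dominated convergence ($Z$ finite, $|\Phi(v_n)|\le2$) then gives $\int_{D_1}|\Phi(\omega)|^2\ge c$. After removing singularities, $\omega$ is a nonholomorphic harmonic sphere with $E_{\bar\partial}(\omega)\le E_{\bar\partial}(u(0))<\delta_N$, contradicting Theorem~\ref{thm:antiholomorphic-energy-threshold}.

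With this step your argument would bypass the bubble tree and the split energy identity entirely. Without it, the proposal does not establish \eqref{assumption:H}.
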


	Finally, we remark that all the above results hold if we replace the antiholomorphic differential $\Phi$ in assumption~\eqref{assumption:H} with its holomorphic counterpart. Indeed, it follows by simply reversing the orientation of the domain. With this understood, we stick with our assumption \eqref{assumption:H} in the following context.
	
	\
	
	The rest of the paper is organized as follows. Section~\ref{sec:preliminaries} derives the evolution equation for $\Phi$, its Bochner formula and stress identities, and records the analytic inputs and bubble-tree construction from Song--Waldron. Section~\ref{sec:outer-scale-estimates} combines a local tension integral estimate with logarithmic local dissipation to prove Theorem~\ref{thm:outer-scale-estimates}. Section~\ref{sec:bubble-estimates} proves rigidity of normalized ancient fields and deduces tension and stress estimates on bubble regions. Section~\ref{sec:neck-estimates} establishes the tension estimate on neck regions. Section~\ref{sec:terminal-convergence} combines these estimates with the energy identity to prove Theorem~\ref{thm:terminal-convergence} and Corollary~\ref{cor:terminal-expansions}. The energy gap and Theorem~\ref{thm:antiholomorphic-energy-threshold} are proved in Section~\ref{sec:antiholomorphic-energy-gap}. Appendix~\ref{sec:annular-estimates} contains the scalar estimates used on the necks, and Appendix~\ref{sec:gap-rigidity} gives the rigidity part and a Riemannian analog of the gap theorem.

	\subsection*{AI Declaration.}
	The authors used ChatGPT (OpenAI) as a research assistance tool to explore proof strategies, check calculations, and assist with language editing. The authors take full responsibility for the mathematical content of this paper.

	\section{Preliminaries and analytic inputs}
	\label{sec:preliminaries}
	
	We first derive basic identities and equations along the harmonic map heat flow. Then we recall analytic inputs from Song--Waldron including $\varepsilon$-regularity and outer energy scale propagation estimates, and record the bubble-tree construction. For later analysis, we will use the real form of the antiholomorphic differential rather than complex notation throughout the paper.
	
	\subsection{Real splitting, tension, and stress}
	
	Let $(\Sigma,g,j)$ be a closed oriented Riemannian surface and let $(N,h,J)$ be a compact K\"ahler manifold. Given a sufficiently regular map $u:\Sigma \to N$, the energy density, energy, and stress-energy tensor are denoted by
	\[
	e(u)=\frac12|du|^2,\qquad
	E(u)=\int_\Sigma e(u)\,dV,\qquad
	S(u)=u^*h-e(u)g.
	\]
	Denoting the connection induced by that of $\Sigma$ and $N$ by $\nabla$, the tension field of $u$ is
	\[
	\mathcal{T}(u) =\operatorname{tr}_g\nabla du.
	\]
	
	Using the complex structures, we decompose
	\[ du = \Phi(u) + \Psi(u),\]
	where
	\begin{equation}\label{eq:real-splitting}
		\Phi(u):=\frac12(du+J\circ du\circ j),\qquad
		\Psi(u):=\frac12(du-J\circ du\circ j).
	\end{equation}
	The forms $\Phi, \Psi$ are the real forms of the antiholomorphic and holomorphic differentials of $u$, satisfying
	\[ \Phi\circ j=-J\circ \Phi, \quad \Psi\circ j=J\circ \Psi, \] 
	and
	\[ \langle \Phi,\Psi\rangle=0. \]
	
	We retain the customary energy subscripts:
	\[
	e_{\bar\partial}=\tfrac12|\Phi|^2,\qquad
	e_\partial=\tfrac12|\Psi|^2.
	\]
	Their integrals are denoted by $E_{\bar\partial}$ and $E_\partial$. As usual, we have 
	\[
	e=e_\partial+e_{\bar\partial},\qquad
	u^*\omega_N=(e_\partial-e_{\bar\partial})\,dV,
	\]
	where $\omega_N$ is the K\"ahler form of $N$. On a closed surface, $\kappa=\int_\Sigma u^*\omega_N$ is invariant under smooth homotopy and
	\begin{equation*}
		E=2E_{\bar\partial}+\kappa=2E_\partial-\kappa.
	\end{equation*}
	
	\begin{lemma}\label{lem:real-first-order-identities}
		The real differential $\Phi$ satisfies
		\begin{equation}\label{eq:real-first-order}
			2\operatorname{div} \Phi=\mathcal T,\qquad
			d^\nabla \Phi=-\tfrac12J\mathcal T\,dV.
		\end{equation}
		Consequently $\Phi$ is a harmonic bundle-valued 1-form when $u$ is harmonic. 
	\end{lemma}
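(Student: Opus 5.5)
The plan is to verify both identities pointwise in local isothermal coordinates on $\Sigma$, using only that $J$ is parallel on $N$ (the Kähler condition) and that $j$ is parallel on $\Sigma$. Write $g=\sigma^2(dx^2+dy^2)$ locally with $j\partial_x=\partial_y$ and $j\partial_y=-\partial_x$. Set $u_x=du(\partial_x)$ and $u_y=du(\partial_y)$. Then
\[
\Phi(\partial_x)=\tfrac12(u_x+Ju_y),\qquad \Phi(\partial_y)=\tfrac12(u_y-Ju_x).
\]
The pullback connection $\nabla$ on $u^*TN$ commutes with $J$, since $\nabla J=0$ on $N$, and is torsion-free in the sense that $\nabla_{\partial_x}u_y=\nabla_{\partial_y}u_x$.

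For the divergence identity, compute $\operatorname{div}\Phi=\operatorname{tr}_g\nabla\Phi$. Because $j$ is parallel and conformal coordinates are used, the Christoffel terms of $\Sigma$ in $\operatorname{div}\Phi$ and in $\operatorname{tr}_g\nabla du$ match term by term. This reduces the identity to
\[
\sigma^{-2}\bigl(\nabla_x\Phi(\partial_x)+\nabla_y\Phi(\partial_y)\bigr)=\tfrac12\sigma^{-2}\bigl(\nabla_xu_x+\nabla_yu_y\bigr)+\tfrac12\sigma^{-2}J\bigl(\nabla_xu_y-\nabla_yu_x\bigr).
\]
The last bracket vanishes by the symmetry of $\nabla du$, which leaves $\tfrac12\mathcal T$. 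A cleaner, coordinate-free way is to note that $\operatorname{div}(J\circ du\circ j)=J\operatorname{tr}_g(\nabla du)(\cdot,j\cdot)$. This is the trace of a symmetric bilinear form against the skew form $g(\cdot,j\cdot)$, so it is zero.

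For the exterior derivative, compute $d^\nabla\Phi(\partial_x,\partial_y)=\nabla_x\Phi(\partial_y)-\nabla_y\Phi(\partial_x)$:
\[
\tfrac12\bigl(\nabla_xu_y-\nabla_yu_x\bigr)-\tfrac12J\bigl(\nabla_xu_x+\nabla_yu_y\bigr)=-\tfrac12J\sigma^2\mathcal T.
\]
Since $dV=\sigma^2dx\wedge dy$, this gives $d^\nabla\Phi=-\tfrac12J\mathcal T\,dV$. Coordinate-freely, $d^\nabla du=0$ by torsion-freeness, and $d^\nabla(J\,du\circ j)=J\,d^\nabla(du\circ j)=-J\,\mathcal T\,dV$ up to the sign convention that $du\circ j=-{*}du$. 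The main care needed is this sign check, which I will fix by the coordinate computation above.

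For the consequence: if $\mathcal T=0$, both $\operatorname{div}\Phi$ and $d^\nabla\Phi$ vanish. Since $\operatorname{div}$ is, up to sign, $d^{\nabla*}$ on $1$-forms, $\Phi$ is closed and coclosed, and hence a harmonic bundle-valued $1$-form.
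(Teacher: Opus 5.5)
Your proposal is correct and is essentially the paper's argument: a direct local computation using only $\nabla J=0$, $\nabla j=0$ and the symmetry of $\nabla du$. You work in isothermal coordinates and compute $d^\nabla\Phi$ directly, whereas the paper uses an orthonormal frame with $e_2=je_1$ and reads off the second identity from $\Phi(e_2)=-J\Phi(e_1)$, which gives $(d^\nabla\Phi)_{12}=-J\operatorname{div}\Phi$; the one standard fact you use implicitly, namely $\nabla_{\partial_x}\partial_x+\nabla_{\partial_y}\partial_y=0$ in isothermal coordinates so that $\mathcal T=\sigma^{-2}(\nabla_xu_x+\nabla_yu_y)$, is worth stating explicitly.
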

	\begin{proof}
		Choose a positively oriented orthonormal frame $e_2=je_1$ and put $X=\Phi(e_1),Y=\Psi(e_1)$. Then the symmetry $(\nabla du)_{12}=(\nabla du)_{21}$ and $\nabla j=\nabla J=0$ give
		\[
		2\operatorname{div}\Phi
		=\mathcal T+J\bigl((\nabla du)_{12}-(\nabla du)_{21}\bigr)
		=\mathcal T.
		\]
		Also $(d^\nabla \Phi)_{12}=-J\nabla_1X-\nabla_2X
		=-J\operatorname{div}\Phi$, proving \eqref{eq:real-first-order}.
	\end{proof}
	
	\begin{lemma}
		\label{lem:real-stress-identities}
		The stress tensor has the bilinear expression
		\begin{equation}\label{eq:stress-expression}
			S = h(\Phi\otimes\Psi) + h(\Psi\otimes\Phi).
		\end{equation}
		Hence it satisfies
		\begin{equation}\label{eq:stress-pointwise-bound}
			|S|^2 \le8e_\partial e_{\bar\partial}
		\end{equation}
		and
		\begin{equation}\label{eq:stress-divergence}
			\operatorname{div}S=\langle\mathcal T,du\rangle,
		\end{equation}
	\end{lemma}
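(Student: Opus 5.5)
Expand $u^*h = h(du\otimes du)$ with $du=\Phi+\Psi$, giving
\[
u^*h = h(\Phi\otimes\Phi)+h(\Psi\otimes\Psi)+h(\Phi\otimes\Psi)+h(\Psi\otimes\Phi).
\]
It then suffices to show that the pure parts satisfy $h(\Phi\otimes\Phi)=e_{\bar\partial}g$ and $h(\Psi\otimes\Psi)=e_\partial g$, because $e=e_\partial+e_{\bar\partial}$ then yields \eqref{eq:stress-expression}. Work in an oriented orthonormal frame $e_1$, $e_2=je_1$ and set $X=\Phi(e_1)$. From $\Phi\circ j=-J\Phi$ we get $\Phi(e_2)=-JX$. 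Since $J$ is $h$-orthogonal, $h(\Phi(e_2),\Phi(e_2))=|X|^2=h(\Phi(e_1),\Phi(e_1))$, while $h(\Phi(e_1),\Phi(e_2))=-h(X,JX)=0$ because $J$ is skew. Hence $h(\Phi\otimes\Phi)=|X|^2g$. As $|\Phi|^2=2|X|^2$, this equals $e_{\bar\partial}g$. The computation for $\Psi$ is identical, using $\Psi(e_2)=JY$.

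For \eqref{eq:stress-pointwise-bound}, compute the entries of $S$ in the same frame, with $X=\Phi(e_1)$ and $Y=\Psi(e_1)$:
\[
S_{11}=2h(X,Y),\qquad S_{22}=2h(-JX,JY)=-2h(X,Y),\qquad S_{12}=h(X,JY)+h(Y,-JX)=2h(X,JY).
\]
Therefore $|S|^2=8\bigl(h(X,Y)^2+h(X,JY)^2\bigr)$. Because $Y$ and $JY$ are orthogonal and have equal length, Bessel's inequality gives $h(X,Y)^2+h(X,JY)^2\le|X|^2|Y|^2$. Combined with $|X|^2=e_{\bar\partial}$ and $|Y|^2=e_\partial$, this yields $|S|^2\le 8e_\partial e_{\bar\partial}$. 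The trace-free structure also reflects $\operatorname{tr}_gS=0$ in dimension two.

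For \eqref{eq:stress-divergence}, the standard computation is the cleanest route. In normal coordinates at a point,
\[
(\operatorname{div}S)_j=\nabla_i\bigl(h(u_i,u_j)\bigr)-\partial_j e=h(\mathcal T,u_j)+h(u_i,\nabla_iu_j)-h(u_i,\nabla_ju_i).
\]
The last two terms cancel by the symmetry $\nabla_iu_j=\nabla_ju_i$ of the Hessian. This leaves $\langle\mathcal T,du\rangle$.

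An alternative is to differentiate the bilinear form \eqref{eq:stress-expression} using Lemma~\ref{lem:real-first-order-identities}. That route requires tracking $d^\nabla\Phi$ and $d^\nabla\Psi$ and is more error-prone, so I would use the direct computation and keep the bilinear form only for the pointwise bound. The only step needing care is the sign bookkeeping with $j$ and $J$ in the frame computation. Everything else is routine.
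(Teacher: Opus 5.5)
Your proposal is correct and takes essentially the same route as the paper: you expand $u^*h$ using $du=\Phi+\Psi$ and the conformality relations $h(\Phi\otimes\Phi)=e_{\bar\partial}g$, $h(\Psi\otimes\Psi)=e_\partial g$, which the paper asserts and you verify in the frame $e_2=je_1$. You then derive the pointwise bound via $|S|^2=8\bigl(h(X,Y)^2+h(X,JY)^2\bigr)\le 8|X|^2|Y|^2$ and the divergence identity via the symmetric Hessian, which correctly fills in the ``direct calculation'' that the paper leaves to the reader and to Song--Waldron's Lemma~2.2.
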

	\begin{proof}
		Since $du=\Phi + \Psi$ and $\Phi^*h=e_{\bar{\partial}}g, \Psi^*h=e_{\partial}g$, we have for vectors $v, w$,
		\begin{align*}
			u^*h(v,w) &= h(du(v), du(w)) \\
			&= h(\Phi(v), \Phi(w)) + h(\Phi(v), \Psi(w)) + h(\Psi(v), \Phi(w)) + h(\Psi(v), \Psi(w))\\
			&= h(\Phi(v), \Psi(w)) + h(\Psi(v), \Phi(w)) + e(u)\langle v, w \rangle.
		\end{align*} 
		This proves \eqref{eq:stress-expression}. Then \eqref{eq:stress-pointwise-bound} and the identity \eqref{eq:stress-divergence} follow by direct calculation; see also \cite[Lemma~2.2]{SongWaldron2024}.
	\end{proof}
	
	For target vectors $X,Y \neq 0$, we write the holomorphic bisectional curvature as
	\[
	B_N(X, Y)=\frac{h(R^N(X,JX)JY,Y)}{|X|^2|Y|^2},
	\]
	and the holomorphic sectional curvature as
	\[
	H_N(X)=\frac{h(R^N(X,JX)JX,X)}{|X|^4},\qquad X\ne0,
	\]
	which equals one on the unit round sphere. The products $B_N(X,Y)e_\partial e_{\bar\partial}$ and $H_N(X)e_{\bar\partial}^2$ are understood by continuous extension when either vector vanishes.

	For a bundle-valued one-form $\xi$ of either type, $\xi\circ j=\pm J\circ \xi$, define the target endomorphism
	\begin{equation}\label{eq:real-curvature-operator}
		\mathcal R_{\xi} =J\circ R^N(X,JX),\qquad X=\xi(e_1).
	\end{equation}
	It acts on bundle-valued forms through their target factor. The endomorphism \eqref{eq:real-curvature-operator} is independent of the positively oriented orthonormal frame. It is self-adjoint, commutes with $J$, and satisfies 
	$$ |\mathcal R_{\xi} |\le C_N|\xi|^2.$$
	If $N$ has nonnegative holomorphic bisectional curvature, then $\mathcal R_{\xi} \ge0$. 
	
	Moreover, setting $X=\Phi(e_1),Y=\Psi(e_1)$, then
	\begin{equation*}
		\tfrac12\langle\mathcal R_{\Psi}  \Phi,\Phi\rangle
		=B_N(X,Y)e_\partial e_{\bar\partial},
	\end{equation*}
	and
	\begin{equation*}
		\tfrac12\langle\mathcal R_{\Phi}  \Phi,\Phi\rangle
		=H_N(X)e_{\bar\partial}^2.
	\end{equation*}

	\subsection{Evolution equation for \texorpdfstring{$\Phi$}{Phi} and Bochner formula}
	
	Now suppose $u:\Sigma\times[0,T)\to N$ is a smooth harmonic map heat flow. The following point-wise energy identity was observed by Song-Waldron~\cite{SongWaldron2024}.
	
	\begin{lemma}
		Along harmonic map heat flow,
		\begin{equation}\label{eq:local-energy-stress}
			\partial_t e+|\mathcal T|^2
			=\operatorname{div}\langle\mathcal T,du\rangle
			=\operatorname{div}\operatorname{div}S.
		\end{equation}
		For a smooth time-independent cutoff $\chi$,
		\begin{equation}\label{eq:cutoff-stress-identity}
			\frac{d}{dt}\int_\Sigma\chi e\,dV
			+\int_\Sigma\chi|\mathcal T|^2\,dV
			=\int_\Sigma\langle S,\nabla^2\chi\rangle\,dV.
		\end{equation}
		In particular,
		\begin{equation}\label{eq:energy-dissipation}
			\frac{dE}{dt}=-\|\mathcal T\|_2^2,\qquad
			\frac{dE_{\bar\partial}}{dt}
			=\frac{dE_\partial}{dt}=-\tfrac12\|\mathcal T\|_2^2.
		\end{equation}
	\end{lemma}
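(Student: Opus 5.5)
We verify the three identities in order, using only the flow equation $\partial_t u=\mathcal T$ and the divergence formula \eqref{eq:stress-divergence} from Lemma~\ref{lem:real-stress-identities}.

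\textbf{Step 1: the pointwise identity \eqref{eq:local-energy-stress}.} Differentiate the energy density in time and commute $\nabla_t$ with the spatial derivative, which is allowed since the pullback connection is torsion free in the mixed $(t,x)$ directions:
\[
\partial_t e=\langle du,\nabla_t du\rangle=\langle du,\nabla\partial_t u\rangle=\operatorname{div}\langle\partial_t u,du\rangle-\langle\partial_t u,\mathcal T\rangle.
\]
Here $\langle\partial_tu,du\rangle$ denotes the one-form $v\mapsto\langle\partial_tu,du(v)\rangle$. Substituting $\partial_tu=\mathcal T$ gives
\[
\partial_te+|\mathcal T|^2=\operatorname{div}\langle\mathcal T,du\rangle.
\]
By \eqref{eq:stress-divergence}, $\langle\mathcal T,du\rangle=\operatorname{div}S$, so the right-hand side equals $\operatorname{div}\operatorname{div}S$.

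\textbf{Step 2: the cutoff identity \eqref{eq:cutoff-stress-identity}.} Multiply the identity from Step~1 by the time-independent cutoff $\chi$ and integrate over the closed surface $\Sigma$. Integrating by parts twice, with no boundary terms, gives
\[
\int_\Sigma\chi\operatorname{div}\operatorname{div}S\,dV=\int_\Sigma\langle S,\nabla^2\chi\rangle\,dV.
\]
The only point to check is the index contraction. $S$ is symmetric, and with the convention $(\operatorname{div}S)_j=\nabla_iS_{ij}$ we get $\int\chi\nabla_j\nabla_iS_{ij}=\int S_{ij}\nabla_i\nabla_j\chi$.

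\textbf{Step 3: the dissipation identities \eqref{eq:energy-dissipation}.} Take $\chi\equiv1$ in Step~2; then $\nabla^2\chi=0$, which yields $dE/dt=-\|\mathcal T\|_2^2$. For the two halves of the energy, use the relations $E=2E_{\bar\partial}+\kappa=2E_\partial-\kappa$. The quantity $\kappa=\int_\Sigma u^*\omega_N$ is constant along the smooth flow by homotopy invariance. Hence
\[
\frac{dE_{\bar\partial}}{dt}=\frac{dE_\partial}{dt}=\tfrac12\,\frac{dE}{dt}=-\tfrac12\|\mathcal T\|_2^2.
\]

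No step is a real obstacle. The only care needed is sign and index bookkeeping in the double integration by parts, and the fact that $\nabla_t du=\nabla\partial_tu$ holds for the pullback connection.
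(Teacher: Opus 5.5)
Your proof is correct and is the standard computation the paper relies on (the paper cites Song--Waldron rather than writing it out). It uses $\partial_tu=\mathcal T$ together with \eqref{eq:stress-divergence} for the pointwise identity, two integrations by parts against $\chi$, and the homotopy invariance of $\kappa$ in $E=2E_{\bar\partial}+\kappa=2E_\partial-\kappa$ for the split dissipation. The $E_{\bar\partial}$ identity can also be obtained locally from $2\operatorname{div}\Phi=\mathcal T$, which is the cutoff form the paper uses later in Section~\ref{sec:antiholomorphic-energy-gap}, but your global route via $\kappa$ is fully adequate for this statement.
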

	
	A cutoff argument using \eqref{eq:local-energy-stress} gives the following local energy inequality, which transfers local energy bounds to nearby times.
	
	\begin{lemma}\label{lem:local-energy-comparison}
		Let $0\le\chi\le1$ be a smooth time-independent cutoff. For $0\le s\le t<T$, write
		\[ \mathcal D:=\int_s^t\|\mathcal T(u(\tau))\|_2^2\,d\tau. \]
		Then
		\begin{equation*}
			\left|\int_\Sigma\chi\bigl(e(u(t))-e(u(s))\bigr)\,dV\right|
			\le \mathcal{D} +C\|\nabla\chi\|_\infty\sqrt{(t-s)E_0\mathcal D}.
		\end{equation*}
		
		In particular, if $t-s\le R^2$ and $|\nabla\chi|\le C/R$, the right-hand side is bounded by $\mathcal D+C\sqrt{E_0\mathcal D}$.
		
	\end{lemma}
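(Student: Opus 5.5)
The plan is to integrate the cutoff identity \eqref{eq:cutoff-stress-identity} in time from $s$ to $t$ and then rewrite the stress term using the divergence formula \eqref{eq:stress-divergence}.

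Integrating \eqref{eq:cutoff-stress-identity} gives
\[
\int_\Sigma\chi\bigl(e(u(t))-e(u(s))\bigr)\,dV
=-\int_s^t\!\!\int_\Sigma\chi|\mathcal T|^2\,dV\,d\tau+\int_s^t\!\!\int_\Sigma\langle S,\nabla^2\chi\rangle\,dV\,d\tau .
\]
Since $0\le\chi\le1$, the first term on the right has absolute value at most $\mathcal D$.

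The second term cannot be estimated directly. It would give a bound of order $(t-s)\|\nabla^2\chi\|_\infty E_0$, which is not of the stated form $\|\nabla\chi\|_\infty\sqrt{(t-s)E_0\mathcal D}$. So I will integrate by parts once, back to the form in \eqref{eq:local-energy-stress}. On the closed surface, using $\operatorname{div}S=\langle\mathcal T,du\rangle$,
\[
\int_\Sigma\langle S,\nabla^2\chi\rangle\,dV=-\int_\Sigma\langle\operatorname{div}S,\nabla\chi\rangle\,dV=-\int_\Sigma\langle\mathcal T,du(\nabla\chi)\rangle\,dV .
\]
Equivalently, one can start from the first equality in \eqref{eq:local-energy-stress}, multiply by $\chi$ and integrate by parts once. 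Either route yields the same expression, and the sign conventions do not matter because only the absolute value is used.

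Next I bound this term pointwise by $|\mathcal T|\,|du|\,\|\nabla\chi\|_\infty$ and apply Cauchy--Schwarz in space-time:
\[
\left|\int_s^t\!\!\int_\Sigma\langle\mathcal T,du(\nabla\chi)\rangle\right|
\le\|\nabla\chi\|_\infty\Bigl(\int_s^t\|\mathcal T\|_2^2\Bigr)^{1/2}\Bigl(\int_s^t\|du\|_2^2\Bigr)^{1/2}.
\]
The energy is nonincreasing by \eqref{eq:energy-dissipation}, so $\|du(\tau)\|_2^2=2E(u(\tau))\le2E_0$. The last factor is therefore at most $\sqrt{2(t-s)E_0}$. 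This gives the stated inequality with $C=\sqrt2$.

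For the final remark, take $t-s\le R^2$ and $|\nabla\chi|\le C/R$. Then $\|\nabla\chi\|_\infty\sqrt{t-s}\le C$, and the right-hand side is at most $\mathcal D+C\sqrt{E_0\mathcal D}$.

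The only real point of the argument is to avoid the Hessian form of the cutoff identity and use the first-order form $\operatorname{div}\langle\mathcal T,du\rangle$ instead. Everything else is routine.
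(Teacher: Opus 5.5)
Your proof is correct and follows the paper's route. The paper obtains this lemma by a cutoff argument from \eqref{eq:local-energy-stress}: multiply $\partial_t e+|\mathcal T|^2=\operatorname{div}\langle\mathcal T,du\rangle$ by $\chi$, integrate by parts once to reach $-\int\langle\mathcal T,du(\nabla\chi)\rangle$, then apply Cauchy--Schwarz in space-time together with $E(u(\tau))\le E_0$, exactly as you do; the same integration by parts reappears in the proof of Proposition~\ref{prop:local-tension-scale}.
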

	
	The following evolution equation of $\Phi$ will play a key role in our analysis.
	
	\begin{proposition}\label{prop:antiholomorphic-evolution}
		The real antiholomorphic differential $\Phi=\Phi(u)$ satisfies
		\begin{equation}\label{eq:antiholomorphic-evolution}
			(\nabla_t-\Delta_\nabla)\Phi
			=-K_{\Sigma}\Phi-\mathcal R_{\Psi}  \Phi+\mathcal R_{\Phi}  \Phi.
		\end{equation}
	\end{proposition}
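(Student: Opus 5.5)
We derive \eqref{eq:antiholomorphic-evolution} by combining a time-derivative computation with a Weitzenböck formula for the bundle-valued $1$-form $\Phi$. The first-order identities of Lemma~\ref{lem:real-first-order-identities} convert the rough Laplacian into $\nabla_t\Phi$, and the curvature terms are then read off in the $\Phi/\Psi$ splitting.

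\emph{Time derivative.} Since $\nabla J=0$ and $j$ is time-independent, we have
\[
\nabla_t\Phi=\tfrac12\bigl(\nabla_t du+J\circ\nabla_t du\circ j\bigr).
\]
Torsion-freeness gives $\nabla_t du=\nabla\partial_t u=\nabla\mathcal T$. Hence $\nabla_t\Phi=\tfrac12(\nabla\mathcal T+J\nabla_{j\cdot}\mathcal T)$, which is the $(0,1)$-part of $\nabla\mathcal T$.

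\emph{Hodge Laplacian.} Write $\Delta_H=-(d^\nabla\delta^\nabla+\delta^\nabla d^\nabla)$ with the sign convention matching $\Delta_\nabla$. By \eqref{eq:real-first-order}, $\delta^\nabla\Phi=-\operatorname{div}\Phi=-\tfrac12\mathcal T$ and $d^\nabla\Phi=-\tfrac12 J\mathcal T\,dV$. In the frame $e_2=je_1$, the term $d^\nabla\delta^\nabla\Phi$ equals $-\tfrac12\nabla\mathcal T$. The term $\delta^\nabla d^\nabla\Phi$ applied to $-\tfrac12 J\mathcal T\,dV$ gives $\mp\tfrac12 J\nabla_{j\cdot}\mathcal T$, using $*dV=1$ and the formula $\delta(f\,dV)=-*d f$ in two dimensions. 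Summing, the Hodge Laplacian of $\Phi$ should equal exactly $\nabla_t\Phi$. So $\Phi$ satisfies $\nabla_t\Phi=\Delta_H\Phi$, a heat equation for the Hodge Laplacian; this is the analogue of $du$ satisfying the Hodge heat equation. The sign bookkeeping in this step must be checked carefully, but the structure is forced.

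\emph{Weitzenböck.} For a $TN$-valued $1$-form on a surface,
\[
\Delta_H\Phi=\Delta_\nabla\Phi-\Phi\circ\mathrm{Ric}_\Sigma+\sum_i R^N(du(e_i),\Phi(e_i))\,du(\cdot)\ \text{(with appropriate sign)},
\]
where the last term is the pullback curvature $\sum_i R^{u^*TN}(e_i,\cdot)\Phi(e_i)$. Since $\mathrm{Ric}_\Sigma=K_\Sigma g$, the domain term gives $-K_\Sigma\Phi$. For the target term, substitute $du=\Phi+\Psi$. With $X=\Phi(e_1)$, we have $\Phi(e_2)=-JX$; with $Y=\Psi(e_1)$, we have $\Psi(e_2)=JY$. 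The $2\times2$ sum then collapses to expressions of the form $R^N(X,JX)$ and $R^N(Y,JY)$ applied to $J\Phi$. Here we use the Kähler identities $R(JA,JB)=R(A,B)$, $R(A,B)J=JR(A,B)$ and the Bianchi identity, exactly as in the derivation of the Bochner formula for $e_{\bar\partial}$ in Song--Waldron.

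The $\Psi$-contribution should produce $-J R^N(Y,JY)\Phi=-\mathcal R_\Psi\Phi$. The $\Phi$-contribution should produce $+\mathcal R_\Phi\Phi$; its opposite sign arises because $\Phi$ anticommutes with the complex structures while $\Psi$ commutes. One can check the consistency through the $\tfrac12\langle\cdot,\Phi\rangle$ pairings stated before the proposition.

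\emph{Main obstacle.} The main difficulty is this curvature reduction. The cross terms $R^N(X,JY)$ and $R^N(Y,JX)$ must cancel by type, and the signs must be tracked against the convention for $H_N$ and $B_N$. I would verify the result in the test case $N=S^2$ with $\Psi=0$. There the equation must reproduce the known evolution $\partial_t e_{\bar\partial}$ with the term $+2H_N e_{\bar\partial}^2$, i.e.\ $\langle\mathcal R_\Phi\Phi,\Phi\rangle=2H_N e_{\bar\partial}^2$, which fixes the remaining signs.
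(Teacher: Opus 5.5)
Your proposal is correct and follows the paper's proof essentially step for step. You write $\nabla_t\Phi$ as the antiholomorphic part of $\nabla\mathcal T$, use Lemma~\ref{lem:real-first-order-identities} to get $(d^\nabla\delta^\nabla+\delta^\nabla d^\nabla)\Phi=-\nabla_t\Phi$, apply the Weitzenb\"ock formula (contributing $K_\Sigma\Phi_j+\sum_iR^N(u_i,u_j)\Phi_i$), and use $R^N(X,JY)=R^N(Y,JX)$ and $R^NJ=JR^N$ to reduce $R^N(u_1,u_2)$ to $R^N(Y,JY)-R^N(X,JX)$. One caution: the first curvature expression you display, $\sum_iR^N(du(e_i),\Phi(e_i))\,du(\cdot)$, is not the Weitzenb\"ock term and in fact vanishes identically here, so compute with the pull-back form $\sum_iR^N(du(e_i),du(\cdot))\Phi(e_i)$ that you name immediately afterwards.
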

	\begin{proof}
		Let $\delta^\nabla=(d^\nabla)^*$ and $\Pi_-\xi=\tfrac12(\xi+J\circ\xi\circ j)$. Since $j,J$ are parallel, $\nabla_t\Phi=\Pi_-\nabla\mathcal T$. On a surface, \eqref{eq:real-first-order} gives
		\[
		\delta^\nabla \Phi=-\tfrac12\mathcal T,\qquad
		d^\nabla \Phi=-\tfrac12J\mathcal T\,dV,
		\]
		and therefore
		\[
		(d^\nabla\delta^\nabla+\delta^\nabla d^\nabla)\Phi
		=-\Pi_-\nabla\mathcal T=-\nabla_t \Phi. 
		\]
		
		Choose a positively oriented orthonormal frame with $e_2=je_1$, and write $X=\Phi(e_1)$ and $Y=\Psi(e_1)$. By definition,
		\[
		\Phi_1=X,\qquad \Phi_2=-JX,\qquad
		u_1=X+Y,\qquad u_2=JY-JX.
		\]
		Commuting covariant derivatives yields
		\[
		\bigl((d^\nabla\delta^\nabla+\delta^\nabla d^\nabla)\Phi\bigr)_j
		=-\Delta_\nabla \Phi_j+K_{\Sigma}\Phi_j+
		\sum_iR^N(u_i,u_j)\Phi_i.
		\]
		Thus we get
		\begin{equation}\label{eq:real-weitzenbock}
			(\nabla_t-\Delta_\nabla)\Phi_j=-K_{\Sigma}\Phi_j-
			\sum_iR^N(u_i,u_j)\Phi_i.
		\end{equation}
		
		For the last curvature term in \eqref{eq:real-weitzenbock}, using \eqref{eq:real-splitting} and the identity $R^N(X,JY)=R^N(Y,JX)$, we have
		\[
		R^N(u_1,u_2)=R^N(Y,JY)-R^N(X,JX).
		\]
		By \eqref{eq:real-curvature-operator} and the commutation of curvature operator with $J$, the first component of the curvature term satisfies
		\[
		\begin{aligned}
			-\sum_iR^N(u_i,u_1)\Phi_i
			&=-R^N(u_2,u_1)(-JX)\\
			&=-JR^N(u_1,u_2)X\\
			&=-\mathcal R_\Psi X+\mathcal R_\Phi X.
		\end{aligned}
		\]
		Applying $-J$ gives the corresponding identity for the second component. This proves \eqref{eq:antiholomorphic-evolution}.
		
	\end{proof}
	
	The Bochner formula for $\Phi$ follows directly.
	
	\begin{proposition}
		\begin{equation*}
			\tfrac12(\partial_t-\Delta)|\Phi|^2
			=-|\nabla \Phi|^2-K_{\Sigma}|\Phi|^2
			-\langle\mathcal R_{\Psi} \Phi,\Phi\rangle
			+\langle\mathcal R_{\Phi} \Phi,\Phi\rangle.
		\end{equation*}
		Equivalently,
		\begin{equation}\label{eq:antiholomorphic-bochner}
			\tfrac12(\partial_t-\Delta)e_{\bar\partial}
			=-\tfrac12|\nabla \Phi|^2-K_{\Sigma}e_{\bar\partial}
			-B_N(X, Y)e_{\partial}e_{\bar\partial}+H_N(X)e_{\bar\partial}^2.
		\end{equation}
	\end{proposition}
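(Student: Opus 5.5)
The plan is to take the inner product of the evolution equation \eqref{eq:antiholomorphic-evolution} from Proposition~\ref{prop:antiholomorphic-evolution} with $\Phi$, and then rewrite the curvature terms using the identities recorded after \eqref{eq:real-curvature-operator}.

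First I will handle the time derivative and the Laplacian. Since $\nabla$ is metric-compatible, $\tfrac12\partial_t|\Phi|^2=\langle\nabla_t\Phi,\Phi\rangle$. The standard identity for the rough Laplacian gives
\[
\tfrac12\Delta|\Phi|^2=\langle\Delta_\nabla\Phi,\Phi\rangle+|\nabla\Phi|^2 .
\]
Subtracting the second identity from the first and inserting \eqref{eq:antiholomorphic-evolution} yields
\[
\tfrac12(\partial_t-\Delta)|\Phi|^2=-|\nabla\Phi|^2-K_\Sigma|\Phi|^2-\langle\mathcal R_\Psi\Phi,\Phi\rangle+\langle\mathcal R_\Phi\Phi,\Phi\rangle ,
\]
which is the first formula.

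Next I will pass to the second form \eqref{eq:antiholomorphic-bochner}. Here $e_{\bar\partial}=\tfrac12|\Phi|^2$. The two curvature terms are converted by the identities noted just after \eqref{eq:real-curvature-operator}:
\[
\langle\mathcal R_\Psi\Phi,\Phi\rangle=2B_N(X,Y)e_\partial e_{\bar\partial},\qquad \langle\mathcal R_\Phi\Phi,\Phi\rangle=2H_N(X)e_{\bar\partial}^2 .
\]
I plan to verify these quickly in a frame with $e_2=je_1$. There $|\Phi|^2=2|X|^2$, so $e_{\bar\partial}=|X|^2$, and likewise $e_\partial=|Y|^2$. The pairing is
\[
\langle\mathcal R_\Psi\Phi,\Phi\rangle=\langle JR(Y,JY)X,X\rangle+\langle JR(Y,JY)JX,JX\rangle .
\]
Using that $R$ commutes with $J$ and the curvature symmetries, this equals $2h(R(X,JX)JY,Y)$. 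That matches $B_N$ up to the sign conventions fixed in the paper; the computation for $\mathcal R_\Phi$ is the same.

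After the conversion, the right-hand side is exactly twice that of \eqref{eq:antiholomorphic-bochner}. Dividing by $2$ gives the stated form, with the left side $\tfrac12(\partial_t-\Delta)e_{\bar\partial}$ and the gradient term $-\tfrac12|\nabla\Phi|^2$.

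The only real obstacle is sign bookkeeping in the curvature convention, i.e. checking that $JR(Y,JY)$ paired with $X$ reproduces $B_N$ with a positive sign. I would rely on the paper's stated identities rather than recompute this in full.
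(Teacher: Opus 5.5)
Your proposal is correct and follows the paper's route: the paper simply says the formula "follows directly" from pairing \eqref{eq:antiholomorphic-evolution} with $\Phi$ (metric compatibility for $\partial_t$ and $\Delta$) and converting the curvature terms with the identities recorded after \eqref{eq:real-curvature-operator}, exactly as you do. The sign check you flagged goes through with no hidden sign under the paper's conventions: $\langle JR(Y,JY)X,X\rangle=-\langle R(Y,JY)X,JX\rangle=-\langle R(X,JX)Y,JY\rangle=\langle R(X,JX)JY,Y\rangle$, which is precisely the numerator in the definition of $B_N(X,Y)$.
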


	\begin{remark}
		The complex form of the Bochner formula was obtained by Liu--Yang~\cite{LiuYang2010} and Song--Waldron~\cite{SongWaldron2024}. Only for comparison, write $g=2\sigma_{\mathrm{SW}}^2|dz|^2$ and use the complex notation of \cite[Section~2]{SongWaldron2024}. Then
		\begin{equation*}
			\begin{gathered}
				\Phi=2\operatorname{Re}(\bar\partial u),\qquad
				\Psi=2\operatorname{Re}(\partial u),\\
				e_{\bar\partial}=|\Phi|^2/2=|\bar\partial u|^2
				=\sigma_{\mathrm{SW}}^{-2}|w_{\bar z}|^2,\qquad
				|\nabla \Phi|^2=2|\nabla\bar\partial u|^2,\\
				q_1=-B_N(X,Y)e_{\partial}e_{\bar\partial},\qquad
				q_2=\tfrac12\langle\mathcal R_{\Phi} \Phi,\Phi\rangle
				=H_N(X)e_{\bar\partial}^2.
			\end{gathered}
		\end{equation*}
		Thus \eqref{eq:antiholomorphic-bochner} is their split Bochner identity with the real normalization made explicit. Similarly \eqref{eq:stress-expression} is their identity $S=2\operatorname{Re}Q_{\mathrm{SW}}$ in Lemma~2.2, where $Q_{\mathrm{SW}}=2\langle w_z,w_{\bar z}\rangle dz^2$. The energy densities therefore agree exactly, so their analytic estimates below apply without a change of energy convention.
	\end{remark}
	
	The following lemma records the scaling properties of the tension and stress energy.
	
	\begin{lemma}\label{lem:tension-stress-scaling}
		Let $p\in\Sigma$ and $F(y)=z(p)+\lambda y$ in an oriented coordinate chart $z$, and set $v(y,s)=u(F(y),t_*+\lambda^2s)$, $g_\lambda=\lambda^{-2}F^*g$. For $t=t_*+\lambda^2s$, we have
		\begin{align*}
			\int_{F(K)}|\mathcal T_g(u(t))|^2\,dV_g
			&=\lambda^{-2}\int_K|\mathcal T_{g_\lambda}(v(s))|^2\,dV_{g_\lambda},
			\\
			\int_{F(K)}|S_g(u(t))|_g^2\,dV_g
			&=\lambda^{-2}\int_K|S_{g_\lambda}(v(s))|_{g_\lambda}^2\,dV_{g_\lambda};
		\end{align*}
		and
		\begin{align*}
			\int_{t_*-\lambda^2L}^{t_*}\int_{F(K)}|\mathcal T_g(u(t))|^2\,dV_gdt
			&=\int_{-L}^0\int_K|\mathcal T_{g_\lambda}(v(s))|^2\,dV_{g_\lambda}ds,
			\\
			\int_{t_*-\lambda^2L}^{t_*}\int_{F(K)}|S_g(u(t))|_g^2\,dV_gdt
			&=\int_{-L}^0\int_K|S_{g_\lambda}(v(s))|_{g_\lambda}^2\,dV_{g_\lambda}ds.
		\end{align*}
	\end{lemma}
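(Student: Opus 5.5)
The plan is to treat this as a pure change-of-variables computation under the rescaling $F(y)=z(p)+\lambda y$, $t=t_*+\lambda^2 s$, $g_\lambda=\lambda^{-2}F^*g$. The idea is to track how each geometric quantity transforms under the pullback by $F$ and under the constant rescaling of the metric, and then integrate.

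First I will record how the ingredients scale. Since $F$ is a diffeomorphism onto its image, $F^*g$ makes $F$ an isometry, so pulling back commutes with every natural operator: $\mathcal T_{F^*g}(u\circ F)=\mathcal T_g(u)\circ F$ and $S_{F^*g}(u\circ F)=F^*(S_g(u))$.

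Next I handle the constant rescaling $\tilde g=c\,F^*g$ with $c=\lambda^{-2}$.
\begin{itemize}
\item The Christoffel symbols do not change, and $\operatorname{tr}_{\tilde g}=c^{-1}\operatorname{tr}_{F^*g}$. Hence $\mathcal T_{g_\lambda}(v)=\lambda^{2}\,\mathcal T_g(u)\circ F$, with the target norm unchanged.
\item In two dimensions $e_{\tilde g}(v)=c^{-1}e_{F^*g}$ and $e\,\tilde g=e\,F^*g$. So $S$ as a covariant $2$-tensor is scale invariant: $S_{g_\lambda}(v)=F^*S_g(u)$.
\item Its norm satisfies $|S|_{\tilde g}^2=c^{-2}|S|_{F^*g}^2=\lambda^4|S_g(u)|^2_g\circ F$.
\item The volume form satisfies $dV_{g_\lambda}=\lambda^{-2}F^*dV_g$.
\end{itemize}

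Combining these gives the spatial identities. For the tension,
\[
\int_K|\mathcal T_{g_\lambda}(v)|^2dV_{g_\lambda}=\int_K\lambda^4|\mathcal T_g(u)|^2\circ F\,\lambda^{-2}F^*dV_g=\lambda^2\int_{F(K)}|\mathcal T_g(u)|^2dV_g,
\]
which is the first identity. The stress identity follows in exactly the same way.

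For the space-time identities I substitute $t=t_*+\lambda^2 s$, so that $dt=\lambda^2ds$ and $s\in[-L,0]$. The factor $\lambda^2$ from $dt$ cancels the $\lambda^{-2}$ from the spatial identity.

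The only point needing care is the time consistency. I must check that $v$ is a harmonic map flow for $g_\lambda$, namely $\partial_s v=\lambda^2\partial_tu\circ F=\lambda^2\mathcal T_g(u)\circ F=\mathcal T_{g_\lambda}(v)$. This confirms that the tension of the rescaled flow is the one appearing in the statement. There is no real obstacle here; the main thing to watch is the bookkeeping of the powers of $\lambda$ in the norm of the $2$-tensor $S$.
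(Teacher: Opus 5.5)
Your proposal is correct and is the same direct change-of-variables computation the paper relies on; the paper states this lemma without proof. The identities $\mathcal T_{g_\lambda}(v)=\lambda^2\,\mathcal T_g(u)\circ F$, $|S_{g_\lambda}(v)|_{g_\lambda}^2=\lambda^4|S_g(u)|_g^2\circ F$, $dV_{g_\lambda}=\lambda^{-2}F^*dV_g$ and $dt=\lambda^2\,ds$ give all four formulas, and your check that $\partial_s v=\mathcal T_{g_\lambda}(v)$ is a consistency remark rather than a needed step.
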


	\subsection{Regularity and scale propagation}
	
	Under the nonnegative holomorphic bisectional curvature assumption, we recall the regularity and scale estimates in Song-Waldron~\cite{SongWaldron2024} below.

	The first proposition is the standard $\varepsilon$-regularity for harmonic map heat flow.
	
	\begin{proposition}[{\cite[Theorem~3.4 and Corollary~3.6]{SongWaldron2024}}]\label{prop:interior-parabolic-regularity}
		There is $\varepsilon_0>0$ such that the following holds. Suppose a smooth harmonic map heat flow is defined on $B_R(p)\times[s-R^2,s]$, with $R$ below a fixed geometric radius, and
		
		\[
		\sup_{s-R^2\le t\le s}E(u(t);B_R(p))\le\varepsilon\le\varepsilon_0.
		\]
		
		For every nonnegative integer $k$,
		
		\begin{equation*}
			\sup_{B_{R/2}(p)\times[s-R^2/4,s]}
			|\nabla^kdu|\le C_k R^{-k-1}\sqrt\varepsilon.
		\end{equation*}
		
		Interior mixed spacetime derivatives satisfy the corresponding parabolic bounds.
		
	\end{proposition}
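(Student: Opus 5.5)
This is the standard $\varepsilon$-regularity for harmonic map heat flow in dimension two, and I would prove it by the usual Struwe-type scheme: first get a pointwise gradient bound from a Bochner inequality plus Moser iteration, then bootstrap to higher derivatives. By scaling (Lemma~\ref{lem:tension-stress-scaling} and its pointwise analogue), set $v(y,\tau)=u(p+Ry,s+R^2\tau)$. This reduces to $R=1$ on $B_1\times[-1,0]$ with a metric $g_R$ that is $C^k$-close to Euclidean. The hypothesis is scale invariant in two dimensions, so $\sup_\tau E(v(\tau);B_1)\le\varepsilon$. The claimed bound $|\nabla^k du|\le C_kR^{-k-1}\sqrt\varepsilon$ is then equivalent to $|\nabla^k dv|\le C_k\sqrt\varepsilon$ on $B_{1/2}\times[-1/4,0]$.

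\emph{Gradient bound.} The Bochner formula for the flow gives
\[
(\partial_t-\Delta)e\le -|\nabla du|^2+C(1+e)e.
\]
Here $C$ depends on $|K_\Sigma|$ and $\sup|R^N|$ only, and no curvature sign is needed. I would run the Schoen point-picking argument in parabolic cylinders. Choose $\sigma\in[0,1/2]$ and a point $(x_0,t_0)$ maximizing $\sigma^2\sup_{P_{1/2-\sigma}}e$, where $P_r$ denotes the backward cylinder of radius $r$. If the maximal value were large, rescale so that $e\le4$ on a unit cylinder with $e(x_0,t_0)=1$. On that cylinder the inequality becomes $(\partial_t-\Delta)e\le Ce$, and the parabolic mean value inequality yields
\[
1\le C\int_{t_0-1}^{t_0}\int_{B_1}e\le C\varepsilon,
\]
using scale invariance of energy. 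This is a contradiction for $\varepsilon_0$ small. Hence the maximum is bounded, and feeding this back into the mean value inequality gives the $L^2$-to-$L^\infty$ bound $\sup_{P_{3/4}}e\le C\varepsilon$, that is, $|dv|\le C\sqrt\varepsilon$. This is the main step: one has to track that the maximized quantity is dominated by the energy linearly in $\varepsilon$, not merely bounded.

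\emph{Higher derivatives.} Once $|dv|\le C\sqrt\varepsilon\le C$, the equation $\partial_tv-\Delta v=A(v)(dv,dv)$ in the embedding $N\subset\mathbb R^m$ has a right-hand side bounded by $C\varepsilon$. I would apply interior parabolic $L^p$ and Schauder estimates on shrinking cylinders to $v-\bar v$, where $\bar v$ is the spatial mean. Combined with Poincaré's inequality this gives $\|dv\|_{C^{1,\alpha}}\le C\sqrt\varepsilon$. Induction on $k$ then works by differentiating the equation: each derivative of $A(v)(dv,dv)$ is a polynomial in lower derivatives, which are already $O(\sqrt\varepsilon)$, and each term carries at least one factor of $dv$, so it is $O(\sqrt\varepsilon)$. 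Time derivatives follow because $\partial_t v=\mathcal T(v)$ is expressed through spatial derivatives, which gives the mixed spacetime bounds. Finally, undoing the scaling produces the factor $R^{-k-1}$.

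The only delicate point beyond the point-picking is uniformity of constants: $R$ below a fixed geometric radius makes $g_R$ uniformly controlled in $C^k$, so all constants depend only on $(\Sigma,g)$, $N$ and $k$.
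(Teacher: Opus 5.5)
Your argument is correct: the paper gives no proof of its own and quotes this standard $\varepsilon$-regularity from Song--Waldron, and your Bochner inequality, Schoen-type point picking with the parabolic mean value inequality, and linear bootstrapping is the classical proof of it. One detail to tighten: if $\bar v=\bar v(\tau)$ is the spatial mean at each time, then $v-\bar v$ no longer solves the same equation (it picks up $-\partial_\tau\bar v$), so subtract a fixed constant instead and control the time oscillation of a cutoff-weighted mean via $\bigl|\frac{d}{d\tau}\int\phi\,v\bigr|=\bigl|\int\Delta_{g_R}\phi\,\bigl(v-v(0,\tau)\bigr)+\int\phi\,A(v)(dv,dv)\bigr|\le C\sqrt\varepsilon$, which together with the gradient bound gives $\|v-c\|_{L^\infty}\le C\sqrt\varepsilon$ on the cylinder.
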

	
	The next proposition is the $\varepsilon$-regularity for the antiholomorphic differential.
	
	\begin{proposition}[{\cite[Theorem~3.5]{SongWaldron2024}}]\label{prop:antiholomorphic-regularity}
		If $E_{\bar\partial}(u(0))<\delta_{\mathrm{SW}}$, with the threshold in Theorem~\ref{thm:song-waldron} chosen sufficiently small, then $\Phi(u)$ is uniformly bounded on $\Sigma\times[t_0,T)$ for every $t_0\in(0,T)$. More locally, let $R$ be below a fixed geometric radius and let $t_1<T-R^2$. If
		
		\[
		E_{\bar\partial}(u(t_1);B_R(p))
		+\sup_{t_1\le t<T}E_{\bar\partial}(u(t);U_{R/2}^R(p))
		\le\delta<\delta_0,
		\]
		
		for a sufficiently small $\delta_0>0$, then
		
		\[
		\sup_{B_{R/2}(p)\times[t_1+R^2/2,T)}|\Phi(u)|
		\le\frac{C\sqrt\delta}{R}.
		\]	
	\end{proposition}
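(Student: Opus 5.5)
The plan is to treat $e_{\bar\partial}=\tfrac12|\Phi|^2$ as a subsolution of a semilinear heat inequality. Then a Moser-type $\varepsilon$-regularity argument turns small local $L^1$ norm into an $L^\infty$ bound, in the spirit of Proposition~\ref{prop:interior-parabolic-regularity} but for $\Phi$ alone. The starting point is the Bochner identity \eqref{eq:antiholomorphic-bochner}. Nonnegative holomorphic bisectional curvature gives $B_N(X,Y)e_\partial e_{\bar\partial}\ge0$, so we may drop that term together with $-\tfrac12|\nabla\Phi|^2$. Since $|K_\Sigma|$ and $H_N$ are bounded, this yields
\[
(\partial_t-\Delta)e_{\bar\partial}\le C e_{\bar\partial}+C e_{\bar\partial}^2 .
\]
The point is that the holomorphic part $\Psi$, whose energy is \emph{not} small near a blow-up point, enters this inequality only with a favorable sign. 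This inequality has exactly the structure of the energy-density inequality for harmonic map flow.

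\textbf{Step 1: local $\varepsilon$-regularity for $\Phi$.} The claim is that if $\sup_{[s-r^2,s]}E_{\bar\partial}(u(t);B_r(x))\le\eta$ with $\eta$ small, then $|\Phi|^2\le C\eta r^{-2}$ on $B_{r/2}(x)\times[s-r^2/4,s]$. I will prove this by the standard Schoen-type rescaling argument. Choose the point maximizing $(r-\text{parabolic distance})^2 e_{\bar\partial}$ and rescale so that $e_{\bar\partial}\le 4$ on a unit cylinder. The inequality then becomes linear, $(\partial_t-\Delta)f\le Cf$, and the parabolic mean value inequality bounds $f(0)$ by $C\eta$, which contradicts a large maximum. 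The global statement, uniform boundedness of $\Phi$ on $\Sigma\times[t_0,T)$, follows from this local one. Indeed, $E_{\bar\partial}(u(t))\le E_{\bar\partial}(u(0))<\delta_{\mathrm{SW}}$ for all $t$ by \eqref{eq:energy-dissipation}, so the smallness hypothesis holds on every ball of a fixed radius.

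\textbf{Step 2: propagating smallness of local antiholomorphic energy in time.} For the localized statement I need $E_{\bar\partial}(u(t);B_{3R/4}(p))\le C\delta$ for all $t\in[t_1,T)$. The idea is a cutoff $\varphi$ equal to $1$ on $B_{R/2}$ and supported in $B_R$, so that $\nabla\varphi$ lives in $U^R_{R/2}(p)$, where $E_{\bar\partial}$ is small by hypothesis. Testing the Bochner identity (keeping $-\tfrac12|\nabla\Phi|^2$) against $\varphi^2$ gives
\[
\frac{d}{dt}\int\varphi^2e_{\bar\partial}+\tfrac12\int\varphi^2|\nabla\Phi|^2\le C\int_{U^R_{R/2}}\Bigl(\frac{1}{R^2}+1\Bigr)e_{\bar\partial}+C\int\varphi^2e_{\bar\partial}^2 .
\]
The quartic term is absorbed via Kato's inequality and the Ladyzhenskaya inequality $\int\varphi^2|\Phi|^4\le C\bigl(\int_{B_R}|\Phi|^2\bigr)\int\bigl(|\nabla(\varphi|\Phi|)|^2\bigr)$, as long as $\int_{B_R}e_{\bar\partial}$ stays below a small multiple of $\delta_0$. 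A continuity argument starting from $E_{\bar\partial}(u(t_1);B_R)\le\delta$ then closes the bootstrap.

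\textbf{Main obstacle.} The source term on the annulus, integrated over $[t_1,T)$, is of size $\delta(T-t_1)/R^2$. This is harmless only when $T-t_1\lesssim R^2$, but the interval may be long. To handle it, I would first try replacing $e_{\bar\partial}$ by the exact decomposition $e_{\bar\partial}=\tfrac12(e-u^*\omega_N/dV)$. By \eqref{eq:cutoff-stress-identity} and the corresponding transgression formula for $\int\varphi\,u^*\omega_N$, the time derivative of $\int\varphi e_{\bar\partial}$ then equals $-\tfrac12\int\varphi|\mathcal T|^2$ plus terms containing $\nabla\varphi$ paired with $\mathcal T$ and $\Phi$. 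This structure uses $S=h(\Phi\otimes\Psi)+h(\Psi\otimes\Phi)$ from \eqref{eq:stress-expression}, so each boundary term carries a factor of $\Phi$ on the annulus. Those terms can be absorbed by Cauchy--Schwarz against $\int\varphi|\mathcal T|^2$ and the global dissipation bound $\int_0^T\|\mathcal T\|_2^2\le E_0$. If that fails, the fallback is to apply the localized argument on successive windows of length $R^2/2$. On each window, Step~1 applied in the annulus already gives $|\Phi|\le C\sqrt\delta/R$ there, and this pointwise control of the boundary flux prevents accumulation.

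Granting Step~2, applying Step~1 at scale $r\sim R/4$ around every point of $B_{R/2}(p)$ at every time $s\ge t_1+R^2/2$ yields $\sup|\Phi|\le C\sqrt\delta/R$.
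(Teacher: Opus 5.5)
\textbf{Status.} The paper gives no proof of its own here; it imports the statement from Song--Waldron. Your argument has a genuine gap at the long-time step.

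\textbf{What works.} Your Step~1 is sound. Together with the monotonicity of $E_{\bar\partial}$, it proves the global assertion. It also proves the local assertion when $T-t_1\lesssim R^2$. On such a window the transgression identity $\frac{d}{dt}\int\varphi^2e_{\bar\partial}=-\frac12\int\varphi^2|\mathcal T|^2-2\int\varphi\langle\mathcal T,\Phi(\nabla\varphi)\rangle\le C\int|\nabla\varphi|^2|\Phi|^2$ gives $\int\varphi^2e_{\bar\partial}(t)\le\delta+C\delta(t-t_1)/R^2$ directly, so the Ladyzhenskaya absorption is not even needed.

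\textbf{The gap.} Step~2 fails for $T-t_1\gg R^2$, which is exactly the case the statement is about. Neither of your remedies closes it.
\begin{itemize}
\item In your first attempt, after Cauchy--Schwarz the leftover flux $\int_{t_1}^T\int|\nabla\varphi|^2|\Phi|^2$ can only be bounded by $C\delta(T-t_1)/R^2$. If you use the global dissipation bound instead, you get $CR^{-1}\sqrt{\delta E_0(T-t_1)}$. Neither is $O(\delta)$, so the transgression structure reproduces the obstacle rather than removing it.
\item In your fallback, pointwise control $|\Phi|\le C\sqrt\delta/R$ on the annulus still allows inflow at rate $\sim\delta/R^2$. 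An integral estimate over $k$ windows gives $\delta+kC\delta$, and converting back and forth with Step~1 loses a fixed factor per window.
\end{itemize}
Nothing in your argument produces decay in the interior.

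\textbf{The missing ingredient.} You need a comparison argument in the disk that exploits diffusion at scale $R$.
\begin{itemize}
\item Step~1 applied on the annulus gives $e_{\bar\partial}\le k:=C\delta/R^2$ on $\partial B_{3R/4}(p)\times[t_1+R^2/4,T)$.
\item The short-window bound plus Step~1 gives $e_{\bar\partial}\le k$ on $B_{3R/4}(p)\times\{t_1+R^2/2\}$.
\item The function $f=e_{\bar\partial}$ satisfies $(\partial_t-\Delta)f\le(C+Cf)f$.
\item Let $\psi>0$ be the first Dirichlet eigenfunction of a slightly larger disk, with eigenvalue $\mu\ge c/R^2$. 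Put $\kappa=\max\psi/\min\psi$ over $\overline{B_{3R/4}(p)}$ and $w=k\psi/\min\psi$.
\item For $R$ and $\delta$ small, $C+2C\kappa k<\mu$. Hence $(\partial_t-\Delta)w=\mu w>(C+Cf)w$ as long as $f\le2\kappa k$.
\item The maximum principle and a continuity argument then give $f\le w\le\kappa k$ on $B_{3R/4}(p)\times[t_1+R^2/2,T)$, uniformly in $T-t_1$.
\end{itemize}
A maximum principle without this eigenvalue absorption only yields growth like $\exp\bigl((C+C\delta R^{-2})(T-t_1)\bigr)$. The smallness of $R$ and $\delta$ enters precisely here, and your proposal never uses it for the long-time propagation.
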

	
	Under \eqref{assumption:H}, the neck estimate of \cite[Theorem~6.4]{SongWaldron2024} takes the following form.
	
	\begin{proposition}\label{prop:two-tail-neck-estimate}
		Assume \eqref{assumption:H}. For every $0<\beta<1$ there is $\varepsilon_\beta>0$ such that, for $0<\varepsilon<\varepsilon_\beta$, there exist $r_\beta,d_\beta>0$ with the following property. If $0<R\le r_\beta$, $t_0\le t_1<T-4R^2$, and
		\[
		\int_{t_1}^T\|\mathcal T(u(s))\|_2^2\,ds\le d_\beta,
		\]
		then, for $t_1+4R^2\le t<T$ and $2\lambda_{\varepsilon,R,p}(t)\le r=d_g(x,p)\le R/2$,
		\begin{equation}\label{eq:two-tail-neck-estimate}
			r|du(x,t)|\le C_\beta\sqrt\varepsilon
			\left[\left(\frac{\lambda_{\varepsilon,R,p}(t)}r\right)^\beta
			+\left(\frac rR\right)^\beta\right].
		\end{equation}
		Here $C_\beta$ depends only on $\beta$ and the geometries; $r_\beta,d_\beta$ may also depend on $B,E_0,\varepsilon$.
	\end{proposition}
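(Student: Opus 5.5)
The plan is to rerun the proof of \cite[Theorem~6.4]{SongWaldron2024}, using \eqref{assumption:H} in place of the small antiholomorphic energy hypothesis. In that argument, small antiholomorphic energy is used only through Proposition~\ref{prop:antiholomorphic-regularity}, which gives a pointwise bound on $\Phi$. The bound \eqref{assumption:H} supplies this directly, and it improves under rescaling.

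\textbf{Step 1: uniform control on dyadic annuli.} Fix $t$ with $t_1+4R^2\le t<T$. For $2\lambda_{\varepsilon,R,p}(t)\le r\le R/2$, the definition \eqref{eq:outer-energy-scale} gives $E(u(t);U^{r}_{r/2}(p))<\varepsilon$. To get this energy bound on a full backward parabolic neighbourhood of size $r^2$, I would combine Lemma~\ref{lem:local-energy-comparison} (with $t-s\le r^2$) with the smallness of $\int_{t_1}^T\|\mathcal T\|_2^2\le d_\beta$. This needs $d_\beta$ small relative to $\varepsilon$, which is why $d_\beta$ depends on $\varepsilon$. Proposition~\ref{prop:interior-parabolic-regularity} then gives $r|du|\le C\sqrt\varepsilon$ and bounds on all higher derivatives on the slightly shrunken annuli.

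\textbf{Step 2: rescaling.} Rescale each annulus $U^{r}_{r/2}(p)$, together with the time interval $[t-r^2,t]$, to unit size as in Lemma~\ref{lem:tension-stress-scaling}. The rescaled flow $v$ has three properties: $|\Phi(v)|\le Br\le Br_\beta$, which is small once $r_\beta$ is small depending on $B$; its spacetime $L^2$ tension is at most $d_\beta$; and it has small energy with smooth bounds.

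\textbf{Step 3: three-annulus lemma by contradiction.} This is the key step. Let $a_k$ be the energy of $u(\cdot,\tau)$ on the $k$-th dyadic annulus, taken at a suitable time in the window (or time-averaged over it). I want $\theta=\theta(\beta)<1/2$ with
\[
a_k\le 2^{-2\beta}\max(a_{k-1},a_{k+1})\quad\text{or}\quad \text{the same inequality holds with the roles of the neighbours reversed.}
\]
This is the standard log-convexity statement. Suppose it fails along a sequence with $Br\to0$ and $d_\beta\to0$. Normalize the maps by $\sqrt{a_k}$ after subtracting their mean. Since the total energy is small, the image lies in a single holomorphic chart, so the normalized maps converge to a holomorphic map into $\mathbb C^n$ on a three-dyadic annulus. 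Here $\Phi\to0$ because $|\Phi|\le Br\to0$ while the normalized energy stays of order one, so $\Phi/\sqrt{a_k}$ is small provided $a_k\gg B^2r^2$; otherwise the annulus already satisfies the desired bound. The limit is also static, because the tension tends to zero.

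A holomorphic map on an annulus has a Laurent expansion in which all modes with nonzero energy have $|k|\ge1$. Its dyadic energies therefore satisfy the three-annulus inequality with rate $2^{-2}$, which beats $2^{-2\beta}$ since $\beta<1$. This contradicts the assumed failure.

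\textbf{Step 4: iteration and conclusion.} Iterate the discrete inequality between the boundary data: $a\le\varepsilon$ at the inner end $r\sim\lambda_{\varepsilon,R,p}(t)$ and at the outer end $r\sim R$. This yields
\[
a_r\lesssim \varepsilon\Bigl[(\lambda/r)^{2\beta}+(r/R)^{2\beta}\Bigr].
\]
Applying Proposition~\ref{prop:interior-parabolic-regularity} with this improved energy converts it into the pointwise bound \eqref{eq:two-tail-neck-estimate}.

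The main obstacle is making Step 3 hold at every time $t$ rather than at good times only. The tension is small only in spacetime $L^2$, so the contradiction argument must run on parabolic cylinders. The conclusion must then be transferred to the single slice $t$ using Lemma~\ref{lem:local-energy-comparison} and parabolic smoothing. I would also have to track the additive error $B^2r^2$ from the case $a_k\lesssim B^2r^2$ and absorb it into the $(r/R)^\beta$ tail; this is possible because $Br\le(r/R)\,BR$ and $BR\le Br_\beta$ is small.
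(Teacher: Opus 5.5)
\textbf{Comparison with the paper.} Your route differs from the paper's, and as written it has a gap at its central step. The paper reruns no argument. \cite[Theorem~6.4]{SongWaldron2024} is already stated under a \emph{local} hypothesis on $\sup_{t_1\le s<T}E_{\bar\partial}(u(s);B_{\rho_1}(p))$. Assumption \eqref{assumption:H} supplies this directly with $\rho_1=2R$ and $\delta=C_\Sigma B^2(2R)^2$, which lies below the threshold once $R\le r_\beta$ is small. One then takes $d_\beta\le c\varepsilon^2/E_0$ and chooses $\nu\in(1/2,1)$ with $\nu>\beta$ and $\nu(2\nu-1)>\beta$. The theorem gives \eqref{eq:two-tail-neck-estimate} after lowering its two exponents to $\beta$. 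A self-contained three-annulus proof would be a legitimate alternative.

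\textbf{The gap: tension after normalization.} In Step~3 you divide by $\sqrt{a_k}$. But $a_k$ is exactly the quantity that decays deep in the neck, down to order $\varepsilon(\lambda/r)^{2\beta}$, and it has no lower bound in terms of $d_\beta$. So the normalized tension on the unit cylinder has squared spacetime $L^2$ norm bounded only by $d_\beta/a_k$. Letting $d_\beta\to0$ along the contradiction sequence does not make this ratio small. Hence neither the compactness of the normalized maps nor ``the limit is static'' follows.

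Your proposed cure for the time-slice issue has the same defect. Lemma~\ref{lem:local-energy-comparison} produces an additive error $\mathcal D+C\sqrt{E_0\mathcal D}$ with $\mathcal D$ of size $d_\beta$, and this swamps $a_k$. In particular, the last sentence of Step~4 is unjustified. Proposition~\ref{prop:interior-parabolic-regularity} needs the energy to be $\lesssim a_k$ on the whole backward cylinder, not only on the slice $t$.

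\textbf{The missing idea.} Use \eqref{assumption:H} at the level of derivatives as well.
\begin{itemize}
\item Step~1 gives $r|du|\le C\sqrt\varepsilon$ on backward cylinders. Interior estimates for the linear system \eqref{eq:antiholomorphic-evolution}, exactly as in Lemma~\ref{lem:normalized-neck-equation}, then give $|\nabla\Phi|\le CB/r$.
\item By \eqref{eq:real-first-order} this yields $|\mathcal T|\le CB/r$.
\item The normalized $\bar\partial$-part and its derivative are therefore $O(Br/\sqrt{a_k})$. Compactness follows from the $\bar\partial$-equation in a holomorphic chart on the single slice $t$, so no parabolic cylinders or time transfer are needed.
\item The pointwise bound comes from $r|du|\le C(\sqrt{a}+Br)$ on that slice. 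The term $Br$ is absorbed into the $(r/R)^\beta$ tail, as you indicate.
\end{itemize}

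\textbf{A secondary error.} The displayed inequality $a_k\le2^{-2\beta}\max(a_{k-1},a_{k+1})$ is false even for holomorphic maps when $\beta>\frac12\log_2(17/8)$. For $z+2z^{-1}$ on the annuli $\{2^k<|z|<2^{k+1}\}$ one has $a_k=3\pi(4^k+4^{-k})$, so $a_0/a_{\pm1}=8/17$. The correct statement is $a_k\le(2^{2\beta}+2^{-2\beta})^{-1}(a_{k-1}+a_{k+1})$. Holomorphic limits satisfy it with constant $4/17$, because every Laurent mode has $|m|\ge1$. It iterates by a discrete maximum principle against $A2^{-2\beta k}+A'2^{2\beta k}$.
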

	\begin{proof}
		Choose $\nu\in(1/2,1)$ with $\nu>\beta$ and $\nu(2\nu-1)>\beta$, and let $\varepsilon_\beta$ be the threshold in \cite[Theorem~6.4]{SongWaldron2024}. Enlarge $B$ to be positive. Apply that theorem with $\tau=t_1$, $\rho=R$, $\rho_1=2R$, and $\delta=C_\Sigma B^2(2R)^2$. By \eqref{assumption:H},
		\[
		\sup_{t_1\le s<T}E_{\bar\partial}(u(s);B_{2R}(p))\le\delta.
		\]
		Its radius and small-energy conditions hold for $r_\beta$ small, since they reduce to $2R<R_0$, $\delta<\delta_0$, and $CB\sqrt{E_0}R<\varepsilon$. Taking $d_\beta\le c\varepsilon^2/E_0$ gives its tension hypothesis; the zero-energy case is immediate. The stated time range includes the waiting time $(2R)^2$. The theorem gives \eqref{eq:two-tail-neck-estimate} with exponents $\nu$ and $\nu(2\nu-1)$; lowering both to $\beta$ proves the claim.
	\end{proof}
	
	We will also need the following proposition on propagation of outer energy scale.
	
	\begin{proposition}[{\cite[Theorem~4.4]{SongWaldron2024}}]\label{prop:outer-scale-propagation}
		Let $0<a\le R$ be sufficiently small geometric radii and suppose $\sup_{t_1\le t\le t_2}\|S(u(t))\|_{L^2(B_R(p))}\le\mathfrak s$. If
		
		\[
		\sup_{t_1\le t\le t_2}E(u(t);B_R(p))
		\le E(u(t_2);B_a(p))+\frac\varepsilon2,
		\]
		
		then, for $t_1\le t\le t_2$,
		
		\begin{equation*}
			\lambda_{\varepsilon,R,p}(t)
			\le\max\left\{4a,\frac{C\mathfrak s(t_2-t)}\varepsilon\right\}.
		\end{equation*}
		
	\end{proposition}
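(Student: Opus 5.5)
The plan is to localize the global energy hypothesis using the cutoff identity \eqref{eq:cutoff-stress-identity}. The key observation is that the dissipation term there has a favorable sign. So a cutoff function that equals one on $B_a(p)$ shows the energy near $p$ can decrease backward in time only at a rate controlled by $\|S\|_{L^2}$. Combined with the hypothesis, this caps the energy available in any annulus $U_{r/2}^r(p)$ at time $t$.

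\emph{Step 1 (cutoff and backward control).} Fix $t\in[t_1,t_2]$ and a radius $r$ with $4a\le r<R$. Choose a time-independent cutoff $0\le\chi\le1$ with the following properties:
\begin{itemize}
\item $\chi\equiv1$ on $B_a(p)$ and $\operatorname{supp}\chi\subset B_{r/2}(p)$;
\item $|\nabla^2\chi|\le C/r^2$, which is possible since $r/2\ge 2a$.
\end{itemize}
Then $\|\nabla^2\chi\|_{L^2}\le C/r$, with $C$ depending only on the geometry. Integrating \eqref{eq:cutoff-stress-identity} from $t$ to $t_2$ and dropping the nonnegative term $\int\chi|\mathcal T|^2$ gives
\[
\int\chi e(u(t_2))\,dV-\int\chi e(u(t))\,dV\le\int_t^{t_2}\|S(u(\tau))\|_{L^2(B_R(p))}\|\nabla^2\chi\|_{L^2}\,d\tau\le \frac{C\mathfrak s(t_2-t)}{r}.
\]
Since $\chi\equiv1$ on $B_a(p)$ and $\chi\le\mathbf 1_{B_{r/2}(p)}$, this yields
\[
E(u(t);B_{r/2}(p))\ge E(u(t_2);B_a(p))-\frac{C\mathfrak s(t_2-t)}{r}.
\]

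\emph{Step 2 (annulus bound).} The annulus $U_{r/2}^r(p)$ lies in $B_R(p)$ and is disjoint from $B_{r/2}(p)$. Using the hypothesis at time $t$ and then Step 1,
\[
E(u(t);U_{r/2}^r(p))\le E(u(t);B_R(p))-E(u(t);B_{r/2}(p))
\le \frac\varepsilon2+\frac{C\mathfrak s(t_2-t)}{r}.
\]
This is $<\varepsilon$ as soon as $r>2C\mathfrak s(t_2-t)/\varepsilon$.

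\emph{Step 3 (conclusion).} Set $\lambda_*=\max\{4a,\,2C\mathfrak s(t_2-t)/\varepsilon\}$. Steps 1 and 2 apply to every $r$ with $\lambda_*<r<R$, so $\sup_{\lambda_*<r<R}E(u(t);U_{r/2}^r(p))<\varepsilon$. By definition \eqref{eq:outer-energy-scale}, $\lambda_{\varepsilon,R,p}(t)\le\lambda_*$, after renaming $C$. If $\lambda_*\ge R$, the bound is trivial because $\lambda_{\varepsilon,R,p}\le R$.

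I expect no serious obstacle. The only points needing care are:
\begin{itemize}
\item the direction of time: we compare with the later time $t_2$, so the dissipation term is dropped with the correct sign;
\item the choice of cutoff support inside $B_{r/2}(p)$, so that the annulus energy is exactly the complement of what the cutoff captures;
\item the scaling $\|\nabla^2\chi\|_{L^2}\sim 1/r$ in two dimensions, which is what produces the linear factor $t_2-t$ divided by $r$.
\end{itemize}
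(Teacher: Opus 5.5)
Your proof is correct. The paper gives no argument of its own here and simply cites Song--Waldron's Theorem~4.4; your route is the localized-stress mechanism that result rests on: integrate \eqref{eq:cutoff-stress-identity} from $t$ to $t_2$ with a cutoff equal to one on $B_a(p)$ and supported in $B_{r/2}(p)$, drop the dissipation, and use $\|\nabla^2\chi\|_{L^2}\le C/r$.

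One cosmetic point in Step~3: the supremum over $r\in(\lambda_*,R)$ of your bound $\varepsilon/2+C\mathfrak s(t_2-t)/r$ can equal $\varepsilon$ rather than being strictly smaller. However, for every $\lambda>\lambda_*$ the supremum over $(\lambda,R)$ is strictly below $\varepsilon$, so the infimum in \eqref{eq:outer-energy-scale} still gives $\lambda_{\varepsilon,R,p}(t)\le\lambda_*$.
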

	
	Under our assumption \eqref{assumption:H}, Theorem~\ref{thm:song-waldron} of Song--Waldron can be restated as follows.
	
	\begin{proposition}
		\label{prop:song-waldron-under-bounded-differential}
		Let $u:\Sigma\times[0,T)\to N$ be a smooth harmonic map heat flow, with $T<\infty$, into a compact K\"ahler manifold with nonnegative holomorphic bisectional curvature. If \eqref{assumption:H} holds, then all conclusions of Theorem~\ref{thm:song-waldron} hold.
	\end{proposition}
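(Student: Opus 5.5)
The plan is to re-run the Song--Waldron argument and check, at each place where the smallness hypothesis $E_{\bar\partial}(u(0))<\delta_{\mathrm{SW}}$ enters, that the pointwise bound \eqref{assumption:H} is enough instead. In \cite{SongWaldron2024} the smallness of the initial antiholomorphic energy is used essentially for one purpose: via Proposition~\ref{prop:antiholomorphic-regularity}, it gives a uniform bound on $\Phi(u)$ on $\Sigma\times[t_0,T)$, and also the quantitative local smallness $\sup|\Phi|\le C\sqrt\delta/R$ on small balls near each singular point. Under \eqref{assumption:H} we get both directly. Globally we have $|\Phi|\le B$. Locally, for every $p$ and small $R$,
\[
\sup_{t<T}E_{\bar\partial}(u(t);B_{R}(p))\le C_\Sigma B^2R^2 ,
\]
which is as small as we like once $R$ is small. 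Moreover $|\Phi|\le B$ on $B_{R/2}(p)$ is precisely the kind of bound Proposition~\ref{prop:antiholomorphic-regularity} supplies. The remaining ingredients (the evolution equation \eqref{eq:antiholomorphic-evolution}, the stress identities of Lemma~\ref{lem:real-stress-identities}, $\varepsilon$-regularity Proposition~\ref{prop:interior-parabolic-regularity}, and the scale propagation Proposition~\ref{prop:outer-scale-propagation}) do not use smallness of $E_{\bar\partial}$ at all, only nonnegative bisectional curvature and the flow structure.

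The key steps, in order, are as follows.

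First, from \eqref{eq:stress-pointwise-bound} and \eqref{assumption:H} we get $|S|^2\le 8e_\partial e_{\bar\partial}\le CB^2 e$. Hence $\|S(u(t))\|_{L^2(\Sigma)}^2\le CB^2E_0$ uniformly in $t$, so the constant $\mathfrak s$ in Proposition~\ref{prop:outer-scale-propagation} is available. Combined with \eqref{eq:cutoff-stress-identity}, this yields the $O(T-t)$ outer scale \eqref{eq:song-waldron-outer-scale}, exactly as in \cite[Corollary~4.6]{SongWaldron2024}: choose $a$ so that $E(u(T);B_a)$ captures the concentrated energy up to $\varepsilon/2$, and use the energy-comparison Lemma~\ref{lem:local-energy-comparison} on $[t,T)$ with vanishing dissipation tail.

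Second, the neck estimate: Proposition~\ref{prop:two-tail-neck-estimate} was already derived above from \cite[Theorem~6.4]{SongWaldron2024} under \eqref{assumption:H} alone. Integrating \eqref{eq:two-tail-neck-estimate} over the neck $2\lambda\le r\le R/2$ shows that the neck energy and the oscillation $\int r|du|\,dr/r$ tend to zero. This gives the energy identity \eqref{eq:song-waldron-energy-identity} and the no-neck property. The same dyadic summation of the oscillation, carried out uniformly in $t$, gives $C^\alpha$ continuity of $u_T$, following the Song--Waldron argument with $\beta>\alpha$.

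Third, bubbles: rescaling at scales $\lambda_n\to0$ multiplies $|\Phi|$ by $\lambda_n$. Under the rescaling of Lemma~\ref{lem:tension-stress-scaling}, the rescaled antiholomorphic differential is bounded by $\lambda_nB\to0$. So every bubble satisfies $\Phi(\omega)\equiv0$, i.e.\ it is holomorphic, and convergence on regular bubble regions follows from standard $\varepsilon$-regularity. The finiteness of the tree comes from the usual energy quantization.

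The step I expect to be the main obstacle is the bookkeeping in the second step. I must verify that every hypothesis of \cite[Theorem~6.4]{SongWaldron2024} and of the $C^\alpha$ argument that was previously deduced from small initial energy can indeed be replaced by $\delta=C B^2R^2$. This involves the smallness of $\delta$ relative to $\delta_0$, the condition $CB\sqrt{E_0}R<\varepsilon$, and the waiting time. In particular, the constants $r_\beta,d_\beta$ must be chosen after $B$ and $E_0$, and one must ensure that the dissipation tail $\int_{t_1}^T\|\mathcal T\|_2^2\to0$ can be made small by taking $t_1$ close to $T$. That is automatic from \eqref{eq:energy-dissipation}, since $E$ is bounded below.
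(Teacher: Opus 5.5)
Your proposal is correct and follows the paper's route. The paper likewise notes that \eqref{assumption:H} gives the uniform $L^2$ stress bound through \eqref{eq:stress-pointwise-bound}, together with nonconcentration of antiholomorphic energy. It then simply cites \cite[Theorem~6.8, Corollaries~4.5 (with $q=2$), 6.5 and 6.9]{SongWaldron2024} for the bubble tree, energy identity, no-neck, H\"older continuity and $O(T-t)$ outer scale; you unpack the same conclusions via Proposition~\ref{prop:outer-scale-propagation} and Proposition~\ref{prop:two-tail-neck-estimate}.

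One small precision. Integrating \eqref{eq:two-tail-neck-estimate} over the full range $2\lambda\le r\le R/2$ only gives neck energy and oscillation of order $\varepsilon/\beta$ and $\sqrt\varepsilon/\beta$. They tend to zero on the inner sub-annuli $K\lambda\le r\le\delta R$ as $K\to\infty$ and $\delta\downarrow0$, which correspond to the necks $\mathcal N_{\alpha,n}(L,d)$ with $L\to\infty$ and $d\downarrow0$.
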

	\begin{proof}
		This is a direct consequence in view of the proof of Theorem~\ref{thm:song-waldron} in \cite{SongWaldron2024}. For completeness, we recall the key steps in the proof using \eqref{assumption:H}. 
		
		By the real stress identity of Lemma~\ref{lem:real-stress-identities}, assumption \eqref{assumption:H} gives a uniform bound of the $L^2$ stress energy. Hence \cite[Theorem~6.8 and Corollary~6.9]{SongWaldron2024} apply to any sequence $t_n\uparrow T$, yielding the bubble-tree subconvergence with energy identity \eqref{eq:song-waldron-energy-identity} and no-neck. Assumption \eqref{assumption:H} also implies nonconcentration of antiholomorphic energy; \cite[Corollary~6.5]{SongWaldron2024} then gives every H\"older exponent below one. Finally, the energy scale bound \eqref{eq:song-waldron-outer-scale} follows from \cite[Corollary~4.5]{SongWaldron2024} with $q=2$.
	\end{proof}

	We finish this subsection with the logarithmic cutoff estimates used below.
	
	\begin{lemma}\label{lem:logarithmic-cutoff}
		Let $D_a\subset D_R$ be Euclidean disks in $\mathbb R^2$, with $0<a<R/4$, and put $\ell=\ln(R/a)$. There is a function $\chi_{a,R}\in C_c^\infty(D_R)$ such that $0\le\chi_{a,R}\le1$, $\chi_{a,R}=1$ on $D_a$, and
		\begin{equation}\label{eq:logarithmic-cutoff-derivatives}
			\int_{\mathbb R^2}|\nabla\chi_{a,R}|^2\,dx
			+\int_{\mathbb R^2}|\Delta\chi_{a,R}|\,dx
			\le\frac C\ell.
		\end{equation}
		Moreover,
		\begin{equation}\label{eq:logarithmic-cutoff-mass}
			\int_{\mathbb R^2}\chi_{a,R}\,dx\le\frac{CR^2}\ell.
		\end{equation}
		The constant is independent of the center and radii. The same estimates hold in a fixed conformal coordinate chart with respect to $g$, with a constant depending only on an upper bound for its conformal factor.
	\end{lemma}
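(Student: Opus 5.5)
The plan is to build the standard logarithmic cutoff, the one showing that a point has zero capacity in two dimensions, and then smooth it. Work first in Euclidean coordinates centered at the common center, with $r=|x|$. Define the radial profile $\psi(r)=1$ for $r\le a$, $\psi(r)=0$ for $r\ge R/2$, and
\[
\psi(r)=\frac{\ln(R/(2r))}{\ln(R/(2a))}\qquad (a\le r\le R/2).
\]
Since $a<R/4$, the denominator $\ln(R/2a)\ge\tfrac12\ell$ up to a universal constant, because $\ln(R/2a)=\ell-\ln2$ and $\ell>\ln 4$. A direct computation gives $\int|\nabla\psi|^2=2\pi/\ln(R/2a)\le C/\ell$. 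The Laplacian vanishes in the open annulus, so it is concentrated on the two circles, with total singular mass $2\pi/\ln(R/2a)$ on each.

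This piecewise profile is only Lipschitz, so the next step is to regularize it while keeping the $L^1$ norm of the Laplacian. I would do this in the logarithmic variable $s=\ln r$: write $\psi=f(\ln r)$, where $f$ is piecewise linear with slope $-1/\ln(R/2a)$ on $[\ln a,\ln(R/2)]$, and replace $f$ by a smooth monotone $\tilde f$. The function $\tilde f$ should equal $1$ for $s\le\ln a$, equal $0$ for $s\ge \ln(3R/4)$, have $|\tilde f'|\le C/\ell$, and have $\int|\tilde f''|\,ds\le C/\ell$. Concretely, take the transition over $[\ln a,\ln(3R/4)]$ with $\tilde f'=-\eta/L$, where $\eta$ is a smooth bump equal to $1$ on the middle and ramping over unit-length $s$-intervals at the ends. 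When the length $L=\ln(3R/4a)$ is large this gives the bounds above. When $\ell$ is bounded (say $\ell\le 10$), the claim reduces to a fixed smooth cutoff with scale-invariant bounds $\int|\nabla\chi|^2+\int|\Delta\chi|\le C$, and $C/\ell$ absorbs this.

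In two dimensions $\Delta=r^{-2}\partial_s^2$ for radial functions, and $dx=r^2\,ds\,d\theta$. Hence
\[
\int|\Delta\chi|\,dx=2\pi\int|\tilde f''|\,ds\le C/\ell,
\qquad
\int|\nabla\chi|^2\,dx=2\pi\int|\tilde f'|^2\,ds\le C L/\ell^2\le C/\ell.
\]
These are exactly the scale-invariant bounds in \eqref{eq:logarithmic-cutoff-derivatives}, and they are manifestly independent of the center and the radii.

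For the mass bound \eqref{eq:logarithmic-cutoff-mass}, use $\chi\le\min\{1,C\ln(R/r)/\ell\}$, which holds because $\tilde f(s)\le C(\ln R-s)/\ell$ by the slope bound. Then
\[
\int\chi\,dx\le\pi a^2+\frac{C}{\ell}\int_0^R \ln(R/r)\,r\,dr\le \pi a^2+\frac{CR^2}{\ell}.
\]
Since $a^2=R^2e^{-2\ell}\le R^2/\ell$, this gives $CR^2/\ell$.

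Finally, in a conformal chart with $g=e^{2\varphi}|dx|^2$, the Dirichlet integral is conformally invariant. The measure changes by $e^{2\varphi}$, and $\Delta_g=e^{-2\varphi}\Delta$, so $\int|\Delta_g\chi|\,dV_g=\int|\Delta\chi|\,dx$ exactly. Only the mass bound picks up the factor $\sup e^{2\varphi}$. The one step needing care is the smoothing, specifically verifying that the $L^1$ norm of $\tilde f''$ stays $O(1/\ell)$ rather than $O(1)$. This is why the smoothing should be done in the $\ln r$ variable with unit-width ramps scaled by the slope $1/L$, and not in $r$.
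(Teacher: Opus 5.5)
Your construction is correct and is essentially the paper's: a radial cutoff that is a smooth function of $\ln r$ with slope $O(1/\ell)$, with both derivative integrals computed in the logarithmic variable, the mass bound taken from $\chi\le C\ln(R/r)/\ell$, and the conformal invariance of $\int|\nabla\chi|^2$ and $\int|\Delta\chi|\,dV_g$. The paper simply takes $\chi_{a,R}=\vartheta(\ln(r/a)/\ell)$ for one fixed profile $\vartheta$. This avoids your case split for bounded $\ell$ and also the normalization of $\tilde f'$: with $\tilde f'=-\eta/L$ you should divide by $\int\eta$ rather than by $L$, so that $\tilde f$ drops exactly from $1$ to $0$.
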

	\begin{proof}
		Fix a smooth nonincreasing function $\vartheta:\mathbb R\to[0,1]$ which equals one on $(-\infty,1/4]$ and zero on $[3/4,\infty)$. Writing $r$ for the distance to the common center, set
		\[
		\chi_{a,R}(x)=\vartheta\!\left(\frac{\ln(r/a)}\ell\right)
		\quad(r>0),\qquad \chi_{a,R}(0)=1,
		\]
		This is smooth and compactly supported in $D_R$. Radial differentiation and the substitution $s=\ln(r/a)/\ell$ give
		\[
		\int|\nabla\chi_{a,R}|^2\,dx
		=\frac{2\pi}\ell\int_0^1|\vartheta'(s)|^2\,ds,
		\qquad
		\int|\Delta\chi_{a,R}|\,dx
		=\frac{2\pi}\ell\int_0^1|\vartheta''(s)|\,ds.
		\]
		Since $\vartheta(s)\le C(1-s)$ for $s\le1$, we have $\chi_{a,R}(x)\le C\ell^{-1}\ln(R/r)$ for $0<r<R$. Thus
		\[
		\int\chi_{a,R}\,dx
		\le\frac{2\pi C}\ell\int_0^R r\ln(R/r)\,dr
		\le\frac{CR^2}\ell.
		\]
		
		For the conformal metric $g=e^{2\sigma}|dx|^2$ in dimension two, we have
		\[
		|\nabla_g\chi|_g^2=e^{-2\sigma}|\nabla\chi|^2,
		\qquad
		\Delta_g\chi=e^{-2\sigma}\Delta\chi,
		\qquad
		dV_g=e^{2\sigma}dx.
		\]
		Consequently, the estimate \eqref{eq:logarithmic-cutoff-derivatives} is unchanged. Moreover, since $\chi\ge0$ is supported in $D_R$,
		\[
		\int\chi\,dV_g
		\le \left(\sup_{D_R}e^{2\sigma}\right)\int\chi\,dx
		\le \frac{CR^2}{\ell},
		\]
		where the last constant also depends on an upper bound for the conformal factor.
	\end{proof}
	
	\subsection{Bubble tree construction and energy identity}
	\label{subsec:tree-construction}
	
	Next we describe the outside-in bubble-tree construction of \cite[Theorem~6.8]{SongWaldron2024}, which originated with Parker~\cite{Parker1996}, and identify the regular body region, the bubble regions, and the neck regions. Assume~\eqref{assumption:H}, which gives the uniform $L^2$ stress bound required in that construction. For interested readers, we refer to \cite{LiWang2010adv} for an inside-out construction.
	
	Fix an arbitrary sequence $t_n\uparrow T$. At each concentration point $p\in\mathcal S$, choose a small fixed coordinate disk containing no other concentration point. Start with an outer energy scale $\lambda_n^\circ$ at $p$, with a small threshold $\epsilon<\varepsilon_0$. Recenter at the center of mass of the energy in $B_{\lambda_n^\circ}(p)$ and define $\lambda_n$ to be the outer energy scale about this new center, with half the original outer radius. Their construction gives
	\[
	0<\lambda_n\le C\lambda_n^\circ\longrightarrow0.
	\]
	Proposition~\ref{prop:outer-scale-propagation} gives $\lambda_n^\circ=O(T-t_n)$. Rescale space by $\lambda_n$ and time by $\lambda_n^2$, keeping $t_n$ as the terminal time.
	
	Away from finitely many concentration points, the rescaled flows converge to a stationary harmonic map. The vanishing rescaled dissipation identifies stationarity, and removable singularities complete the limit to a harmonic sphere. It is holomorphic because the rescaled antiholomorphic differential is bounded by $CB\lambda_n$. The sphere may be constant. In that case there are at least two distinct concentration points: if there were only one, the center-of-mass normalization would put it at the origin; the energy on a fixed annulus about the origin would then tend to zero, contradicting the normalization of the outer energy scale to one. This is the reason for retaining a constant intermediate component, often called a \emph{ghost bubble}.
	
	Repeat the same construction at every remaining concentration point of the rescaled sequence. A nonconstant sphere carries at least the fixed energy gap $\varepsilon_0$, while a constant component divides the remaining concentration into at least two parts, each carrying at least $\varepsilon_0$. Thus the energy available at any subsequent concentration point decreases by at least $\varepsilon_0$. The construction terminates after finitely many steps. 
	
	Index the resulting spherical components, constant ones included, by $\alpha=1,\ldots,m$. Write $z_{\alpha,n}$, $\lambda_{\alpha,n}$, and $\omega_\alpha$ for their coordinate centers, energy scales, and limiting maps; thus $z_{\alpha,n}=z(p_{\alpha,n})$ for points $p_{\alpha,n}\in\Sigma$. For each $\alpha$, let $\pi(\alpha)$ denote the component on which the concentration producing $\omega_\alpha$ occurred. The index $0$ denotes the body, and $\lambda_{0,n}=1$ in each fixed coordinate disk. For $\pi(\alpha)\ne0$,
	\begin{equation*}
		\frac{\lambda_{\alpha,n}}{\lambda_{\pi(\alpha),n}}\longrightarrow0,
		\qquad
		\frac{z_{\alpha,n}-z_{\pi(\alpha),n}}
		{\lambda_{\pi(\alpha),n}}\longrightarrow q_\alpha.
	\end{equation*}
	When $\pi(\alpha)=0$, the center tends to its concentration point $q_\alpha=p$ on the body. Components arising directly from the same component have distinct attachment points $q_\alpha$. 
	
	Choose $L$ large enough to contain all finite attachment points, and $d>0$ small enough that their closed $2d$-disks are disjoint on each component. Define the regular body region by 
	\[ \mathcal R(d):= \Sigma\setminus\left(\cup_{p\in\mathcal S}B_{d/2}(p)\right).\]
	Then
	\[ u(x, t_n) \longrightarrow u_T(x), \text{~in~} C^\infty(\mathcal R(d)).\]
	Put
	\[
	K_\alpha(L,d)=\overline{D_{2L}}\setminus
	\bigcup_{\pi(\gamma)=\alpha}D_{d/2}(q_\gamma),
	\]
	so that
	\[ u(z_{\alpha, n}+\lambda_{\alpha, n}y,t_n) \longrightarrow \omega_\alpha(y), \text{~in~} C^\infty(K_\alpha(L,d)).\]
	Denote the bubble region about $z_{\alpha, n}$ by
	\[
	\mathcal B_{\alpha,n}(L,d)=z_{\alpha,n}+\lambda_{\alpha,n}K_\alpha(L,d).
	\]
	Define the corresponding neck region by
	\begin{equation*}
		\mathcal N_{\alpha,n}(L,d)=
		\{L\lambda_{\alpha,n}<|x-z_{\alpha,n}|
		<d\lambda_{\pi(\alpha),n}\}.
	\end{equation*}
	
	All these pieces are allowed to overlap and together they cover $\Sigma$. Since there are finitely many components, all choices of $L,d$ below can be made simultaneously. The energy identity \eqref{eq:song-waldron-energy-identity} asserts that the energy on the neck region vanishes.
	
	\begin{lemma}
		\label{lem:component-cover}
		For $L$ sufficiently large and $d$ sufficiently small, the regular body piece, the sets $\mathcal B_{\alpha,n}(L,d)$, and the annuli $\mathcal N_{\alpha,n}(L,d)$ cover $\Sigma$ for all large $n$. Moreover, for every connecting annulus,
		\begin{equation*}
			\lim_{d\downarrow0}\lim_{L\to\infty}\limsup_n
			E(u(t_n);\mathcal N_{\alpha,n}(L,d))=0.
		\end{equation*}
	\end{lemma}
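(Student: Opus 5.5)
The plan is to treat the lemma as a direct consequence of the outside-in construction above, combined with the energy identity \eqref{eq:song-waldron-energy-identity} of Theorem~\ref{thm:song-waldron}, which Proposition~\ref{prop:song-waldron-under-bounded-differential} makes available under \eqref{assumption:H}. The proof has two parts: the covering statement, which is purely set-theoretic and proceeded by induction along the tree, and the vanishing of neck energy.

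\textbf{Covering.} At level zero, fix $p\in\mathcal S$ and the fixed coordinate disk around it. The set $\mathcal R(d)$ covers everything outside $\bigcup_p B_{d/2}(p)$. For each root component $\alpha$ with $\pi(\alpha)=0$ attached at $p$, we have $z_{\alpha,n}\to p$. Hence for large $n$ the disk $B_{d/2}(p)$ lies inside $\{|x-z_{\alpha,n}|<d\}$, which equals $\mathcal N_{\alpha,n}\cup\{|x-z_{\alpha,n}|\le L\lambda_{\alpha,n}\}$ (using $\lambda_{0,n}=1$). The inner disk $z_{\alpha,n}+\lambda_{\alpha,n}\overline{D_{L}}$ lies in $\mathcal B_{\alpha,n}(L,d)$, up to the holes $z_{\alpha,n}+\lambda_{\alpha,n}D_{d/2}(q_\gamma)$ with $\pi(\gamma)=\alpha$. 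Inside such a hole, the convergence $(z_{\gamma,n}-z_{\alpha,n})/\lambda_{\alpha,n}\to q_\gamma$ shows that for large $n$ the hole is contained in $\{|x-z_{\gamma,n}|<d\lambda_{\alpha,n}\}$. This is again covered by $\mathcal N_{\gamma,n}$ together with the inner disk of $\gamma$, so we recurse. Since the tree is finite, the recursion terminates at leaves, where there are no holes. Choosing $L$ larger than all $|q_\gamma|+1$ and $d$ below the separation of attachment points makes all these inclusions valid simultaneously for large $n$. The only care needed is the factor $2$ between $D_{2L}$ and $D_L$ and between $d$ and $d/2$; this slack absorbs the $o(1)$ shifts of centers.

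\textbf{Neck energy.} For fixed $L,d$ the pieces overlap only in sets of controlled type, so I would write
\[
E(u(t_n);\Sigma)\ge E(u(t_n);\mathcal R(d))+\sum_\alpha E(u(t_n);\mathcal B^{\circ}_{\alpha,n})+\sum_\alpha E(u(t_n);\mathcal N_{\alpha,n}(L,d)),
\]
where the sets are made disjoint by using slightly shrunken bubble regions $\mathcal B^\circ_{\alpha,n}=z_{\alpha,n}+\lambda_{\alpha,n}(D_L\setminus\bigcup_{\gamma} \overline{D_{d}(q_\gamma)})$ and the body region $\Sigma\setminus\bigcup_pB_d(p)$. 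These are disjoint from the annuli by construction: the annulus $\mathcal N_{\alpha,n}$ lives at scales between $L\lambda_{\alpha,n}$ and $d\lambda_{\pi(\alpha),n}$, which are separated from the other regions for large $n$ since $\lambda_{\alpha,n}/\lambda_{\pi(\alpha),n}\to0$. Taking $\limsup_n$ and using smooth convergence on the body and bubble regions gives
\[
\limsup_n\sum_\alpha E(u(t_n);\mathcal N_{\alpha,n})\le E_*-E(u_T;\Sigma\setminus\textstyle\bigcup B_d(p))-\sum_\alpha E(\omega_\alpha;D_L\setminus\bigcup D_d(q_\gamma)).
\]
Letting $L\to\infty$ and then $d\to0$, the right side tends to $E_*-E(u_T)-\sum_\alpha E(\omega_\alpha)=0$ by \eqref{eq:song-waldron-energy-identity}, applied along the subsequence given by the construction. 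The terms are nonnegative, so each annulus limit vanishes.

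\textbf{Main obstacle.} The delicate step is verifying the disjointness and the nesting for large $n$, especially near ghost bubbles, where several children attach to a constant component. This is handled by the choice of $d$ smaller than half the minimal distance between the $q_\gamma$ and the scale separation $\lambda_{\alpha,n}/\lambda_{\pi(\alpha),n}\to0$.
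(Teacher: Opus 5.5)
Your proposal is correct and matches the paper, which records this lemma without a separate proof as an immediate consequence of the Song--Waldron outside-in construction (your recursion along the finite tree gives the covering) and of the energy identity \eqref{eq:song-waldron-energy-identity} (your subtraction of the body and bubble energies gives the vanishing neck energy). One small imprecision: the centers of $\{|x-z_{\alpha,n}|<d\lambda_{\pi(\alpha),n}\}$ and of the hole $z_{\pi(\alpha),n}+\lambda_{\pi(\alpha),n}\overline{D_d(q_\alpha)}$ differ by $o(\lambda_{\pi(\alpha),n})$, so your pieces are not literally disjoint; removing $\overline{D_{2d}(q_\gamma)}$ and $B_{2d}(p)$ instead fixes this, as does noting that the thin overlaps carry vanishing energy.
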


	\section{Refined estimates for the outer energy scale}\label{sec:outer-scale-estimates}
	
	In this section, we prove the improved outer energy scale estimate in Theorem~\ref{thm:outer-scale-estimates} under the standing assumptions. By Proposition~\ref{prop:song-waldron-under-bounded-differential} and \eqref{eq:song-waldron-outer-scale}, we have, for every $p\in\mathcal S$, every $\varepsilon>0$, and all sufficiently small $\rho>0$,
	\begin{equation}\label{eq:preliminary-linear-scale}
		\lambda_{\varepsilon,\rho,p}(t)\le C(T-t)
		\qquad(t\uparrow T).
	\end{equation}
	We improve this estimate by establishing a logarithmic control of the spacetime integral of $|\nabla\Phi|^2$.
	
	\subsection{Upper bound of the outer energy scale}
	
	The following lemma is somewhat standard and provides a bound of the outer energy scale by the local spacetime integral of the tension field.
	
	\begin{proposition}\label{prop:local-tension-scale}
		Under the standing assumptions, for each $p\in\mathcal S$ and $\varepsilon>0$, there exist $\rho_*>0$ and $t_*<T$ such that, for every $0<\rho\le\rho_*$ and $t_*<t<T$,
		\begin{equation}\label{eq:local-tension-scale}
			\lambda^2\le C(T-t)
			\int_t^T\int_{B_{\lambda/2}(p)}|\mathcal T(u(s))|^2\,dV\,ds,
			\qquad \lambda=\lambda_{\varepsilon,\rho,p}(t).
		\end{equation}
		The constant $C$ only depends on $\varepsilon$ and $E_0$.
	\end{proposition}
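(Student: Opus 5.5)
The plan is a weighted (second-moment) energy argument with a weight supported in the ball, integrated from $t$ to $T$.

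\emph{Choice of weight and lower bound at time $t$.} Fix $p\in\mathcal S$ and $\varepsilon>0$, and put $\lambda=\lambda_{\varepsilon,\rho,p}(t)$. Since $p$ is a blow-up point, $\lambda>0$ for $t$ close to $T$. By the definition \eqref{eq:outer-energy-scale} as an infimum, for every $\lambda'<\lambda$ some annulus $U^r_{r/2}(p)$ with $\lambda'<r<\rho$ carries energy at least $\varepsilon$. Since the energy on annuli $U^r_{r/2}(p)$ with $r>\lambda$ is below $\varepsilon$, this forces an annulus $U^{r}_{r/2}(p)$ with $r$ comparable to $\lambda$ and $E(u(t);U^r_{r/2}(p))\ge\varepsilon$. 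Choose a radial weight $\psi\ge0$ adapted to that annulus: $\psi\ge c\lambda^2$ on it, $\psi\le C\lambda^2$, $|\nabla\psi|\le C\lambda$, and $\operatorname{supp}\psi$ contained in the ball in which the tension is to be integrated. This gives
\[
\int_\Sigma\psi\,e(u(t))\,dV\ge c\,\varepsilon\lambda^2 .
\]
The step I expect to be the real obstacle is making the annulus carrying $\varepsilon$ sit inside $B_{\lambda/2}(p)$ rather than only inside $B_\lambda(p)$. I would first try to do this through the threshold $\varepsilon$, comparing $\lambda_{\varepsilon,\rho,p}$ with $\lambda_{\varepsilon/2,\rho,p}$ and using that the annular energies near scale $\lambda$ are pinned by the infimum. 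If that fails, I would prove the estimate with $B_{C\lambda}(p)$ and absorb the constant, which still suffices for the application in Theorem~\ref{thm:outer-scale-estimates}.

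\emph{Weighted energy identity and the terminal term.} By \eqref{eq:local-energy-stress}, after one integration by parts, for $t\le s<T$,
\[
\frac{d}{ds}\int\psi e\,dV=-\int\psi|\mathcal T|^2\,dV-\int\langle\mathcal T,du(\nabla\psi)\rangle\,dV .
\]
Integrating from $t$ to $s\uparrow T$ and writing $\mathcal D_{\rm loc}=\int_t^T\int_{\operatorname{supp}\psi}|\mathcal T|^2$, Cauchy--Schwarz together with $E(u(s))\le E_0$ gives
\[
\int\psi e(u(t))-\limsup_{s\uparrow T}\int\psi e(u(s))
\le C\lambda^2\mathcal D_{\rm loc}+C\lambda\sqrt{E_0(T-t)\mathcal D_{\rm loc}} .
\]
The terminal term needs care, since energy concentrates at $p$. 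I would build $\psi$ so that it also vanishes on a small core $B_{c\lambda}(p)$. Then, by \eqref{eq:preliminary-linear-scale} and the fact that the annuli of radius larger than $\lambda$ already carry energy below $\varepsilon$, only the body map contributes in the limit. Indeed, $u(s)\to u_T$ smoothly away from $\mathcal S$, so the terminal term is bounded by $C\lambda^2E(u_T;B_{C\lambda}(p))+o(1)\lambda^2$, and this is at most $\tfrac12 c\,\varepsilon\lambda^2$ once $\rho_*$ is small. A possible subtlety is that part of the $\varepsilon$ in the annulus at time $t$ might remain there up to $T$. This is excluded for $t$ close to $T$, because by Theorem~\ref{thm:song-waldron} the outer scales tend to zero.

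\emph{Conclusion.} Combining the two steps gives
\[
c\,\varepsilon\lambda^2\le C\lambda^2\mathcal D_{\rm loc}+C\lambda\sqrt{E_0(T-t)\mathcal D_{\rm loc}} .
\]
Since $\int_0^T\|\mathcal T\|_2^2\le E_0$, choosing $t_*$ so that $\int_{t_*}^T\|\mathcal T\|_2^2\le c\,\varepsilon/(2C)$ lets the first term on the right be absorbed. What remains is $\lambda\le C\varepsilon^{-1}\sqrt{E_0(T-t)\mathcal D_{\rm loc}}$, that is, $\lambda^2\le C(\varepsilon,E_0)(T-t)\mathcal D_{\rm loc}$, which is \eqref{eq:local-tension-scale}.
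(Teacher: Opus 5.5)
\textbf{Gap: the route does not reach $B_{\lambda/2}(p)$.} Your fallback is a correct argument, but it proves a weaker inequality. Take $\psi$ vanishing on $B_{\lambda/4}(p)$, with $\psi\ge c\lambda^2$ on $U^{\lambda}_{\lambda/2}(p)$ and $\operatorname{supp}\psi\subset B_{2\lambda}(p)$. Your steps then give $\lambda^2\le C(T-t)\int_t^T\int_{B_{2\lambda}(p)}|\mathcal T|^2$. This suffices for Theorem~\ref{thm:outer-scale-estimates} after enlarging $K$, but it is not \eqref{eq:local-tension-scale}.

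The obstacle you flagged is not technical, and your first remedy cannot work. The only annulus the definition of $\lambda$ guarantees to carry energy at least $\varepsilon$ is $U^{\lambda}_{\lambda/2}(p)$, and it is disjoint from $B_{\lambda/2}(p)$. Nothing of size $\varepsilon$ need lie at positive relative distance inside $B_{\lambda/2}(p)$. For example, suppose that at time $t$ the energy near $p$ is essentially one bubble of scale $\eta\ll d$ centred at distance $d$ from $p$. Then $|\lambda/2-d|\le\eta$, so the bubble straddles $\partial B_{\lambda/2}(p)$. A weight supported in $B_{\lambda/2}(p)$ with $|\nabla\psi|\le C\lambda$ is at most $C\lambda\eta\ll\lambda^2$ there, so the lower bound $\int\psi e(u(t))\ge c\varepsilon\lambda^2$ fails. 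A weight not vanishing at $p$ instead picks up the terminal atom. Passing to $\lambda_{\varepsilon/2,\rho,p}\ge\lambda_{\varepsilon,\rho,p}$ only moves the scale outward. In short, an argument based on energy \emph{leaving} an annulus can only localize the tension near that annulus.

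\textbf{Missing idea: reverse the direction and use the terminal atom.}
\begin{itemize}
\item By Lemma~\ref{lem:local-energy-comparison} and finite dissipation, $e(u(t))\,dV\rightharpoonup\mu_T=e(u_T)\,dV+\sum_{q\in\mathcal S}\Theta_q\delta_q$ along the full time variable.
\item Choose $\rho_*$ with $E(u_T;B_{\rho_*}(p))<\varepsilon/8$. Then $E(u(t);B_\rho(p))\le\Theta_p+\varepsilon/4$ for late $t$.
\item Since $U^\lambda_{\lambda/2}(p)$ carries at least $\varepsilon$, the small ball has a deficit: $E(u(t);B_{\lambda/2}(p))\le\Theta_p-3\varepsilon/4$.
\item Take $\chi$ supported in $B_{\lambda/2}(p)$ with $\chi(p)=1$ and $|\nabla\chi|\le C/\lambda$. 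Then $\int\chi\,d\mu_T\ge\Theta_p$, so $\int\chi e$ must \emph{increase} by at least $3\varepsilon/4$ on $[t,T)$.
\item In \eqref{eq:cutoff-stress-identity} the dissipation term now has the favourable sign and can be dropped. No absorption step and no smallness of the dissipation tail are needed.
\item The flux is bounded by $C\sqrt{E_0}\,\lambda^{-1}\int_t^T\|\mathcal T(u(s))\|_{L^2(B_{\lambda/2}(p))}\,ds$. Cauchy--Schwarz in time then yields \eqref{eq:local-tension-scale}, with the tension integrated only over $B_{\lambda/2}(p)$.
\end{itemize}
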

	\begin{proof}
		We first identify the limit energy measure. The uniform energy bound gives subsequential weak limits of $e(u(t))\,dV$ as $t\uparrow T$. For every fixed smooth cutoff $\chi$, Lemma~\ref{lem:local-energy-comparison} and the finiteness of the total dissipation imply
		\[
		\left|\int_\Sigma\chi\bigl(e(u(t))-e(u(s))\bigr)\,dV\right|\longrightarrow0
		\qquad(s,t\uparrow T).
		\]
		Thus all subsequential limits agree. Smooth convergence away from $\mathcal S$ identifies the full-time limit as
		\begin{equation}\label{eq:terminal-measure}
			e(u(t))\,dV\rightharpoonup
			\mu_T=e(u_T)\,dV+\sum_{p\in\mathcal S}\Theta_p\delta_p.
		\end{equation}
		
		Fix $p\in\mathcal S$ and $\varepsilon>0$. Choose $\rho_*>0$ below the injectivity radius so that $\overline{B_{\rho_*}(p)}$ contains no other singular point and
		\[
		E(u_T;B_{\rho_*}(p))<\frac\varepsilon8.
		\]
		Hence, for all sufficiently late $t$,
		\begin{equation}\label{eq:local-energy-bound}
		E(u(t);B_\rho(p))\le E(u(t);B_{\rho_*}(p))
		\le\Theta_p+\frac\varepsilon4
		\qquad(0<\rho\le\rho_*).
		\end{equation}
		Fix such $\rho,t$ and set $\lambda=\lambda_{\varepsilon,\rho,p}(t)>0$.

		Continuity of the annular energy in its radius and the definition of $\lambda$ give
		\begin{equation}\label{eq:outer-scale-threshold-annulus}
			E(u(t);U_{\lambda/2}^{\lambda}(p))\ge\varepsilon.
		\end{equation}
		Combining \eqref{eq:local-energy-bound} with \eqref{eq:outer-scale-threshold-annulus} yields
		\begin{equation}\label{eq:inner-energy-bound}
			E(u(t);B_{\lambda/2}(p))\le\Theta_p-\frac{3\varepsilon}4.
		\end{equation}
		
		Choose a cut-off function $0\le\chi\le1$ supported in $B_{\lambda/2}(p)$, satisfying $\chi(p)=1$ and  $|\nabla\chi|\le C/\lambda$. Applying \eqref{eq:cutoff-stress-identity} with this fixed cutoff, integrating from $t$ to $T$, and using \eqref{eq:terminal-measure} and \eqref{eq:inner-energy-bound}, we obtain
		\begin{align*}
			\frac{3\varepsilon}{4}
			&\le
			\int_\Sigma\chi\,d\mu_T
			-\int_\Sigma\chi e(u(t))\,dV\\
			&\le
			\int_t^T\int_\Sigma
			\langle S(u(s)),\nabla^2\chi\rangle\,dV\,ds\\
			&=
			-\int_t^T\int_\Sigma
			\langle \mathcal{T}(u(s)),du(\nabla\chi)\rangle\,dV\,ds\\
			&\le\frac{C\sqrt{E_0}}\lambda
			\int_t^T\|\mathcal T(u(s))\|_{L^2(B_{\lambda/2}(p))}\,ds.
		\end{align*}
		Cauchy--Schwarz in time then proves \eqref{eq:local-tension-scale}.
	\end{proof}
	
	\subsection{Logarithmic control of local dissipation}
	
	The next estimate gives the desired logarithmic control of the local spacetime integral of $|\nabla \Phi|^2$.
	
	\begin{lemma}
		\label{lem:log-local-dissipation}
		Under the standing assumptions, for concentric disks $D_a\subset D_R$ in a fixed conformal chart, with $0<a<R/4$, and $t_0\le t<T$, we have
		\begin{equation}\label{eq:log-dissipation}
			\int_t^T\int_{D_a}
			|\nabla\Phi|^2\,dV\,ds
			\le
			\frac{C\bigl(R^2+(T-t)+(T-t)R^2\bigr)}{\ln(R/a)}.
		\end{equation}
		Here $C$ depends only on $B$ and the fixed geometries.
	\end{lemma}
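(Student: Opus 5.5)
Test the Bochner formula \eqref{eq:antiholomorphic-bochner} against the logarithmic cutoff $\chi=\chi_{a,R}$ of Lemma~\ref{lem:logarithmic-cutoff} and integrate over $[t,T)$. Multiplying by $\chi$ and integrating over $\Sigma\times[t,\tau]$ with $\tau<T$ gives
\[
\int_t^\tau\!\!\int\chi\,\tfrac12|\nabla\Phi|^2
=\tfrac12\!\int\chi e_{\bar\partial}(t)-\tfrac12\!\int\chi e_{\bar\partial}(\tau)
+\tfrac12\!\int_t^\tau\!\!\int e_{\bar\partial}\Delta\chi
+\int_t^\tau\!\!\int\chi\bigl(-K_\Sigma e_{\bar\partial}-B_N e_\partial e_{\bar\partial}+H_N e_{\bar\partial}^2\bigr).
\]
Since $\chi=1$ on $D_a$, the left side controls $\tfrac12\int_t^\tau\int_{D_a}|\nabla\Phi|^2$. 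We then let $\tau\uparrow T$ by monotone convergence. Every term on the right must be shown to be at most $C(R^2+(T-t)+(T-t)R^2)/\ell$, where $\ell=\ln(R/a)$.

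We bound the right-hand terms one at a time.
\begin{itemize}
\item The $\tau$ boundary term is nonpositive, so we drop it.
\item The initial term satisfies $\int\chi e_{\bar\partial}(t)\le \tfrac12B^2\int\chi\,dV\le CB^2R^2/\ell$ by \eqref{assumption:H} and the mass bound \eqref{eq:logarithmic-cutoff-mass}. This is the source of the $R^2$ term.
\item The Laplacian term is at most $\tfrac12B^2\int|\Delta_g\chi|\,dV_g\le CB^2/\ell$ per unit time by \eqref{eq:logarithmic-cutoff-derivatives}, using the conformal invariance of $\Delta_g\chi\,dV_g$. Integrating in time gives $C(T-t)/\ell$.
\item The bisectional curvature term $-B_N e_\partial e_{\bar\partial}\chi$ is nonpositive by the standing curvature assumption, so we discard it. This matters, because $e_\partial$ is not bounded near the singularity.
\item The remaining terms satisfy $|K_\Sigma|e_{\bar\partial}+H_Ne_{\bar\partial}^2\le C(1+B^2)B^2$ pointwise. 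Integrated against $\chi$ over $[t,T)$ they give $C(T-t)R^2/\ell$ by \eqref{eq:logarithmic-cutoff-mass}.
\end{itemize}
Summing these bounds gives \eqref{eq:log-dissipation}.

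The one point needing care is the justification. The flow is smooth on $[t,\tau]$ for $\tau<T$, so the integration by parts is legitimate there. The sign of $\partial_t$ and of the Laplacian in the Bochner identity must be tracked so that $\int\chi(-\Delta e_{\bar\partial})$ becomes $-\int e_{\bar\partial}\Delta\chi$, whose absolute value is what we bound. We must also check that $\chi_{a,R}$, transplanted to the fixed conformal chart, has compact support in the chart, so that no boundary terms arise.

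We expect the main obstacle to be conceptual rather than computational. One has to see that the bisectional-curvature term, the only term involving the unbounded holomorphic energy $e_\partial$, has a favorable sign. One also has to see that the logarithmic cutoff converts the $L^\infty$ bound \eqref{assumption:H} into the $1/\ell$ gain, through $\int|\Delta\chi|\lesssim1/\ell$ and $\int\chi\lesssim R^2/\ell$. Once these two observations are made, the remaining estimates are routine.
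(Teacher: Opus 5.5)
Your proposal is correct and follows the paper's proof essentially step for step. The paper also tests the Bochner identity \eqref{eq:antiholomorphic-bochner} against $\chi_{a,R}$ (using $2\chi$ rather than your $\chi$), discards the terminal term, and uses $B_N\ge0$ to dispose of the only term involving the unbounded $e_\partial$. It then bounds the remaining terms via \eqref{assumption:H} together with $\int|\Delta\chi|\lesssim1/\ell$ and $\int\chi\lesssim R^2/\ell$.
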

	
	\begin{proof}
		Take $\chi=\chi_{a,R}$ from Lemma~\ref{lem:logarithmic-cutoff} in the given conformal chart. Thus $\chi=1$ on $D_a$, $\chi$ is supported in $D_R$, and
		\begin{equation}\label{eq:log-cutoff}
			\int_\Sigma|\Delta\chi|\,dV\le\frac C{\ln(R/a)},
			\qquad
			\int_\Sigma\chi\,dV\le\frac{CR^2}{\ln(R/a)}.
		\end{equation}
		
		Fix $t<t_1<T$. Multiplying \eqref{eq:antiholomorphic-bochner} by $2\chi$ and integrating by parts in space gives
		\[
		\begin{aligned}
			&\frac{d}{ds}\int_\Sigma\chi e_{\bar\partial}\,dV
			+\int_\Sigma\chi
			\left(
			|\nabla\Phi|^2
			+2B_N(X,Y)e_\partial e_{\bar\partial}
			\right)\,dV\\
			&\qquad=
			\int_\Sigma e_{\bar\partial}\Delta\chi\,dV
			-2\int_\Sigma\chi K_\Sigma e_{\bar\partial}\,dV
			+2\int_\Sigma\chi H_N(X)e_{\bar\partial}^2\,dV.
		\end{aligned}
		\]
		By \eqref{assumption:H} and the curvature bounds,
		\[
		e_{\bar\partial}\le\frac{B^2}{2},
		\qquad
		2|K_\Sigma|e_{\bar\partial}
		+2|H_N(X)|e_{\bar\partial}^2\le C.
		\]
		Integrating from $t$ to $t_1$, discarding the nonnegative terminal term, and then letting $t_1\uparrow T$, we obtain
		\begin{equation*}
			\begin{aligned}
				&\int_t^T\int_\Sigma\chi
				\left(
				|\nabla\Phi|^2
				+2B_N(X,Y)e_\partial e_{\bar\partial}
				\right)\,dV\,ds\\
				&\qquad\le
				C\int_\Sigma\chi\,dV
				+C(T-t)\int_\Sigma|\Delta\chi|\,dV
				+C(T-t)\int_\Sigma\chi\,dV.
			\end{aligned}
		\end{equation*}
		Since $\chi=1$ on $D_a$ and $B_N(X,Y)\ge0$, \eqref{eq:log-cutoff} gives
		\[
			\int_t^T\int_{D_a}
			\left(
			|\nabla\Phi|^2+B_N(X,Y)e_\partial e_{\bar\partial}
			\right)\,dV\,ds
			\le
			\frac{C\bigl(R^2+(T-t)+(T-t)R^2\bigr)}{\ln(R/a)},
		\]
		which proves \eqref{eq:log-dissipation}.
	\end{proof}
	
	The first-order identity \eqref{eq:real-first-order} gives the pointwise estimate
	\begin{equation}\label{eq:tension-gradient-bound}
		|\mathcal T|^2=4|\operatorname{div}\Phi|^2\le8|\nabla\Phi|^2.
	\end{equation}
	Thus Lemma~\ref{lem:log-local-dissipation} also controls the spacetime integral of $|\mathcal T|^2$ on $D_a$.
	
	\subsection{Proof of Theorem~\ref{thm:outer-scale-estimates}}
	
	\begin{proof}[Proof of Theorem~\ref{thm:outer-scale-estimates}]
		Fix $p\in\mathcal S$, $\varepsilon>0$, and a sufficiently small $\rho>0$. Write
		\[
		\delta=T-t,\qquad\lambda=\lambda_{\varepsilon,\rho,p}(t).
		\]
		By \eqref{eq:preliminary-linear-scale}, $\lambda\le C_0\delta$ for all sufficiently late $t$. In a fixed conformal chart centered at $p$, choose a fixed $K>0$ large enough that $B_{\lambda/2}(p)\subset D_a$, where
		\[
		a=K\delta,\qquad R=\sqrt\delta.
		\]
		For sufficiently small $\delta$, the disk $D_R$ lies in the chart, $a<R/4$, and
		\[
		\ln(R/a)=\frac12|\ln\delta|-\ln K\ge\frac14|\ln\delta|.
		\]
		These radii are fixed over the time interval $[t,T)$. Lemma~\ref{lem:log-local-dissipation} and \eqref{eq:tension-gradient-bound} imply
		\[
		\int_t^T\int_{B_{\lambda/2}(p)}|\mathcal T(u(s))|^2\,dV\,ds
		\le C\frac{R^2+\delta+\delta R^2}{\ln(R/a)}
		\le C\frac\delta{|\ln\delta|}.
		\]
		Substituting this into \eqref{eq:local-tension-scale} yields
		\[
		\lambda^2\le C\frac{\delta^2}{|\ln\delta|},
		\]
		which proves \eqref{eq:outer-scale-log-improvement}.
	\end{proof}

	\section{Tension and stress estimates on bubble regions}
	\label{sec:bubble-estimates}
	
	In this section, we show that the $L^2$ tension and stress energies do not concentrate inside bubble regions, including ghost bubbles. The starting point is the evolution equation of $\Phi(u)$ in Proposition~\ref{prop:antiholomorphic-evolution}. We define rescaled flows $v_n$ along a bubbling sequence and study the limit of normalized antiholomorphic differentials of $v_n$. By establishing the rigidity of ancient solutions to the limiting system, we show that inside the bubble region,
	\[ |\mathcal T(v_n)|+|S(v_n)|=o(\lambda_n), \]
	and the tension and stress energies vanish.
	
	\subsection{Spacetime convergence of rescaled flows}
	\label{subsec:rescaled-flow-convergence}
	
	Fix $t_n\uparrow T$, $\lambda_n\downarrow0$, and points $p_n\in\Sigma$ with coordinates $z_n=z(p_n)$ in a fixed positively oriented conformal coordinate chart $z$.  Assume that
	\begin{equation}
		u(z_n+\lambda_n y,t_n)
		\longrightarrow
		\omega(y)
		\qquad
		\text{strongly in }
		C^{\infty}_{\mathrm{loc}}(\mathbb R^2\setminus Z),
		\label{eq:rescaled-terminal-convergence}
	\end{equation}
	where $Z\subset\mathbb R^2$ is finite and $\omega$ is a smooth harmonic map on $\mathbb R^2$ with finite energy. Here, the limit map $\omega$ is allowed to be constant.
	
	Define the rescaled flows by
	\begin{equation}
		v_n(y,s)
		:=
		u(z_n+\lambda_n y,t_n+\lambda_n^2s).
		\label{eq:rescaled-flow}
	\end{equation}	
	For every compact subset of $\mathbb{R}^2\times(-\infty, 0]$, these flows are defined for all sufficiently large $n$. Let $F_n(y)=z_n+\lambda_n y$ and $g_n=\lambda_n^{-2}F_n^*g$ be the rescaled domain metric. Then these metrics converge smoothly on compact sets to a constant Euclidean metric, and $v_n$ satisfies
	\[ \partial_sv_n=\mathcal{T}_{g_n}(v_n). \]
	
	The first lemma upgrades the endpoint convergence of $v_n$ in \eqref{eq:rescaled-terminal-convergence} to spacetime convergence of $v_n$ to a stationary ancient solution.
	
	\begin{lemma}
		\label{lem:rescaled-spacetime-convergence}
		After passing to a subsequence,
		\begin{equation*}
			v_n(y,s)
			\longrightarrow
			\omega(y)
			\qquad
			\text{strongly in }
			C^{\infty}_{\mathrm{loc}}((\mathbb R^2\setminus Z)\times(-\infty,0]).
		\end{equation*}
	\end{lemma}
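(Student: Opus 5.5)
The plan is to combine three ingredients: vanishing of the rescaled dissipation on finite backward time intervals, local $\varepsilon$-regularity away from $Z$, and an energy-continuity argument that rules out new concentration at earlier rescaled times.

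\textbf{Step 1: vanishing dissipation.} For fixed $L>0$, the scaling identity of Lemma~\ref{lem:tension-stress-scaling} gives
\[
\int_{-L}^0\int_{K}|\mathcal T_{g_n}(v_n)|^2\,dV_{g_n}ds\le\int_{t_n-\lambda_n^2L}^{t_n}\|\mathcal T(u(t))\|_2^2\,dt .
\]
The right-hand side tends to zero, because the total dissipation $\int_0^T\|\mathcal T\|_2^2$ is finite and $t_n\uparrow T$. Hence $\partial_s v_n\to0$ in $L^2_{\mathrm{loc}}(\mathbb R^2\times(-\infty,0])$.

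\textbf{Step 2: uniform smallness of energy near regular points.} Fix $y_0\notin Z$. By \eqref{eq:rescaled-terminal-convergence} there is $r>0$ with $E(v_n(0);B_{2r}(y_0))<\varepsilon_0/4$ for all large $n$. To propagate this backward in time, I apply Lemma~\ref{lem:local-energy-comparison} in rescaled variables. Take a cutoff $\chi$ equal to $1$ on $B_r(y_0)$, supported in $B_{2r}(y_0)$, with $|\nabla\chi|\le C/r$. For $s\in[-L,0]$, let $\mathcal D_n$ denote the rescaled dissipation on $[-L,0]$. Then
\[
\Bigl|\int\chi\bigl(e(v_n(s))-e(v_n(0))\bigr)\Bigr|\le\mathcal D_n+C r^{-1}\sqrt{L E_0\mathcal D_n}.
\]
Since $\mathcal D_n\to0$, the right-hand side tends to $0$ uniformly in $s\in[-L,0]$, because energy is scale invariant in two dimensions. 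Consequently
\[
\sup_{-L\le s\le0}E(v_n(s);B_r(y_0))<\varepsilon_0/2
\]
for large $n$. This is the key step: it excludes bubbles that form at earlier rescaled times near regular points. It works because the $L^2$ dissipation on the short physical time interval $\lambda_n^2L$ vanishes.

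\textbf{Step 3: compactness and identification of the limit.} Proposition~\ref{prop:interior-parabolic-regularity} yields uniform $C^k$ bounds for $v_n$ on $B_{r/2}(y_0)\times[-L+r^2,0]$, including mixed spacetime derivatives; the metrics $g_n$ converge smoothly, so the constants are uniform. By Arzel\`a--Ascoli and a diagonal argument over exhausting compact sets of $(\mathbb R^2\setminus Z)\times(-\infty,0]$, a subsequence converges in $C^\infty_{\mathrm{loc}}$ to some $v_\infty$. Step 1 gives $\partial_s v_\infty=0$, so $v_\infty(y,s)=v_\infty(y,0)$. The smooth convergence at $s=0$ identifies $v_\infty(\cdot,0)=\omega$.

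The main obstacle I expect is Step 2, namely the uniform-in-$s$ control of local energy. It hinges on ensuring the constant in Lemma~\ref{lem:local-energy-comparison} scales correctly in the rescaled metric. I would verify this by applying the lemma directly to $u$ with the pulled-back cutoff. Under that pullback, $|\nabla\chi|\sim 1/(\lambda_n r)$ and $t-s\le\lambda_n^2L$, so the product is bounded by $C\sqrt{L}/r$.
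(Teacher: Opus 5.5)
Your proposal is correct and follows essentially the same route as the paper: the rescaled dissipation vanishes by the scaling identity, small local energy is propagated backward in time via Lemma~\ref{lem:local-energy-comparison}, Proposition~\ref{prop:interior-parabolic-regularity} gives $\varepsilon$-regularity with constants uniform in the converging metrics $g_n$, and the stationary limit is identified with $\omega$ at $s=0$. The only cosmetic difference is the parabolic waiting time: the paper integrates the dissipation over $[-L-1,0]$ to absorb it, whereas you shrink the interval to $[-L+r^2,0]$, which is equivalent since $L$ is arbitrary.
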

	
	\begin{proof}
		Fix $L>0$ and a compact set $K\Subset\mathbb R^2\setminus Z$. Choose an open set $U$ with $K\Subset U\Subset\mathbb R^2\setminus Z$, and put
		\[
		\mathcal D_n(L)
		:=\int_{-L-1}^0\int_U
		|\mathcal T_{g_n}(v_n)|^2\,dV_{g_n}\,ds.
		\]
		The scaling in \eqref{eq:rescaled-flow} gives
		\begin{equation*}
			\begin{aligned}
				\mathcal D_n(L)
				&=\int_{t_n-(L+1)\lambda_n^2}^{t_n}
				\int_{F_n(U)}|\mathcal T_g(u)|^2\,dV_g\,dt\\
				&\le\int_{t_n-(L+1)\lambda_n^2}^T
				\|\mathcal T(u(t))\|_2^2\,dt\longrightarrow0,
			\end{aligned}
		\end{equation*}
		where $F_n(U)=z_n+\lambda_n U$ and the last limit follows from \eqref{eq:energy-dissipation}.
		
		By \eqref{eq:rescaled-terminal-convergence}, each $y\in K$ admits a disk $D_R(y)\Subset U$, with $R$ sufficiently small, such that $E_{g_n}(v_n(0);D_R(y))<\varepsilon_0/2$ for all large $n$. Here $\varepsilon_0$ is the threshold in Proposition~\ref{prop:interior-parabolic-regularity}. Choose a spatial cutoff $0\le\chi\le1$ supported in $D_R(y)$, equal to one on $D_{R/2}(y)$, with $|d\chi|_{g_n}\le C/R$. Lemma~\ref{lem:local-energy-comparison}, applied with the rescaled metric, gives, for $-L-1\le s\le0$,
		\[
		E_{g_n}(v_n(s);D_{R/2}(y))
		\le E_{g_n}(v_n(0);D_R(y))
		+\mathcal D_n(L)
		+\frac{C}{R}\sqrt{(L+1)E_0\mathcal D_n(L)}
		<\varepsilon_0
		\]
		for all large $n$. The support of $\chi$ lies in $U$, so all dissipation terms in this comparison are bounded by $\mathcal D_n(L)$.
		
		The smooth convergence of $g_n$ to a constant metric makes the constants in Proposition~\ref{prop:interior-parabolic-regularity} uniform. Taking $R$ small enough that the required parabolic waiting time is less than one, that proposition gives uniform derivative bounds on $D_{R/4}(y)\times[-L,0]$. A finite covering of $K$ and a diagonal subsequence yield smooth local convergence to an ancient flow $v_\infty$ on $(\mathbb R^2\setminus Z)\times(-\infty,0]$. Moreover,
		\[
		\int_{-L}^0\int_K|\partial_s v_n|^2\,dV_{g_n}\,ds
		\le\mathcal D_n(L)\longrightarrow0.
		\]
		Passing to the limit gives $\partial_s v_\infty=0$. Since $v_\infty(\cdot,0)=\omega$ by \eqref{eq:rescaled-terminal-convergence}, the limit is $\omega$ for every $s\le0$.
	\end{proof}
	
	\subsection{Limiting system of the normalized antiholomorphic differential}
	
	By \eqref{assumption:H},
	\begin{equation*}
		|\Phi(v_n)|
		\le C B\lambda_n,
	\end{equation*}
	and the bubble $\omega$ is holomorphic, i.e. $\Phi(\omega)=0$.	We consider the normalized differential
	\begin{equation}
		\Phi_n
		:=
		\frac{\Phi(v_n)}{\lambda_n}.
		\label{eq:normalized-differential}
	\end{equation}
	Then $|\Phi_n|\le CB$. We denote a subsequential limit by $\Phi_{\infty}$.
	
	In the chosen coordinates, the coefficients of $\Phi_n$ are those of $\Phi(u)$ evaluated at $(z_n+\lambda_n y,t_n+\lambda_n^2s)$. Our goal is to show that  $\nabla_{g_n}\Phi_n\to0$ on compact regular subsets. First we derive the limiting equation of $\Phi_n$.

	\begin{lemma}
		\label{lem:normalized-limiting-system}
		After passing to a further subsequence,
		\begin{equation}
			\Phi_n\longrightarrow \Phi_{\infty}
			\quad \text{strongly in }
			C^{\infty}_{\mathrm{loc}}((\mathbb R^2\setminus Z)\times(-\infty,0]).
			\label{eq:normalized-differential-convergence}
		\end{equation}
		The limit $\Phi_{\infty}$ is bounded and satisfies
		\begin{equation}
			\nabla_s\Phi_{\infty}-\Delta_\nabla \Phi_{\infty}+\mathcal R_{\Psi(\omega)} \Phi_{\infty}=0,
			\label{eq:limiting-differential-equation}
		\end{equation}
		where $\mathcal R_{\Psi(\omega)}$ is a nonnegative self-adjoint endomorphism. In particular,
		\begin{equation*}
			\frac12(\partial_s-\Delta)|\Phi_{\infty}|^2
			=-|\nabla \Phi_{\infty}|^2-\langle\mathcal R_{\Psi(\omega)}\Phi_{\infty}, \Phi_{\infty}\rangle.
		\end{equation*}
	\end{lemma}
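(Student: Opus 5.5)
The plan is to rescale the evolution equation of Proposition~\ref{prop:antiholomorphic-evolution} and pass to the limit using the smooth convergence of Lemma~\ref{lem:rescaled-spacetime-convergence}.

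\textbf{Step 1: rescaled equation.} The evolution equation is scale covariant, with curvature terms quadratic in the differentials. For the rescaled metric $g_n$, the form $\Phi(v_n)$ satisfies
\[
(\nabla_s-\Delta_{\nabla,g_n})\Phi(v_n)=-K_{g_n}\Phi(v_n)-\mathcal R_{\Psi(v_n)}\Phi(v_n)+\mathcal R_{\Phi(v_n)}\Phi(v_n),
\]
where $K_{g_n}=\lambda_n^2K_\Sigma\circ F_n$. Dividing by $\lambda_n$ gives
\[
(\nabla_s-\Delta_{\nabla,g_n})\Phi_n=-K_{g_n}\Phi_n-\mathcal R_{\Psi(v_n)}\Phi_n+\mathcal R_{\Phi(v_n)}\Phi_n .
\]
This is a linear parabolic system for $\Phi_n$ whose coefficients are smooth functions of $v_n$ and $dv_n$.

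\textbf{Step 2: uniform bounds.} On a compact set $K\times[-L,0]$ with $K\Subset\mathbb R^2\setminus Z$, Lemma~\ref{lem:rescaled-spacetime-convergence} and the bound of Proposition~\ref{prop:interior-parabolic-regularity} give uniform $C^k$ bounds on $v_n$, $dv_n$ and $g_n$. The coefficients therefore satisfy:
\begin{itemize}
\item $K_{g_n}=O(\lambda_n^2)$;
\item $|\mathcal R_{\Phi(v_n)}|\le C|\Phi(v_n)|^2\le CB^2\lambda_n^2$;
\item $\mathcal R_{\Psi(v_n)}$ and all its derivatives are bounded.
\end{itemize}
Together with $|\Phi_n|\le CB$, interior parabolic Schauder estimates give $C^k$ bounds for $\Phi_n$ on slightly smaller compacts. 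The equation is written for the section $\Phi_n$ of $v_n^*TN\otimes T^*\mathbb R^2$; after the isometric embedding $N\hookrightarrow\mathbb R^m$, it is a system for an $\mathbb R^m$-valued form whose coefficients involve the second fundamental form.

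A less technical route uses the original scale. The coefficients of $\Phi_n$ are those of $\Phi(u)$ at rescaled points, so the $k$-th derivative of $\Phi_n$ equals $\lambda_n^{k-1}\nabla^k\Phi(u)$. This alone does not give the bound, because $\nabla^k\Phi(u)$ is only of size $\lambda_n^{-k}$ near concentration. The linear Schauder argument above is therefore the real input.

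\textbf{Step 3: passage to the limit.} Arzel\`a--Ascoli and a diagonal subsequence give the convergence \eqref{eq:normalized-differential-convergence}, with $|\Phi_\infty|\le CB$. The terms $K_{g_n}\Phi_n$ and $\mathcal R_{\Phi(v_n)}\Phi_n$ vanish in the limit. Since $\Psi(v_n)\to\Psi(\omega)=d\omega$, we get $\mathcal R_{\Psi(v_n)}\to\mathcal R_{\Psi(\omega)}$. This yields \eqref{eq:limiting-differential-equation} with respect to the flat limit metric and the pulled-back bundle $\omega^*TN$.

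The stated properties of $\mathcal R_{\Psi(\omega)}$ come from Section~2:
\begin{itemize}
\item it is self-adjoint and commutes with $J$, as recorded after \eqref{eq:real-curvature-operator};
\item it is nonnegative by the nonnegative holomorphic bisectional curvature assumption.
\end{itemize}
The type condition also passes to the limit: $\Phi_n\circ j=-J\Phi_n$ gives $\Phi_\infty\circ j=-J\Phi_\infty$, so $\mathcal R_{\Psi(\omega)}$ acts on a form of the correct type.

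\textbf{Step 4: Bochner identity.} Take the inner product of \eqref{eq:limiting-differential-equation} with $\Phi_\infty$ and use metric compatibility:
\[
\tfrac12\partial_s|\Phi_\infty|^2=\langle\nabla_s\Phi_\infty,\Phi_\infty\rangle,
\qquad
\tfrac12\Delta|\Phi_\infty|^2=\langle\Delta_\nabla\Phi_\infty,\Phi_\infty\rangle+|\nabla\Phi_\infty|^2 .
\]
Substituting gives the displayed identity.

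\textbf{Main obstacle.} The hardest step is Step 2, the uniform higher-order bounds on $\Phi_n$. It requires the linear system to have uniformly controlled coefficients up to $s=0$, and the bundle $v_n^*TN$ changes with $n$. I would handle this with the embedding: write $\Phi_n$ as an $\mathbb R^m$-valued form satisfying a heat equation with lower-order terms bounded by the $C^k$ norms of $dv_n$. Standard $L^p$ and Schauder interior estimates then apply on parabolic cylinders ending at $s=0$, which is allowed since interior estimates only need a backward cylinder.
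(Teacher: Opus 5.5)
Your proposal is correct and is essentially the paper's argument: rescale \eqref{eq:antiholomorphic-evolution}, divide by $\lambda_n$, use Lemma~\ref{lem:rescaled-spacetime-convergence} to control the coefficients of the resulting linear system on compact regular cylinders, apply interior parabolic estimates together with $|\Phi_n|\le CB$, and pass to the limit. Two differences are cosmetic. You remove $\mathcal R_{\Phi(v_n)}\Phi_n$ through the $O(\lambda_n^2)$ bound, whereas the paper notes $\Phi(\omega)=0$ in the limit. Also, in your discarded aside the $k$-th derivative of $\Phi_n$ scales like $\lambda_n^{k}\nabla^k\Phi(u)$ rather than $\lambda_n^{k-1}\nabla^k\Phi(u)$; the naive route fails because $\varepsilon$-regularity of $u$ only gives $|\nabla^k\Phi(u)|\lesssim\lambda_n^{-k-1}$ a priori, so you are right that the linear estimate is the real input.
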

	
	\begin{proof}
		Rescaling \eqref{eq:antiholomorphic-evolution} and dividing by $\lambda_n$ gives
		\begin{equation}\label{eq:rescaled-differential-equation}
			(\nabla_s-\Delta_\nabla)\Phi_n
			=-\lambda_n^{2}K_\Sigma\Phi_n-\mathcal R_{\Psi(v_n)}\Phi_n+\mathcal R_{\Phi(v_n)}\Phi_n.
		\end{equation}
		Here $\Delta_\nabla$ uses $g_n$ before taking the limit and its constant limit metric afterwards; $\Delta$ denotes the scalar Laplacian. This is a homogeneous system on real bundle-valued one-forms. On every compact regular cylinder, the smooth convergence $v_n\to\omega$ by Lemma~\ref{lem:rescaled-spacetime-convergence} bounds all coefficients and their derivatives. Standard interior parabolic estimates, together with the bound $|\Phi_n|\le CB$, yield uniform bounds on all derivatives on smaller cylinders. Therefore, after passing to a subsequence, $\Phi_n$ converges smoothly locally to a limit form $\Phi_{\infty}$ on $(\mathbb R^2\setminus Z)\times(-\infty,0]$, and \eqref{eq:normalized-differential-convergence} follows.
		
		By letting $n\to \infty$ in \eqref{eq:rescaled-differential-equation}, we get 
		\[
		(\nabla_s-\Delta_\nabla)\Phi_{\infty}
		=-\mathcal R_{\Psi(\omega)}\Phi_{\infty}+\mathcal R_{\Phi(\omega)}\Phi_{\infty}.
		\]
		Since $\omega$ is holomorphic, this proves \eqref{eq:limiting-differential-equation}.
	\end{proof}
	
	Next we extend the limit $\Phi_{\infty}$ to an ancient solution across the finite set $Z$.
	
	\begin{lemma}
		\label{lem:limiting-system-removability}
		The bounded solution $\Phi_{\infty}$ to \eqref{eq:limiting-differential-equation} extends across $Z$ as a smooth solution on
		$$
		\mathbb R^2\times(-\infty,0].
		$$
	\end{lemma}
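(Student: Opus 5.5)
The extension across $Z$ will come from a removable singularity argument for bounded solutions of a linear parabolic system with smooth coefficients. The two ingredients are that points have zero parabolic capacity in two space dimensions, and that $\Phi_\infty$ is bounded.

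First, the coefficients. Since $\omega$ is a smooth harmonic map on all of $\mathbb R^2$ (the limit in \eqref{eq:rescaled-terminal-convergence}), the endomorphism $\mathcal R_{\Psi(\omega)}$ is smooth on $\mathbb R^2$, including at the points of $Z$. The connection on $\omega^*TN$ is likewise smooth. So the operator $\nabla_s-\Delta_\nabla+\mathcal R_{\Psi(\omega)}$ in \eqref{eq:limiting-differential-equation} has smooth coefficients on $\mathbb R^2\times(-\infty,0]$, and only $\Phi_\infty$ itself is a priori undefined at $Z\times(-\infty,0]$.

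Second, the equation holds weakly across $Z$. Fix $q\in Z$ and a small disk $D_r(q)$ that contains no other point of $Z$. I will trivialize $\omega^*TN\otimes T^*\mathbb R^2$ by a parallel or smooth frame, which turns \eqref{eq:limiting-differential-equation} into a linear system $\partial_s f-\Delta f+A\cdot\nabla f+Bf=0$ for a bounded vector function $f$ on $(D_r(q)\setminus\{q\})\times(-\infty,0]$, with smooth $A,B$. For a test function $\zeta$ compactly supported in $D_r(q)\times(-\infty,0)$, I insert the logarithmic cutoff $1-\chi_{\epsilon,\sqrt\epsilon}$ from Lemma~\ref{lem:logarithmic-cutoff}, centered at $q$, and integrate by parts. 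All derivatives fall on the test function, so the error terms are bounded by
\[
\|f\|_\infty\int\bigl(|\nabla\chi|+|\Delta\chi|\bigr)\,dx .
\]
By \eqref{eq:logarithmic-cutoff-derivatives} this is $O(1/|\ln\epsilon|)$, and the term with $|\nabla\chi|$ alone is even smaller by Cauchy--Schwarz, since the support has area $O(\epsilon)$. Letting $\epsilon\to0$ shows that $f$ is a bounded distributional solution on the full cylinder. It is also convenient to know $\nabla f\in L^2_{\mathrm{loc}}$. I expect to get this from a Caccioppoli estimate with the same cutoff, which is legitimate because boundedness of $f$ controls the cutoff error by $\|f\|_\infty^2\int|\nabla\chi|^2\to0$.

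Third, regularity. A bounded weak solution of a linear parabolic system with smooth coefficients is smooth. I plan to show this by writing $f=\zeta f$ via Duhamel against the heat kernel, which gives $f\in C^{\alpha}$, and then bootstrapping with Schauder estimates. The system is diagonal in its principal part, so scalar-type estimates apply componentwise. The resulting smooth extension agrees with $\Phi_\infty$ off $Z$, and it satisfies \eqref{eq:limiting-differential-equation} on $\mathbb R^2\times(-\infty,0]$ by continuity.

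The main obstacle is the weak-formulation step: making sure no singular measure is concentrated on the line $\{q\}\times(-\infty,0]$. This is exactly where boundedness and the two-dimensionality (the log cutoff) are used. I would handle it carefully with the cutoff estimates as above, keeping the time direction untouched since the singular set is only spatial.
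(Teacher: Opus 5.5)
Your proof is correct, but your key step differs from the paper's.

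\textbf{The paper's route.} It first proves a Caccioppoli bound with the cutoff $\chi\zeta_\varepsilon$, where $\zeta_\varepsilon=1-\chi_{\varepsilon^2,\varepsilon}$. This uses $\mathcal R\ge0$ and $\int|\nabla\zeta_\varepsilon|^2\to0$, and yields $\nabla\Phi_\infty\in L^2$ near $y_0$, hence $\Phi_\infty\in L^2H^1$ across the puncture. It then passes to the limit in the \emph{energy} weak formulation. There the only error term is $\int\langle\nabla\Phi_\infty,d\zeta_\varepsilon\otimes\varphi\rangle$, which is killed by $\|\nabla\Phi_\infty\|_{L^2}\|d\zeta_\varepsilon\|_{L^2}$. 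Regularity is then quoted for $L^2H^1$ weak solutions.

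\textbf{Your route.} You use the very weak formulation, moving all derivatives onto the test function. You therefore need only $\|f\|_\infty$ and the $L^1$ bounds on $\nabla\chi$ and $\Delta\chi$. It is exactly the bound $\int|\Delta\chi|\le C/\ell$ in \eqref{eq:logarithmic-cutoff-derivatives} that makes this crude $L^\infty$ error estimate vanish in two dimensions; a single-scale cutoff would only give $\int|\Delta\chi|=O(1)$. Your argument is shorter and does not use $\mathcal R\ge0$ or any gradient bound.

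\textbf{What this costs you.} You only obtain an $L^\infty$ distributional solution, so the regularity step cannot simply be quoted from energy-class theory. Your Duhamel/heat-kernel plan does handle it: write $A^j\partial_jf=\partial_j(A^jf)-(\partial_jA^j)f$, so the localized forcing lies in $\partial L^\infty+L^\infty$, which gives $C^\alpha$, and then bootstrap. Alternatively, your optional Caccioppoli bound puts you back in the paper's framework.

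\textbf{Terminal face.} Make this explicit. Your test functions are supported in $s<0$, so smoothness up to $s=0$ near $q$ is not obtained merely ``by continuity''. You need derivative bounds that stay uniform as $s\uparrow0$. These hold because interior parabolic (or Duhamel) estimates at time $s$ use only the past cylinder $D_\rho\times[s-\rho^2,s]$. This is what the paper's final paragraph verifies. Only then do the derivatives extend continuously to $s=0$ and match $\Phi_\infty(\cdot,0)$ off $q$.
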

	
	\begin{proof}
		It suffices to consider one point $y_0\in Z$. Choose $R>0$ such that
		\[
		\overline{D_{2R}(y_0)}\cap Z=\{y_0\},
		\]
		and fix a smooth orthonormal bundle frame over $D_{2R}(y_0)$. We write $\mathcal R=\mathcal R_{\Psi(\omega)}$ and put
		\[
		M=\|\Phi_{\infty}\|_{L^\infty}.
		\]
		
		For $0<\varepsilon<\min\{R,1/4\}$, apply Lemma~\ref{lem:logarithmic-cutoff} to the disks centered at $y_0$ with radii $\varepsilon^2$ and $\varepsilon$, and set $\zeta_\varepsilon=1-\chi_{\varepsilon^2,\varepsilon}$. Then $\zeta_\varepsilon=0$ on $D_{\varepsilon^2}(y_0)$, $\zeta_\varepsilon=1$ outside $D_\varepsilon(y_0)$, and
		\begin{equation}
			\int_{\mathbb R^2}|\nabla\zeta_\varepsilon|^2
			\le\frac{C}{|\ln\varepsilon|}\longrightarrow0.
			\label{eq:puncture-cutoff-energy}
		\end{equation}
		
		Fix $s_0<s_1<0$. Choose $\chi\in C_c^\infty(D_{2R}(y_0))$ and a smooth temporal cutoff $\psi$ such that
		\[
		0\le\chi,\psi\le1,\qquad
		\chi\equiv1\ \text{on }D_R(y_0),\qquad
		\psi(s_0)=0,\qquad
		\psi\equiv1\ \text{on }[s_1,0].
		\]
		Write $\phi_\varepsilon=\chi\zeta_\varepsilon$. Testing the equation against $\psi^2\phi_\varepsilon^2\Phi_{\infty}$ gives
		\begin{align*}
			&\frac12\frac{d}{ds}
			\left(\psi^2\int \phi_\varepsilon^2|\Phi_{\infty}|^2\right)
			+\psi^2\int \phi_\varepsilon^2
			\left(
			|\nabla\Phi_{\infty}|^2+
			\langle\mathcal R\Phi_{\infty},\Phi_{\infty}\rangle
			\right)\\
			&\qquad=
			\psi\psi'\int \phi_\varepsilon^2|\Phi_{\infty}|^2
			-2\psi^2\int \phi_\varepsilon
			\langle\nabla\Phi_{\infty},d\phi_\varepsilon\otimes\Phi_{\infty}\rangle.
		\end{align*}
		Using
		\[
		2\phi_\varepsilon|\nabla\Phi_{\infty}|\,
		|d\phi_\varepsilon|\,|\Phi_{\infty}|
		\le
		\frac12\phi_\varepsilon^2|\nabla\Phi_{\infty}|^2
		+2|d\phi_\varepsilon|^2|\Phi_{\infty}|^2
		\]
		and $\mathcal R\ge0$, and integrating from $s_0$ to $\tau<0$, we obtain
		\begin{align*}
			&\frac12\psi(\tau)^2
			\int \phi_\varepsilon^2|\Phi_{\infty}(\tau)|^2
			+\frac12\int_{s_0}^{\tau}\int
			\psi^2\phi_\varepsilon^2|\nabla\Phi_{\infty}|^2\\
			&\qquad\le
			M^2\int_{s_0}^{\tau}\int
			\left(
			|\psi\psi'|\phi_\varepsilon^2
			+2\psi^2|d\phi_\varepsilon|^2
			\right)
			\le C M^2,
		\end{align*}
		where $C$ is independent of $\varepsilon$ and $\tau$. Indeed,
		\[
		|d\phi_\varepsilon|^2
		\le
		2\zeta_\varepsilon^2|d\chi|^2
		+2\chi^2|d\zeta_\varepsilon|^2,
		\]
		and the second term is controlled by \eqref{eq:puncture-cutoff-energy}. Letting $\tau\uparrow0$ and then $\varepsilon\downarrow0$, Fatou's lemma gives
		\[
		\int_{s_1}^{0}\int_{D_R(y_0)\setminus\{y_0\}}
		|\nabla\Phi_{\infty}|^2<\infty.
		\]
		
		Extend $\Phi_{\infty}$ measurably across $\{y_0\}\times(s_1,0)$, with arbitrary values on this measure-zero set. The sections $\zeta_\varepsilon\Phi_{\infty}$ converge to $\Phi_{\infty}$ in $L^2$, and
		\[
		\nabla(\zeta_\varepsilon\Phi_{\infty})
		=
		\zeta_\varepsilon\nabla\Phi_{\infty}
		+d\zeta_\varepsilon\otimes\Phi_{\infty}.
		\]
		Moreover,
		\[
		\|d\zeta_\varepsilon\otimes\Phi_{\infty}\|_
		{L^2(D_R(y_0)\times(s_1,0))}^2
		\le
		M^2|s_1|
		\int_{\mathbb R^2}|d\zeta_\varepsilon|^2
		\longrightarrow0.
		\]
		Together with dominated convergence for $\zeta_\varepsilon\nabla\Phi_{\infty}$, this shows that
		\[
		\Phi_{\infty}\in L^2\bigl((s_1,0);H^1(D_R(y_0))\bigr),
		\]
		and its weak covariant derivative agrees with the original derivative away from $y_0$.
		
		Let $\varphi$ be a smooth bundle-valued test section compactly supported in $D_R(y_0)\times(s_1,0)$. Testing the equation away from $y_0$ against $\zeta_\varepsilon\varphi$ yields
		\begin{align*}
			0={}&
			\int_{s_1}^{0}\int_{D_R(y_0)}
			\zeta_\varepsilon
			\left(
			-\langle\Phi_{\infty},\partial_s\varphi\rangle
			+\langle\nabla\Phi_{\infty},\nabla\varphi\rangle
			+\langle\mathcal R\Phi_{\infty},\varphi\rangle
			\right)\\
			&+
			\int_{s_1}^{0}\int_{D_R(y_0)}
			\langle\nabla\Phi_{\infty},d\zeta_\varepsilon\otimes\varphi\rangle.
		\end{align*}
		The last integral tends to zero, since its absolute value is bounded by
		\[
		\|\nabla\Phi_{\infty}\|_{L^2(D_R(y_0)\times(s_1,0))}
		\|\varphi\|_{L^\infty}
		\left(
		|s_1|\int_{\mathbb R^2}|d\zeta_\varepsilon|^2
		\right)^{1/2}.
		\]
		All remaining terms converge by dominated convergence. Hence
		\[
		\int_{s_1}^{0}\int_{D_R(y_0)}
		\left(
		-\langle\Phi_{\infty},\partial_s\varphi\rangle
		+\langle\nabla\Phi_{\infty},\nabla\varphi\rangle
		+\langle\mathcal R\Phi_{\infty},\varphi\rangle
		\right)=0.
		\]
		Thus $\Phi_{\infty}$ solves the equation weakly across $y_0$.
		
		In the chosen smooth frame, this is a uniformly parabolic linear system with smooth coefficients and scalar principal part. Interior parabolic regularity therefore gives a smooth representative on $D_R(y_0)\times(s_1,0)$, agreeing with the original solution away from $y_0$.
		
		To include the terminal face, fix $s_2\in(s_1,0)$. Local estimates on backward cylinders give, for every $k,\ell\ge0$,
		\[
		\sup_{D_{R/2}(y_0)\times[s_2,0)}
		|\nabla^k\partial_s^\ell\Phi_{\infty}|
		\le C_{k,\ell}M.
		\]
		The constants are uniform as the upper time face tends to zero, since the spatial and backward temporal margins remain fixed. Applying the same bounds to one further spacetime derivative shows that all derivatives extend continuously to $s=0$. The extension is therefore smooth up to the terminal face and satisfies the equation there. For $x\ne y_0$, its terminal value agrees with the prescribed $\Phi_{\infty}(x,0)$ by continuity.
		
		Since $s_1<0$ is arbitrary, these local extensions cover $D_{R/2}(y_0)\times(-\infty,0]$. They agree on overlaps because they agree away from $y_0$. Repeating the argument at each point of $Z$ completes the proof.
	\end{proof}
	
	\subsection{Rigidity of ancient solutions to the limiting system}
	
	For the ancient solution $\Phi_{\infty}$ to the limiting system, we first prove that it is actually stationary by local parabolic estimates, then use two-dimensional logarithmic capacity to identify the stationary fields.
	
	\begin{lemma}
		\label{lem:ancient-stationarity}
		Let $\mathcal E\to\mathbb R^2$ be a real metric vector bundle with a time-independent metric connection, and let $\mathcal R\ge0$ be a smooth time-independent self-adjoint endomorphism. Suppose $\Phi_{\infty}$ is a bounded ancient solution to
		\begin{equation}
			\partial_s\Phi_{\infty}-\Delta_\nabla \Phi_{\infty}+\mathcal R\Phi_{\infty}=0
			\label{eq:ancient-bundle-equation}
		\end{equation}
		on $\mathbb R^2\times(-\infty,0]$. Then $\Phi_{\infty}$ is stationary, i.e.
		\begin{equation*}
			\partial_s\Phi_{\infty}\equiv0.
		\end{equation*}
	\end{lemma}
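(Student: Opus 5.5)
The plan is to show that the \emph{localized Dirichlet-type energy} of $\Phi_\infty$ is small on any fixed disk at time $0$, and in fact at every time $s\le 0$. Stationarity then follows from the equation. Set $M=\|\Phi_\infty\|_{L^\infty}$ and
\[
W=\partial_s\Phi_\infty .
\]
Since the connection and $\mathcal R$ are time-independent, $W$ solves the same equation \eqref{eq:ancient-bundle-equation}. By Kato's inequality and $\mathcal R\ge0$, both $|\Phi_\infty|^2$ and $|W|^2$ are scalar subsolutions of the heat equation. The tool throughout is the two-dimensional logarithmic cutoff of Lemma~\ref{lem:logarithmic-cutoff}: its Dirichlet energy and $L^1$-norm of the Laplacian are $O(1/\ln(R/a))$, which compensates for the lack of any decay of $\Phi_\infty$ at spatial infinity.

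\emph{Step 1 (time-averaged smallness).} Fix $a>0$ and let $R\gg a$. Take $\chi=\chi_{a,R}$ and integrate the identity
\[
\tfrac12(\partial_s-\Delta)|\Phi_\infty|^2=-|\nabla\Phi_\infty|^2-\langle\mathcal R\Phi_\infty,\Phi_\infty\rangle
\]
(from Lemma~\ref{lem:normalized-limiting-system}) against $\chi$ over $[-R^2,0]$. Exactly as in the proof of Lemma~\ref{lem:log-local-dissipation}, this gives
\[
\int_{-R^2}^0\int\chi\bigl(|\nabla\Phi_\infty|^2+\langle\mathcal R\Phi_\infty,\Phi_\infty\rangle\bigr)
\le M^2\int\chi+R^2M^2\int|\Delta\chi|
\le \frac{CM^2R^2}{\ln(R/a)}.
\]
Dividing by the length $R^2$ of the time interval, the average over $[-R^2,0]$ of
\[
F(s):=\int\chi\bigl(|\nabla\Phi_\infty|^2+\langle\mathcal R\Phi_\infty,\Phi_\infty\rangle\bigr)(s)
\]
is $O(M^2/\ln(R/a))$, which tends to $0$ as $R\to\infty$.

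\emph{Step 2 (almost monotonicity).} Next I would convert the time average into a pointwise-in-time bound. Test \eqref{eq:ancient-bundle-equation} against $\chi^2 W$, with $\chi$ now the square root of a logarithmic cutoff (or simply $\chi_{a,R}$ used with $\chi^2$). This gives
\[
\frac{d}{ds}\,\frac12\int\chi^2\bigl(|\nabla\Phi_\infty|^2+\langle\mathcal R\Phi_\infty,\Phi_\infty\rangle\bigr)
+\int\chi^2|W|^2
=-2\int\chi\langle W,\nabla_{\nabla\chi}\Phi_\infty\rangle .
\]
Absorbing half of the right-hand side yields
\[
F'(s)\le C\int|\nabla\chi|^2|\nabla\Phi_\infty|^2 .
\]
If this error is $o(1)$ uniformly on $[-R^2,0]$, then $F(0)$, and likewise $F(s_*)$ for any fixed $s_*\le 0$, is bounded by the average from Step 1 plus an error $o(1)\cdot R^2$. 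That is not yet good enough, so I would run the argument with time windows of length $\sim R^2$ but balance the choice of $a$, $R$ and the window so that the error, weighted by the window length, is still $o(1)$. In either form, $F(s_*)\to 0$ as $R\to\infty$ for each fixed $a$ and each fixed $s_*\le0$. Hence
\[
\nabla\Phi_\infty\equiv0,\qquad \mathcal R\Phi_\infty\equiv0 \quad\text{on }D_a\times(-\infty,0],
\]
and since $a$ is arbitrary the equation gives $W=\Delta_\nabla\Phi_\infty-\mathcal R\Phi_\infty\equiv0$, i.e.\ $\partial_s\Phi_\infty\equiv0$.

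\emph{Main obstacle.} The hard step is controlling the error term $\int|\nabla\chi|^2|\nabla\Phi_\infty|^2$. This requires a uniform bound on $|\nabla\Phi_\infty|$ on the annulus $D_R\setminus D_a$. That bound would follow from interior parabolic estimates on unit cylinders only if the connection coefficients and $\mathcal R$ are uniformly bounded, with derivatives, in suitable frames.

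In the intended application ($\mathcal R=\mathcal R_{\Psi(\omega)}$ with $\omega$ a finite-energy holomorphic map, extending smoothly to $S^2$), this holds: $\omega$ extends smoothly across $\infty$, $|d\omega|=O(|y|^{-2})$, and the pullback connection has bounded geometry. Under that hypothesis, interior estimates give $|\nabla\Phi_\infty|\le CM$ globally, so the error is at most $CM^2/\ln(R/a)$. Multiplied by the window length $R^2$, this is of size $R^2/\ln(R/a)$, which still diverges. The fix I would try first is to run Step 2 with a shorter window of length $T\gg1$ but $T\ll\ln(R/a)$, using Step 1 on that window with the cruder $\int\chi\le CR^2/\ln(R/a)$ replaced by the decay of $|\nabla\Phi_\infty|$ obtained from interior estimates.

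If that fails, the fallback is to apply the same log-capacity averaging to the scalar subsolution $|W|^2$. From Step 1 and interior estimates, $W$ is bounded; then, for instance, $\int_{-T}^0\int\chi|W|^2$ is controlled by $F$ at the endpoints, giving $\int\chi|W|^2(0)\le C/T$ as $T\to\infty$, which again forces $W\equiv0$.
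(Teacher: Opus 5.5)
Your proposal has a genuine gap. It sits exactly at the step you flag, and neither proposed repair closes it.

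Step~1 is fine, but it controls only the time average of $F$. With logarithmic cutoffs, the passage to a single time slice fails because of a mismatch of time scales, not merely for lack of a gradient bound. On a window of length $T$, Step~1 gives $\frac1T\int_{-T}^0F\,ds\le CM^2\bigl(T^{-1}\int\chi+\int|\Delta\chi|\bigr)$. The $\int\chi$ comes from the initial-slice term $\frac12\int\chi|\Phi_\infty(-T)|^2$. This involves $|\Phi_\infty|^2$, which need not decay, so no decay of $\nabla\Phi_\infty$ can improve it. With your bounds the average is therefore small only if $T\gg R^2/\ln(R/a)$.

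Meanwhile, even granting $|\nabla\Phi_\infty|\le CM$, the Step~2 error accumulates to $CM^2T\int|\nabla\chi|^2\approx CM^2T/\ln(R/a)$. This is small only if $T\ll\ln(R/a)$. The two requirements are incompatible as $R\to\infty$ with $a$ fixed.

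Changing the cutoff does not help. If $\chi=1$ on $D_a$ and $\int|\nabla\chi|^2\le\epsilon$, then after radial symmetrization, Cauchy--Schwarz in $\ln r$ gives $\chi\ge\frac12$ on $D_{ae^{c/\epsilon}}$. Hence $\int\chi\gtrsim a^2e^{2c/\epsilon}$, while Step~2 tolerates only $T\ll1/\epsilon$.

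The fallback hits the same obstruction. It also presupposes a bound on $W$ and a bound on $F$ at a single time, and you have neither. Finally, the global bound $|\nabla\Phi_\infty|\le CM$ is not available under the hypotheses of the lemma, which impose nothing on the connection or on $\mathcal R$ at infinity.

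The missing idea is to handle the time direction at the natural parabolic scale with ordinary cutoffs. Logarithmic capacity is the right tool only for the elliptic Liouville step of Lemma~\ref{lem:stationary-parallelism}, where there is no time window to balance. The paper's argument runs as follows.

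\begin{itemize}
\item Fix $(x_0,s_0)$ and put $Q_R=D_R(x_0)\times[s_0-R^2,s_0]$.
\item Test the equation against $\psi^2\chi^2\Phi_\infty$, with $\chi$ a cutoff at scale $R$ and $\psi$ a temporal cutoff on a window of length of order $R^2$. This gives $\int_{Q_{2R}}\bigl(|\nabla\Phi_\infty|^2+\langle\mathcal R\Phi_\infty,\Phi_\infty\rangle\bigr)\le CM^2R^2$.
\item Test against $\psi^2\chi^2W$, using the same time-independence identities as in your Step~2. The difference is that the $\psi\psi'$ term is integrated in spacetime, so no slice bound is needed. This gives the scale-invariant estimate $\int_{Q_R}|W|^2\le CR^{-2}\int_{Q_{2R}}\bigl(|\nabla\Phi_\infty|^2+\langle\mathcal R\Phi_\infty,\Phi_\infty\rangle\bigr)\le CM^2$.
\item Since $\frac12(\partial_s-\Delta)|W|^2=-|\nabla W|^2-\langle\mathcal RW,W\rangle\le0$, the parabolic mean value inequality gives $|W(x_0,s_0)|^2\le CR^{-4}\int_{Q_R}|W|^2\le CM^2R^{-4}$, which tends to $0$ as $R\to\infty$.
\end{itemize}

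Only integral estimates and a scalar subsolution inequality are used, so no bounded geometry is needed.

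Your fallback becomes essentially this argument with two changes. Replace the logarithmic cutoffs by ordinary cutoffs at scale $\sqrt T$, and replace the slice bound on $F$ by the spacetime Caccioppoli bound above. One then obtains $\int_{D_{\sqrt T/2}}|W(\cdot,0)|^2\le CM^2/T$.
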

	
	\begin{proof}
		Set
		\[
		M:=\|\Phi_{\infty}\|_{L^\infty(\mathbb R^2\times(-\infty,0])}.
		\]
		Fix $(x_0,s_0)\in\mathbb R^2\times(-\infty,0)$. For $R>1$, write
		\[
		D_R:=D_R(x_0),\qquad
		I_R:=[s_0-R^2,s_0],\qquad
		\Omega_R:=D_R\times I_R.
		\]
		Choose $\chi\in C_c^\infty(D_{2R})$ and a smooth temporal cutoff $\psi$ such that
		\[
		0\le\chi,\psi\le1,\qquad
		\chi\equiv1\ \text{on }D_R,\qquad
		\psi(s_0-4R^2)=0,\qquad
		\psi\equiv1\ \text{on }I_R,
		\]
		and
		\[
		|d\chi|\le C/R,\quad 0\le\psi'\le C/R^2.
		\]
		
		Testing \eqref{eq:ancient-bundle-equation} against $\psi^2\chi^2\Phi_{\infty}$ and integrating by parts in space, we obtain
		\begin{align*}
			&\frac12\frac{d}{ds}
			\left(\psi^2\int\chi^2|\Phi_{\infty}|^2\right)
			+\psi^2\int\chi^2
			\left(
			|\nabla\Phi_{\infty}|^2+\langle\mathcal R\Phi_{\infty},\Phi_{\infty}\rangle
			\right)\\
			&\qquad=
			\psi\psi'\int\chi^2|\Phi_{\infty}|^2
			-2\psi^2\int
			\chi\langle\nabla\Phi_{\infty},d\chi\otimes\Phi_{\infty}\rangle.
		\end{align*}
		The last term satisfies
		\[
		2\chi|\nabla\Phi_{\infty}|\,|d\chi|\,|\Phi_{\infty}|
		\le
		\frac12\chi^2|\nabla\Phi_{\infty}|^2
		+2|d\chi|^2|\Phi_{\infty}|^2.
		\]
		Integrating over $I_{2R}$, using the vanishing of $\psi$ at the initial time, and discarding the nonnegative terminal term gives
		
		\[	\int_{\Omega_{2R}}\psi^2\chi^2
		\left(
		\frac12|\nabla\Phi_{\infty}|^2
		+\langle\mathcal R\Phi_{\infty},\Phi_{\infty}\rangle
		\right)
		\le
		\int_{\Omega_{2R}}
		\left(
		\psi\psi'\chi^2
		+2\psi^2|d\chi|^2
		\right)|\Phi_{\infty}|^2
		\le CM^2R^2.
		\]
		It follows that
		\begin{equation}\label{eq:ancient-caccioppoli}
			\int_{\Omega_{R}}
			\left(
			|\nabla\Phi_{\infty}|^2+\langle\mathcal R\Phi_{\infty},\Phi_{\infty}\rangle
			\right)
			\le CM^2R^2.
		\end{equation}
		
		Next, set $U:=\partial_s\Phi_{\infty}$ and test \eqref{eq:ancient-bundle-equation} against $\psi^2\chi^2U$. Time independence of the metric connection implies
		\[
		\nabla U=\partial_s\nabla\Phi_{\infty}.
		\]
		Time independence and self-adjointness of $\mathcal R$ also give
		\[
		\langle\mathcal R\Phi_{\infty},U\rangle
		=\frac12\partial_s\langle\mathcal R\Phi_{\infty},\Phi_{\infty}\rangle.
		\]
		Consequently, if
		\[
		A_R(s):=
		\int\chi^2
		\left(
		|\nabla\Phi_{\infty}|^2+\langle\mathcal R\Phi_{\infty},\Phi_{\infty}\rangle
		\right),
		\]
		then
		\[
		\psi^2\int\chi^2|U|^2
		+\frac12\frac{d}{ds}\bigl(\psi^2A_R\bigr)
		=
		\psi\psi'A_R
		-2\psi^2\int
		\chi\langle\nabla\Phi_{\infty},d\chi\otimes U\rangle.
		\]
		Using
		\[
		2\chi|\nabla\Phi_{\infty}|\,|d\chi|\,|U|
		\le
		\frac12\chi^2|U|^2
		+2|d\chi|^2|\nabla\Phi_{\infty}|^2,
		\]
		and integrating over $I_{2R}$, we obtain
		\begin{align*}
			&\frac12\int_{\Omega_{2R}}
			\psi^2\chi^2|U|^2
			+\frac12 A_R(s_0)\\
			&\qquad\le
			\int_{I_{2R}}\psi\psi'A_R(s)\,ds
			+2\int_{\Omega_{2R}}
			\psi^2|d\chi|^2|\nabla\Phi_{\infty}|^2\\
			&\qquad\le
			\frac{C}{R^2}\int_{\Omega_{2R}}
			\left(
			|\nabla\Phi_{\infty}|^2+\langle\mathcal R\Phi_{\infty},\Phi_{\infty}\rangle
			\right).
		\end{align*}
		Since $A_R(s_0)\ge0$, the preceding estimate and \eqref{eq:ancient-caccioppoli} at radius $2R$ imply
		\begin{equation}
			\int_{\Omega_R}|\partial_s\Phi_{\infty}|^2
			\le
			\frac{C}{R^2}\int_{\Omega_{2R}}
			\left(
			|\nabla\Phi_{\infty}|^2+\langle\mathcal R\Phi_{\infty},\Phi_{\infty}\rangle
			\right)
			\le CM^2.
			\label{eq:ancient-time-derivative-bound}
		\end{equation}
		
		Differentiating equation \eqref{eq:ancient-bundle-equation} in time gives
		\[	\partial_sU-\Delta_\nabla U+\mathcal RU=0.\]
		Taking an inner product with $U$ yields
		\begin{equation*}
			\frac12(\partial_s-\Delta)|U|^2
			=
			-|\nabla U|^2-\langle\mathcal RU,U\rangle
			\le0.
		\end{equation*}
		Thus $|U|^2$ is a nonnegative scalar subsolution of the Euclidean heat equation. The parabolic mean-value inequality in spatial dimension two gives
		\begin{equation*}
			|U(x_0,s_0)|^2
			\le \frac{C}{R^4}\int_{\Omega_R}|U|^2.
		\end{equation*}
		Combining this with \eqref{eq:ancient-time-derivative-bound}, we obtain
		\[
		|U(x_0,s_0)|^2\le\frac{CM^2}{R^4}.
		\]
		Since the solution is ancient, we may let $R\to\infty$. Thus $U(x_0,s_0)=0$, and the point was arbitrary.
	\end{proof}
	
	Next we show that the stationary solution is parallel.
	\begin{lemma}
		\label{lem:stationary-parallelism}
		Let $\Phi_{\infty}$ be a bounded stationary solution to \eqref{eq:ancient-bundle-equation}, i.e.
		\begin{equation}
			-\Delta_\nabla \Phi_{\infty}+\mathcal R \Phi_{\infty}=0
			\qquad\text{on }\mathbb R^2.
			\label{eq:stationary-bundle-equation}
		\end{equation}
		Then $\nabla \Phi_{\infty}\equiv0$ and $\mathcal R \Phi_{\infty}\equiv0$.
	\end{lemma}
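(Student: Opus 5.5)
The plan is a Liouville argument by logarithmic capacity, as announced at the start of this subsection. Since $\Phi_{\infty}$ is bounded by $M$, it suffices to show
\[
\int_{\mathbb R^2}\bigl(|\nabla\Phi_{\infty}|^2+\langle\mathcal R\Phi_{\infty},\Phi_{\infty}\rangle\bigr)=0 .
\]
The key observation is the stationary Bochner identity. Pairing \eqref{eq:stationary-bundle-equation} with $\Phi_{\infty}$ and using metric compatibility of the connection gives
\[
\tfrac12\Delta|\Phi_{\infty}|^2=|\nabla\Phi_{\infty}|^2+\langle\mathcal R\Phi_{\infty},\Phi_{\infty}\rangle\ \ge0 .
\]
So $|\Phi_{\infty}|^2$ is a bounded subharmonic function on $\mathbb R^2$.

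The first route is the classical one. In dimension two a bounded subharmonic function is constant, which is exactly the logarithmic-capacity statement that the plane is parabolic. Then $\Delta|\Phi_{\infty}|^2=0$, so the nonnegative right-hand side vanishes pointwise. Hence $\nabla\Phi_{\infty}\equiv0$ and $\langle\mathcal R\Phi_{\infty},\Phi_{\infty}\rangle\equiv0$.

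To keep the argument self-contained and in the spirit of the paper, I will instead run it with the logarithmic cutoff of Lemma~\ref{lem:logarithmic-cutoff}. Take $\chi=\chi_{a,R}$ centered at the origin with $a$ fixed and $R\to\infty$. Multiply the Bochner identity by $\chi$ and integrate by parts twice to get
\[
\int\chi\bigl(|\nabla\Phi_{\infty}|^2+\langle\mathcal R\Phi_{\infty},\Phi_{\infty}\rangle\bigr)
=\tfrac12\int|\Phi_{\infty}|^2\Delta\chi
\le \tfrac12 M^2\int|\Delta\chi|\le\frac{CM^2}{\ln(R/a)} .
\]
The integrations by parts are legitimate because $\chi$ is compactly supported and $\Phi_{\infty}$ is smooth. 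Since $\chi=1$ on $D_a$, letting $R\to\infty$ kills the left side over $D_a$. Then letting $a\to\infty$ (or noting $a$ was arbitrary) gives the vanishing on all of $\mathbb R^2$.

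Finally, $\mathcal R\ge0$ is self-adjoint, so $\langle\mathcal R\Phi_{\infty},\Phi_{\infty}\rangle=|\mathcal R^{1/2}\Phi_{\infty}|^2=0$ gives $\mathcal R^{1/2}\Phi_{\infty}=0$, and hence $\mathcal R\Phi_{\infty}=0$.

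I do not expect a real obstacle here. The only care needed is the sign convention $\Delta_\nabla=\operatorname{tr}\nabla^2$, so that $\tfrac12\Delta|\Phi|^2=|\nabla\Phi|^2+\langle\Delta_\nabla\Phi,\Phi\rangle$. The other point is that Lemma~\ref{lem:logarithmic-cutoff} controls $\int|\Delta\chi|$ rather than requiring any decay of $\nabla\Phi_{\infty}$. This is why no a priori finite-energy assumption on $\Phi_{\infty}$ is needed: boundedness alone suffices in dimension two.
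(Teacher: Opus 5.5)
Your argument is correct and is essentially the paper's proof: both are logarithmic-capacity arguments using the cutoff from Lemma~\ref{lem:logarithmic-cutoff}, letting $R\to\infty$ and then using that the inner radius is arbitrary. The only difference is bookkeeping. The paper tests the equation against $\zeta^2\Phi_{\infty}$, absorbs the cross term by Young's inequality, and uses the bound $\int|\nabla\zeta|^2\le C/\ln(R/L)$. You instead integrate the Bochner identity for $|\Phi_{\infty}|^2$ against $\chi$, integrate by parts twice, and use the $\int|\Delta\chi|$ bound from the same lemma, which avoids the absorption step.
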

	
	\begin{proof}
		Fix $L>1$. For $R>4L$, take $\zeta=\chi_{L,R}$ from Lemma~\ref{lem:logarithmic-cutoff}. Then $\zeta=1$ on $D_L$, $\zeta$ is supported in $D_R$, and
		\[
		\int_{\mathbb R^2}|\nabla\zeta|^2
		\le\frac C{\ln(R/L)}\longrightarrow0\qquad(R\to\infty).
		\]
		Testing \eqref{eq:stationary-bundle-equation} against $\zeta^2\Phi_{\infty}$ gives
		\begin{equation*}
			\int\zeta^2|\nabla \Phi_{\infty}|^2
			+\int\zeta^2\langle\mathcal R \Phi_{\infty},\Phi_{\infty}\rangle
			=-2\int\zeta	\langle\nabla \Phi_{\infty},\Phi_{\infty}\otimes\nabla\zeta\rangle.
		\end{equation*}
		By Young's inequality,
		\begin{equation*}
			\frac12\int\zeta^2|\nabla \Phi_{\infty}|^2
			+\int\zeta^2\langle\mathcal R \Phi_{\infty},\Phi_{\infty}\rangle
			\le C\|\Phi_{\infty}\|_\infty^2\int|\nabla\zeta|^2.
		\end{equation*}
		Since $\zeta=1$ on $D_L$, letting $R\to\infty$ yields
		\begin{equation*}
			\int_{D_L}|\nabla \Phi_{\infty}|^2
			+\int_{D_L}\langle\mathcal R \Phi_{\infty},\Phi_{\infty}\rangle=0.
		\end{equation*}
		Both integrands are nonnegative, so
		\begin{equation*}
			\nabla \Phi_{\infty}=0,\qquad\langle\mathcal R \Phi_{\infty},\Phi_{\infty}\rangle=0
			\quad\text{on }D_L.
		\end{equation*}
		As $L$ is arbitrary and $\mathcal R$ is nonnegative, the lemma follows.
	\end{proof}
	
	\subsection{Vanishing of tension energy on bubble regions}
	
	Now we apply the preceding lemmas to show the vanishing of $L^2$ tension energy in the bubble region.
	
	\begin{theorem}
		\label{thm:bubble-tension-vanishing}
		Assume \eqref{assumption:H} and $B_N\ge 0$. Let $t_n\uparrow T$, $\lambda_n\downarrow0$, and coordinate centers $z_n=z(p_n)$ satisfy
		\begin{equation*}
			u(z_n+\lambda_n y,t_n)
			\longrightarrow
			\omega(y)
			\qquad
			\text{strongly in }
			C^{\infty}_{\mathrm{loc}}(\mathbb R^2\setminus Z),
		\end{equation*}
		where $Z\subset\mathbb R^2$ is finite and $\omega$ is a holomorphic bubble or constant. Then for every compact regular set $K\Subset\mathbb R^2\setminus Z$,
		\begin{equation*}
			\int_{z_n+\lambda_nK}|\mathcal T(u(t_n))|^2\,dV
			\longrightarrow0.
		\end{equation*}
	\end{theorem}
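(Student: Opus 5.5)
The plan is to rescale, pass to the normalized antiholomorphic differential, and use the rigidity lemmas to show that the rescaled tension is $o(\lambda_n)$ on $K$. By Lemma~\ref{lem:tension-stress-scaling} with $s=0$,
\[
\int_{z_n+\lambda_nK}|\mathcal T(u(t_n))|^2\,dV
=\lambda_n^{-2}\int_K|\mathcal T_{g_n}(v_n(0))|^2\,dV_{g_n}.
\]
Applying \eqref{eq:real-first-order} to $v_n$ with respect to $g_n$ gives $\mathcal T_{g_n}(v_n)=2\operatorname{div}_{g_n}\Phi(v_n)=2\lambda_n\operatorname{div}_{g_n}\Phi_n$, with $\Phi_n$ as in \eqref{eq:normalized-differential}. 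Hence, as in \eqref{eq:tension-gradient-bound}, the quantity to control is bounded by
\[
C\int_K|\nabla_{g_n}\Phi_n(\cdot,0)|^2\,dV_{g_n}.
\]
It therefore suffices to show that $\nabla_{g_n}\Phi_n(\cdot,0)\to0$ uniformly on $K$, along the full sequence.

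The key steps are as follows.

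\begin{enumerate}
\item Argue by contradiction. Pass to a subsequence along which the integral stays above some $\eta>0$.
\item Apply Lemma~\ref{lem:rescaled-spacetime-convergence}, so that $v_n\to\omega$ smoothly on compact subsets of $(\mathbb R^2\setminus Z)\times(-\infty,0]$.
\item Apply Lemma~\ref{lem:normalized-limiting-system} to extract a further subsequence with $\Phi_n\to\Phi_\infty$ in $C^\infty_{\mathrm{loc}}$, up to and including the terminal face $s=0$. Here $\Phi_\infty$ is bounded by $CB$ and solves \eqref{eq:limiting-differential-equation} with $\mathcal R=\mathcal R_{\Psi(\omega)}\ge0$. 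The curvature is nonnegative because $B_N\ge0$.
\item Apply Lemma~\ref{lem:limiting-system-removability} to extend $\Phi_\infty$ smoothly to a bounded ancient solution on $\mathbb R^2\times(-\infty,0]$.
\end{enumerate}

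To fit the hypotheses of the rigidity lemmas, take the bundle $\mathcal E=T^*\mathbb R^2\otimes\omega^*TN$ with its pulled-back metric connection and the constant limiting flat metric. The bundle, its connection, and $\mathcal R_{\Psi(\omega)}$ are all time-independent because $\omega$ is stationary. Lemma~\ref{lem:ancient-stationarity} then shows that $\Phi_\infty$ is stationary, and Lemma~\ref{lem:stationary-parallelism} gives $\nabla\Phi_\infty\equiv0$.

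By the $C^1$ convergence on $K\times\{0\}$, we get $\nabla_{g_n}\Phi_n(\cdot,0)\to\nabla\Phi_\infty(\cdot,0)=0$ uniformly on $K$. The integral therefore tends to zero, contradicting the choice of subsequence. Since every subsequence has a further subsequence along which the integral tends to zero, the whole sequence does.

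The main obstacle is making sure that the convergence $\Phi_n\to\Phi_\infty$ holds smoothly up to $s=0$, rather than only in the interior of the time interval. This is what allows the terminal-time tension to be evaluated at the limit. Lemma~\ref{lem:normalized-limiting-system} asserts exactly this, through backward parabolic cylinders, and the proof relies on it. Two further points need checking. First, the covariant derivative with respect to $g_n$ must converge to the limiting one; this holds because $g_n$ converges smoothly to a constant metric and $v_n\to\omega$ in $C^\infty$. Second, the case where $\omega$ is constant is covered by the same argument, with $\mathcal R=0$ and a trivial bundle.
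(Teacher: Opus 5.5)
Your proposal is correct and follows essentially the same route as the paper's proof. Both rescale, pass to the normalized differential $\Phi_n$, extend the limit across $Z$, use the stationarity and parallelism lemmas to get $\nabla\Phi_\infty=0$, and conclude via the first-order identity and the scaling lemma. The only differences are cosmetic: you bound $\operatorname{div}_{g_n}\Phi_n$ by the full covariant derivative, and you phrase the subsequence argument as a contradiction.
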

	
	\begin{proof}
		Use $v_n$ and $g_n$ from Section~\ref{subsec:rescaled-flow-convergence}, and $\Phi_n$ from \eqref{eq:normalized-differential}. By Lemmas~\ref{lem:rescaled-spacetime-convergence} and~\ref{lem:normalized-limiting-system}, $\Phi_n$ subconverges smoothly on compact regular cylinders to a bounded ancient field $\Phi_{\infty}$. Lemma~\ref{lem:limiting-system-removability} extends it across $Z$, and Lemmas~\ref{lem:ancient-stationarity} and~\ref{lem:stationary-parallelism} give
		\[
		\nabla\Phi_{\infty}=0,\qquad \mathcal R_{\Psi(\omega)}\Phi_{\infty}=0.
		\]
		The first-order identity \eqref{eq:real-first-order} therefore yields
		\begin{equation*}
			\frac{\mathcal T_{g_n}(v_n)}{\lambda_n}
			=2\operatorname{div}_{g_n}\Phi_n\longrightarrow0,
		\end{equation*}
		smoothly on compact regular cylinders, including $s=0$. Every subsequence has a further subsequence with this zero limit, so the convergence holds along the whole sequence. By Lemma~\ref{lem:tension-stress-scaling},
		\[
		\int_{z_n+\lambda_nK}|\mathcal T(u(t_n))|^2\,dV
		=\int_K\left|\frac{\mathcal T_{g_n}(v_n(\cdot,0))}{\lambda_n}\right|^2
		\,dV_{g_n}\longrightarrow0.
		\]
	\end{proof}
	
	\begin{remark}
		If the holomorphic bisectional curvature is strictly positive along the bubble, then
		$$
		\mathcal R_{\Psi(\omega)} \Phi_{\infty}=0
		$$
		forces $\Phi_{\infty}=0$ on every nonconstant component. This can also be proved using the usual absorption argument for a scalar equation.
	\end{remark}
	
	\begin{remark}
		If $\omega$ is constant, then
		$$
		\mathcal R_{\Psi(\omega)}\equiv0.
		$$
		In this case, the bounded ancient normalized differential $\Phi_{\infty}$ is not necessarily zero, but still parallel. Thus the same proof treats nonconstant bubbles and ghost components.
	\end{remark}
	
	\subsection{Vanishing of stress energy on bubble regions}
	
	We now establish the stress energy counterpart of Theorem~\ref{thm:bubble-tension-vanishing}, which will be used to prove strong stress convergence in Section~\ref{sec:terminal-convergence}. First we prove the following lemma.

	\begin{lemma}\label{lem:stress-pairing}
		Let $\omega:S^2\to N$ be harmonic and let $X$ be a bounded parallel real section of $\omega^*TN$ on an affine plane minus finitely many points. Then $X$ extends to a parallel section over $S^2$, and
		\begin{equation*}
			h(d\omega,X)=0.
		\end{equation*}
	\end{lemma}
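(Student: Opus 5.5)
Two things must be shown: that $X$ extends, and that the scalar function $f:=h(d\omega,X)$ vanishes. The plan is to treat $f$ as a harmonic one-form on the punctured plane and then rule it out globally.

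\textbf{Step 1: extension.} Parallelism gives $|X|$ constant on the connected punctured plane, and parallel transport along curves is well defined there. Near a puncture $y_0$, the holonomy of the pulled-back connection around a small loop $\partial D_r(y_0)$ is the integral of $\omega^*R^N$ over $D_r(y_0)$. This integral tends to $0$ as $r\to0$, since $\omega$ is smooth across the puncture. On the other hand, $X$ is parallel around the loop, so the holonomy is trivial on $X$. Hence $X$ is continuous at $y_0$, and $\nabla X=0$ holds weakly across the point. A parallel section is smooth, so $X$ extends to every finite puncture.

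The point at infinity is handled the same way. Use the chart $w=1/y$, in which $\omega$ is smooth near $w=0$ by removability. Then $X$ is a bounded parallel section on a punctured disc around $w=0$, and the same holonomy argument applies. So $X$ is a global parallel section of $\omega^*TN$ over $S^2$.

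\textbf{Step 2: $f$ is a harmonic one-form.} Set $f=h(d\omega,X)$, a real one-form on $S^2$. Since $\nabla X=0$, we get $df=h(d^\nabla d\omega,X)=0$, because $d^\nabla d\omega=0$ by torsion-freeness. Also
\[
d^*f=-h(\mathcal T(\omega),X)=0,
\]
since $\omega$ is harmonic. Thus $f$ is closed and coclosed on $S^2$.

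\textbf{Step 3: conclusion.} Because $H^1(S^2)=0$, every closed one-form is exact, so $f=d\phi$ with $\phi=h(\omega,X)$ understood in the embedding sense. Since $f$ is also coclosed, $\phi$ is harmonic on $S^2$, hence constant, and therefore $f=0$.

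Alternatively, one can work directly in the plane: $f=a\,dx+b\,dy$ with $a-ib$ holomorphic on $\mathbb C$. Conformal invariance of the energy gives $|d\omega|\in L^2$. Since $f$ is smooth at infinity in the $w$-chart, $a-ib=O(|y|^{-2})$, and Liouville's theorem gives $f=0$.

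\textbf{Main obstacle.} The delicate step is the extension across the punctures, in particular at infinity. It rests on the holonomy argument together with removability of singularities for $\omega$, which is what makes $\omega^*R^N$ integrable near each puncture. Once $X$ is a global parallel section, Steps 2 and 3 are immediate.
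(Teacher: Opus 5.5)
Your overall route is the paper's: extend $X$ across the punctures and $\infty$, show that $h(d\omega,X)$ is closed (from $d^\nabla d\omega=0$ and $\nabla X=0$) and co-closed (from $\mathcal T(\omega)=0$), and conclude using $H^1(S^2;\mathbb R)=0$ and constancy of harmonic functions. Steps~2--3 are fine in substance.

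\textbf{Gap in Step~1.} Step~1, which you flag as the delicate point, does not prove what it claims. You establish two facts: the holonomy around $\partial D_r(y_0)$ tends to the identity, and the holonomy is trivial on $X$. Both facts are gauge invariant, and the second holds automatically for any single-valued parallel section. Neither says anything about how $X$ behaves as $y\to y_0$ in a frame that is smooth across $y_0$. Here is a concrete obstruction to the inference. Take the trivial $\mathbb R^2$-bundle over $D_1\setminus\{0\}$ with the metric connection $d+J\,d\theta$, where $J$ is rotation by $\pi/2$. The section $X=e^{-\theta J}X_0$ is parallel and has constant length, and every circle has trivial holonomy. Yet $X$ has no limit at $0$ in this frame.

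\textbf{How to repair it.} What excludes this in your situation is a frame-dependent input: in a frame of $\omega^*TN$ that is smooth across $y_0$, the connection coefficients $\Gamma$ are bounded. Then $dX=-\Gamma X$ gives $|dX|\le C|X|$. So $X$ is uniformly Lipschitz on the punctured disk (join nearby points by paths of comparable length avoiding $y_0$), and it extends continuously. The equation then holds weakly across $y_0$ and bootstraps to smoothness. This is exactly the paper's argument, and it applies verbatim at $\infty$ in the coordinate $1/y$. A holonomy proof can be made to work, but it needs loops through $y_0$ rather than circles around it: compare $X$ along thin sectors based at $y_0$ using radial parallel transport.

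\textbf{Two smaller points.} First, in Step~3 the identification $\phi=h(\omega,X)$ is false for curved targets. In the embedding, the ambient derivative of $X$ is $\mathrm{II}(d\omega,X)$, so $d\langle\omega,X\rangle=f+\langle\omega,\mathrm{II}(d\omega,X)\rangle$, and the last term need not vanish. For example, for $N=S^n\subset\mathbb R^{n+1}$ one has $\langle\omega,X\rangle\equiv0$ identically. You only use the existence of $\phi$, which comes from $H^1(S^2)=0$, so just delete the formula.

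Second, your Liouville alternative is stronger than you present it. Since $|X|$ is constant and $\omega$ is smooth on all of $S^2$, the holomorphic coefficient $a-ib$ is bounded near each finite puncture, hence the singularity is removable. It is also $O(|y|^{-2})$ at infinity, and neither fact requires knowing that $X$ extends. So this argument proves $h(d\omega,X)=0$ independently of Step~1. The Hodge-theoretic route (yours and the paper's) needs $f$ to be a genuine one-form on $S^2$, and hence needs the extension first. The extension claim in the lemma still requires the repaired Step~1.
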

	\begin{proof}
		We first extend the parallel section $X$ to a global one on $S^2$. In a smooth local bundle frame, parallelism of $X$ gives 
		\[ dX=-\Gamma^N X, \]
		where $\Gamma^N$ is the connection form which is smooth across the singular points. Boundedness of $X$ then gives bounded first derivatives near each missing point. Paths of length tending to zero in a punctured disk give a unique continuous extension; the parallel equation $\nabla X=0$ then extends weakly and implies smoothness. The same argument applies at infinity in the coordinate $1/z$.
		
		The global real one-form $\alpha=h(d\omega,X)$ is closed because $d^\nabla d\omega=0$ and $\nabla X=0$, and is co-closed because $\mathcal T(\omega)=0$. Since $H^1(S^2;\mathbb R)=0$, we may write $\alpha=df$. Co-closedness gives $\Delta f=0$, so $f$ is constant and $\alpha=0$.
	\end{proof}
	
	\begin{theorem}
		\label{thm:bubble-stress-vanishing}
		Under the assumptions of Theorem~\ref{thm:bubble-tension-vanishing}, for every compact regular set $K\Subset\mathbb R^2\setminus Z$,
		\begin{equation}
			\int_{z_n+\lambda_n K}|S(u(t_n))|_g^2\,dV\longrightarrow0.
			\label{eq:bubble-stress-vanishing}
		\end{equation}
	\end{theorem}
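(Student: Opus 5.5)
We rescale exactly as in the proof of Theorem~\ref{thm:bubble-tension-vanishing}. By Lemma~\ref{lem:tension-stress-scaling},
\[
\int_{z_n+\lambda_nK}|S(u(t_n))|_g^2\,dV
=\int_K\left|\frac{S_{g_n}(v_n(\cdot,0))}{\lambda_n}\right|_{g_n}^2dV_{g_n},
\]
so it suffices to show that $S_{g_n}(v_n(\cdot,0))/\lambda_n\to0$ uniformly on $K$ along a subsequence of every subsequence. By the bilinear expression \eqref{eq:stress-expression},
\[
\frac{S(v_n)}{\lambda_n}=h(\Phi_n\otimes\Psi(v_n))+h(\Psi(v_n)\otimes\Phi_n).
\]
By Lemma~\ref{lem:rescaled-spacetime-convergence}, $\Psi(v_n)\to\Psi(\omega)=d\omega$ smoothly on $K$ (recall $\omega$ is holomorphic). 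By Lemma~\ref{lem:normalized-limiting-system}, $\Phi_n\to\Phi_\infty$ smoothly on compact regular cylinders, including $s=0$. Hence the normalized stress converges to
\[
h(\Phi_\infty\otimes d\omega)+h(d\omega\otimes\Phi_\infty),
\]
and the task reduces to showing that this limit vanishes identically.

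By Lemmas~\ref{lem:limiting-system-removability}, \ref{lem:ancient-stationarity} and~\ref{lem:stationary-parallelism}, $\Phi_\infty$ is a bounded parallel section of $T^*\mathbb R^2\otimes\omega^*TN$ on $\mathbb R^2$. In the Euclidean frame $e_1,e_2=je_1$ of the limiting constant metric, set $X=\Phi_\infty(e_1)$. Since $\Phi_n\circ j=-J\circ\Phi_n$ persists in the limit, we have $\Phi_\infty(e_2)=-JX$. Because $e_1,e_2$ are parallel and $J$ is parallel, both $X$ and $JX$ are bounded parallel real sections of $\omega^*TN$ on $\mathbb R^2$. If $\omega$ is constant, then $d\omega=0$ and the limit vanishes trivially. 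Otherwise $\omega$ extends to a harmonic sphere, and Lemma~\ref{lem:stress-pairing} applies to both $X$ and $JX$, giving
\[
h(d\omega,X)=0,\qquad h(d\omega,JX)=0.
\]
The components of the limiting stress are
\[
h(\Phi_\infty(e_a),d\omega(e_b))+h(d\omega(e_a),\Phi_\infty(e_b)),
\]
and each term is of the form $\pm h(d\omega(e_b),X)$ or $\pm h(d\omega(e_b),JX)$. Therefore every component vanishes.

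The main point to check is that the convergence is strong enough at the terminal slice $s=0$ to pass to the limit pointwise uniformly on $K$. This holds because Lemma~\ref{lem:normalized-limiting-system} gives smooth convergence up to $s=0$ on compact regular sets, and the coefficients of $g_n$ converge smoothly to the Euclidean metric. Since every subsequence has a further subsequence with zero limit, the convergence in \eqref{eq:bubble-stress-vanishing} holds along the full sequence.
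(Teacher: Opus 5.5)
Your proposal is correct and follows essentially the same argument as the paper. The paper likewise uses the bilinear expression \eqref{eq:stress-expression}, the smooth convergence $\Phi_n\to\Phi_\infty$ (a bounded parallel field) and $\Psi(v_n)\to d\omega$, and Lemma~\ref{lem:stress-pairing} applied componentwise, followed by the subsequence argument and the scaling in Lemma~\ref{lem:tension-stress-scaling}. Your explicit treatment of the components $X$ and $JX$, and of the constant (ghost) case, spells out what the paper compresses into ``applied to each component of the parallel one-form $\Phi_\infty$.''
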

	\begin{proof}
		As in the proof of Theorem~\ref{thm:bubble-tension-vanishing}, $\Phi_n$ subconverges to a bounded parallel field $\Phi_{\infty}$, while $\Psi(v_n)\to d\omega$ smoothly on compact regular cylinders.
		
		Using the bilinear expression of the stress-energy tensor \eqref{eq:stress-expression}, we derive
		\[\begin{aligned}
			\frac{S(v_n)}{\lambda_n}&=h(\Phi_n\otimes \Psi(v_n)) + h(\Psi(v_n)\otimes \Phi_n)\\
			&\longrightarrow h(\Phi_{\infty}\otimes d\omega)+h(d\omega\otimes\Phi_{\infty})=0.
		\end{aligned}  \]
		The last equality follows from Lemma~\ref{lem:stress-pairing}, applied to each component of the parallel one-form $\Phi_{\infty}$.   Since every subsequence has the same possible limit, the convergence holds for the whole sequence.
		
		Finally, the scaling property in Lemma~\ref{lem:tension-stress-scaling} gives
		\[
		\int_{F_n(K)}|S(u(t_n))|_g^2\,dV
		=\int_K\left|\frac{S(v_n(\cdot,0))}{\lambda_n}\right|_{g_n}^2
		\,dV_{g_n}\longrightarrow0,
		\]
		which proves \eqref{eq:bubble-stress-vanishing}.
	\end{proof}
	
	\section{Tension estimates on neck regions}\label{sec:neck-estimates}
	
	In this section, we show that the tension energy does not concentrate on the neck region by establishing control of derivatives of $\Phi$.  We first work with a normalized harmonic map flow on a Euclidean neck domain and derive the tension estimate for $\Phi$ assuming the required decay of the flow. Then we deal with the error term brought by the domain metric in Corollary~\ref{cor:original-scale-neck-tension}. To keep the main context concise, we put the technical scalar estimates in Appendix~\ref{sec:annular-estimates}. 
	
	For $0<a<R/2$, and a center $p\in\Sigma$ in a fixed chart $z$, define the parabolic neck regions
	\[
	\begin{aligned}
		\Omega_a^R(p,t_*)
		&:=\left\{(x,t):t_*-R^2\le t\le t_*,\quad
		a+\frac{t_*-t}{2R}<|x-z(p)|<R\right\},\\
		\widetilde \Omega_a^R(p,t_*)
		&:=\left\{(x,t):t_*-2R^2<t\le t_*,\quad
		\frac12\left(a+\frac{t_*-t}{2R}\right)<|x-z(p)|<2R\right\}.
	\end{aligned}
	\]
	In normalized Euclidean coordinates, we use the same formulas with center $0$ and abbreviate
	\[
	\Omega_a:=\Omega_a^1(0,0),\qquad
	\widetilde \Omega_a:=\widetilde \Omega_a^1(0,0).
	\]
	
	\subsection{Tension estimates on normalized neck regions}
	
	Fix $0<\beta<1$, $L<\infty$, and $0<a<1/2$. Let $v$ be a smooth harmonic map heat flow on $\widetilde \Omega_a$, with Euclidean domain metric. Suppose
	\begin{equation}\label{eq:standard-profile}
		|\Phi(v)|\le M,\qquad
		r|dv|\le L\left[\left(\frac{a-s/2}{r}\right)^\beta+r^\beta\right],
		\qquad r=|y|.
	\end{equation}
	Using the fixed embedding $N\hookrightarrow\mathbb R^m$, we regard $\Phi(v)$ as a $\mathbb R^m$-valued function. We write $\partial$ and $\Delta$ for Euclidean derivatives of the ambient components, and $\nabla$ for the induced target connection.
	
	\begin{lemma}
		\label{lem:normalized-neck-equation}
		Under \eqref{eq:standard-profile}, the $\mathbb R^m$-valued function $\Phi(v)$ satisfies
		\begin{equation}\label{eq:flat-neck-system}
			(\partial_s-\Delta)\Phi
			=\mathsf C^j\partial_j\Phi+\mathsf V\Phi,
		\end{equation}
		where
		\begin{equation}\label{eq:neck-coefficient-bounds}
			|\mathsf C|\le C|dv|,\qquad |\mathsf V|\le C|dv|^2.
		\end{equation}
		On $\Omega_a$, we have
		\begin{equation}\label{eq:neck-derivatives}
			|\partial\Phi|\le CM/r,\qquad
			|\partial^2\Phi|\le CM/r^2,
		\end{equation}
		and
		\begin{equation}\label{eq:neck-forcing}
			|(\partial_s-\Delta)\Phi|
			\le\frac{CM}{r^2}
			\left[\left(\frac{a-s/2}{r}\right)^\beta+r^\beta\right].
		\end{equation}
		The constants depend only on $\beta,L$ and the target geometry.
	\end{lemma}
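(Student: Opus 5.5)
We derive \eqref{eq:flat-neck-system} by rewriting the intrinsic evolution \eqref{eq:antiholomorphic-evolution} (with $K_\Sigma=0$ for the Euclidean metric) in ambient coordinates. For an $\mathbb R^m$-valued section $\xi$ of $v^*TN$, the ambient derivative and the pulled-back connection are related by $\partial_j\xi=\nabla_j\xi+A(v)(\partial_jv,\xi)$, where $A$ is the second fundamental form of $N\hookrightarrow\mathbb R^m$. The same relation holds in time with $\partial_sv$ in place of $\partial_jv$. Iterating it gives
\[
\Delta\xi=\Delta_\nabla\xi+2A(\partial_jv,\partial_j\xi)+(\text{terms }DA\cdot dv\cdot dv\cdot\xi,\ A\cdot\nabla dv\cdot\xi,\ A\cdot A\cdot dv\cdot dv\cdot\xi).
\]
We apply this to $\xi=\Phi$. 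The term involving $\nabla dv$ is removed by a standard trick: its trace part is $\mathcal T=\partial_sv$, which cancels against the corresponding term in $\partial_s\Phi$. For the remaining term $\partial_j\Phi\cdot\partial_j v$, we write $\nabla\Phi$ back as $\partial\Phi$ minus $A(dv,\Phi)$. Since we are expressing things in terms of $\Phi$, the terms with $\nabla_jdv$ are handled via $dv=\Phi+\Psi$ and $\partial\Phi$, $\partial\Psi$.

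A more careful route is to expand everything so that each second derivative of $v$ appears either as $\partial\Phi$ or as $\mathcal T$. Using $2\operatorname{div}\Phi=\mathcal T$ and the Cauchy--Riemann-type relation $d^\nabla\Phi=-\tfrac12J\mathcal T\,dV$ from Lemma~\ref{lem:real-first-order-identities}, every $\mathcal T$ can itself be written as $\mathsf C\cdot\partial\Phi+\mathsf V\Phi$. The curvature terms $\mathcal R_\Psi\Phi$ and $\mathcal R_\Phi\Phi$ are of the form $O(|dv|^2)\Phi$. The outcome is a linear system with $|\mathsf C|\le C|dv|$ and $|\mathsf V|\le C|dv|^2$, which is \eqref{eq:neck-coefficient-bounds}. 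The main bookkeeping obstacle is making sure no term with an undifferentiated $\nabla dv$ multiplying $\Phi$ survives. Such a term would only be bounded by $C|\nabla dv||\Phi|$. If one does survive, we will bound it using interior regularity as in the next step, since $|\nabla dv|\lesssim |dv|/r$, which also suffices for \eqref{eq:neck-forcing}.

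For the derivative bounds \eqref{eq:neck-derivatives}, fix $(y,s)\in\Omega_a$ with $r=|y|$. The shape of the regions guarantees that the parabolic cylinder $P=D_{cr}(y)\times[s-c^2r^2,s]$ lies in $\widetilde\Omega_a$ for a small fixed $c$. This uses $r>a+(-s)/2$ on $\Omega_a$: the inner boundary of $\widetilde\Omega_a$ is at half this radius, the backward time extension only moves it by $O(r)$, and the outer radius is $2$. On $P$, \eqref{eq:standard-profile} gives $r|dv|\le C L$, since $(a-s/2)/r\le1$ and $r\le1$.

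We rescale $P$ to unit size by $y'=(y-\cdot)/(cr)$. The rescaled $\Phi$ is bounded by $M$ in $L^\infty$ and solves \eqref{eq:flat-neck-system} with coefficients bounded by $C(L)$. To get higher derivative bounds on those coefficients, we use Proposition~\ref{prop:interior-parabolic-regularity}, or simply smallness-free parabolic regularity of harmonic map flow with bounded gradient, applied to the rescaled $v$. Standard interior Schauder/$L^p$ estimates for linear parabolic systems then give $|\partial\Phi|+|\partial^2\Phi|\le CM$ in rescaled variables. Scaling back yields \eqref{eq:neck-derivatives}.

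Finally, for \eqref{eq:neck-forcing} we insert these bounds into \eqref{eq:flat-neck-system}:
\[
|(\partial_s-\Delta)\Phi|\le C|dv|\frac{M}{r}+C|dv|^2M.
\]
We then use $|dv|\le Lr^{-1}[\cdots]$ and, for the quadratic term, $r|dv|\le CL$ to drop one factor. This gives $CMr^{-2}[(\tfrac{a-s/2}{r})^\beta+r^\beta]$, as required.
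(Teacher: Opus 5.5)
Your proposal is correct and follows the paper's proof. The steps match: the Gauss formula for $\partial_j\Phi$ and $\partial_s\Phi$, cancellation of the tension terms via $v_s=\mathcal T(v)$, and re-expressing $\nabla\Phi$ through $\partial\Phi$. Then come rescaled interior estimates on backward cylinders $D_{\theta r}(y)\times[s-\theta^2r^2,s]\subset\widetilde\Omega_a$ where $|dv|\le CL/r$. Finally, you insert the profile into $|(\partial_s-\Delta)\Phi|\le CMr^{-2}(r|dv|+r^2|dv|^2)$.

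The hedging in your second paragraph is unnecessary, and its fallback would give \eqref{eq:neck-forcing} but not $|\mathsf V|\le C|dv|^2$. Iterating the Gauss formula produces second derivatives of $v$ only through the trace term $A(\Delta v,\Phi)$. Its tangential part $\mathcal T(v)$ cancels exactly against $A(\partial_s v,\Phi)$, and its normal part $-A(dv,dv)$ is $O(|dv|^2)$. So no $\nabla dv\cdot\Phi$ term survives, and no $\partial\Psi$ is ever needed.
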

	
	\begin{proof}
		By Proposition~\ref{prop:antiholomorphic-evolution},
		\begin{equation*}
			(\nabla_s-\Delta_\nabla)\Phi
			=-\mathcal R_{\Psi(v)}\Phi+\mathcal R_{\Phi(v)}\Phi.
		\end{equation*}
		Write $\Phi=\Phi_j\,dy^j$ in the fixed embedding $N\hookrightarrow\mathbb R^m$. The Gauss formula gives
		\[
		\partial_i\Phi_j=\nabla_i\Phi_j+\mathrm{II}(v_i,\Phi_j),
		\qquad
		\partial_s\Phi_j=\nabla_s\Phi_j+\mathrm{II}(v_s,\Phi_j).
		\]
		Differentiating the first identity and taking the trace, we obtain
		\[
		\Delta\Phi_j=(\Delta_\nabla\Phi)_j
		+\mathrm{II}(\mathcal T(v),\Phi_j)
		+2\sum_i\mathrm{II}(v_i,\nabla_i\Phi_j)
		+O(|dv|^2|\Phi|),
		\]
		where the remainder is linear in $\Phi$. Since $v_s=\mathcal T(v)$, the terms containing the tension cancel in $(\partial_s-\Delta)\Phi$. Expressing $\nabla\Phi$ in terms of $\partial\Phi$ then gives \eqref{eq:flat-neck-system} and \eqref{eq:neck-coefficient-bounds}.
		
		Fix $(y,s)\in\Omega_a$ and write $r=|y|$. We choose $0<\theta<1/4$ uniformly, independently of $a,M,y,s$. Since $a-s/2<r<1$ and $s\ge-1$, the backward cylinder
		\[
		D_{\theta r}(y)\times[s-\theta^2r^2,s]
		\]
		lies in $\widetilde\Omega_a$; in particular, its inner boundary condition follows from
		\[
		a-s/2+\tfrac12\theta^2r^2<2(1-\theta)r.
		\]
		On this cylinder, \eqref{eq:standard-profile} gives $|dv|\le C_\beta L/r$. Under the rescaling $(x,t)=(y+\theta r z,s+\theta^2r^2\tau)$, the flow therefore has uniformly bounded spatial gradient. Standard interior parabolic regularity for the flow gives uniform bounds on all its spacetime derivatives on $D_{1/2}\times[-1/4,0]$. Hence the rescaled coefficients $\theta r\,\mathsf C^j$ and $\theta^2r^2\mathsf V$ in \eqref{eq:flat-neck-system} have uniform smooth bounds there. The ambient components of $\Phi$, composed with this rescaling, remain bounded by $M$. Interior estimates for the linear system, followed by scaling back, therefore give
		\[
		|\partial^k\Phi(y,s)|\le \frac{CM}{(\theta r)^k}
		\le \frac{CM}{r^k},\qquad k=1,2.
		\]
		Here $C$ depends only on $\beta,L$ and the target geometry. These are \eqref{eq:neck-derivatives}; the same backward-cylinder estimates apply when $s=0$.
		
		Finally, \eqref{eq:flat-neck-system} implies
		\[
		|(\partial_s-\Delta)\Phi|
		\le\frac{CM}{r^2}\bigl(r|dv|+r^2|dv|^2\bigr).
		\]
		Inserting the bound \eqref{eq:standard-profile} proves \eqref{eq:neck-forcing}.
	\end{proof}
	
	Next we apply the decay estimate in Appendix~\ref{sec:annular-estimates} to control the tension energy on the normalized neck region. 
	
	\begin{proposition}\label{prop:normalized-neck-tension}
		Under \eqref{eq:standard-profile}, there exist $K_0\ge4$, $0<\delta_0<1$, and $C<\infty$ such that
		\begin{equation}\label{eq:neck-main}
			\int_{U_{Ka}^{\delta}}
			\bigl(|\nabla\Phi(v(y,0))|^2+|\mathcal T(v(y,0))|^2\bigr)\,dy
			\le CM^2\left[
			\frac1{\ln K}+\frac1{\ln(1/\delta)}+\delta^{\beta/4}\right]
		\end{equation}
		for $K\ge K_0$ and $0<\delta<\delta_0$. The constants depend only on $\beta,L$ and the target geometry, and are independent of $a$ and $M$.
	\end{proposition}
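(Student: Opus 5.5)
Since $|\mathcal T|^2=4|\operatorname{div}\Phi|^2\le 8|\nabla\Phi|^2$ by \eqref{eq:tension-gradient-bound}, and $|\nabla\Phi|\le|\partial\Phi|+C|dv||\Phi|$ by the Gauss formula, it suffices to bound $\int_{U_{Ka}^\delta}|\partial\Phi(y,0)|^2\,dy$. The term $|dv|^2|\Phi|^2\le CM^2L^2r^{-2}[(a/r)^{2\beta}+r^{2\beta}]$ integrates over $U_{Ka}^\delta$ to at most $CM^2(K^{-2\beta}+\delta^{2\beta})$. This is absorbed into the right side of \eqref{eq:neck-main}.

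The main term is handled by a time-integrated energy estimate with a logarithmic cutoff at both ends, plus a local parabolic bound that passes from spacetime to the terminal slice. Take the scalar weight $\zeta(y)$ equal to $\chi_{\delta,\sqrt\delta}$ near the outer end and $1-\chi_{Ka/2,\sqrt{K}a}$ near the inner end, both from Lemma~\ref{lem:logarithmic-cutoff}. Then $\zeta$ is supported in $\{\sqrt{K}a\lesssim r\lesssim\sqrt\delta\}$, equals one on $U_{Ka}^{\delta}$, and satisfies $\int|\nabla\zeta|^2\le C/\ln K+C/\ln(1/\delta)$.

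Multiply \eqref{eq:flat-neck-system} by $\zeta^2\Phi$ and integrate over $y$ and $s\in[-\tau,0]$, with $\tau$ chosen below. Integrating by parts gives
\[
\tfrac12\!\int\zeta^2|\Phi(0)|^2+\!\int\!\!\int\zeta^2|\partial\Phi|^2
\le\tfrac12\!\int\zeta^2|\Phi(-\tau)|^2+C\!\int\!\!\int\bigl(|\nabla\zeta|^2|\Phi|^2+\zeta^2|\Phi||(\partial_s-\Delta)\Phi|\bigr).
\]
This gives spacetime control, but we want the slice $s=0$. To get it, use the neck derivative bounds \eqref{eq:neck-derivatives} differently. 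At each point at scale $r$, interior parabolic estimates for \eqref{eq:flat-neck-system} on the cylinder $D_{\theta r}(y)\times[-\theta^2r^2,0]$ give
\[
|\partial\Phi(y,0)|^2\le \frac{C}{r^4}\int\!\!\int_{D_{\theta r}(y)\times[-\theta^2 r^2,0]}|\partial\Phi|^2+C\sup|\text{forcing}|^2 r^2 .
\]
Integrating over $y$ therefore bounds the slice integral by $r^{-2}$ times the spacetime integral on a time window of length $r^2$. So I would apply the energy identity with the scale-adapted weight $r^{-2}$, i.e.\ work on dyadic annuli $A_k=\{2^k\le r<2^{k+1}\}$ with time window $\tau_k\sim 4^k$.

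The key scalar quantity is the Dirichlet-type energy $\int\zeta^2|\partial\Phi|^2$ at a fixed time, which is scale invariant in two dimensions. I would instead differentiate $\int\zeta^2|\partial\Phi|^2$ in $s$: its derivative is $-\int\zeta^2|\partial_s\Phi|^2$ plus cross terms with $\nabla\zeta$ and the forcing. Integrating backward from $s=0$ over $[-1/2,0]$ bounds $\int\zeta^2|\partial\Phi(0)|^2$ by the time average of $\int\zeta^2|\partial\Phi|^2$, plus forcing and cutoff errors. The time average is controlled by the $\Phi$-energy inequality above, using $\int|\nabla\zeta|^2|\Phi|^2\le CM^2(1/\ln K+1/\ln(1/\delta))$.

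The forcing term is estimated with \eqref{eq:neck-forcing}:
\[
\int\!\!\int\zeta^2 M\cdot\frac{M}{r^2}\Bigl[\Bigl(\frac{a-s/2}{r}\Bigr)^\beta+r^\beta\Bigr].
\]
The $r^\beta$ part integrated up to $\sqrt\delta$ gives $M^2\delta^{\beta/2}$. The inner part is the main obstacle. The time weight $(a-s/2)^\beta$ is not small for $|s|$ of order one, since the neck inner radius grows to $a-s/2$. So the time window at $y$ must be restricted to $|s|\lesssim r^{2}$ (or $\lesssim r\sqrt{\delta}$), where $(a-s/2)/r\lesssim a/r+r$. This is exactly where the Appendix~\ref{sec:annular-estimates} scalar annular decay estimates are needed: a sub/supersolution comparison on the shrinking parabolic annuli $\Omega_a$ for $|\partial\Phi|^2$, or for $r^2|\partial\Phi|^2$, which yields decay $\delta^{\beta/4}$ after choosing the outer time window as a power of $\delta$. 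The split into exponents $\beta/2$ and then $\beta/4$ comes from optimizing this window against the $1/\ln$ cutoff losses.

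Collecting the cutoff losses $1/\ln K+1/\ln(1/\delta)$, the forcing contribution $\delta^{\beta/4}$, and the $K^{-\beta}$ terms (absorbed into $1/\ln K$) gives \eqref{eq:neck-main}. All constants depend only on $\beta$, $L$ and the target geometry.
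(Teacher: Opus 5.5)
There is a genuine gap: the core estimate for $\int_{U_{Ka}^{\delta}}|\partial\Phi(y,0)|^2\,dy$ is never established, and the energy scheme you sketch does not close. Your reduction to that integral is fine. In fact $|\nabla\Phi|\le|\partial\Phi|$ holds directly, since by the Gauss formula $\nabla\Phi$ is the tangential part of $\partial\Phi$.

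The failure is a time-scale mismatch between the two ends of the neck.
\begin{itemize}
\item \emph{Inner end.} A time-independent cutoff $\zeta$ reaching down to $r\sim\sqrt K a$ must lie inside $\widetilde\Omega_a$ on the whole window $[-\tau,0]$, which forces $\tau\lesssim\sqrt K a$. Your window $[-1/2,0]$ is not even admissible: at $s=-1/2$ the inner radius of $\widetilde\Omega_a$ exceeds $1/8$, so all of $\operatorname{supp}\zeta\subset\{r\lesssim\sqrt\delta\}$ lies outside the domain. Moreover, differentiating $\int\zeta^2|\partial\Phi|^2$ in time produces $\int|\nabla\zeta|^2|\partial\Phi|^2$. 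By \eqref{eq:neck-derivatives} this is of size $M^2/(Ka^2\ln^2K)$ on the inner transition layer, which forces $\tau\lesssim Ka^2$.
\item \emph{Outer end.} Converting the spacetime bound to a time average costs $\tau^{-1}\int\zeta^2|\Phi(-\tau)|^2$. This can be of size $M^2\delta/\tau$, so you need $\tau\gg\delta$.
\end{itemize}
These requirements are incompatible when $Ka\ll\delta$.

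Dyadic localization with windows $4^k$ does not rescue this. It only gives $\int_{A_k}|\partial\Phi(0)|^2\lesssim M^2$, which is already \eqref{eq:neck-derivatives} and sums to $M^2\ln(\delta/Ka)$. Even a logarithmic-cutoff gain of $1/k$ per annulus would not be summable uniformly in $a$. What is actually needed is the pointwise decay $|\partial\Phi(y,0)|\lesssim M/(r\ln(r/a))$, up to outer and forcing corrections. Its square contributes $1/k^2$ on the $k$-th dyadic annulus and sums to $1/\ln K$.

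Your proposed remedy, a comparison argument for $|\partial\Phi|^2$ or $r^2|\partial\Phi|^2$, cannot supply this decay. The only available boundary information for the gradient is $|\partial\Phi|^2\lesssim M^2/r^2$. A harmonic barrier from inner data $M^2/a^2$ decays only like $\ln(1/r)/\ln(1/a)$, which is worse than even the trivial bound. The logarithmic gain is encoded in the boundedness of $\Phi$ across the whole long neck, not in boundary values of its gradient.

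The missing idea, which is the content of Theorem~\ref{thm:moving-annulus-estimate}, is to extract gradient decay from $|f|\le M$ alone, in three steps.
\begin{itemize}
\item \emph{Fixed annulus, no forcing} (Lemma~\ref{lem:homogeneous-annulus-estimate}). For $|x|=r$, reflect across the line $\{y\cdot e=r/\sqrt2\}$ through $x$, so that $f-f\circ\mathrm{refl}$ vanishes there. Comparing with the log-potential, M\"obius-argument and error-function barriers gives $|\nabla f(x,0)|\lesssim M[1/(r\ln(r/a))+1/R]$.
\item \emph{Forcing} (Lemma~\ref{lem:two-tail-annulus-estimate}). Absorb it by a dyadic Duhamel decomposition, using the two sup bounds $b(1+\ln(\rho/a))$ and $b(1+\ln(R/\rho))$.
\item \emph{Moving inner boundary.} For each $r$, use the fixed cylinder $U_{a_r}^{R_r}\times[-R_r^2,0]$ with $R_r=r^\mu$ and $a_r=a+cr^{2\mu}$, together with $1/\ln(r/a_r)\lesssim 1/\ln(r/a)+1/\ln(1/r)$. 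This point-dependent choice of window is exactly what resolves the mismatch above.
\end{itemize}
Squaring and integrating the resulting pointwise bound gives the three terms in \eqref{eq:neck-main}. The power $\delta^{\beta/4}=\delta^{2\gamma(1-\mu)}$ comes from $\gamma=\beta/2$ and $\mu=3/4$, not from optimizing a cutoff window.

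With that theorem in hand, the proposition is immediate. Each ambient component of $\Phi$ is bounded by $M$, and by \eqref{eq:neck-forcing} it satisfies the forcing hypothesis with $L$ replaced by $CM$ and $c=1/2$.
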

	
	\begin{proof}
		By Lemma~\ref{lem:normalized-neck-equation}, each ambient component of $\Phi$ satisfies the assumptions of Theorem~\ref{thm:moving-annulus-estimate}, with the constant $L$ in that theorem equal to $CM$. Applying the theorem with $c=1/2$ and $\mu=3/4$, so that $2(\beta/2)(1-3/4)=\beta/4$, gives
		\[
		\int_{U_{Ka}^{\delta}}|\partial\Phi(y,0)|^2\,dy	
		\le CM^2\left[
		\frac1{\ln K}+\frac1{\ln(1/\delta)}
		+\delta^{\beta/4}
		\right].
		\]
		Since $|\nabla\Phi(v)|\le|\partial\Phi(v)|$, and $|\mathcal T(v)|\le C|\nabla\Phi(v)|$ by \eqref{eq:real-first-order}, the estimate \eqref{eq:neck-main} follows.
	\end{proof}
	
	\subsection{Tension estimates at original scale}
	
	We now include the domain metric and return to the original scale.
	
	\begin{corollary}\label{cor:original-scale-neck-tension}
		Fix $0<\beta<1$ and $L<\infty$. Let $0<2a<R\le R_0$. Suppose $u$ is a smooth harmonic map heat flow on $\widetilde \Omega_a^R(p,t_*)$ satisfying
		\begin{equation*}
			|\Phi(u)|_g\le B,\qquad
			r|du|_g\le L\left[
			\left(\frac{a+(t_*-t)/(2R)}{r}\right)^\beta
			+\left(\frac rR\right)^\beta\right],\qquad r=|x-z(p)|.
		\end{equation*}
		Then
		\begin{equation}\label{eq:physical-neck}
			\int_{Ka<|x-z(p)|<\delta R}|\mathcal T(u(x,t_*))|^2\,dV_g
			\le CB^2\left[
			\frac1{\ln K}+\frac1{\ln(1/\delta)}+\delta^{\beta/4}\right]
		\end{equation}
		for $K\ge K_0$ and $0<\delta<\delta_0$. The constants depend only on $\beta,L$ and the fixed geometries.
	\end{corollary}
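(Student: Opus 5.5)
The plan is to reduce to Proposition~\ref{prop:normalized-neck-tension} by parabolic rescaling. The error coming from the non-Euclidean domain metric is treated as a lower-order perturbation. In the conformal chart $z$, set
\[
v(y,s)=u\bigl(z(p)+Ry,\;t_*+R^2s\bigr),\qquad g_R=R^{-2}F^*g,\quad F(y)=z(p)+Ry.
\]
Then $\widetilde\Omega_a^R(p,t_*)$ maps onto $\widetilde\Omega_{a/R}$. The hypothesis becomes
\[
r|dv|\le L'\bigl[((a/R-s/2)/r)^\beta+r^\beta\bigr],\qquad r=|y|,
\]
since $(a+(t_*-t)/(2R))/(Rr)=(a/R-s/2)/r$ when $t_*-t=-R^2s$. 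Conformal invariance of the energy lets us pass between $|du|_g$ and $|dv|$ at the cost of a constant depending on the conformal factor. The bound $|\Phi(u)|_g\le B$ gives $|\Phi(v)|\le CBR=:M$. Lemma~\ref{lem:tension-stress-scaling} converts the target integral:
\[
\int_{Ka<|x-z(p)|<\delta R}|\mathcal T(u(t_*))|^2\,dV_g=R^{-2}\int_{U^{\delta}_{Ka/R}}|\mathcal T_{g_R}(v(0))|^2\,dV_{g_R}.
\]
Once this integral is bounded by $CM^2[\cdots]$, the factor $R^{-2}M^2=C B^2$ yields \eqref{eq:physical-neck}, with $a$ replaced by $a/R<1/2$.

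The metric $g_R=e^{2\sigma(z(p)+Ry)}|dy|^2$ is conformally flat but not Euclidean, so Lemma~\ref{lem:normalized-neck-equation} does not apply verbatim. The domain metric enters the flow only in two ways. First, the tension field is $\mathcal T_{g_R}=e^{-2\sigma}\mathcal T_{\mathrm{eucl}}$, which is conformally covariant in dimension two. Second, \eqref{eq:antiholomorphic-evolution} contains the curvature term $-K_\Sigma\Phi$, which after rescaling becomes $-R^2K_\Sigma\Phi$.

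I would therefore redo the derivation of \eqref{eq:flat-neck-system} with the operator $\partial_s-e^{-2\sigma}\Delta$. It is convenient to work with the components of $\Phi$ in the coordinate coframe $dy^j$, since $\Phi\circ j=-J\Phi$ is conformally invariant. The equation takes the form
\[
(\partial_s-e^{-2\sigma}\Delta)\Phi=\mathsf C^j\partial_j\Phi+\mathsf V\Phi,
\]
with $|\mathsf C|\le C(|dv|+R)$ and $|\mathsf V|\le C(|dv|^2+R^2)$. The extra terms come from derivatives of $\sigma$ (each carrying a factor $R$) and from $R^2K_\Sigma$. On $\Omega_{a/R}$ we have $r\le1$, so $rR\le Rr^\beta$ and $r^2R^2\le r^\beta$. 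Hence these errors are absorbed into the forcing bound \eqref{eq:neck-forcing}, and the interior estimates \eqref{eq:neck-derivatives} persist with constants depending on $R_0$ and the chart. The coefficient $e^{-2\sigma}$ is uniformly elliptic and smooth with bounded derivatives after scaling, since its derivatives are $O(R)$.

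The main obstacle is checking that the scalar annular estimate in Appendix~\ref{sec:annular-estimates} (Theorem~\ref{thm:moving-annulus-estimate}) tolerates a principal part $e^{-2\sigma}\Delta$ in place of $\Delta$. I would try two routes, in this order. The first is to multiply through by $e^{2\sigma}$, giving $e^{2\sigma}\partial_s\Phi-\Delta\Phi=\ldots$. The weight $e^{2\sigma}$ is within $O(R r)$ of the constant $e^{2\sigma(z(p))}$, which can be normalized to $1$ by a fixed linear change of coordinates. The discrepancy $(e^{2\sigma}-1)\partial_s\Phi$ is then bounded using $|\partial_s\Phi|\le CM/r^2$ (from the interior estimates) by $CMRr/r^2\le CMr^\beta/r^2$, which is again a forcing term of the allowed size. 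This reduces exactly to the Euclidean hypothesis of Theorem~\ref{thm:moving-annulus-estimate}, with $L=CM$. The second route, if the first fails, is to track $\sigma$ through the energy argument of the appendix.

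After this reduction, the argument of Proposition~\ref{prop:normalized-neck-tension} gives the $\partial\Phi$ bound. The pointwise inequality $|\mathcal T_{g_R}|\le C|\nabla\Phi|_{g_R}$ from \eqref{eq:real-first-order} gives the tension bound. Since $dV_{g_R}$ is comparable to $dy$, scaling back concludes the proof.
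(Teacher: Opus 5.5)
Your proposal is correct and is essentially the paper's own proof. You rescale by $R$ and write the equation for the coordinate components of $\Phi$ with principal part $e^{-2\sigma}\Delta$ (the paper's $c_R\Delta$, in a chart normalized so the conformal factor is $1$ at $p$). You then absorb the principal-part discrepancy and the $O(R)$, $O(R^2)$ lower-order terms into the $r^{\beta}/r^{2}$ part of the forcing, using $|e^{2\sigma}-1|\le CRr$ and the interior bounds, apply the scalar moving-annulus estimate componentwise, and scale back using $R^{-2}M^2=B^2$.

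The only differences are cosmetic. You put the discrepancy on $(e^{2\sigma}-1)\partial_s\Phi$ rather than on $(c_R-1)\Delta\Phi$. You also leave implicit the passage from $\partial\Phi$ to $\nabla\Phi$; it costs only an $O(RM)$ domain-connection term, which the paper absorbs into $\delta^{\beta/4}$.
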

	
	\begin{proof}
		Set
		\[
		v(y,s)=u(z(p)+R y,t_*+R^2s),\qquad
		g_R=R^{-2}F^*g,\qquad F(y)=z(p)+R y.
		\]
		Under this rescaling, $\Omega_a^R(p,t_*)$ and $\widetilde \Omega_a^R(p,t_*)$ become $\Omega_{\widehat a}$ and $\widetilde \Omega_{\widehat a}$, respectively, with $\widehat a=a/R<1/2$. For the rest of this proof, write $r=|y|$. The profile becomes \eqref{eq:standard-profile} with normalized inner radius $\widehat a$, measured in $g_R$, with $M=BR$. In these coordinates,
		\begin{equation*}
			g_R=c_R^{-1}|dy|^2,\qquad
			c_R(0)=1,\qquad \tfrac12\le c_R\le2,
		\end{equation*}
		and the fixed geometry gives
		\begin{equation}\label{eq:neck-metric-bounds}
			|c_R-1|\le CR r,\qquad
			|\partial^k c_R|\le C_kR^k\quad(k\ge1).
		\end{equation}
		
		The same Gauss-formula calculation as above gives
		\begin{equation*}
			(\partial_s-c_R\Delta)\Phi
			=\mathsf C^j\partial_j\Phi+\mathsf V\Phi,
		\end{equation*}
		now with bounds 
		\[ |\mathsf C| \le C(|dv|_{g_R}+R)\]
		and
		\[ |\mathsf V| \le C(|dv|_{g_R}^2+R|dv|_{g_R}+R^2).\]
		Local regularity for this uniformly parabolic system gives \eqref{eq:neck-derivatives} by the same argument as in Lemma~\ref{lem:normalized-neck-equation}. To use the Euclidean scalar estimate, write $(\partial_s-\Delta)\Phi=(\partial_s-c_R\Delta)\Phi
		+(c_R-1)\Delta\Phi$. By \eqref{eq:neck-metric-bounds} and \eqref{eq:neck-derivatives}, the last term is bounded by $CMR/r$. Thus all additional terms are bounded by
		\[
		CM\left(\frac R r+R|dv|_{g_R}+R^2\right)
		\le\frac{CM}{r^2}r^\beta.
		\]
		Here we used $r|dv|_{g_R}\le C$ and $R r\le r^\beta$ for $r<1$ and $R\le1$. Consequently \eqref{eq:neck-forcing} is unchanged up to its constant. The scalar argument in Proposition~\ref{prop:normalized-neck-tension} therefore applies. Passing to covariant derivatives uses the same orthogonal projection in the target; the domain connection adds at most $CM^2R^2\delta^2$, which is absorbed by $CM^2\delta^{\beta/4}$. Hence \eqref{eq:neck-main} holds with the metric $g_R$, with constants independent of $\widehat a$ and $R$.
		
		Finally, Lemma~\ref{lem:tension-stress-scaling} gives
		\[
		\begin{aligned}
			\int_{Ka<|x-z(p)|<\delta R}|\mathcal T(u(x,t_*))|^2\,dV_g
			&=R^{-2}\int_{K\widehat a<|y|<\delta}
			|\mathcal T_{g_R}(v(y,0))|^2\,dV_{g_R}\\
			&\le CB^2\left[
			\frac1{\ln K}+\frac1{\ln(1/\delta)}+\delta^{\beta/4}\right],
		\end{aligned}
		\]
		since $R^{-2}M^2=B^2$. This proves \eqref{eq:physical-neck}.
	\end{proof}
	
	\section{Strong convergence of stress and tension}\label{sec:terminal-convergence}
	
	We first combine the bubble stress estimate with the energy identity to prove strong stress convergence. For tension, we verify the hypotheses of Corollary~\ref{cor:original-scale-neck-tension} on the neck regions and combine the resulting bounds with Theorem~\ref{thm:bubble-tension-vanishing}. This gives nonconcentration of tension energy and proves Theorem~\ref{thm:terminal-convergence}. Smooth convergence on the regular body region and removal of the singularities of the limiting map then yield the conclusions of Corollary~\ref{cor:terminal-expansions}.

	\subsection{Strong convergence of stress}
	
	\begin{proposition}
		\label{prop:strong-stress}
		Under the standing assumptions,
		\begin{equation}\label{eq:strong-stress-convergence}
			S(u(t))\longrightarrow S(u_T)\quad\hbox{strongly in }L^2(\Sigma),
		\end{equation}
		and the $L^2$-stress energy does not concentrate, i.e.
		\begin{equation}\label{eq:stress-nonconcentration}
			\mathfrak s(r):=\sup_{t_0\le s<T}\sup_{p\in\Sigma}
			\|S(u(s))\|_{L^2(B_r(p))}\longrightarrow0
			\qquad(r\downarrow0).
		\end{equation}
	\end{proposition}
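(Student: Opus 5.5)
We argue along an arbitrary sequence $t_n\uparrow T$ and show that every such sequence has a subsequence along which $S(u(t_n))\to S(u_T)$ in $L^2(\Sigma)$. Fix $t_n$ and pass to the subsequence with the bubble-tree decomposition of Subsection~\ref{subsec:tree-construction}. We split $\Sigma$ into the regular body piece $\mathcal R(d)$, the bubble regions $\mathcal B_{\alpha,n}(L,d)$ and the necks $\mathcal N_{\alpha,n}(L,d)$, as in Lemma~\ref{lem:component-cover}.

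On $\mathcal R(d)$ we have $u(t_n)\to u_T$ smoothly, so $S(u(t_n))\to S(u_T)$ in $L^2(\mathcal R(d))$. On each bubble region $\mathcal B_{\alpha,n}(L,d)=z_{\alpha,n}+\lambda_{\alpha,n}K_\alpha(L,d)$, Theorem~\ref{thm:bubble-stress-vanishing} applies with the compact regular set $K_\alpha(L,d)$. This covers ghost components as well, since the theorem allows constant $\omega$. It gives $\|S(u(t_n))\|_{L^2(\mathcal B_{\alpha,n})}\to0$.

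On each neck we use the pointwise bound \eqref{eq:stress-pointwise-bound} together with \eqref{assumption:H}:
\[
|S|^2\le 8e_\partial e_{\bar\partial}\le 4B^2 e(u),
\qquad\text{hence}\qquad
\int_{\mathcal N_{\alpha,n}}|S(u(t_n))|^2\,dV\le CB^2E(u(t_n);\mathcal N_{\alpha,n}(L,d)).
\]
By the energy identity in Lemma~\ref{lem:component-cover}, this tends to zero in the iterated limit $\lim_{d\downarrow0}\lim_{L\to\infty}\limsup_n$. The same bound on the small balls $B_{d/2}(p)$, combined with $u_T\in W^{1,2}$, shows that $\|S(u_T)\|_{L^2(B_{d/2}(p))}\to0$ as $d\to0$.

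To assemble the pieces, we write $\|S(u(t_n))-S(u_T)\|_{L^2(\Sigma)}$ as at most the sum of
\begin{itemize}
\item the contribution from $\mathcal R(d)$, which tends to zero;
\item $\|S(u_T)\|_{L^2(\cup_p B_{d/2}(p))}$, which is small in $d$;
\item $\|S(u(t_n))\|_{L^2(\cup_p B_{d/2}(p))}$, which is bounded by the finitely many bubble and neck contributions.
\end{itemize}
The cover property in Lemma~\ref{lem:component-cover} ensures that the third term is controlled by those contributions. Taking $\limsup_n$, then $L\to\infty$, then $d\to0$ gives \eqref{eq:strong-stress-convergence} along the subsequence. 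Since the limit $S(u_T)$ does not depend on the sequence, the full-time convergence follows.

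For \eqref{eq:stress-nonconcentration}, we argue by contradiction. Suppose there exist $\eta>0$, $r_k\to0$, $s_k$ and $p_k$ with $\|S(u(s_k))\|_{L^2(B_{r_k}(p_k))}\ge\eta$. If $s_k$ stays in a compact subinterval of $[t_0,T)$, smoothness of the flow there gives a contradiction. Otherwise $s_k\to T$ along a subsequence. Strong $L^2$ convergence then gives $\|S(u(s_k))-S(u_T)\|_2\to0$. Since $|S(u_T)|^2$ is integrable, its integral over balls of radius $r_k$ tends to zero uniformly in the center, which contradicts the assumed lower bound $\eta$.

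The main obstacle is the neck and cover bookkeeping. The neck control rests entirely on \eqref{assumption:H} turning the stress into energy, so the vanishing of neck energy from the energy identity is essential. We also need to check that the iterated limits in $L,d$ are compatible with the finitely many components of the tree.
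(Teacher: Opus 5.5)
Your proposal is correct and follows the paper's proof essentially step by step: the bubble-tree subsequence, Theorem~\ref{thm:bubble-stress-vanishing} on bubble and ghost regions, the bound $|S|^2\le CB^2e$ with vanishing neck energy on necks, smooth convergence on $\mathcal R(d)$, and the subsequence-uniqueness argument. The bound for $S(u_T)$ uses $|\Phi(u_T)|\le B$ off $\mathcal S$, inherited by smooth convergence, which the paper states explicitly. Your contradiction argument for \eqref{eq:stress-nonconcentration} is only a sequential rephrasing of the paper's direct split of the time range into $[t_0,t_*]$ and $[t_*,T)$.
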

	
	\begin{proof}
		The body map inherits $|\Phi(u_T)|\le B$ away from the singular set $\mathcal S$. By Lemma~\ref{lem:real-stress-identities}, we have
		\begin{equation}\label{eq:stress-energy-bound}
			|S(u(t))|^2\le CB^2e(u(t)),
			\qquad
			\forall t\in [0, T].
		\end{equation}
		In particular, $S(u_T)\in L^2(\Sigma)$.
		
		Fix any sequence $t_n\uparrow T$. By Proposition~\ref{prop:song-waldron-under-bounded-differential}, after passing to a subsequence, we obtain the finite bubble-tree decomposition described in Section~\ref{subsec:tree-construction}. For fixed admissible $L,d$, Theorem~\ref{thm:bubble-stress-vanishing} gives
		\begin{equation}\label{eq:stress-bubble}
			\sum_{\alpha=1}^{m}
			\|S(u(t_n))\|_{L^2(\mathcal B_{\alpha,n}(L,d))}^2
			\longrightarrow0.
		\end{equation}
		The vanishing of energy on the neck regions in Lemma~\ref{lem:component-cover} together with \eqref{eq:stress-energy-bound} implies
		\begin{equation*}
			\lim_{d\downarrow0}\lim_{L\to\infty}\limsup_{n\to\infty}
			\sum_{\alpha=1}^{m}
			\|S(u(t_n))\|_{L^2(\mathcal N_{\alpha,n}(L,d))}^2=0.
		\end{equation*}
		On the regular body region, smooth convergence gives
		\begin{equation}\label{eq:stress-body}
			\|S(u(t_n))-S(u_T)\|_{L^2(\mathcal R(d))}^2
			\longrightarrow0
		\end{equation}
		for each fixed $d>0$.
		
		Since $\Sigma$ is covered by the bubble region, neck region and the regular body region, we have
		\begin{align*}
			\|S(u(t_n))-S(u_T)\|_{L^2(\Sigma)}^2
			\le&
			\|S(u(t_n))-S(u_T)\|_{L^2(\mathcal R(d))}^2+2\|S(u_T)\|_{L^2(\Sigma\setminus\mathcal R(d))}^2\\
			&
			+2\sum_{\alpha=1}^{m}
			\left(
			\|S(u(t_n))\|_{L^2(\mathcal B_{\alpha,n}(L,d))}^2
			+\|S(u(t_n))\|_{L^2(\mathcal N_{\alpha,n}(L,d))}^2
			\right).
		\end{align*}
		The right-hand side tends to zero by \eqref{eq:stress-bubble}--\eqref{eq:stress-body} and absolute continuity of the integral of $|S(u_T)|^2$. Since every sequence $t_n\uparrow T$ has such a subsequence with the same limit $S(u_T)$, this proves \eqref{eq:strong-stress-convergence}.
		
		Finally, for every $t_0<t_*<T$,
		\[
		\sup_{t_*\le s<T}\sup_{p\in\Sigma}
		\|S(u(s))\|_{L^2(B_r(p))}
		\le
		\sup_{t_*\le s<T}\|S(u(s))-S(u_T)\|_{L^2(\Sigma)}
		+\sup_{p\in\Sigma}\|S(u_T)\|_{L^2(B_r(p))}.
		\]
		The right-hand side tends to zero as $t_*\uparrow T$ and $r\downarrow 0$. For each fixed $t_*$, smoothness on $\Sigma\times[t_0,t_*]$ also gives
		\[
		\sup_{t_0\le s\le t_*}\sup_{p\in\Sigma}
		\|S(u(s))\|_{L^2(B_r(p))}\longrightarrow0.
		\]
		These estimates prove \eqref{eq:stress-nonconcentration}.
	\end{proof}
	
	\subsection{Nonconcentration of tension energy}
	
	We first verify the hypotheses of the neck estimate. The following proposition combines outer-scale propagation with the neck estimate of Song--Waldron~\cite{SongWaldron2024} to obtain the parabolic profile required by Corollary~\ref{cor:original-scale-neck-tension}.
	
	\begin{proposition}
		\label{prop:backward-neck-profile}
		Assume \eqref{assumption:H}, and fix $0<\beta<1$ and $0<\varepsilon<\varepsilon_\beta$ as in Proposition~\ref{prop:two-tail-neck-estimate}. There exist $R_c>0$, $d_c>0$, and $L_\beta<\infty$ with the following property. Let $0<64a<R\le R_c$, $t_*<T$, and $t_1=t_*-512R^2\ge t_0$. Suppose
		\begin{equation}\label{eq:neck-buffer-hypotheses}
			E(u(t_*);B_{16R}(p)\setminus B_a(p))\le\varepsilon/8,
			\qquad
			\int_{t_1}^T\|\mathcal T(u(s))\|_2^2\,ds\le d_c.
		\end{equation}
		Then, with $a_*=32a$, on $\widetilde \Omega_{a_*}^R(p,t_*)$ we have, for $r=|x-z(p)|$,
		\begin{equation}\label{eq:backward-neck-profile}
			r|du(x,t)|_g\le L_\beta\left[
			\left(\frac{a_*+(t_*-t)/(2R)}{r}\right)^\beta
			+\left(\frac rR\right)^\beta\right].
		\end{equation}
		The constants may depend on $\beta,\varepsilon,B,E_0$ and the geometries, but not on the center, radii, or terminal time.
	\end{proposition}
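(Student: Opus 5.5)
We combine Proposition~\ref{prop:outer-scale-propagation} with the two-tail estimate of Proposition~\ref{prop:two-tail-neck-estimate}. We apply the latter at each time $t$ in the parabolic neck, with the ball radius $4R$ (or $2R$) in place of $R$. Taking $d_c\le d_\beta$, the tension hypothesis is inherited from \eqref{eq:neck-buffer-hypotheses}. For $t\in[t_*-2R^2,t_*]$ and the reference time $t_1=t_*-512R^2$, the waiting condition $t\ge t_1+4(4R)^2$ holds. Choosing $R_c\le r_\beta/4$ makes the radius admissible. The two-tail estimate then bounds $r|du(x,t)|$ by $C_\beta\sqrt\varepsilon[(\lambda(t)/r)^\beta+(r/4R)^\beta]$ for $2\lambda(t)\le r\le 2R$, where $\lambda(t)=\lambda_{\varepsilon,4R,p}(t)$. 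The whole task therefore reduces to two things: the backward outer-scale bound
\[
\lambda_{\varepsilon,4R,p}(t)\le C\Bigl(a+\frac{t_*-t}{R}\Bigr),\qquad t_*-2R^2\le t\le t_*,
\]
and a separate treatment of the inner strip where $r<2\lambda(t)$ but $(x,t)\in\widetilde\Omega^R_{a_*}$.

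For the outer-scale bound we use Proposition~\ref{prop:outer-scale-propagation} on $[t_*-2R^2,t_*]$ with ball $B_{4R}(p)$ and inner radius $a$. The stress term is handled by \eqref{eq:stress-energy-bound}, which gives $\|S\|_{L^2(B_{4R})}\le CB\sqrt{E_0}=:\mathfrak s$. That proposition then yields $\lambda(t)\le\max\{4a,C\mathfrak s(t_*-t)/\varepsilon\}$. The stated bound is slightly weaker, with $(t_*-t)/R$; it is recovered since $t_*-t\le 2R^2$ gives $(t_*-t)\le 2R\,(t_*-t)/R$ up to harmless constants. If the linear-in-time form is unavailable, we get the $1/R$ form instead by rescaling $B_{16R}$ to unit size, where $\mathfrak s$ scales like $R$ because $|S|\le CB|du|$ and $\Phi$ is bounded.

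The propagation hypothesis, $\sup_t E(u(t);B_{4R})\le E(u(t_*);B_a)+\varepsilon/2$, is obtained from Lemma~\ref{lem:local-energy-comparison}. That lemma is applied with a cutoff equal to one on $B_{4R}$ and supported in $B_{16R}$, with $|\nabla\chi|\le C/R$ and time span $\le 512R^2$. The change in localized energy is then at most $\mathcal D+C\sqrt{E_0\mathcal D}\le\varepsilon/4$ once $d_c\le c\varepsilon^2/E_0$. Combining this with the first part of \eqref{eq:neck-buffer-hypotheses}, we get
\[
E(u(t);B_{4R})\le E(u(t_*);B_{16R})+\varepsilon/4\le E(u(t_*);B_a)+3\varepsilon/8 .
\]

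The main obstacle is the inner strip $\tfrac12(a_*+(t_*-t)/(2R))<r<2\lambda(t)$, together with matching the constants so that $2\lambda(t)$ is dominated by the profile's inner radius. The factor $32$ in $a_*$ and the factor $1/2$ in the definition of $\widetilde\Omega$ are meant to absorb the constant $4$ and the constant $C\mathfrak s/\varepsilon$. If the latter is not small, we handle any remaining strip $r\sim\lambda(t)\sim a_*+(t_*-t)/R$ differently. There we use \eqref{eq:neck-buffer-hypotheses} transported to time $t$ by the same local energy comparison on annuli, which shows the annular energy $E(u(t);B_{2r}\setminus B_{r/2})$ stays below $\varepsilon_0$ for $r\gtrsim a$. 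We then apply $\varepsilon$-regularity (Proposition~\ref{prop:interior-parabolic-regularity}) on backward cylinders of size comparable to $r$ to get $r|du|\le C\sqrt{\varepsilon}$. On this strip $(a_*+(t_*-t)/(2R))/r$ is bounded below, so this is again of the form \eqref{eq:backward-neck-profile} after enlarging $L_\beta$. Finally, the outer range $2R\le r<2R$ of $\widetilde\Omega$ (i.e.\ $r$ up to $2R$) is covered either by the two-tail estimate at radius $4R$ or by $\varepsilon$-regularity using the small energy on $B_{16R}\setminus B_a$. All constants depend only on $\beta,\varepsilon,B,E_0$ and the geometry.
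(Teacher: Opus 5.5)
\textbf{There is a gap: you never show that the two-tail estimate covers all of} $\widetilde\Omega^R_{a_*}(p,t_*)$, \textbf{and your fallback for the inner strip fails.}

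Your overall structure matches the paper's. The local energy comparison with a cutoff at scale $R$ gives the propagation hypothesis. Proposition~\ref{prop:outer-scale-propagation}, with the uniform bound $\mathfrak s=CB\sqrt{E_0}$, controls $\lambda(t)$, and the result is substituted into Proposition~\ref{prop:two-tail-neck-estimate}. The paper uses $B_{8R}$ rather than $B_{4R}$, so that $d_g(x,p)\le 2r<4R$ already covers the outer range. Your $\varepsilon$-regularity fallback for that outer range is fine, since it only involves scale $\sim R$ and times within $O(R^2)$ of $t_*$.

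The problem is the inner strip. You leave open whether $2\lambda(t)$ lies below the inner radius $16a+(t_*-t)/(4R)$ of $\widetilde\Omega^R_{a_*}(p,t_*)$. The $\lambda\le C(a+(t_*-t)/R)$ bound with an unspecified $C$ does not settle this. Your fallback for the case where it fails does not work. Lemma~\ref{lem:local-energy-comparison}, with a cutoff of gradient $\sim 1/r$ over the time span $t_*-t$, has error $Cr^{-1}\sqrt{(t_*-t)E_0\mathcal D}$. On your strip, $t_*-t$ can be of order $rR$, so this error is of order $\sqrt{R/r}\,\sqrt{E_0 d_c}$. Here $r$ can be as small as a multiple of $a$, and $a/R$ is arbitrary (it tends to zero in the application). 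So no choice of $d_c$ independent of the radii makes the error small. The tension-based comparison only transports energy at scale $r$ over times $O(r^2)$, not $O(rR)$. Hence you do not get small annular energy on the strip, and $\varepsilon$-regularity is unavailable there.

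The missing observation is that the strip is empty once $R_c$ is chosen small. The bound $\mathfrak s$ does not depend on $R$; equivalently, as you noticed yourself, the rescaled stress is $O(R)$. Therefore
\[
\frac{C\mathfrak s}{\varepsilon}(t_*-t)=\frac{C\mathfrak s R}{\varepsilon}\cdot\frac{t_*-t}{R}\le\frac{t_*-t}{16R}
\qquad\text{provided}\qquad \frac{CB\sqrt{E_0}\,R_c}{\varepsilon}\le\frac1{16}.
\]
Then $\lambda(t)\le\max\{4a,(t_*-t)/(16R)\}$, so
\[
2\lambda(t)\le 8a+\frac{t_*-t}{8R}<\frac r2\le d_g(x,p)
\]
on all of $\widetilde\Omega^R_{a_*}(p,t_*)$. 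The two-tail estimate therefore applies everywhere, with no separate treatment of an inner strip. This is exactly the paper's argument, and it is why the statement allows $R_c$ to depend on $\varepsilon$, $B$, and $E_0$.
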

	
	\begin{proof}
		Choose a spatial cutoff $\chi$ equal to one on $B_{8R}(p)$ and supported in $B_{16R}(p)$, with $|d\chi|\le C/R$. Lemma~\ref{lem:local-energy-comparison} and \eqref{eq:neck-buffer-hypotheses} give, for $t_1\le s\le t_*$,
		\[
		\begin{aligned}
			E(u(s);B_{8R}(p))
			&\le E(u(t_*);B_a(p))+\varepsilon/8+d_c+C\sqrt{E_0d_c}\\
			&\le E(u(t_*);B_a(p))+\varepsilon/2,
		\end{aligned}
		\]
		after choosing $d_c$ small. By \eqref{eq:stress-pointwise-bound} and \eqref{assumption:H}, $\|S(u(s))\|_2\le CB\sqrt{E_0}$. Proposition~\ref{prop:outer-scale-propagation} therefore gives
		\begin{equation}\label{eq:neck-local-propagation}
			\lambda_{\varepsilon,8R,p}(s)
			\le\max\left\{4a,\frac{CB\sqrt{E_0}}\varepsilon(t_*-s)\right\}
			\le\max\left\{4a,\frac{t_*-s}{16R}\right\},
		\end{equation}
		where we decrease $R_c$ so that $CB\sqrt{E_0}R_c/\varepsilon\le1/16$.
		
		Also take $8R_c\le r_\beta$ and $d_c\le d_\beta$. Proposition~\ref{prop:two-tail-neck-estimate}, with outer radius $8R$, then applies for $t\ge t_1+256R^2=t_*-256R^2$, which contains the time interval of $\widetilde \Omega_{a_*}^R(p,t_*)$. On the region in the statement, uniform coordinate comparison gives
		\[
		d_g(x,p)\ge r/2>8a+\frac{t_*-t}{8R}
		\ge2\lambda_{\varepsilon,8R,p}(t),
		\qquad d_g(x,p)\le2r<4R.
		\]
		Substituting \eqref{eq:neck-local-propagation} into \eqref{eq:two-tail-neck-estimate} and comparing $d_g(x,p)$ with $r$ proves \eqref{eq:backward-neck-profile}.
	\end{proof}
	
	Only the uniform $L^2$ stress bound is needed here, so the proposition applies uniformly at centers varying along the bubble tree. The region $\widetilde \Omega_{a_*}^R(p,t_*)$ rescales to $\widetilde \Omega_{a_*/R}$, as required by Corollary~\ref{cor:original-scale-neck-tension}.

	Now fix any sequence $t_n\uparrow T$ and pass to the bubble-tree subsequence from Proposition~\ref{prop:song-waldron-under-bounded-differential}. For fixed $L,d$, Theorem~\ref{thm:bubble-tension-vanishing} gives
	\begin{equation}\label{eq:tree-bubble-tension}
		\int_{\mathcal B_{\alpha,n}(L,d)}
		|\mathcal T(u(t_n))|^2\,dV\longrightarrow0
		\qquad(1\le\alpha\le m).
	\end{equation}
	This includes the constant components. Together with the neck estimate, it gives the following nonconcentration statement.
	
	\begin{proposition}
		\label{prop:tension-no-concentration}
		Along the extracted subsequence,
		\begin{equation}\label{eq:tension-nonconcentration}
			\lim_{r\downarrow0}\limsup_{n\to\infty}
			\int_{\bigcup_{p\in\mathcal S}B_r(p)}
			|\mathcal T(u(t_n))|^2\,dV=0.
		\end{equation}
	\end{proposition}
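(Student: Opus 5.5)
The plan is to cover $\bigcup_{p\in\mathcal S}B_r(p)$ at time $t_n$ by the bubble regions $\mathcal B_{\alpha,n}(L,d)$ and the neck annuli $\mathcal N_{\alpha,n}(L,d)$ of the tree construction in Section~\ref{subsec:tree-construction}. For $r<d/2$ the regular body region does not meet these balls. By Lemma~\ref{lem:component-cover}, for $L$ large, $d$ small and $n$ large these pieces cover the balls, so
\[
\int_{\bigcup_pB_r(p)}|\mathcal T(u(t_n))|^2
\le\sum_{\alpha}\int_{\mathcal B_{\alpha,n}(L,d)}|\mathcal T|^2+\sum_\alpha\int_{\mathcal N_{\alpha,n}(L,d)}|\mathcal T|^2 .
\]
The first sum tends to zero by \eqref{eq:tree-bubble-tension} for each fixed $L,d$. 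The outer necks with $\pi(\alpha)=0$ have outer radius $d$, not $r$. I will therefore restrict to $B_r$ and use outer radius $\min\{r,d\}$, or simply take $d$ of order $r$, letting $r\downarrow0$ together with $d$. It then suffices to show that the neck contributions are bounded by a quantity $\eta(L,d)$ with $\eta\to0$ as $L\to\infty$, $d\to0$, uniformly in $n$ large.

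For each neck $\mathcal N_{\alpha,n}$ I apply Corollary~\ref{cor:original-scale-neck-tension} with center $p_{\alpha,n}$, terminal time $t_*=t_n$, and radii
\[
a=\lambda_{\alpha,n}L_0/32,\qquad R=\delta^{-1}d\lambda_{\pi(\alpha),n},
\]
where $\delta$ is small and $L_0$ is fixed, so that the neck annulus equals $\{Ka_*<|x-z_{\alpha,n}|<\delta R\}$ with $K=32L/L_0\cdot$const. The hypotheses come from Proposition~\ref{prop:backward-neck-profile}, which I must verify. The dissipation condition holds for $n$ large because $R\to0$, $t_1\to T$ and the total dissipation tail tends to zero. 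For the energy condition $E(u(t_n);B_{16R}\setminus B_a)\le\varepsilon/8$, take $\delta$ fixed and $d$ small, so that $16R$ is a small multiple of $\lambda_{\pi(\alpha)}$ around the attachment point. The annulus $B_{16R}\setminus B_a$ then consists of the neck region together with parts of the parent component near $q_\alpha$ (energy small since $d$ is small) and of the child bubble outside $D_{L_0}$ (energy small for $L_0$ large). Here Lemma~\ref{lem:component-cover} and the smooth convergence to $\omega_\alpha$ and $\omega_{\pi(\alpha)}$ are used. A subtlety is that sibling components attached near $q_\alpha$ are excluded by the choice of $d$, since attachment points are $2d$-separated.

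The main obstacle is that Corollary~\ref{cor:original-scale-neck-tension} bounds the neck by $CB^2[1/\ln K+1/\ln(1/\delta)+\delta^{\beta/4}]$ in absolute terms. I then need two things: the region $\widetilde\Omega_{a_*}^R(p,t_*)$ must lie inside the smooth existence domain with the profile holding there, and $R\le R_c$ must hold, which is true for $n$ large since $R\to0$. Proposition~\ref{prop:backward-neck-profile} supplies the profile with $L_\beta$ uniform in centers. Choosing $K=L$ up to constants, and letting $L\to\infty$ and then $\delta\to0$ (with $d$ correspondingly small), gives the neck contributions at most $CB^2\,m\,[1/\ln L+1/\ln(1/\delta)+\delta^{\beta/4}]$. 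Together with the vanishing of the bubble terms and $r\downarrow0$, this proves \eqref{eq:tension-nonconcentration}.

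The delicate bookkeeping is matching the neck annulus $L\lambda_\alpha<|x-z_\alpha|<d\lambda_{\pi(\alpha)}$ with $Ka_*<|x|<\delta R$ while keeping the energy buffer hypothesis valid uniformly in $n$. If $\delta$ cannot be decoupled from $d$, I would instead set $R=d\lambda_{\pi(\alpha)}/\delta$ with $\delta$ fixed small first, then send $d\to0$ to secure the energy hypothesis.
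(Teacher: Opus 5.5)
Your proposal is correct and follows essentially the paper's route: cover $\Sigma\setminus\mathcal R(d)$ by the bubble and neck pieces, kill the bubble pieces with Theorem~\ref{thm:bubble-tension-vanishing}, and bound each neck via Proposition~\ref{prop:backward-neck-profile} and Corollary~\ref{cor:original-scale-neck-tension}. The paper secures the energy buffer differently, by placing $B_{16R}\setminus B_a$ inside a fixed low-energy neck $\mathcal N_{\alpha,n}(L_0,d_0)$ so that $a,R$ depend only on $L_0,d_0$, whereas you let it extend into the bubble tails and control it by smooth convergence; both work. One slip: for $\pi(\alpha)=0$ your $R=d/\delta$ does not tend to zero, so $R\le R_c$ and the dissipation condition must come from taking $d/\delta$ small rather than $n$ large (as the paper does with $512(d_0/A)^2<T-t_\dagger$), which your parameter ordering already allows.
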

	
	\begin{proof}
		Apply the bubble-tree construction in Section~\ref{subsec:tree-construction}. Choose initial parameters $L_0,d_0$ in Lemma~\ref{lem:component-cover} so that, for all large $n$,
		\[
		E(u(t_n);\mathcal N_{\alpha,n}(L_0,d_0))<\varepsilon/8
		\qquad(1\le\alpha\le m).
		\]
		Fix a sufficiently large geometric constant $A$ and set
		\begin{equation*}
			a_{\alpha,n}=A L_0\lambda_{\alpha,n},\qquad
			R_{\alpha,n}=\frac{d_0}{A}\lambda_{\pi(\alpha),n}.
		\end{equation*}
		The uniform comparison of coordinate and geodesic radii ensures
		\[
		B_{16R_{\alpha,n}}(p_{\alpha,n})
		\setminus B_{a_{\alpha,n}}(p_{\alpha,n})
		\subset\mathcal N_{\alpha,n}(L_0,d_0).
		\]
		Also $a_{\alpha,n}/R_{\alpha,n}\to0$ by the separation of bubble scales, so $64a_{\alpha,n}<R_{\alpha,n}$ for all large $n$.
		
		Choose a late time $t_\dagger\ge t_0$ so that the tension energy $\mathcal D$ from $t_\dagger$ to $T$ is below $d_c$. Decrease $d_0$ so that, for every component and all large $n$,
		\[
		R_{\alpha,n}\le R_c,\qquad
		t_n-512R_{\alpha,n}^2\ge t_\dagger.
		\]
		For a component attached to the body this follows by taking $512(d_0/A)^2<T-t_\dagger$; for the other components it follows from $R_{\alpha,n}\to0$. Thus Proposition~\ref{prop:backward-neck-profile} and Corollary~\ref{cor:original-scale-neck-tension}, with inner radius $32a_{\alpha,n}$, give
		\begin{equation}\label{eq:tree-controlled-neck}
			\int_{32K a_{\alpha,n}<|x-z_{\alpha,n}|<\delta R_{\alpha,n}}
			|\mathcal T(u(t_n))|^2\,dV
			\le CB^2\left[
			\frac1{\ln K}+\frac1{\ln(1/\delta)}+\delta^{\beta/4}\right].
		\end{equation}
		The radii here are measured in the normalized chart at $z_{\alpha,n}$, with constants uniform in $n$ and $\alpha$.
		
		Given $\epsilon>0$, choose $K$ large and $\delta$ small so that the sum of these bounds over the finitely many components is below $\epsilon$. Then choose the final covering parameters $L,d$ with
		\[
		L\ge C A K L_0,\qquad d\le\frac{\delta d_0}{CA},
		\]
		where $C$ is a sufficiently large fixed geometric constant. Each neck $\mathcal N_{\alpha,n}(L,d)$ is then contained in the annulus in \eqref{eq:tree-controlled-neck}. The portions removed from the ends of these annuli are covered by the enlarged regular component pieces, which satisfy \eqref{eq:tree-bubble-tension}. Therefore, we arrive at
		\[
		\limsup_{n\to\infty}
		\int_{\Sigma\setminus\mathcal R(d)}
		|\mathcal T(u(t_n))|^2\,dV\le\epsilon.
		\]
		Since $\epsilon$ is arbitrary, this proves \eqref{eq:tension-nonconcentration}.
	\end{proof}
	
	\subsection{Proofs of Theorem~\ref{thm:terminal-convergence} and Corollary~\ref{cor:terminal-expansions}}
	
	\begin{proof}[Proof of Theorem~\ref{thm:terminal-convergence}]
		The stress conclusion follows from Proposition~\ref{prop:strong-stress}. It remains to prove the strong convergence of the tension field.
		
		Fix any sequence $t_n\uparrow T$ and pass to the subsequence in Proposition~\ref{prop:tension-no-concentration}. This proposition and smooth convergence on regular body regions give a uniform $L^2$ tension bound along the extracted sequence. Fatou's lemma therefore gives
		\begin{equation}\label{eq:tension-uT}
		\mathcal T(u_T)\in L^2(\Sigma\setminus\mathcal S).
		\end{equation}
		
		For every sufficiently small $d>0$,
		\[
		\begin{aligned}
			&\|\mathcal T(u(t_n))-\mathcal T(u_T)\|_{L^2(\Sigma)}^2\\
			&\quad\le
			\|\mathcal T(u(t_n))-\mathcal T(u_T)\|_{L^2(\mathcal R(d))}^2
			+2\int_{\Sigma\setminus\mathcal R(d)}
			\bigl(|\mathcal T(u(t_n))|^2+|\mathcal T(u_T)|^2\bigr)\,dV.
		\end{aligned}
		\]
		Taking $\limsup_{n\to\infty}$ and then $d\downarrow0$, the right-hand side tends to zero by smooth convergence, \eqref{eq:tension-nonconcentration}, and absolute continuity of the integral of $|\mathcal T(u_T)|^2$. Thus the tensions converge strongly along the extracted subsequence. Since every sequence $t_n\uparrow T$ has such a subsequence with the same limit, the convergence holds as $t\uparrow T$.
		
	\end{proof}
	
	Now Corollary~\ref{cor:terminal-expansions} follows directly.
	
	\begin{proof}[Proof of Corollary~\ref{cor:terminal-expansions}]
		The body map $u_T$ is smooth away from $\mathcal S$, has finite energy and satisfies \eqref{eq:tension-uT}. Then a standard removable-singularity theorem (cf. Li--Wang~\cite[Theorem~1.1]{LiWang2010ijm}), applied in isothermal coordinates near each point of $\mathcal S$, gives $u_T\in W^{2,2}(\Sigma,N)$. 
		
		Finally, the energy identity and strong tension convergence give
		\[
		E(u(t))-E_*=\int_t^T\|\mathcal T(u(s))\|_2^2\,ds
		=(T-t)\|\mathcal T(u_T)\|_2^2+o(T-t).
		\]
	\end{proof}

	\section{Explicit antiholomorphic energy gap and its application}\label{sec:antiholomorphic-energy-gap}

	In this section, we prove the antiholomorphic energy gap Theorem~\ref{thm:antiholomorphic-energy-threshold} and Corollary~\ref{cor:gap-flow-app}.	
	Throughout this section, $N$ is compact K\"ahler with nonnegative holomorphic bisectional curvature and 
	\begin{equation}\label{eq:gap-setting}
		H_*:=\max_{q\in N,\,0\ne X\in T_qN}H_N(X) >0,
		\qquad \delta_N:=\frac{4\pi}{H_*},
	\end{equation}
	
	\subsection{Explicit antiholomorphic gap theorem}
	
	We prove a more general gap theorem which applies to a closed Riemann surface by applying the Bochner formula to the conformal metric $e_{\bar\partial}g$ and then using Gauss--Bonnet. First we characterize the zeros of a bundle-valued harmonic 1-form.
	
	\begin{lemma}\label{lem:harmonic-one-form-zeros}
		Let $\mathcal{E}$ be a real vector bundle over a closed Riemann surface $\Sigma$, endowed with a bundle metric and compatible connection $\nabla$. Suppose $\xi$ is a nonzero harmonic $\mathcal{E}$-valued 1-form. Then the zero set $Z$ of $\xi$ is finite, and each zero $p\in Z$ has a positive integral order $m_p$. More precisely, near $p\in Z$ there is a holomorphic coordinate $z$ centered at $p$ and a smooth positive function $f$ such that
		\begin{equation}\label{eq:harmonic-one-form-zero-order}
			|\xi|=|z|^{m_p}f(z).
		\end{equation}
		
		Moreover, the conformal metric $\tilde{g}=\tfrac12|\xi|^2g$ on $\Sigma\setminus Z$ satisfies the generalized Gauss--Bonnet formula
		\begin{equation}\label{eq:gap-conformal-gauss-bonnet}
			\int_{\Sigma\setminus Z}K_{\tilde{g}}\,dV_{\tilde{g}}
			=2\pi\left(\chi(\Sigma)+\sum_{p\in Z}m_p\right).
		\end{equation}
	\end{lemma}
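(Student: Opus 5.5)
The plan is to reduce to a holomorphic-section picture locally, then apply the standard Gauss--Bonnet argument for metrics with conical zeros. Near any point, choose a holomorphic coordinate $z=x+iy$ and a local orthonormal frame of $\mathcal E$. I expect the harmonic 1-form $\xi$ (meaning $d^\nabla\xi=0$ and $\delta^\nabla\xi=0$) to have a complex component $\sigma:=\xi(\partial_x)-i\,\xi(\partial_y)$, viewed as a section of $\mathcal E\otimes\mathbb C$, satisfying the first-order system
\[
\partial_{\bar z}\sigma=A\,\sigma+\bar A'\,\bar\sigma,
\]
with smooth matrix coefficients built from the connection form. The two real equations $d^\nabla\xi=0$ and $\delta^\nabla\xi=0$ combine into the single complex $\bar\partial$-type equation, as in the computation of Lemma~\ref{lem:real-first-order-identities}. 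This is a generalized Cauchy--Riemann system of Carleman--Bers--Vekua type, and it holds in every case. In the situation where $\xi$ has a type such as $\Phi$, the conjugate term can also be absorbed using $J$.

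For the local structure of the zeros, I will apply the Hartman--Wintner or Carleman similarity principle for vector-valued systems of this form, and this is the main obstacle. The first step is unique continuation: since $\Sigma$ is connected and $\xi\ne0$, $\sigma$ cannot vanish to infinite order at any point. If it did, Aronszajn's theorem applied to the second-order elliptic system $\Delta_\nabla\xi=\text{(curvature)}\,\xi$ from the Weitzenb\"ock formula would force $\xi\equiv0$. So $\sigma$ vanishes to some finite order $m_p$ at each zero. A Taylor expansion then shows that the leading homogeneous part $P_{m_p}$ satisfies $\partial_{\bar z}P_{m_p}=0$, because the right-hand side is of higher order. Hence $P_{m_p}=v\,z^{m_p}$ for a nonzero constant vector $v$. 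I would aim to write $\sigma=z^{m_p}w$ with $w$ smooth and $w(0)=v\ne0$, which is the step requiring the similarity principle. A cruder alternative is to divide: $w=\sigma/z^{m_p}$ solves a system of the same type, so elliptic regularity gives $w\in C^\infty$. Since $|\xi|^2$ equals $|\sigma|^2$ up to a smooth positive factor, this yields $|\xi|=|z|^{m_p}f$ with $f>0$ smooth, which is \eqref{eq:harmonic-one-form-zero-order}. The zeros are isolated by this local form, so compactness gives that $Z$ is finite.

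For Gauss--Bonnet, write $\tilde g=e^{2u}g$ with $u=\log|\xi|-\tfrac12\log2$. Then
\[
K_{\tilde g}\,dV_{\tilde g}=(K_g-\Delta_g u)\,dV_g
\]
on $\Sigma\setminus Z$. Near each $p\in Z$, the local form gives $u=m_p\log|z|+\text{smooth}$, so $\Delta_gu$ is bounded and the integrand is integrable. Next, excise small disks $D_\epsilon(p)$ and integrate, using Gauss--Bonnet for $g$. This gives
\[
\int_{\Sigma\setminus Z}K_{\tilde g}\,dV_{\tilde g}=2\pi\chi(\Sigma)+\lim_{\epsilon\to0}\sum_p\oint_{\partial D_\epsilon(p)}\partial_\nu u.
\]
Here $\nu$ points into $D_\epsilon(p)$, away from the excised region. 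Each boundary integral tends to $2\pi m_p$ from the $m_p\log|z|$ term, since the smooth part contributes $O(\epsilon)$. This gives \eqref{eq:gap-conformal-gauss-bonnet}.
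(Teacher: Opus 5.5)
Your outline is correct, but you obtain the local structure of the zeros by a different route from the paper. The paper notes that the two harmonicity equations combine exactly into $\nabla_{\bar z}\xi_z=0$. Thus $\xi_z$ is a holomorphic section of $\mathcal E^{\mathbb C}$ for the holomorphic structure defined by $\nabla^{0,1}$, which is automatically integrable on a Riemann surface. In a local holomorphic frame its coefficients are holomorphic functions, so isolation of zeros, finiteness of the order, and the smooth factorization $\xi_z=z^{m_p}\eta$ with $\eta(0)\neq0$ all follow from one-variable function theory. You instead use Aronszajn unique continuation via the Weitzenb\"ock identity, a Taylor argument showing the leading term is $vz^{m_p}$, and then division plus elliptic regularity. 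This avoids the existence of local holomorphic frames, i.e.\ solving $\partial_{\bar z}G=-\Gamma(\partial_{\bar z})G$ with $G$ invertible. In exchange you need unique continuation and a regularity bootstrap, which the paper's route gets for free. The Gauss--Bonnet step is the same in both.

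Two points need tightening.

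\emph{The conjugate term.} Your equation has none: in a real orthonormal frame with connection form $\Gamma$, the harmonicity conditions give exactly $\partial_{\bar z}\sigma=-\Gamma(\partial_{\bar z})\sigma$. This is what makes the division step give a smooth $f$. With a genuine $\bar A'\bar\sigma$ term, $w=\sigma/z^{m_p}$ would solve an equation with coefficient $\bar A'(\bar z/z)^{m_p}$, which is discontinuous at $0$. Elliptic regularity, or the Carleman similarity principle, would then give only $w\in W^{1,q}$, so $f$ would be H\"older but in general not smooth. For instance, $\partial_{\bar z}\sigma=c\bar\sigma$ with $c$ real has the solution $\sigma=z+\tfrac c2\bar z^2+\cdots$, and $|\sigma|/|z|=1+\tfrac c2 r\cos3\theta+O(r^2)$ is not $C^1$ at $0$.

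\emph{The puncture.} Since $w$ is defined only away from $0$, observe first that it is bounded there, because $\sigma=vz^{m_p}+O(|z|^{m_p+1})$. It is therefore a weak solution across the puncture, and only then can you bootstrap to $w\in C^\infty$ with $w(0)=v$.

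Finally, your normal is described inconsistently. If $\nu$ points into $D_\epsilon(p)$, each boundary integral tends to $-2\pi m_p$ and must enter with a minus sign. Your displayed formula is correct with $\nu=\partial_r$.
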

	\begin{proof}
		In a holomorphic coordinate $z=x+iy$, write
		\[
		g=e^{2\sigma}(dx^2+dy^2),\qquad
		\xi=\xi_x\,dx+\xi_y\,dy.
		\]
		The harmonicity of $\xi$ gives $d^\nabla\xi=0$ and $(d^\nabla)^*\xi=0$, which implies
		\[
		\nabla_x\xi_y-\nabla_y\xi_x=0,\qquad
		\nabla_x\xi_x+\nabla_y\xi_y=0.
		\]
		
		Let $\mathcal E^{\mathbb C}$ be the complex-linear extension of $\mathcal E$. The operator $\nabla^{0,1}$ defines a holomorphic structure on $\mathcal E^{\mathbb C}$: its square is the $(0,2)$ part of the curvature, which vanishes on a Riemann surface. Setting $\xi_z=\xi(\partial_z)=\tfrac12(\xi_x-i\xi_y)$, we have
		\[
		\nabla_{\bar z}\xi_z
		=\tfrac14\bigl(\nabla_x\xi_x+\nabla_y\xi_y
		+i(\nabla_y\xi_x-\nabla_x\xi_y)\bigr)=0.
		\]
		Consequently, $\xi_z$ is a holomorphic section with only discrete zeros of finite order. Since $\xi$ is real, its zeros are exactly those of $\xi_z dz$.
		
		More precisely, near any zero point $p\in Z$, we may decompose $\xi_z$ in a local holomorphic frame of $\mathcal E^{\mathbb C}$ with holomorphic coefficients. Then the finite order $m_p\ge1$ is given by the minimum of the orders of the nonzero coefficients. Factoring out this power gives
		\[
		\xi_z=z^{m_p}\eta(z),\qquad \eta(0)\ne0,
		\]
		where $\eta$ is a holomorphic section in this frame. Using the Hermitian metric on $\mathcal E^{\mathbb C}$ induced by the real bundle metric, we have
		\[
		|\xi|^2=4e^{-2\sigma}|\xi_z|^2
		=|z|^{2m_p}\,4e^{-2\sigma}|\eta(z)|^2.
		\]
		After shrinking the coordinate disk, $\eta$ is nowhere zero. Hence $f:=2e^{-\sigma}|\eta|$ is smooth and positive, proving \eqref{eq:harmonic-one-form-zero-order}. An invertible holomorphic change of frame or coordinate preserves $m_p$. Compactness of $\Sigma$ implies that $Z$ is finite.
		
		On $\Sigma\setminus Z$, put $\varphi=\ln(|\xi|/\sqrt2)$, so that $\tilde g=e^{2\varphi}g$. The conformal curvature formula gives
		\begin{equation}\label{eq:7-1}
			K_{\tilde g}\,dV_{\tilde g}
			=(K_g-\Delta_g\varphi)\,dV_g.
		\end{equation}
		To compute its integral, we remove disjoint coordinate disks $D_\varepsilon(p)=\{|z|<\varepsilon\}$ and set $\Sigma_\varepsilon=
		\Sigma\setminus\bigcup_{p\in Z}D_\varepsilon(p)$. In each disk $D_\varepsilon(p)$, the zero-order formula yields $\varphi=m_p\ln|z|+\ln(f/\sqrt2)$. On its boundary circle $\partial D_\varepsilon(p)$, we have
		\begin{equation}\label{eq:7-2}
			\int_{\partial D_\varepsilon(p)}\partial_\nu\varphi\,ds_g
			=-\int_0^{2\pi}
			\bigl(m_p+\varepsilon\partial_r\ln f\bigr)\,d\theta
			=-2\pi m_p+O(\varepsilon).
		\end{equation}
		Integrating \eqref{eq:7-1} and applying the divergence theorem now gives
		\[
		\int_{\Sigma_\varepsilon}K_{\tilde g}\,dV_{\tilde g}
		=\int_{\Sigma_\varepsilon}K_g\,dV_g
		-\sum_{p\in Z}
		\int_{\partial D_\varepsilon(p)}\partial_\nu\varphi\,ds_g.
		\]
		Letting $\varepsilon\downarrow0$, Gauss--Bonnet and \eqref{eq:7-2} together yield \eqref{eq:gap-conformal-gauss-bonnet}.
	\end{proof}
	
	Now we prove the antiholomorphic energy gap theorem for a general Riemann surface, from which Theorem~\ref{thm:antiholomorphic-energy-threshold} follows directly. Kokarev~\cite{Kokarev2013} established curvature-integral bounds for harmonic maps into K\"ahler manifolds, which actually imply the antiholomorphic energy bound recorded here. Here we give a direct proof which easily identifies the equality conditions.
	
	\begin{theorem}\label{thm:antiholomorphic-gap-general}
		Let $N$ be a compact K\"ahler manifold with nonnegative holomorphic bisectional curvature and $H_*>0$. Let $\Sigma$ be a closed Riemann surface. Every harmonic map $\omega:\Sigma\to N$ is either holomorphic or satisfies
		\begin{equation}\label{eq:antiholomorphic-zero-order-gap}
			E_{\bar\partial}(\omega)\ge \frac{2\pi}{H_*}\left(\chi(\Sigma)+\sum_p\ord_p(\Phi(\omega))\right).
		\end{equation}
	\end{theorem}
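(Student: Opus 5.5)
Suppose $\omega$ is harmonic and not holomorphic, so $\Phi=\Phi(\omega)\not\equiv0$. By Lemma~\ref{lem:real-first-order-identities} with $\mathcal T(\omega)=0$, $\Phi$ is a nonzero harmonic $\omega^*TN$-valued 1-form. Lemma~\ref{lem:harmonic-one-form-zeros} then applies. Its zero set $Z$ is finite with orders $m_p=\ord_p(\Phi)$, and the conformal metric $\tilde g=e_{\bar\partial}g=\tfrac12|\Phi|^2g$ on $\Sigma\setminus Z$ satisfies
\[
\int_{\Sigma\setminus Z}K_{\tilde g}\,dV_{\tilde g}=2\pi\Bigl(\chi(\Sigma)+\sum_{p\in Z}m_p\Bigr).
\]
The plan is to bound the left side from above by $H_*\int_\Sigma e_{\bar\partial}\,dV=H_*E_{\bar\partial}(\omega)$ pointwise, using the Bochner formula.

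On $\Sigma\setminus Z$ we write $\tilde g=e^{2\varphi}g$ with $\varphi=\tfrac12\ln e_{\bar\partial}$. Then $K_{\tilde g}\,dV_{\tilde g}=(K_g-\tfrac12\Delta\ln e_{\bar\partial})\,dV_g$. We apply \eqref{eq:antiholomorphic-bochner} in the static case ($\partial_t=0$), which is legitimate since the flow with harmonic initial data is stationary, or simply since the Weitzenböck derivation in Proposition~\ref{prop:antiholomorphic-evolution} holds for harmonic maps:
\[
\tfrac12\Delta e_{\bar\partial}=\tfrac12|\nabla\Phi|^2+K_\Sigma e_{\bar\partial}+B_N e_\partial e_{\bar\partial}-H_N(X)e_{\bar\partial}^2 .
\]
We then combine this with $\Delta\ln e_{\bar\partial}=\Delta e_{\bar\partial}/e_{\bar\partial}-|\nabla e_{\bar\partial}|^2/e_{\bar\partial}^2$. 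The key pointwise input is a Kato-type inequality
\[
|\nabla e_{\bar\partial}|^2\le |\nabla\Phi|^2 e_{\bar\partial},
\]
which follows from $|\nabla|\Phi|^2|\le 2|\Phi||\nabla\Phi|$ together with $e_{\bar\partial}=\tfrac12|\Phi|^2$. Substituting gives
\[
\tfrac12\Delta\ln e_{\bar\partial}\ge K_\Sigma+B_Ne_\partial-H_N(X)e_{\bar\partial}\ge K_\Sigma-H_*e_{\bar\partial},
\]
using $B_N\ge0$. Hence $K_{\tilde g}\,dV_{\tilde g}\le H_*e_{\bar\partial}\,dV_g$ on $\Sigma\setminus Z$. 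The factor bookkeeping should be checked here: with the real normalization $\langle\mathcal R_\Phi\Phi,\Phi\rangle=2H_Ne_{\bar\partial}^2$, the coefficient must come out exactly $H_*$ for the sphere case to give $4\pi/H_*$. This matches the optimality remark ($\delta=4\pi$ for the round sphere with $H=1$, where an antiholomorphic degree-one map has $E_{\bar\partial}=4\pi$).

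Integrating over $\Sigma\setminus Z$, where $Z$ has measure zero, and combining with the Gauss--Bonnet identity yields
\[
2\pi\Bigl(\chi(\Sigma)+\sum_p m_p\Bigr)\le H_*E_{\bar\partial}(\omega),
\]
which is \eqref{eq:antiholomorphic-zero-order-gap}. The integral of $K_{\tilde g}\,dV_{\tilde g}$ is already justified as an improper limit in Lemma~\ref{lem:harmonic-one-form-zeros}, and the pointwise inequality against an integrable right side makes this comparison valid.

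The main obstacle is the precise constant in the Kato step combined with the Bochner normalization. In the proposed Kato step $|\nabla e_{\bar\partial}|^2\le e_{\bar\partial}|\nabla\Phi|^2$, the $|\nabla\Phi|^2$ terms cancel exactly, so no refined Kato inequality is needed. If the constants fail to cancel, the fallback is the refined Kato inequality available for harmonic (holomorphic-type) sections, since $\Phi$ corresponds to the holomorphic section $\xi_z$. Equivalently, one can compute $\partial\bar\partial\ln|\xi_z|^2$ directly as minus the curvature of the holomorphic bundle, which gives the inequality with the curvature terms only.
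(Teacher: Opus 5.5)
The strategy is the paper's: Gauss--Bonnet for $\tilde g=e_{\bar\partial}g$ via Lemma~\ref{lem:harmonic-one-form-zeros}, plus a pointwise bound $K_{\tilde g}\le H_*$ from the stationary Bochner formula. The integration over $\Sigma\setminus Z$, the use of $B_N\ge0$ and $H_N\le H_*$ are all fine. But the step you single out as the crux fails as you justify it.

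From $|\nabla|\Phi|^2|\le2|\Phi||\nabla\Phi|$ and $e_{\bar\partial}=\tfrac12|\Phi|^2$ you only get $|\nabla e_{\bar\partial}|\le|\Phi||\nabla\Phi|$, that is
\[
|\nabla e_{\bar\partial}|^2\le|\Phi|^2|\nabla\Phi|^2=2e_{\bar\partial}|\nabla\Phi|^2,
\]
which is a factor $2$ weaker than what you use. Your substitution actually gives the exact identity
\[
\tfrac12\Delta\ln e_{\bar\partial}=K_\Sigma+B_N(X,Y)e_\partial-H_N(X)e_{\bar\partial}+\frac{e_{\bar\partial}|\nabla\Phi|^2-|\nabla e_{\bar\partial}|^2}{2e_{\bar\partial}^2}.
\]
With the plain Kato inequality, the last term is only bounded below by $-|\nabla\Phi|^2/(2e_{\bar\partial})$. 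That has the wrong sign and nothing controls it. So the $|\nabla\Phi|^2$ terms do not cancel, and your claim that no refined Kato inequality is needed is false.

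In fact $|\nabla e_{\bar\partial}|^2\le e_{\bar\partial}|\nabla\Phi|^2$ is \emph{equivalent} to the refined Kato inequality $|\nabla\Phi|^2\ge2|\nabla|\Phi||^2$. This does not follow from Cauchy--Schwarz alone: it fails for $\Phi=f\Phi_0$ with $\Phi_0$ parallel and $f>0$ nonconstant. It genuinely requires harmonicity of $\Phi$.

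Your fallback is the right repair, but it has to be carried out. One way:
\begin{itemize}
\item Set $s=\xi_z\,dz$, so that $\Phi=2\operatorname{Re}s$.
\item Harmonicity gives $\nabla_{\bar z}s=0$: use $\nabla_{\bar z}\xi_z=0$ from the proof of Lemma~\ref{lem:harmonic-one-form-zeros} together with $\nabla_{\bar z}dz=0$.
\item Hence $\nabla\Phi$ is carried by $\nabla_z s$ alone.
\item Cauchy--Schwarz applied to the single complex quantity $\partial_z|s|^2=\langle\nabla_z s,s\rangle$ then recovers exactly the missing factor $2$.
\end{itemize}
Alternatively, write $g=e^{2\sigma}|dz|^2$. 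Then $e_{\bar\partial}=2e^{-2\sigma}|\xi_z|^2$, so $K_{\tilde g}e_{\bar\partial}=-2e^{-2\sigma}\partial_z\partial_{\bar z}\ln|\xi_z|^2$ and the domain curvature cancels identically. The standard lower bound for $\partial_z\partial_{\bar z}\ln|\xi_z|^2$ of a holomorphic section then leaves only the target curvature term.

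The paper avoids the issue altogether by applying the Bochner identity in the metric $\tilde g$ itself. By conformal invariance $\Phi$ stays harmonic, and $e_{\bar\partial,\tilde g}\equiv1$, so
\[
0=\tfrac12|\nabla_{\tilde g}\Phi|_{\tilde g}^2+K_{\tilde g}+B_N(X,Y)e_{\partial,\tilde g}-H_N(X),
\]
and the square is simply discarded. The defect term in your identity equals $\tfrac12e_{\bar\partial}|\nabla_{\tilde g}\Phi|^2_{\tilde g}$. This is exactly how the paper obtains the refined Kato inequality as a by-product in Remark~\ref{rem:kato-equality}. Once that input is supplied, your argument is the paper's, rewritten in the background metric $g$.
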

	\begin{proof}
		Let $\mathcal{E}=\omega^*TN$ be the pull-back bundle. Let $\Phi(\omega)$ be the real antiholomorphic differential of $\omega$ and $Z$ be its zero set. By Lemma~\ref{lem:real-first-order-identities}, $\Phi$ is a harmonic $\mathcal{E}$-valued 1-form.
		
		On $\Sigma\setminus Z$, set $\tilde{g}=\tfrac12|\Phi|^2g$. Then
		\[
		e_{\bar{\partial},\tilde{g}}=\frac12|\Phi|^2_{\tilde{g}}=1, \quad \operatorname{Area}_{\tilde{g}}(\Sigma\setminus Z)=E_{\bar\partial}(\omega).
		\]
		Harmonicity is conformally invariant in dimension two, so $\Phi$ is still harmonic under the conformal metric $\tilde{g}$. The stationary form of the Bochner formula \eqref{eq:antiholomorphic-bochner}, applied with the domain metric $\tilde{g}$, gives
		\begin{equation}\label{eq:gap-conformal-bochner}
			\begin{aligned}
				0=\Delta_{\tilde{g}}e_{\bar{\partial},\tilde{g}}
				&=\tfrac12|\nabla_{\tilde{g}}\Phi|_{\tilde{g}}^2+K_{\tilde{g}} +B_N(X,Y)e_{\partial, \tilde{g}}-H_N(X)
				\\
				&\ge K_{\tilde{g}}-H_*.
			\end{aligned}
		\end{equation}
		Here we used the nonnegative bisectional curvature assumption $B_N\ge 0$ in the last line. Now integrating this inequality over $\Sigma\setminus Z$ gives
		\[ \int_{\Sigma\setminus Z}K_{\tilde{g}}dV_{\tilde{g}} \le H_*\operatorname{Area}_{\tilde{g}}(\Sigma\setminus Z)=H_*E_{\bar\partial}(\omega).\]
		Applying \eqref{eq:gap-conformal-gauss-bonnet} in Lemma~\ref{lem:harmonic-one-form-zeros}, we get \eqref{eq:antiholomorphic-zero-order-gap}.
	\end{proof}
	
	\begin{remark}\label{rem:gap-equality}
		From \eqref{eq:gap-conformal-bochner}, we see that the equality in \eqref{eq:antiholomorphic-zero-order-gap} holds precisely when 
		\[ K_{\tilde g}=H_*, \]
		or equivalently,
		\begin{equation}\label{eq:gap-rigidity-kahler-equality}
			\nabla_{\tilde g}\Phi=0,\qquad
			B_N(X,Y)e_{\partial,\tilde g}=0,\qquad H_N(X)=H_*.
		\end{equation}
		In Appendix~\ref{sec:gap-rigidity}, we show that if the holomorphic bisectional curvature of $N$ is strictly positive, then the equality holds if and only if $\omega$ is an antiholomorphic totally geodesic branched immersion whose induced metric has constant Gauss curvature $H_*$ away from the branch points.
	\end{remark}
	
	\begin{remark}\label{rem:kato-equality}
		A direct computation shows that
		\[ |\nabla_{\tilde{g}}\Phi|_{\tilde{g}}^2=4|\Phi|_g^{-4}\left(|\nabla \Phi|_g^2-2|\nabla|\Phi|_g|^2\right).\]
		This gives a simple proof of the improved Kato inequality for harmonic 1-forms (cf. \cite{CibotaruZhu2012}), i.e.
		\[ |\nabla \Phi|_g^2\ge 2|\nabla|\Phi|_g|^2. \]
		In particular, the equality holds if and only if $\nabla_{\tilde{g}}\Phi=0$.
	\end{remark}
	
	Now we give the proof of Theorem~\ref{thm:antiholomorphic-energy-threshold}.
	
	\begin{proof}[Proof of Theorem~\ref{thm:antiholomorphic-energy-threshold}]
		For $\Sigma=S^2$, the Euler characteristic number is $\chi(\Sigma)=2$. If $\omega$ is nonholomorphic, then \eqref{eq:thm-gap} follows by discarding the nonnegative orders in \eqref{eq:antiholomorphic-zero-order-gap}. 
		
		Otherwise $\omega$ is holomorphic. Composing $\omega$ with an orientation-reversing conformal automorphism of $S^2$ gives an antiholomorphic map $\tilde{\omega}$. Then \eqref{eq:thm-holo-gap} follows by applying \eqref{eq:thm-gap} to $\tilde{\omega}$.
	\end{proof}
	
	\subsection{Threshold of almost-holomorphic harmonic map flow}
	
	We next use the gap theorem to prove Corollary~\ref{cor:gap-flow-app}. The key point is to rule out loss of antiholomorphic energy at the terminal time, so that assumption \eqref{assumption:H} holds. The proof follows from the split energy identity, which was proved by Li--Song \cite[Section~6.1]{LiSong2025} for the Landau--Lifshitz flow.
	
	\begin{proposition}\label{prop:antiholomorphic-bound-below-gap}
		Let $u:\Sigma\times[0,T)\to N$ be a smooth harmonic map heat flow with $T<\infty$, satisfying \eqref{eq:initial-gap-threshold}. Then \eqref{assumption:H} holds. Moreover,
		\[
		E_{\bar\partial}(u(t))\longrightarrow E_{\bar\partial}(u_T),
		\qquad
		\lim_{r\downarrow0}\lim_{t\uparrow T}
		E_{\bar\partial}(u(t);B_r(p))=0
		\]
		for every $p\in\mathcal S$.
	\end{proposition}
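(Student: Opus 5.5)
The plan is to show first that no antiholomorphic energy concentrates at any point of $\mathcal S$, and then to obtain \eqref{assumption:H} from the local antiholomorphic $\varepsilon$-regularity of Proposition~\ref{prop:antiholomorphic-regularity}.

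\textbf{Step 1: split energy identity.} Fix any sequence $t_n\uparrow T$. The classical bubble-tree theory (Ding--Tian, Qing--Tian, Lin--Wang) gives a subsequence and a decomposition into the body map $u_T$ and harmonic spheres $\omega_\alpha$, with no energy loss in the necks. This requires no K\"ahler hypothesis. On the neck annuli the energy vanishes, so the pointwise bound $e_{\bar\partial}\le e$ shows that $E_{\bar\partial}$ also vanishes there. On each bubble region the rescaled maps converge strongly in $W^{1,2}_{\mathrm{loc}}$, and $\Phi$ is pointwise conformally natural. Hence we obtain the split identity
\[
\lim_{n}E_{\bar\partial}(u(t_n))=E_{\bar\partial}(u_T)+\sum_\alpha E_{\bar\partial}(\omega_\alpha),
\]
and likewise for $E_\partial$. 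This follows Li--Song's argument for the Landau--Lifshitz flow.

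\textbf{Step 2: every bubble is holomorphic.} Since $E_{\bar\partial}$ is nonincreasing by \eqref{eq:energy-dissipation},
\[
\sum_\alpha E_{\bar\partial}(\omega_\alpha)\le E_{\bar\partial}(u(0))<\delta_N.
\]
Each nonconstant $\omega_\alpha$ is a harmonic sphere. By Theorem~\ref{thm:antiholomorphic-energy-threshold}, a nonholomorphic one would carry $E_{\bar\partial}(\omega_\alpha)\ge\delta_N$, which is impossible. So every $\omega_\alpha$ is holomorphic and $E_{\bar\partial}(\omega_\alpha)=0$. Consequently $E_{\bar\partial}(u(t_n))\to E_{\bar\partial}(u_T)$. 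Since $E_{\bar\partial}(u(t))$ is monotone, this limit holds along the full time $t\uparrow T$.

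\textbf{Step 3: local nonconcentration.} The same bookkeeping, localized to $B_r(p)$, gives
\[
\lim_n E_{\bar\partial}(u(t_n);B_r(p))=E_{\bar\partial}(u_T;B_r(p))+\sum_{\alpha\text{ at }p}E_{\bar\partial}(\omega_\alpha)=E_{\bar\partial}(u_T;B_r(p)).
\]
To pass from the subsequence to the full limit $t\uparrow T$, apply Lemma~\ref{lem:local-energy-comparison} to $e_{\bar\partial}$. This works because $e_{\bar\partial}=\tfrac12(e-\text{pullback of }\omega_N)$ and the pullback term $u^*\omega_N$ varies continuously in time in the weak sense. Alternatively, use the weak convergence of the measures $e_{\bar\partial}(u(t))\,dV$, which holds because every subsequence has the same limit $e_{\bar\partial}(u_T)\,dV$. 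Letting $r\downarrow0$ proves the second limit in the statement.

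\textbf{Step 4: deducing \eqref{assumption:H}.} Fix $p\in\mathcal S$. Choose $R$ small, below the geometric radius, so that $E_{\bar\partial}(u_T;B_R(p))<\delta_0/4$. By Step 3, choose $t_1<T-R^2$ so that
\[
E_{\bar\partial}(u(t);B_R(p))<\delta_0/2\qquad\text{for all }t\ge t_1.
\]
This controls both the initial-ball term and the annulus supremum in Proposition~\ref{prop:antiholomorphic-regularity}, which then gives
\[
|\Phi|\le\frac{C}{R}\quad\text{on }B_{R/2}(p)\times[t_1+R^2/2,T).
\]
Away from $\mathcal S$ the flow converges smoothly, so $|\Phi|$ is bounded on the complement near $T$. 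On $[0,t_1+R^2/2]$ the flow is smooth on a compact domain, so $|\Phi|$ is bounded there as well. Combining the finitely many pieces gives a uniform $B$.

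\textbf{Main obstacle.} The delicate step is Step 1. The classical energy identity must be upgraded to the split $\bar\partial$-version, in particular checking that rescaling and orientation conventions keep $E_{\bar\partial}$ of bubbles well defined. The identity also has to hold without assuming \eqref{assumption:H}, since that is what we are trying to prove. I would handle this using only the pointwise domination $e_{\bar\partial}\le e$ on necks together with strong local convergence on bubble regions.
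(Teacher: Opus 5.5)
Your proposal is correct and follows essentially the paper's route. It upgrades the classical energy identity to a split identity for $E_{\bar\partial}$, uses the gap Theorem~\ref{thm:antiholomorphic-energy-threshold} to force every bubble to be holomorphic, passes from a sequence to the full limit $t\uparrow T$ by a cutoff argument for $e_{\bar\partial}$ (your weak continuity of $u^*\omega_N$ amounts to the paper's computation of $\frac{d}{dt}\int_\Sigma\chi e_{\bar\partial}\,dV$), and applies local $\varepsilon$-regularity for $\Phi$ to obtain \eqref{assumption:H}.

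Two small cautions. First, without \eqref{assumption:H} the classical theory only gives a bubble tree along a suitably chosen sequence, not along every sequence, so in Step~3 use the cutoff option rather than the ``every subsequence has the same limit'' alternative. Second, in Step~4 fix $t_1$ first and then shrink $R$ (the ball energy is monotone in $R$), so that $t_1<T-R^2$ is guaranteed.
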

	\begin{proof}
		Fix $p\in\mathcal S$. The classical bubbling and energy identity \cite{LinWang1998,DingTian1995,Wang1996}, which do not require \eqref{assumption:H}, provide a sequence $t_n\uparrow T$ and finitely many harmonic spheres $\omega_{p,1},\ldots,\omega_{p,\ell_p}$ at $p$ such that
		\[
		\lim_{r\downarrow 0}\lim_{n\to\infty}E(u(t_n);B_r(p))
		=\sum_{j=1}^{\ell_p}E(\omega_{p,j}).
		\]
		Since the energy on the neck region vanishes in the limit, the energy identity can be upgraded to a split energy identity for holomorphic and antiholomorphic energies. Indeed, the strong convergence of $u(t)$ on the body regular region and the rescaled maps on the bubble regions imply
		\begin{equation}\label{eq:gap-local-split-energy}
			\lim_{r\downarrow 0}\lim_{n\to\infty}E_{\bar\partial}(u(t_n);B_r(p))
			=\sum_{j=1}^{\ell_p}E_{\bar\partial}(\omega_{p,j}).
		\end{equation}
		
		By \eqref{eq:energy-dissipation} and \eqref{eq:initial-gap-threshold},
		\[
		E_{\bar\partial}(\omega_{p,j})
		\le \lim_{n\to\infty}E_{\bar\partial}(u(t_n);B_r(p))
		\le E_{\bar\partial}(u(0))<\delta_N.
		\]
		Then Theorem~\ref{thm:antiholomorphic-energy-threshold} asserts that every $\omega_{p,j}$ is holomorphic. Hence the sum in \eqref{eq:gap-local-split-energy} vanishes, and
		\begin{equation}\label{eq:gap-local-split-energy1}
			\lim_{r\downarrow 0}\lim_{n\to\infty}E_{\bar\partial}(u(t_n);B_r(p))
			=0.
		\end{equation}
		
		The limit in \eqref{eq:gap-local-split-energy1} actually does not depend on the choice of the time sequence. Indeed, we can choose $r>0$ so small that $B_{2r}(p)\cap\mathcal S=\{p\}$. Let $\chi\in C_c^\infty(B_{2r}(p))$ satisfy $0\le\chi\le1$, $\chi=1$ on $B_r(p)$, and $|\nabla\chi|\le C/r$. Integration by parts gives
		\[
		\frac{d}{dt}\int_\Sigma\chi e_{\bar\partial}(u(t))\,dV
		=-\frac12\int_\Sigma\chi|\mathcal T|^2\,dV
		-\int_\Sigma\langle\mathcal T,\Phi(\nabla\chi)\rangle\,dV.
		\]
		Writing $D(t_1,t_2)=\int_{t_1}^{t_2}\|\mathcal T(u(t))\|_2^2\,dt$, we obtain, for $t_1<t_2<T$,
		\[
		\left|\int_\Sigma\chi
		\bigl(e_{\bar\partial}(u(t_2))-e_{\bar\partial}(u(t_1))\bigr)\,dV\right|
		\le\frac12D(t_1,t_2)
		+\frac{C\sqrt{E(u(0))}}{r}
		\sqrt{(t_2-t_1)D(t_1,t_2)}.
		\]
		Since $D(t_1,t_2)\to0$ as $t_1,t_2\uparrow T$, the cutoff energy has a limit. Therefore, using \eqref{eq:gap-local-split-energy1}, we get
		\[
		\lim_{r\downarrow 0}\lim_{t\uparrow T}E_{\bar\partial}(u(t);B_r(p))
		=\lim_{r\downarrow 0}\lim_{n\to\infty}E_{\bar\partial}(u(t_n);B_r(p))
		=0.
		\]
		Strong convergence on the regular body region also gives $E_{\bar\partial}(u(t))\to E_{\bar\partial}(u_T)$.
		
		Finally, nonconcentration of the antiholomorphic energy allows us to apply Proposition~\ref{prop:antiholomorphic-regularity} locally in a parabolic region of uniform size at every point of $\Sigma\times [0, T]$, which validates assumption \eqref{assumption:H}.
	\end{proof}
	
	\begin{proof}[Proof of Corollary~\ref{cor:gap-flow-app}]
		
		Under \eqref{eq:initial-gap-threshold}, Proposition~\ref{prop:antiholomorphic-bound-below-gap} gives \eqref{assumption:H}, which allows us to apply Theorems~\ref{thm:outer-scale-estimates} and~\ref{thm:terminal-convergence} and Corollary~\ref{cor:terminal-expansions}. This proves Corollary~\ref{cor:gap-flow-app}. 
	\end{proof}	
	
	\appendix
	
	\section{Scalar estimates on a standard neck region}
	\label{sec:annular-estimates}
	
	In this appendix, we prove scalar estimates on a parabolic region with moving inner boundary. These estimates are applied componentwise to $\Phi$ in Section~\ref{sec:neck-estimates}.  
	
	Write $r=|x|$ and $U_a^R:=\{x\in\mathbb R^2|a<r<R\}$. Fix $c>0$ and set
	\begin{equation*}
		\Omega_c(a,1)
		:=
		\left\{
		(x,s)\in \mathbb R^2\times\mathbb R^1 |
		-1\le s\le0,\quad
		a-cs < r < 1
		\right\}.
	\end{equation*}
	For $c=1/2$, this is precisely the region $\Omega_a$ used in Section~\ref{sec:neck-estimates}.	The following theorem allows any exponent $1/2<\mu<1$; the application in Proposition~\ref{prop:normalized-neck-tension} uses $\mu=3/4$.
	
	\begin{theorem}
		\label{thm:moving-annulus-estimate}
		Fix
		\[
		c>0,
		\qquad
		\beta>0,
		\qquad
		\frac12<\mu<1.
		\]
		Suppose that $f$ is smooth on $\Omega_c(a,1)$,
		\[
		|f|\le M,
		\]
		and
		\begin{equation}
			|(\partial_s-\Delta)f|
			\le
			\frac L{r^2}
			\left[
			\left(
			\frac{a-cs}r
			\right)^\beta
			+
			r^\beta
			\right].
			\label{eq:moving-annulus-forcing}
		\end{equation}
		Set
		\[
		\gamma
		:=
		\min\left\{
		\frac\beta2,\frac12
		\right\}.
		\]
		Then there exist
		\[
		K_0\ge4,
		\qquad
		\delta_0>0,
		\qquad
		C<\infty,
		\]
		depending only on $c,\beta,\mu$, such that, for
		\[
		K\ge K_0,
		\qquad
		0<\delta<\delta_0,
		\]
		we have
		\begin{equation}
			\int_{U_{Ka}^{\delta}}
			|\nabla f(x,0)|^2\,dx
			\le
			C(M+L)^2
			\left[
			\frac1{\ln K}
			+
			\frac1{\ln(1/\delta)}
			+
			\delta^{\,2\gamma(1-\mu)}
			\right].
			\label{eq:moving-annulus-energy-estimate}
		\end{equation}
	\end{theorem}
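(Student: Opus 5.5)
We split $f=h+w$, where $w$ absorbs the forcing and $h$ is caloric. The gradient energy of $h$ on the terminal slice will be controlled by a logarithmic cutoff argument in the spirit of Lemma~\ref{lem:log-local-dissipation}. The gradient of $w$ will be controlled by explicit barriers adapted to the two tails of \eqref{eq:moving-annulus-forcing}.

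\textbf{Step 1: interior gradient bound.} First, we record a pointwise interior bound. For $(x,0)$ with $Ka<r<\delta$, the backward cylinder $D_{\theta r}(x)\times[-\theta^2r^2,0]$ lies in $\Omega_c(a,1)$ once $K_0$ is large and $\delta_0$ small; for the inner side this needs $a<(1-\theta)r$, i.e.\ $K_0\ge 2$. Interior parabolic estimates for $(\partial_s-\Delta)f=F$ then give
\[
r|\nabla f(x,0)|\le C\Bigl(M+L\bigl[(a/r)^\beta+r^\beta\bigr]\Bigr).
\]
This bound alone only gives $\int|\nabla f|^2\lesssim (M+L)^2\ln(\delta/(Ka))$, which is too weak. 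So the logarithmic gain must come from an energy identity rather than from pointwise bounds.

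\textbf{Step 2: localized energy identity.} We test the equation against $\chi^2 f$, where $\chi(x,s)$ is a logarithmic cutoff in space. Concretely, $\chi=1-\chi_{K a,\sqrt{K}a}$-type near the inner end, adapted to the moving boundary $a-cs$, and $\chi=\chi_{\delta,\sqrt\delta}$ near the outer end, glued on the annulus $U_{Ka}^{\delta}$. Integrating over $s\in[-\tau,0]$ gives
\[
\int\chi^2|\nabla f(\cdot,0)|^2
\lesssim \int\!\!\int |\partial_s(\chi^2)|f^2+|\nabla\chi|^2f^2+\chi^2|F||f| .
\]
A cleaner route is to test against $\chi^2\partial_s f$ and add the time-localized Caccioppoli estimate, as in the proof of Lemma~\ref{lem:ancient-stationarity}, so that the terminal slice $\int\chi^2|\nabla f(0)|^2$ appears on the left.

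The terms $\int|\nabla\chi|^2 f^2$ are at most $M^2\cdot C/\ln K$ and $M^2\cdot C/\ln(1/\delta)$ by Lemma~\ref{lem:logarithmic-cutoff}, since the two cutoff annuli have log-moduli $\tfrac12\ln K$ and $\tfrac12\ln(1/\delta)$.

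The forcing term is handled with $|f|\le M$. We need the time integral over a window of length about $r^2$ (localized by a temporal cutoff of scale comparable to the spatial scale) of $\int L r^{-2}[(a-cs)^\beta r^{-\beta}+r^\beta]M$. The $r^\beta$ tail integrates to about $LM\delta^{\beta}$, which is fine. The inner tail is dangerous: $(a-cs)/r$ is not small when $|s|\sim r$, since then $a-cs\gtrsim r$, i.e.\ the moving boundary has swept past. This is exactly why $\mu$ and $\gamma=\min\{\beta/2,1/2\}$ appear. We restrict the time window at radius $r$ to $|s|\le r^{1/\mu}$ (parabolic scale $r^2\le r^{1/\mu}\le r$ since $1/2<\mu<1$). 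On that window $(a-cs)/r\le a/r+cr^{1/\mu-1}$, and the resulting error is bounded by $L M(K^{-\beta}+\delta^{\beta(1/\mu-1)})$.

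\textbf{Step 3: organizing the time cutoff.} We implement this with a space--time cutoff $\psi(x,s)=\eta(s/r^{1/\mu})$ multiplied by the spatial log cutoff. Its time derivative contributes $\int|\partial_s\psi|f^2\lesssim M^2\int_{Ka}^{\delta} r^{1-1/\mu}\,dr\cdot r^{-1/\mu}\cdots$. We also need the Step~1 bound on $|\nabla f|$ for the cross terms from $\nabla\psi$. This bookkeeping produces the power $\delta^{2\gamma(1-\mu)}$; the cap $\gamma\le 1/2$ comes from the interior gradient bound squared against $r^{1/\mu-2}$.

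\textbf{Main obstacle.} The main obstacle is this moving-boundary forcing term together with the time-localization error. These must be balanced so that every error is either logarithmic in $K$ or $1/\delta$, or a positive power of $\delta$, uniformly in $a$. If the direct energy approach leaks, the fallback is Duhamel with the heat kernel on dyadic annuli $U_{2^k}^{2^{k+1}}$, using Step~1 on each annulus and summing the geometric tails.
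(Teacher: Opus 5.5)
\textbf{The gap: an energy identity with a logarithmic cutoff does not give logarithmic smallness on the terminal slice (Steps 2--3).} Your sketch never actually shows this step, and the natural implementations fail.

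Testing against $\chi^2 f$ only controls $\int\chi^2 f(\cdot,0)^2$ and the space--time integral $\int\!\!\int\chi^2|\nabla f|^2$. So the displayed inequality in Step~2 is not what that identity yields.

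To reach $s=0$ you must test against $\chi^2\psi^2\partial_s f$. This bounds $\int\chi^2|\nabla f(\cdot,0)|^2$ by a time average $\tau^{-1}\int_{-\tau}^0\!\int\chi^2|\nabla f|^2$ plus cross terms. Caccioppoli then bounds that average only by $CM^2\bigl(\tau^{-1}|\operatorname{supp}\chi|+\int|\nabla\chi|^2\bigr)$ plus forcing terms.

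\emph{Time-independent cutoff.} Take $\chi$ equal to one on $U_{Ka}^{\delta}$ and staying inside the moving domain. Then $|\operatorname{supp}\chi|\gtrsim\delta^2$ while $\tau\lesssim Ka/c$. So the first term is $\gtrsim M^2\delta^2/(Ka)$, which is unbounded as $a\to0$.

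\emph{Your radius-dependent window.} The cutoff $\psi=\eta(s/r^{1/\mu})$ removes this obstruction, but it changes by $O(1)$ each time $r$ doubles, so $|\nabla\psi|\sim r^{-1}$. With your Step~1 bound, the cross term $\int\!\!\int\chi^2|\nabla\psi|^2|\nabla f|^2$ is of order $M^2\int_{Ka}^{\delta}r^{1/\mu-3}\,dr$, which blows up as $a\to0$. If you instead control the time averages by a Caccioppoli estimate with weight $\chi^2\psi/r^{1/\mu}$, the gradient of this weight produces $M^2\int_{U_{Ka}^{\delta}}r^{-2}\,dx\sim M^2\ln(\delta/(Ka))$. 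That is exactly the bound you were trying to beat. So the bookkeeping in Step~3 does not close, and the exponent $\delta^{2\gamma(1-\mu)}$ and the cap $\gamma\le1/2$ are asserted rather than derived.

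\textbf{The missing idea: the logarithmic gain comes from a sharper pointwise bound.} This is contrary to your conclusion at the end of Step~1.

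Lemma~\ref{lem:homogeneous-annulus-estimate} shows that a caloric $f$ with $|f|\le M$ on $U_a^R\times[-R^2,0]$ satisfies $|\nabla f(x,0)|\le CM\bigl[\frac{1}{r\ln(r/a)}+\frac1R\bigr]$. The proof reflects across the line $\{y\cdot e=r/\sqrt2\}$ and dominates the odd part by three barriers vanishing on that line:
\begin{itemize}
\item the normalized logarithmic dipole $\ln(|y-2de|/|y|)/\ln((2d-a)/a)$;
\item the harmonic measure of the outer semicircle;
\item an error-function solution for the initial face.
\end{itemize}
Squaring gives $\int_{Ka}\frac{dr}{r\ln^2(r/a)}\le\frac1{\ln K}$.

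The forcing is then handled by a dyadic Duhamel decomposition with two sup bounds per shell (Lemma~\ref{lem:two-tail-annulus-estimate}). This resembles your fallback, but it needs Lemma~\ref{lem:homogeneous-annulus-estimate} both for the caloric part and for each shell solution away from its shell. Step~1's $M/r$ bound summed over shells only reproduces $\ln(\delta/(Ka))$.

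The moving boundary is handled pointwise rather than by time cutoffs. At each $x$, apply the fixed-annulus estimate on $U_{a_r}^{R_r}\times[-R_r^2,0]$ with $R_r=r^\mu$ and $a_r=a+cr^{2\mu}$, and use $1/\ln(r/a_r)\lesssim1/\ln(r/a)+1/\ln(1/r)$. The outer-tail factor $r^{-1}(r/R_r)^\gamma=r^{-1+\gamma(1-\mu)}$ is what produces $\delta^{2\gamma(1-\mu)}$. The cap on $\gamma$ comes from needing $\gamma<\min\{\beta,1\}$ in the dyadic summation of that outer tail.
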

	
	First we prove the gradient estimate for a caloric function. We then include the inhomogeneous  term and obtain an estimate of the same logarithmic order. Finally, we treat the moving inner boundary and finish the proof in Section~\ref{subsec:moving-annulus-estimate}.
	
	\subsection{Neck estimate for homogeneous heat equation}
	
	We first prove a gradient estimate for bounded caloric functions on a fixed annulus. The proof uses a standard reflection argument and requires no prescribed boundary data.
	
	\begin{lemma}
		\label{lem:homogeneous-annulus-estimate}
		Let $f$ be a smooth caloric function on
		\begin{equation*}
			\Omega(a,R)
			:=
			U_a^R\times[-R^2,0],
			\qquad
			0<a<\frac{R}{16},
		\end{equation*}
		and suppose
		\[
		|f|\le M.
		\]
		Then, for all $x\in U_{4a}^{R/4}$, we have
		\begin{equation}
			|\nabla f(x,0)|
			\le
			CM
			\left[
			\frac{1}{r\ln(r/a)}
			+
			\frac1R
			\right].
			\label{eq:homogeneous-annulus-gradient}
		\end{equation}
		Consequently, for $K\ge4$ and $0<\delta\le1/4$,
		\begin{equation}
			\int_{U_{Ka}^{\delta R}}
			|\nabla f(x,0)|^2\,dx
			\le
			CM^2
			\left[
			\frac1{\ln K}
			+
			\delta^2
			\right].
			\label{eq:homogeneous-annulus-energy}
		\end{equation}
	\end{lemma}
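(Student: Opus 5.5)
Our plan is to compare $f$ with the solution of a one-dimensional (radial) problem by separating the angular mean from the oscillating part. We work at the fixed point $x$ with $4a<r=|x|<R/4$.

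\emph{Step one: the oscillating part.} On the circle $|y|=\rho$ write $f=\bar f(\rho,s)+\tilde f$, where $\bar f$ is the angular average. Each Fourier mode $f_k(\rho,s)e^{ik\theta}$ with $k\ne0$ solves $\partial_s f_k=f_k''+\rho^{-1}f_k'-k^2\rho^{-2}f_k$. Instead of handling modes separately, we use the reflection idea at scale $r$. Interior parabolic estimates on $D_{r/2}(x)\times[-r^2/4,0]$ give $|\nabla f(x,0)|\le CM/r$. This is too weak, so the gain $1/\ln(r/a)$ must come from the mean-zero structure. For the nonradial part, we use the fact that the angular gradient of $f$ is controlled by the oscillation of $f$ on the annulus $U_{r/2}^{2r}$.

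\emph{Step two: the radial part.} Since $f$ is caloric, $\bar f$ satisfies the radial heat equation $\partial_s\bar f=\rho^{-1}\partial_\rho(\rho\partial_\rho\bar f)$. In the variable $\tau=\ln\rho$ this becomes a heat equation with time scaled by $e^{2\tau}$. The radial derivative $\rho\partial_\rho\bar f$ is then the ``flux''. We compare $\bar f$ with a bounded caloric function of the form $A+B\ln\rho$ plus a correction. The harmonic barriers $M\ln(\rho/a)/\ln(r/a)$, combined with the time-dependent barriers $M\bigl(\rho^2+4(s+R^2)\bigr)/R^2$ coming from the outer boundary and the initial time, give oscillation bounds. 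These yield $\rho|\partial_\rho \bar f|\lesssim M/\ln(\rho/a)+M\rho/R$ at $s=0$.

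\emph{Alternative route (reflection).} More simply, one may apply the standard gradient estimate to $f-f(x_0,0)$ after reflecting across the circle $|y|=\sqrt{a\,\cdot}$. The cleanest implementation is the Kelvin-type reflection $y\mapsto a r y/|y|^2$, which maps the annulus to itself. This makes the oscillation of $f$ between the radii $r$ and $2r$ controlled by $M/\ln(r/a)$ through a Harnack chain in $\ln\rho$. Interior estimates on the scale-$r$ cylinder then convert the oscillation bound into \eqref{eq:homogeneous-annulus-gradient}, with the extra term $M/R$ coming from the outer boundary.

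\emph{Integrated estimate.} Finally, \eqref{eq:homogeneous-annulus-energy} follows by integrating the square of \eqref{eq:homogeneous-annulus-gradient}:
\[
\int_{Ka}^{\delta R}\frac{dr}{r\ln^2(r/a)}\le \frac1{\ln K},
\qquad
\int_{U_{Ka}^{\delta R}}R^{-2}\,dx\le \pi\delta^2 .
\]

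\emph{Main obstacle.} The main difficulty is the oscillation bound: showing that $\operatorname{osc}_{U_{r/2}^{2r}\times[-r^2,0]}f\le CM\bigl(1/\ln(r/a)+r/R\bigr)$ modulo constants. We would try first a log-barrier comparison, namely the maximum principle for $f$ against $M\ln(\rho/a)/\ln(r/a)$ on the time-dependent domain. The boundary data on the inner circle are unknown, so the comparison must be done for differences between two times, or through the Dirichlet-energy decay of the radial mode.
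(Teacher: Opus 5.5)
\textbf{There is a genuine gap.} The content of \eqref{eq:homogeneous-annulus-gradient} is the factor $1/\ln(r/a)$, and none of your proposed steps produces it. Your ``main obstacle'' paragraph states exactly what remains to be proved.
\begin{itemize}
\item In Step one you fall back on interior estimates at scale $r$. These give only $|\nabla f(x,0)|\le CM/r$ unless the oscillation bound is already known.
\item In Step two, the barrier $M\ln(\rho/a)/\ln(r/a)$ can only be compared with a caloric function that vanishes, or is otherwise controlled, on $|y|=a$. But $f$ and $\bar f$ have arbitrary bounded, time-dependent data there, and $\bar f-\bar f(a,s)$ is not caloric.
\item Your time barrier $M(\rho^2+4(s+R^2))/R^2$ equals $M\rho^2/R^2<M$ on the initial face, so it does not dominate the initial data.
\item The alternative route fails outright. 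The map $y\mapsto ary/|y|^2$ sends $U_a^R$ onto $U_{ar/R}^{r}$, not onto itself. More importantly, an inversion does not preserve caloric functions: for a conformal map $\phi$ of the plane, $v=f(\phi(y),s)$ satisfies $\partial_s v=|\phi'|^{-2}\Delta v$, so only rigid motions preserve the heat equation.
\item A Harnack chain in $\ln\rho$ does not supply the gain either. Harnack and oscillation-decay estimates act only on cylinders contained in the domain, which around $x$ have radius $\lesssim r$. By themselves they give nothing better than $\operatorname{osc}\le CM$ at that scale.
\end{itemize}

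The missing idea is to reflect across a straight line through $x$, which is an isometry and hence preserves the heat equation. The paper takes a unit vector $e$ with $x\cdot e=d:=r/\sqrt2$. The line $\Pi=\{y\cdot e=d\}$ then passes through $x$ and leaves the hole $D_a$ strictly on one side; this is why $d=r/\sqrt2$ rather than $0$. It sets $g(y,t)=f(y,t)-f(y^*,t)$ on the half-disk $\{y\cdot e<d\}\cap D_{R/2}(x)\setminus\overline{D_a}$. Then $g$ is caloric, $|g|\le 2M$, $g=0$ on $\Pi$, and $\partial_{-e}g(x,0)=2\partial_{-e}f(x,0)$. The problem thus becomes a normal-derivative bound at a boundary point where $g$ vanishes, and no information about $f$ on the inner circle is needed. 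One dominates $|g|/(2M)$ by three nonnegative caloric barriers that vanish on $\Pi$:
\begin{itemize}
\item the logarithmic dipole $\ln(|y-2de|/|y|)/\ln((2d-a)/a)$, which is $\ge1$ on $|y|=a$ and whose normal derivative at $x$ is at most $\sqrt2/(r\ln(r/a))$; this is where the logarithm comes from;
\item the harmonic measure of the outer semicircle, with normal derivative $4/(\pi L)$, where $L=R/2$;
\item $\operatorname{erf}\bigl((d-y\cdot e)/(2\sqrt{t+R^2})\bigr)$ for the initial face, with normal derivative $1/(\sqrt\pi R)$.
\end{itemize}
The two perpendicular admissible choices of $e$ then give the full gradient. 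Your derivation of \eqref{eq:homogeneous-annulus-energy} from the pointwise bound is correct and is the same as the paper's.
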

	
	\begin{proof}
		Fix $x \in U_{4a}^{R/4}$ with $|x|=r$. Choose a unit vector $e$ such that
		\begin{equation*}
			x\cdot e=\frac{r}{\sqrt2}=:d.
		\end{equation*}
		There are two perpendicular choices of $e$, so it suffices to estimate the derivative of $f$ in either such direction.
		
		Our first step is to turn this interior derivative into a normal derivative at a zero boundary. Let
		\[
		\Pi=\{y:y\cdot e=d\},
		\]
		and denote reflection across $\Pi$ by
		\begin{equation*}
			y^*=y+2(d-y\cdot e)e.
		\end{equation*}
		Set $L=R/2$ and consider the half-disk with the inner hole removed
		\begin{equation*}
			\mathcal H
			:=\{y\cdot e<d\}\cap D_L(x)\setminus\overline{D_a}.
		\end{equation*}
		For $y\in\mathcal H$, both $y$ and $y^*$ lie in $U_a^R$: reflection fixes $x$, so
		\[
		|y|,\ |y^*|<r+L<R,
		\qquad
		|y^*|\ge y^*\cdot e>d>a.
		\]
		Consequently,
		\begin{equation*}
			g(y,t):=f(y,t)-f(y^*,t)
		\end{equation*}
		is caloric on $\mathcal H\times[-R^2,0]$, satisfies $|g|\le2M$, and vanishes on $\Pi$.
		
		We construct three nonnegative caloric barriers, one for each of the inner spatial boundary, the outer spatial boundary, and the initial time face. All three vanish on $\Pi$. Their inward normal derivatives at $(x,0)$ will therefore control that of $g$.
		
		\medskip
		\noindent
		\emph{The inner boundary.} In two dimensions, the logarithmic potential is harmonic away from its pole. Since $0$ and $2de$ are reflections across $\Pi$, the difference
		\[
		V(y):=\ln|y-2de|-\ln|y|
		\]
		vanishes on $\Pi$. Moreover,
		\[
		|y-2de|^2-|y|^2=4d(d-y\cdot e)>0
		\qquad (y\cdot e<d),
		\]
		so $V$ is positive and harmonic in $\mathcal H$. On $|y|=a$, the bound $|y-2de|\ge2d-a$ suggests the normalization
		\begin{equation*}
			H_{\mathrm{in}}(y)
			:=
			\frac{\ln\bigl(|y-2de|/|y|\bigr)}
			{\ln\bigl((2d-a)/a\bigr)}.
		\end{equation*}
		Thus $H_{\mathrm{in}}\ge1$ on the inner circle. At $x$, the inward normal is $-e$, and
		\begin{equation*}
			\partial_{-e}V(x)=\frac{2d}{r^2}
			=\frac{\sqrt2}{r}.
		\end{equation*}
		Since $2d-a\ge r$, we obtain
		\begin{equation*}
			\partial_{-e}H_{\mathrm{in}}(x)
			\le\frac{\sqrt2}{r\ln(r/a)}.
		\end{equation*}
		
		\medskip
		\noindent
		\emph{The outer boundary.} Introduce the complex coordinate
		\begin{equation*}
			z:=(y-x)\cdot e^\perp+i(d-y\cdot e).
		\end{equation*}
		The half-disk becomes $\{|z|<L,\ \operatorname{Im}z>0\}$. We seek a harmonic function equal to zero on its diameter and one on its semicircular boundary. The M\"obius transformation
		\[
		W(z)=\frac{L+z}{L-z}
		\]
		maps this half-disk to the first quadrant, sending the diameter to the positive real axis and the semicircle to the positive imaginary axis. Hence the harmonic function
		\begin{equation*}
			H_{\mathrm{out}}(y)
			:=\frac2\pi\arg\left(\frac{L+z}{L-z}\right)
		\end{equation*}
		has precisely the required boundary values.
		
		Along the inward normal ray $y=x-d_\perp e$, we have $z=id_\perp$ and $H_{\mathrm{out}}=(4/\pi)\arctan(d_\perp/L)$. Thus
		\begin{equation*}
			\partial_{-e}H_{\mathrm{out}}(x)=\frac4{\pi L}.
		\end{equation*}
		
		\medskip
		\noindent
		\emph{The initial time boundary.} Write
		\[
		d_\perp=d-y\cdot e,\qquad \tau=t+R^2.
		\]
		To dominate the initial data while retaining zero boundary value on $\Pi$, we use the half-line heat solution with initial value one and boundary value zero. The similarity ansatz $F(d_\perp/(2\sqrt\tau))$ reduces the heat equation to
		\[
		F''(\eta)+2\eta F'(\eta)=0,
		\qquad F(0)=0,\qquad F(\infty)=1.
		\]
		Its solution is the so-called  error function
		\[
		\operatorname{erf}(\eta)
		:=\frac2{\sqrt\pi}\int_0^\eta e^{-q^2}\,dq.
		\]
		Accordingly, set
		\begin{equation*}
			P(y,t)
			:=\operatorname{erf}
			\left(\frac{d-y\cdot e}{2\sqrt{t+R^2}}\right),
			\qquad -R^2<t\le0.
		\end{equation*}
		This function is nonnegative and caloric, vanishes on $\Pi$, and tends to one as $t\downarrow-R^2$ at every interior point. At the terminal time,
		\begin{equation*}
			\partial_{-e}P(x,0)=\frac1{\sqrt\pi R}.
		\end{equation*}
		
		\medskip
		The sum of these barriers dominates $|g|/(2M)$ on the parabolic boundary: $H_{\mathrm{in}}\ge1$ on the inner circle, $H_{\mathrm{out}}=1$ on the outer semicircle, and $P$ has initial value one in the interior. On $\Pi$, both $g$ and all three barriers vanish. Applying the parabolic maximum principle to $g$ and $-g$ gives
		\begin{equation}
			|g(y,t)|
			\le2M\bigl(H_{\mathrm{in}}(y)
			+H_{\mathrm{out}}(y)+P(y,t)\bigr).
			\label{eq:annular-comparison}
		\end{equation}
		The corner points can be handled by excising small neighborhoods, adding arbitrarily small logarithmic barriers, and then shrinking the neighborhoods to zero.
		
		All terms in \eqref{eq:annular-comparison} vanish at $x$. Dividing along the inward normal by the distance to $\Pi$ and taking the limit, we obtain
		\begin{equation*}
			|\partial_{-e}g(x,0)|
			\le CM\left[
			\frac1{r\ln(r/a)}+\frac1R
			\right].
		\end{equation*}
		Reflection reverses the normal direction, so
		\[
		\partial_{-e}g(x,0)=2\partial_{-e}f(x,0).
		\]
		Using the two perpendicular choices of $e$ proves \eqref{eq:homogeneous-annulus-gradient}.
		
		Finally, square this estimate and integrate in polar coordinates. For a nonempty annulus,
		\begin{equation*}
			\int_{Ka}^{\delta R}\frac{dr}{r\ln^2(r/a)}
			=\frac1{\ln K}-\frac1{\ln(\delta R/a)}
			\le\frac1{\ln K},
		\end{equation*}
		while the $R^{-2}$ term contributes at most $C\delta^2$. This proves \eqref{eq:homogeneous-annulus-energy}.
	\end{proof}

	\subsection{Neck estimate for inhomogeneous heat equation}
	
	Next we treat the inhomogeneous forcing term which decays away from the two ends of the annulus.
	
	\begin{lemma}
		\label{lem:two-tail-annulus-estimate}
		Let $f$ be a smooth solution to the equation
		\begin{equation*}
			(\partial_t-\Delta)f=G, \quad \text{on~} \Omega(a,R).
		\end{equation*}
		Suppose
		\[
		|f|\le M,
		\]
		and assume
		\begin{equation}
			|G|
			\le
			\frac{L}{r^2}
			\left[
			\left(\frac ar\right)^\beta
			+
			\left(\frac rR\right)^\beta
			\right],
			\qquad
			\beta>0.
			\label{eq:two-tail-forcing}
		\end{equation}
		Set
		\begin{equation}
			\gamma
			:=
			\min\left\{
			\frac\beta2,\frac12
			\right\}.
			\label{eq:two-tail-exponent}
		\end{equation}
		Then there exists $C_\beta<\infty$ such that, whenever
		\[
		32a\le r:=|x|\le\frac{R}{64},
		\]
		we have
		\begin{equation}
			|\nabla f(x,0)|
			\le
			C_\beta(M+L)
			\left[
			\frac1{r\ln(r/a)}
			+
			\frac1r
			\left(\frac rR\right)^\gamma
			\right].
			\label{eq:two-tail-annulus-gradient}
		\end{equation}
	\end{lemma}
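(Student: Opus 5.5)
Fix $x$ with $32a\le r=|x|\le R/64$. We split $f=f_1+f_2$ on a suitable subdomain and treat the two pieces separately. Here $f_2$ solves the inhomogeneous equation with zero parabolic boundary data, and $f_1$ is caloric and handled by Lemma~\ref{lem:homogeneous-annulus-estimate}. The whole annulus would give poor bounds for $f_2$ at the two ends, so we work on a dyadically adapted annulus $\Omega(a',R')$ with $a'=a$, $R'=R$. A small correction: to exploit the decay of $G$ we instead write $f_2$ via the Duhamel formula with the Dirichlet heat kernel of $\Omega(a,R)$, or more simply with a cutoff, as below.

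\emph{Step 1 (local Duhamel piece).} Let $\eta$ be a spacetime cutoff equal to one on $U_{r/4}^{4r}\times[-r^2,0]$ and supported in $U_{r/8}^{8r}\times[-4r^2,0]$. Set $f_2:=\int\Gamma*(\eta G)$, with $\Gamma$ the Euclidean heat kernel started at time $-4r^2$. On the support we have $|G|\le L r^{-2}\epsilon(r)$, where $\epsilon(r)=(a/r)^\beta+(r/R)^\beta$. Standard kernel bounds then give $|f_2|\le CL\epsilon(r)$ globally and $|\nabla f_2(x,0)|\le CL\epsilon(r)/r$. The remainder $f-f_2$ solves $(\partial_t-\Delta)(f-f_2)=(1-\eta)G$, which vanishes near $(x,0)$.

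\emph{Step 2 (far forcing via dyadic decomposition).} The forcing supported away from scale $r$ is handled by summing over dyadic shells $U_{2^k a}^{2^{k+1}a}$ and $U_{2^{-k-1}R}^{2^{-k}R}$. For the shell at radius $\rho$, the Duhamel contribution has size $L\epsilon(\rho)$ and reaches $(x,0)$ only through the heat kernel. A cleaner route is to handle the inner and outer forcings by applying Lemma~\ref{lem:homogeneous-annulus-estimate} on intermediate annuli. For the outer part, apply it on $\Omega(a,\rho)$ with $\rho=r^{1-\mu'}R^{\mu'}$ chosen to balance the errors. Inside $|y|<\rho$, the outer forcing is at most $Lr^{-2}(\rho/R)^\beta$, and its Duhamel solution over a time span $\rho^2$ has sup of order $L(\rho/R)^\beta(\rho/r)^{2}$. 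This is too crude. Instead, we integrate the forcing against the kernel radially: the Newtonian-type bound $\int_{r}^{\rho}s^{-1}(s/R)^\beta\,ds\lesssim(\rho/R)^\beta$ controls $|f_{\rm forced}|$ by $CL[(a/r)^{\beta}+(\rho/R)^\beta]$ plus logarithmic terms.

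\emph{Step 3 (homogeneous piece and balancing).} The caloric part $h=f-f_{\rm forced}$ is bounded by $C(M+L)$ on $\Omega(a,\rho)$. Lemma~\ref{lem:homogeneous-annulus-estimate} then gives
\[
|\nabla h(x,0)|\le C(M+L)\Bigl[\frac{1}{r\ln(r/a)}+\frac1\rho\Bigr].
\]
Choosing $\rho=\sqrt{rR}$ gives $1/\rho=r^{-1}(r/R)^{1/2}$. The forced part's gradient at $x$, from the parabolic interior estimate on the ball of radius $r/2$, is at most $r^{-1}\sup|f_{\rm forced}|$. On $D_{r/2}(x)$ this local oscillation is of order $L[(a/r)^\beta+(r/R)^\beta]$, up to the nonlocal constant part, which does not affect the gradient. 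Since $(a/r)^\beta\lesssim 1/\ln(r/a)$ and $(r/R)^\beta\le(r/R)^\gamma$ for $r/R\le1$, combining the pieces yields the claimed bound with $\gamma=\min\{\beta/2,1/2\}$. The half exponent comes from the balancing choice $\rho=\sqrt{rR}$, which converts $(\rho/R)^\beta$ into $(r/R)^{\beta/2}$.

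\emph{Main obstacle.} The hardest part is controlling the nonlocal effect of the forcing, namely showing that $f_{\rm forced}$ has small oscillation near $x$ rather than merely small size. I would construct $f_{\rm forced}$ as the solution with zero Dirichlet data on $\Omega(a,\rho)$ and compare it with the explicit radial supersolution $W(y)=L\int_{|y|}^{\rho}s^{-1}\int_a^{s}\tau^{-1}\epsilon(\tau)\,d\tau\,ds$, which is time independent with $-\Delta W\ge|G|$. Its gradient at $x$ is of order $r^{-1}\int_a^r\tau^{-1}\epsilon(\tau)\,d\tau\lesssim L r^{-1}$. This alone is not small, so a second reflection or barrier argument, as in Lemma~\ref{lem:homogeneous-annulus-estimate}, is needed to localize the estimate. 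This is where I expect the bulk of the work.
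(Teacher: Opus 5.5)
Your framework is partly sound. On $\Omega(a,\rho)$ with $\rho=\sqrt{rR}$, the caloric remainder $h$ is handled by Lemma~\ref{lem:homogeneous-annulus-estimate}, which gives $C(M+L)\bigl[\frac1{r\ln(r/a)}+\frac1\rho\bigr]$ with $\frac1\rho=\frac1r\bigl(\frac rR\bigr)^{1/2}$. The outer tail $L|y|^{-2}(|y|/R)^\beta$ inside $D_\rho$ has a zero-data solution of sup-norm at most $C_\beta L(\rho/R)^\beta=C_\beta L(r/R)^{\beta/2}$, by a heat-kernel bound shell by shell over a time span $\rho^2$. Interior estimates at scale $r$ then make its contribution admissible. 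This balancing is a legitimate, slightly simpler alternative to the paper's Step~5.

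The gap is the inner tail $L|y|^{-2}(a/|y|)^\beta$, which is exactly where the term $\frac1{r\ln(r/a)}$ comes from. Your Step~3 claim is that, modulo a constant, $f_{\mathrm{forced}}$ oscillates on $D_{r/2}(x)$ only by $O(L[(a/r)^\beta+(r/R)^\beta])$. This is unjustified, and the principle behind it is false. The forcing near $|y|\simeq a$ has size $L/a^2$, and its zero-data response near $x$ has size $O(L)$ and is not nearly constant there. For instance, take an admissible forcing $\simeq La^{-2}\mathbf 1_{U_a^{2a}}$. Its stationary response vanishing at $|y|=a$ and $|y|=\rho$ equals $c_\beta L\ln(\rho/|y|)/\ln(\rho/a)$ on $\{|y|\ge2a\}$. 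Its gradient at $x$ is $c_\beta L/(r\ln(\rho/a))$, far larger than $L(a/r)^\beta/r$. Interior estimates applied to a function of size $O(L)$ give only $L/r$. Comparison with your supersolution $W$ bounds $|f_{\mathrm{forced}}|$, not its gradient at an interior point. So, as your last paragraph concedes, the logarithmic estimate is not proved; as written, the argument yields only $|\nabla f(x,0)|\lesssim(M+L)/r$.

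The missing idea is not a new reflection argument. It is a second use of Lemma~\ref{lem:homogeneous-annulus-estimate}, applied to the part of the forced solution that is caloric on a long annulus around $x$, after splitting the inner tail at the geometric-mean scale $\sqrt{ar}$. Write $G=G^-+G^+$ with $|G^-|\le L|y|^{-2}(a/|y|)^\beta$, and split $G^-$ at $|y|=\sqrt{ar}/2$.

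First, let $w_1$ be the zero-data solution driven by the part in $\{|y|<\sqrt{ar}/2\}$. A radial supersolution vanishing at $|y|=a$ and constant beyond the support gives $\|w_1\|_\infty\le C_\beta L$. Moreover, $w_1$ is caloric on $U_{\sqrt{ar}/2}^{\rho}\times[-\rho^2,0]$. Lemma~\ref{lem:homogeneous-annulus-estimate} there gives $|\nabla w_1(x,0)|\le C_\beta L\bigl[\frac1{r\ln(2\sqrt{r/a})}+\frac1\rho\bigr]$, and $\ln(2\sqrt{r/a})\ge\frac12\ln(r/a)$.

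Second, the part in $\{|y|\ge\sqrt{ar}/2\}$ has coefficient $(a/|y|)^\beta\le(4a/r)^{\beta/2}$. The same kind of radial supersolution bounds its solution by $C_\beta L(a/r)^{\beta/2}(1+\ln(r/a))\le C_\beta L/\ln(r/a)$. The interior estimate on $D_{r/2}(x)\times[-r^2/4,0]$ then gives $C_\beta L/(r\ln(r/a))$.

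This is precisely the paper's Step~4. There the shell solutions $w_j^-$ are bounded by $Cb(1+\ln(\rho_j/a))$ via $H_\rho$. Those with $\rho_j\lesssim\sqrt{ar}$ are estimated by Lemma~\ref{lem:homogeneous-annulus-estimate} on $U_{2\rho_j}^R$. The remaining ones are controlled by the tail bound $\sum_{j>j_*}B_j^-\lesssim L(1+\ln z)z^{-\beta/2}$ with $z=r/a$.
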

	\begin{proof}
		Let $\partial\Omega(a,R)$ denote the parabolic boundary of $\Omega(a,R)$. We write $f=h+g$, where $h$ satisfies
		\begin{equation}
			\begin{cases}
				(\partial_t-\Delta)h=0,
				& \text{in } \Omega(a,R),\\
				h=f,
				& \text{on } \partial\Omega(a,R),
			\end{cases}
			\label{eq:two-tail-caloric-part}
		\end{equation}
		and $g$ satisfies
		\begin{equation}
			\begin{cases}
				(\partial_t-\Delta)g=G,
				& \text{in } \Omega(a,R),\\
				g=0,
				& \text{on } \partial\Omega(a,R).
			\end{cases}
			\label{eq:two-tail-inhomogeneous-part}
		\end{equation}
		The maximum principle gives
		\begin{equation}
			|h|\le M.
			\label{eq:two-tail-caloric-bound}
		\end{equation}
		Thus Lemma~\ref{lem:homogeneous-annulus-estimate} controls $\nabla h$. To estimate $\nabla g$, we divide $U_a^R$ into dyadic annuli and write $g$ as a sum of solutions with zero parabolic boundary values, each having its right-hand side supported in one of these annuli. We first estimate the supremum norm of each solution and then apply Lemma~\ref{lem:homogeneous-annulus-estimate} where its right-hand side vanishes. Finally, we sum the gradient estimates, using \eqref{eq:two-tail-forcing} to obtain a constant independent of the number of dyadic annuli. All unspecified supremum norms below are taken over $\Omega(a,R)$.
		
		\medskip
		\noindent
		\emph{Step 1. Supremum estimates for a single annulus.} We first consider an equation whose right-hand side is supported in one annulus.
		
		Suppose $G_\rho$ is bounded and supported in $U_\rho^{2\rho}\times[-R^2,0]$, where $a\le\rho<R$, and
		\begin{equation*}
			|G_\rho|\le\frac{b}{\rho^2}.
		\end{equation*}
		Let $w_\rho$ solve
		\begin{equation}
			\begin{cases}
				(\partial_t-\Delta)w_\rho=G_\rho,
				& \text{in } \Omega(a,R),\\
				w_\rho=0,
				& \text{on } \partial\Omega(a,R).
			\end{cases}
			\label{eq:shell-dirichlet-problem}
		\end{equation}
		For bounded right-hand sides, $w_\rho$ is defined using the Dirichlet heat kernel. We derive two bounds for $\|w_\rho\|_\infty$: one involving $\ln(\rho/a)$ and the other involving $\ln(R/\rho)$. These will be used separately for the two terms in \eqref{eq:two-tail-forcing}.
		
		For the first bound, construct a nonnegative radial function $H_\rho$ with $H_\rho(a)=0$ and $-\Delta H_\rho=(b/\rho^2)\mathbf1_{U_\rho^{2\rho}}$. We require $H_\rho'=0$ beyond $2\rho$, so that $H_\rho$ does not continue to grow outside this annulus. Since $\Delta H_\rho=r^{-1}(rH_\rho')'$, integration inward gives
		\begin{equation*}
			rH_\rho'(r)=
			\begin{cases}
				\dfrac{3b}{2},
				& a<r\le\rho,\\[2mm]
				\dfrac{b(4\rho^2-r^2)}{2\rho^2},
				& \rho<r<2\rho,\\[2mm]
				0,
				& r\ge2\rho.
			\end{cases}
		\end{equation*}
		The continuity of $rH_\rho'$ ensures that the equation for $H_\rho$ also holds across $\rho$ and $2\rho$ in the weak sense. Moreover, $H_\rho'\ge0$ and $H_\rho'(r)\le Cb/r$, whence
		\begin{equation*}
			\|H_\rho\|_{L^\infty(U_a^R)}
			\le H_\rho(2\rho)
			\le Cb\left(1+\ln\frac{\rho}{a}\right).
		\end{equation*}
		Comparison of $w_\rho$ and $-w_\rho$ with $H_\rho$ yields
		\begin{equation}
			\|w_\rho\|_\infty
			\le Cb\left(1+\ln\frac{\rho}{a}\right).
			\label{eq:shell-inner-bound}
		\end{equation}
		This estimate is useful when $\rho$ is close to $a$.
		
		For the second bound, extend $|G_\rho|$ by zero to $\mathbb R^2$. For the Euclidean heat kernel $K_\tau$, the bounds $\|G_\rho(\cdot,t)\|_1\le Cb$ and $\|G_\rho(\cdot,t)\|_\infty\le b/\rho^2$ imply
		\begin{equation*}
			\|K_\tau*|G_\rho(\cdot,t)|\|_\infty
			\le C\min\left\{\frac b{\rho^2},\frac b\tau\right\}.
		\end{equation*}
		The Dirichlet heat kernel is bounded above by $K_\tau$. Duhamel's formula and integration over the elapsed time give
		\begin{equation}
			\|w_\rho\|_\infty
			\le Cb\int_0^{R^2}
			\min\{\rho^{-2},\tau^{-1}\}\,d\tau
			\le Cb\left(1+\ln\frac R\rho\right).
			\label{eq:shell-outer-bound}
		\end{equation}
		
		\medskip
		\noindent
		\emph{Step 2. Gradient estimates for a single annulus.} We next estimate $\nabla w_\rho(x,0)$ according to the position of $x$ relative to $U_\rho^{2\rho}$.
		
		Let $B_\rho\ge b$ be an upper bound for $\|w_\rho\|_\infty$, chosen using either \eqref{eq:shell-inner-bound} or \eqref{eq:shell-outer-bound}. Write $r=|x|$. Interior parabolic estimates give
		\begin{equation}
			|\nabla w_\rho(x,0)|
			\le\frac{CB_\rho}{r},
			\qquad \forall x\in U_{4a}^{R/8}.
			\label{eq:shell-local-gradient}
		\end{equation}
		Indeed, choose a cylinder of radius comparable to $r$ around $(x,0)$ and contained in the original domain. If it meets $U_\rho^{2\rho}$, then $r\simeq\rho$, and the right-hand side contributes at most $Cr\|G_\rho\|_\infty\le Cb/r$. If it does not meet that annulus, $w_\rho$ is caloric there.
		
		When $x$ is sufficiently far from $U_\rho^{2\rho}$, we can improve \eqref{eq:shell-local-gradient} by applying Lemma~\ref{lem:homogeneous-annulus-estimate}. If $8\rho\le r\le R/8$, then $w_\rho$ is caloric on $U_{2\rho}^R\times[-R^2,0]$, so
		\begin{equation}
			|\nabla w_\rho(x,0)|
			\le CB_\rho
			\left[
			\frac1{r\ln(r/(2\rho))}+\frac1R
			\right].
			\label{eq:shell-outside-gradient}
		\end{equation}
		If $4a\le r\le\rho/8$, then $w_\rho$ is caloric on $U_a^\rho\times[-\rho^2,0]$, and the same lemma gives
		\begin{equation}
			|\nabla w_\rho(x,0)|
			\le CB_\rho
			\left[
			\frac1{r\ln(r/a)}+\frac1\rho
			\right].
			\label{eq:shell-inside-gradient}
		\end{equation}
		
		\medskip
		\noindent
		\emph{Step 3. Decomposition into dyadic annuli.} Choose $j_{\max}$ so that
		\begin{equation*}
			2^{j_{\max}}a<R\le2^{j_{\max}+1}a,
		\end{equation*}
		and put $\rho_j=2^ja$, $0\le j\le j_{\max}$. To separate the two terms in \eqref{eq:two-tail-forcing}, write
		\[
		q_-=(a/r)^\beta,\qquad q_+=(r/R)^\beta,
		\qquad
		G^\pm=\frac{q_\pm}{q_-+q_+}G.
		\]
		Then $G=G^-+G^+$ and $|G^\pm|\le Lr^{-2}q_\pm$. Let $G_j^\pm$ be the restriction of $G^\pm$ to $U_{\rho_j}^{\min\{2\rho_j,R\}}$. These annuli are disjoint and cover $U_a^R$ up to their boundaries. Thus $G=\sum_j(G_j^-+G_j^+)$ almost everywhere, with
		\begin{equation*}
			|G_j^-|
			\le\frac{C_\beta L}{\rho_j^2}2^{-\beta j},
		\end{equation*}
		\begin{equation*}
			|G_j^+|
			\le\frac{C_\beta L}{\rho_j^2}
			2^{-\beta(j_{\max}-j)}.
		\end{equation*}
		
		Let $w_j^\pm$ solve \eqref{eq:shell-dirichlet-problem} with right-hand side $G_j^\pm$ in place of $G_\rho$. For $w_j^-$, use \eqref{eq:shell-inner-bound} and $\ln(\rho_j/a)=j\ln2$ to obtain
		\begin{equation}
			\|w_j^-\|_\infty
			\le C_\beta L(1+j)2^{-\beta j}
			=:B_j^-.
			\label{eq:two-tail-inner-bound}
		\end{equation}
		For $w_j^+$, use \eqref{eq:shell-outer-bound} and $\ln(R/\rho_j)\le(j_{\max}+1-j)\ln2$ to obtain
		\begin{equation}
			\|w_j^+\|_\infty
			\le C_\beta L(1+j_{\max}-j)2^{-\beta(j_{\max}-j)}
			=:B_j^+.
			\label{eq:two-tail-outer-bound}
		\end{equation}
		Choose the constants so that $B_j^\pm$ also bound $\rho_j^2\|G_j^\pm\|_\infty$; the estimates in Step 2 then apply with $B_\rho=B_j^\pm$. Summing the preceding bounds gives
		\begin{equation}
			\sum_jB_j^-+\sum_jB_j^+\le C_\beta L.
			\label{eq:two-tail-summed-bound}
		\end{equation}
		By linearity and uniqueness for the zero-boundary problem \eqref{eq:two-tail-inhomogeneous-part},
		\begin{equation}
			g=\sum_jw_j^-+\sum_jw_j^+,
			\qquad
			f=h+\sum_jw_j^-+\sum_jw_j^+.
			\label{eq:two-tail-solution-decomposition}
		\end{equation}
		We now estimate the two sums of gradients separately.
		
		\medskip
		\noindent
		\emph{Step 4. Estimating $\sum_j|\nabla w_j^-|$.} Fix $32a\le r\le R/64$ and set $z=r/a$. We divide the sum at $\rho_j\simeq\sqrt{ar}$. For the smaller annuli, we use \eqref{eq:shell-outside-gradient} to obtain the denominator $r\ln(r/a)$. For the remaining annuli, we use \eqref{eq:shell-local-gradient} together with the factor $2^{-\beta j}$ in \eqref{eq:two-tail-inner-bound}.
		
		Specifically, choose
		\begin{equation*}
			j_*:=\left\lfloor\frac12\log_2z\right\rfloor-2
			=\frac12\log_2z+O(1).
		\end{equation*}
		For $j\le j_*$, we have $8\rho_j\le r$ and $\ln(r/(2\rho_j))\ge c\ln z$. Equations \eqref{eq:shell-outside-gradient} and \eqref{eq:two-tail-summed-bound} therefore give
		\begin{equation*}
			\sum_{j\le j_*}|\nabla w_j^-(x,0)|
			\le C_\beta L
			\left[\frac1{r\ln(r/a)}+\frac1R\right].
		\end{equation*}
		For $j>j_*$, summing \eqref{eq:two-tail-inner-bound} yields
		\begin{equation*}
			\sum_{j>j_*}B_j^-
			\le C_\beta L(1+\ln z)z^{-\beta/2}.
		\end{equation*}
		Since
		\begin{equation*}
			(1+\ln z)z^{-\beta/2}
			\le\frac{C_\beta}{\ln z},
			\qquad z\ge32,
		\end{equation*}
		estimate \eqref{eq:shell-local-gradient} gives
		\begin{equation*}
			\sum_{j>j_*}|\nabla w_j^-(x,0)|
			\le\frac{C_\beta L}{r\ln(r/a)}.
		\end{equation*}
		Combining the two sums, we obtain
		\begin{equation}
			\sum_j|\nabla w_j^-(x,0)|
			\le C_\beta L
			\left[\frac1{r\ln(r/a)}+\frac1R\right].
			\label{eq:two-tail-inner-gradient}
		\end{equation}
		
		\medskip
		\noindent
		\emph{Step 5. Estimating $\sum_j|\nabla w_j^+|$.} We divide this sum according to whether $\rho_j\ge8r$. In that case, $w_j^+$ is caloric inside $\rho_j$, so apply \eqref{eq:shell-inside-gradient}:
		\begin{equation}
			\sum_{\rho_j\ge8r}|\nabla w_j^+(x,0)|
			\le
			\frac{C}{r\ln(r/a)}
			\sum_{\rho_j\ge8r}B_j^+
			+C\sum_{\rho_j\ge8r}\frac{B_j^+}{\rho_j}.
			\label{eq:two-tail-outer-large-annuli}
		\end{equation}
		By \eqref{eq:two-tail-summed-bound}, the first term on the right is bounded by $C_\beta L/(r\ln(r/a))$. To estimate the second term, put $\ell=j_{\max}-j$. Then $\rho_j\simeq R2^{-\ell}$, and
		\begin{equation*}
			\frac{B_j^+}{\rho_j}
			\le\frac{C_\beta L}{R}
			(1+\ell)2^{(1-\beta)\ell}.
		\end{equation*}
		Recall that $\gamma=\min\{\beta/2,1/2\}$, so $0<\gamma<\min\{\beta,1\}$. In particular,
		\[
		(1+\ell)2^{(1-\beta)\ell}
		\le C_\beta2^{(1-\gamma)\ell}.
		\]
		Since $\rho_j\ge8r$ implies $\ell\le\log_2(R/r)+C$, summation gives
		\begin{equation}
			\sum_{\rho_j\ge8r}\frac{B_j^+}{\rho_j}
			\le\frac{C_\beta L}{r}
			\left(\frac rR\right)^\gamma.
			\label{eq:two-tail-outer-weighted-sum}
		\end{equation}
		The choice of $\gamma$ allows the same argument for every $\beta>0$, including $\beta=1$.
		
		For $\rho_j<8r$, use \eqref{eq:two-tail-outer-bound} to obtain
		\begin{equation*}
			\sum_{\rho_j<8r}B_j^+
			\le C_\beta L
			\bigl(1+\ln(R/r)\bigr)\left(\frac rR\right)^\beta
			\le C_\beta L\left(\frac rR\right)^\gamma.
		\end{equation*}
		Thus \eqref{eq:shell-local-gradient} gives
		\begin{equation*}
			\sum_{\rho_j<8r}|\nabla w_j^+(x,0)|
			\le\frac{C_\beta L}{r}
			\left(\frac rR\right)^\gamma.
		\end{equation*}
		Together with \eqref{eq:two-tail-outer-large-annuli} and \eqref{eq:two-tail-outer-weighted-sum}, this proves
		\begin{equation}
			\sum_j|\nabla w_j^+(x,0)|
			\le C_\beta L
			\left[
			\frac1{r\ln(r/a)}
			+\frac1r\left(\frac rR\right)^\gamma
			\right].
			\label{eq:two-tail-outer-gradient}
		\end{equation}
		
		\medskip
		Finally, the function $h$ defined in \eqref{eq:two-tail-caloric-part} is caloric and satisfies \eqref{eq:two-tail-caloric-bound}, so Lemma~\ref{lem:homogeneous-annulus-estimate} yields
		\begin{equation}
			|\nabla h(x,0)|
			\le CM
			\left[\frac1{r\ln(r/a)}+\frac1R\right].
			\label{eq:two-tail-caloric-gradient}
		\end{equation}
		As $r\le R$ and $0<\gamma\le1$,
		\begin{equation*}
			\frac1R\le\frac1r\left(\frac rR\right)^\gamma.
		\end{equation*}
		Using \eqref{eq:two-tail-solution-decomposition}, add \eqref{eq:two-tail-inner-gradient}, \eqref{eq:two-tail-outer-gradient}, and \eqref{eq:two-tail-caloric-gradient} to obtain \eqref{eq:two-tail-annulus-gradient}.
	\end{proof}

	\subsection{Neck estimate with moving inner boundary}
	\label{subsec:moving-annulus-estimate}
	
	We now prove Theorem~\ref{thm:moving-annulus-estimate}. We apply Lemma~\ref{lem:two-tail-annulus-estimate} on fixed annuli contained in $\Omega_c(a,1)$, with radii chosen according to the point where the gradient is estimated.
	
	\begin{proof}[Proof of Theorem~\ref{thm:moving-annulus-estimate}]
		We first obtain a pointwise estimate at $(x,0)$ for $x\in U_{K_0a}^{\delta_0}$, choosing $K_0$ sufficiently large and $\delta_0$ sufficiently small below. We then compare the resulting logarithmic term with $\ln(r/a)$ and $\ln(1/r)$, and integrate over $U_{Ka}^{\delta}$.
		
		\medskip
		\noindent
		\emph{Step 1. Applying the fixed-annulus estimate.} Write $r=|x|$ and set
		\begin{equation*}
			R_r:=r^\mu,\qquad
			a_r:=a+cR_r^2=a+cr^{2\mu}.
		\end{equation*}
		The choice of $a_r$ places the fixed inner boundary outside the moving boundary throughout $[-R_r^2,0]$. The conditions $\mu>1/2$ and $\mu<1$ allow $a_r$ to be much smaller than $r$ and $R_r$ to be much larger than $r$. Indeed, for $K_0a<r<\delta_0$,
		\[
		\frac{a_r}{r}
		\le\frac1{K_0}+c\delta_0^{2\mu-1},
		\qquad
		\frac r{R_r}\le\delta_0^{1-\mu}.
		\]
		Choose $K_0$ large and then $0<\delta_0<1$ small, depending only on $c,\mu$, so that
		\begin{equation}\label{eq:moving-annulus-separation}
			a_r\le\frac r{64},
			\quad
			r\le\frac{R_r}{64}.
		\end{equation}
		The fixed cylinder
		\begin{equation*}
			\Omega_r:=\Omega(a_r,R_r)
			=U_{a_r}^{R_r}\times[-R_r^2,0]
		\end{equation*}
		is contained in $\Omega_c(a,1)$, since $a-cs\le a+cR_r^2=a_r$ for $-R_r^2\le s\le0$ and $R_r<1$.
		
		On $\Omega_r$, equation \eqref{eq:moving-annulus-forcing} and $R_r<1$ give
		\begin{align*}
			|(\partial_s-\Delta)f(y,s)|
			&\le\frac L{|y|^2}
			\left[
			\left(\frac{a_r}{|y|}\right)^\beta
			+|y|^\beta
			\right]\notag\\
			&\le\frac L{|y|^2}
			\left[
			\left(\frac{a_r}{|y|}\right)^\beta
			+\left(\frac{|y|}{R_r}\right)^\beta
			\right].
		\end{align*}
		By \eqref{eq:moving-annulus-separation}, Lemma~\ref{lem:two-tail-annulus-estimate} applies at $x$, with inner radius $a_r$ and outer radius $R_r$. Hence
		\begin{equation}
			|\nabla f(x,0)|
			\le C_\beta(M+L)
			\left[
			\frac1{r\ln(r/a_r)}
			+r^{-1+\gamma(1-\mu)}
			\right],
			\label{eq:moving-annulus-fixed-gradient}
		\end{equation}
		where $\gamma=\min\{\beta/2,1/2\}$ as in \eqref{eq:two-tail-exponent}.
		
		\medskip
		\noindent
		\emph{Step 2. Comparing the logarithmic terms.} We show that
		\begin{equation}
			\frac1{\ln(r/a_r)}
			\le C_{c,\mu}
			\left[
			\frac1{\ln(r/a)}
			+\frac1{\ln(1/r)}
			\right].
			\label{eq:moving-annulus-log-comparison}
		\end{equation}
		If $a\ge cr^{2\mu}$, then $a_r\le2a$. For $K_0\ge4$, we have $\ln(r/a_r)\ge\ln(r/a)-\ln2\ge\frac12\ln(r/a)$, so
		\begin{equation*}
			\frac1{\ln(r/a_r)}
			\le\frac2{\ln(r/a)}.
		\end{equation*}
		If $a<cr^{2\mu}$, then $a_r\le2cr^{2\mu}$ and
		\begin{equation*}
			\ln\frac r{a_r}
			\ge(2\mu-1)\ln\frac1r-\ln(2c).
		\end{equation*}
		Since $2\mu-1>0$, decreasing $\delta_0$ if necessary gives
		\begin{equation*}
			\frac1{\ln(r/a_r)}
			\le\frac2{(2\mu-1)\ln(1/r)}.
		\end{equation*}
		Thus \eqref{eq:moving-annulus-log-comparison} follows in both cases. Substituting it into \eqref{eq:moving-annulus-fixed-gradient} yields
		\begin{equation}
			|\nabla f(x,0)|
			\le C(M+L)
			\left[
			\frac1{r\ln(r/a)}
			+\frac1{r\ln(1/r)}
			+r^{-1+\gamma(1-\mu)}
			\right],
			\label{eq:moving-annulus-gradient}
		\end{equation}
		where $C$ depends only on $c,\beta,\mu$.
		
		\medskip
		\noindent
		\emph{Step 3. Integration over the annulus.} Let $K\ge K_0$ and $0<\delta<\delta_0$. If $Ka\ge\delta$, the conclusion is immediate. Otherwise, square \eqref{eq:moving-annulus-gradient} and integrate over $U_{Ka}^{\delta}$ in polar coordinates. The three resulting radial integrals satisfy
		\begin{align*}
			\int_{Ka}^{\delta}
			\frac{dr}{r\ln^2(r/a)}
			&\le\frac1{\ln K},
			\\
			\int_{Ka}^{\delta}
			\frac{dr}{r\ln^2(1/r)}
			&\le\int_0^\delta
			\frac{dr}{r\ln^2(1/r)}
			=\frac1{\ln(1/\delta)},
			\\
			\int_{Ka}^{\delta}
			r^{-1+2\gamma(1-\mu)}\,dr
			&\le
			\frac{\delta^{2\gamma(1-\mu)}}{2\gamma(1-\mu)}.
		\end{align*}
		Using $(A+B+C)^2\le3(A^2+B^2+C^2)$ and including the angular integral proves \eqref{eq:moving-annulus-energy-estimate}.
	\end{proof}
	
	\begin{remark}
		The restrictions $\mu>1/2$ and $\mu<1$ ensure respectively that $a_r-a=cr^{2\mu}=o(r)$ and $r/R_r\to0$. In Section~\ref{sec:neck-estimates}, we take $\mu=3/4$ and $0<\beta<1$. Then $\gamma=\beta/2$ and $2\gamma(1-\mu)=\beta/4$, giving the term $\delta^{\beta/4}$ in Proposition~\ref{prop:normalized-neck-tension}.
	\end{remark}

	\section{Rigidity of antiholomorphic gap theorem and Riemannian analog}
	\label{sec:gap-rigidity}
	
	In this appendix, we prove a geometric characterization of the equality in Theorem~\ref{thm:antiholomorphic-energy-threshold} and Theorem~\ref{thm:antiholomorphic-gap-general} under strictly positive holomorphic bisectional curvature. We also give the corresponding gap and rigidity for Riemannian targets.
	
	Throughout this appendix, $(\Sigma,g,j)$ is a closed connected Riemann surface and $\omega:(\Sigma,g)\to(N,h)$ is smooth and harmonic. We use the notation and curvature convention of Section~\ref{sec:preliminaries}. As in Section~\ref{sec:antiholomorphic-energy-gap}, $\nabla_{\tilde g}$ denotes the connection induced by the Levi-Civita connection of $\tilde g$ and the pull-back connection on $\mathcal E=\omega^*TN$. For a conformal immersion, total geodesicity is equivalent to $\nabla_{\omega^*h}d\omega=0$. We write $S_R^2$ for the round sphere of radius $R$. A conformal branched covering may be holomorphic or antiholomorphic; its degree always denotes the number of sheets. Total geodesicity of a branched immersion is understood away from its branch points.
	
	\subsection{Rigidity of antiholomorphic gap theorem}
	
	As in Section~\ref{sec:antiholomorphic-energy-gap}, let $N$ be compact K\"ahler with nonnegative holomorphic bisectional curvature and $H_*>0$. We keep $\delta_N=4\pi/H_*$ and write $\Phi=\Phi(\omega)$, $\Psi=\Psi(\omega)$, and $Z=\{\Phi=0\}$. For $p\in Z$, let $m_p=\ord_p(\Phi)$ be its order as in Lemma~\ref{lem:harmonic-one-form-zeros}.
	
	First we characterize the equality in Theorem~\ref{thm:antiholomorphic-gap-general} for a general surface. By Remark~\ref{rem:gap-equality}, \eqref{eq:antiholomorphic-zero-order-gap} is an equality precisely when \eqref{eq:gap-rigidity-kahler-equality} holds on $\Sigma\setminus Z$. In particular, $\nabla_{\tilde g}\Phi=0$ makes the image subbundle of $\Phi$ parallel. Strict positivity gives the following geometric conclusion.
	
	\begin{proposition}\label{prop:gap-rigidity-kahler-surface}
		Suppose $N$ has strictly positive holomorphic bisectional curvature. For a nonholomorphic harmonic map $\omega:\Sigma\to N$, equality in \eqref{eq:antiholomorphic-zero-order-gap} holds if and only if $\omega$ is an antiholomorphic totally geodesic branched immersion whose induced metric has constant Gauss curvature $H_*$ away from the branch points.
	\end{proposition}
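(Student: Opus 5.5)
Both directions go through the equality characterization in Remark~\ref{rem:gap-equality}: equality in \eqref{eq:antiholomorphic-zero-order-gap} holds exactly when \eqref{eq:gap-rigidity-kahler-equality} holds on $\Sigma\setminus Z$, namely $\nabla_{\tilde g}\Phi=0$, $B_N(X,Y)e_{\partial,\tilde g}=0$ and $H_N(X)=H_*$.

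For the forward direction, note that on $\Sigma\setminus Z$ we have $X=\Phi(e_1)\neq0$. Strict positivity of the bisectional curvature then turns $B_N(X,Y)e_{\partial,\tilde g}=0$ into $Y=0$, so $\Psi\equiv0$ on $\Sigma\setminus Z$. Since $Z$ is finite, $\Psi\equiv0$ on all of $\Sigma$, and $\omega$ is antiholomorphic. Then $d\omega=\Phi$ vanishes exactly on $Z$. By the zero-order formula \eqref{eq:harmonic-one-form-zero-order}, $\omega$ is a branched conformal immersion with branch points $Z$. Its induced metric is $\omega^*h=e_{\bar\partial}g=\tilde g$, because $\Phi^*h=e_{\bar\partial}g$.

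Next we read off the geometry from the other two conditions. The condition $\nabla_{\tilde g}\Phi=0$ says $\nabla_{\omega^*h}d\omega=0$, which is total geodesicity away from the branch points. For the curvature, in the stationary Bochner identity \eqref{eq:gap-conformal-bochner} the terms $\tfrac12|\nabla_{\tilde g}\Phi|^2$ and $B_N e_{\partial}$ both vanish, and $e_{\bar\partial,\tilde g}\equiv1$. This leaves $K_{\tilde g}=H_N(X)=H_*$. Alternatively, the Gauss equation for a totally geodesic surface gives $K=\sec(X,JX)=H_N(X)$, since the tangent plane of an antiholomorphic immersion is $J$-invariant.

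For the converse, suppose $\omega$ is an antiholomorphic totally geodesic branched immersion with $K_{\omega^*h}\equiv H_*$ off the branch points. Then $\Psi=0$, $d\omega=\Phi$, and $\tilde g=\omega^*h$. Total geodesicity gives $\nabla_{\tilde g}\Phi=0$, and $B_N e_{\partial}=0$ holds because $e_\partial=0$. The Gauss equation gives $H_N(X)=K=H_*$. Thus \eqref{eq:gap-rigidity-kahler-equality} holds, and Remark~\ref{rem:gap-equality} yields equality. One can also check this directly: integrate $K_{\tilde g}=H_*$ over $\Sigma\setminus Z$ and use the Gauss--Bonnet formula \eqref{eq:gap-conformal-gauss-bonnet}.

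The main point needing care is the identification of the branch points of $\omega$ with the zeros of $\Phi$, with the orders $m_p$ matching. Lemma~\ref{lem:harmonic-one-form-zeros} provides this once $d\omega=\Phi$. Beyond that, one must make sure the Gauss equation and the claim that $\Psi$ vanishes identically are applied only on the open set $\Sigma\setminus Z$, and then extended across $Z$ by continuity.
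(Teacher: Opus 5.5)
Your proposal is correct and follows essentially the same route as the paper's proof. In both, strict positivity and the $B_N$ equality condition force $\Psi=0$, so $\omega$ is antiholomorphic with $d\omega=\Phi$ and $\tilde g=\omega^*h$. Then $\nabla_{\tilde g}\Phi=0$ gives total geodesicity, and $H_N(X)=H_*$ with the Gauss equation gives the curvature. The converse is read off from $K_{\tilde g}=H_*$ together with the conformal Gauss--Bonnet formula; the paper additionally records the factorization through a round sphere, but that is not needed for the stated equivalence.
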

	\begin{proof}
		Suppose equality holds and set $\tilde g=e_{\bar\partial}g$ on $\Sigma\setminus Z$. In \eqref{eq:gap-rigidity-kahler-equality}, write $X=\Phi(e_1)$ and $Y=\Psi(e_1)$ for a $\tilde g$-orthonormal frame $(e_1,je_1)$. Since $X\ne0$, strict positivity and $B_N(X,Y)e_{\partial,\tilde g}=0$ force $Y=0$. Thus $\Psi=0$ on $\Sigma\setminus Z$ and everywhere by continuity. Hence $\omega$ is antiholomorphic, $d\omega=\Phi$, and $\tilde g=\omega^*h$. The remaining equality conditions give $\nabla_{\tilde g}d\omega=0$ and $H_N(X)=H_*$. Therefore its regular part is totally geodesic and, by the Gauss equation, its induced metric has curvature $H_*$. Lemma~\ref{lem:harmonic-one-form-zeros} identifies the finitely many zeros of $d\omega$ as branch points.
		
		Conversely, for such an antiholomorphic branched immersion, $\tilde g=\omega^*h$ and $K_{\tilde g}=H_*$ off the branch points, which gives equality in \eqref{eq:antiholomorphic-zero-order-gap}.
		
		Lemma~\ref{lem:gap-rigidity-factorization} below also gives $\omega=\iota\circ f$, where $f:\Sigma\to S^2_{H_*^{-1/2}}$ is an antiholomorphic branched covering and $\iota:S^2_{H_*^{-1/2}}\to N$ is a holomorphic totally geodesic isometric immersion, orienting the intermediate sphere by $J$.
	\end{proof}
	
	For $\Sigma=S^2$, we have
	
	\begin{corollary}\label{thm:gap-rigidity-kahler}
		Suppose $N$ has strictly positive holomorphic bisectional curvature. Then a nonholomorphic harmonic sphere $\omega:S^2\to N$ has antiholomorphic energy $E_{\bar\partial}(\omega)=\delta_N$ if and only if $\omega$ is an antiholomorphic totally geodesic immersion with induced metric of curvature $H_*$.
	\end{corollary}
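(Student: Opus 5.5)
This follows by specializing Proposition~\ref{prop:gap-rigidity-kahler-surface} to $\Sigma=S^2$ and checking that the branch points disappear.

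First, suppose $\omega:S^2\to N$ is a nonholomorphic harmonic sphere with $E_{\bar\partial}(\omega)=\delta_N=4\pi/H_*$. Theorem~\ref{thm:antiholomorphic-gap-general} with $\chi(S^2)=2$ gives
\[
\delta_N\ge E_{\bar\partial}(\omega)\ge\frac{2\pi}{H_*}\Bigl(2+\sum_p\ord_p(\Phi)\Bigr)=\delta_N+\frac{2\pi}{H_*}\sum_p\ord_p(\Phi).
\]
Each order is a positive integer by Lemma~\ref{lem:harmonic-one-form-zeros}, so the zero set $Z$ of $\Phi$ is empty. Equality then holds in \eqref{eq:antiholomorphic-zero-order-gap}. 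Proposition~\ref{prop:gap-rigidity-kahler-surface} now says $\omega$ is an antiholomorphic totally geodesic branched immersion whose induced metric has curvature $H_*$. In that proof the branch points are exactly the zeros of $d\omega=\Phi$. Since $Z=\emptyset$, $\omega$ is a genuine immersion, and the curvature condition holds everywhere.

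Conversely, let $\omega$ be an antiholomorphic totally geodesic immersion with induced metric of constant curvature $H_*$. Then $\Psi=0$ and $d\omega=\Phi$ vanishes nowhere, so $\omega$ is nonholomorphic (being a nonconstant antiholomorphic map) and $Z=\emptyset$. Since $\omega$ is antiholomorphic, $e(\omega)=e_{\bar\partial}(\omega)$, so $E_{\bar\partial}(\omega)=\operatorname{Area}(S^2,\omega^*h)$. Gauss--Bonnet gives $H_*\operatorname{Area}=4\pi$, hence $E_{\bar\partial}(\omega)=4\pi/H_*=\delta_N$. Equivalently, this is the converse direction of Proposition~\ref{prop:gap-rigidity-kahler-surface} with $\chi(S^2)=2$ and no orders.

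The only delicate point is that equality forces every order $m_p$ to vanish. This uses integrality and positivity of the orders, which Lemma~\ref{lem:harmonic-one-form-zeros} already provides, so I expect no real obstacle.
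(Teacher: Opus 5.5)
Your proposal is correct and follows essentially the paper's proof: setting $\chi(S^2)=2$ in Theorem~\ref{thm:antiholomorphic-gap-general}, the positive integer orders force $Z=\emptyset$ and equality. Proposition~\ref{prop:gap-rigidity-kahler-surface} then gives an unbranched antiholomorphic totally geodesic immersion, and your Gauss--Bonnet check of the converse just spells out the converse direction of that proposition.
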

	\begin{proof}
		Since $\chi(S^2)=2$, the identity $E_{\bar\partial}(\omega)=\delta_N$ implies that the equality $K_{\tilde{g}}=H_*$ holds with no branch points. So the corollary follows from Proposition~\ref{prop:gap-rigidity-kahler-surface}.
	\end{proof}
	
	The following factorization applies to the equality cases above and to the Riemannian case below.
	
	\begin{lemma}\label{lem:gap-rigidity-factorization}
		Suppose $\omega:\Sigma\to N$ is nonconstant, conformal and totally geodesic on its regular part, and its image tangent planes have sectional curvature $K>0$. Then $\omega=\iota\circ f$, where $\iota:S^2_{K^{-1/2}}\to N$ is a totally geodesic isometric immersion and $f$ is a finite holomorphic branched covering. Reversing the orientation of the intermediate sphere makes $f$ antiholomorphic.
	\end{lemma}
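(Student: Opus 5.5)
The plan is to build the intermediate sphere from the induced metric and then obtain $\iota$ by integrating the totally geodesic condition along $\omega$.

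Let $Z\subset\Sigma$ be the finite branch set, i.e. the zeros of $d\omega$. On $\Sigma\setminus Z$, the map is conformal and totally geodesic. The image tangent planes have sectional curvature $K$, so the Gauss equation shows that the induced metric $\tilde g=\omega^*h$ has constant curvature $K$ there. Near each point of $Z$, the conformal factor behaves like $|z|^{2m}f$ with $f>0$. This follows as in Lemma~\ref{lem:harmonic-one-form-zeros}, because $\omega$ is harmonic: conformal plus totally geodesic implies harmonic. Hence $\tilde g$ is a metric of constant curvature $K$ with conical singularities of angle $2\pi(m+1)$ at $Z$.

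First key step: construct a developing map $f:\Sigma\to S^2_{K^{-1/2}}$ as a local isometry of $\tilde g$ away from $Z$. On the universal cover of $\Sigma\setminus Z$ this is standard. To make it well defined on $\Sigma\setminus Z$ itself, I would not use the abstract developing map. Instead I would extract $f$ directly from $N$. The image plane field $P=d\omega(T\Sigma)\subset\omega^*TN$ is parallel, because total geodesicity gives $\nabla d\omega=0$. At a regular point $x_0$, let $\iota:S^2_{K^{-1/2}}\to N$ be the complete totally geodesic surface $\exp_{\omega(x_0)}(B)$ tangent to $P_{x_0}$. Here $B$ is a suitable ball, and we use the Cartan--Ambrose--Hicks type fact that the exponential image of a parallel plane field along a totally geodesic piece is totally geodesic with constant curvature $K$. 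Since $\omega$ is totally geodesic, near $x_0$ its image lies in $\iota(S^2)$. So $\omega=\iota\circ f$ locally, with $f$ a local isometry. Analytic continuation along paths in $\Sigma\setminus Z$ then extends $f$. It is single valued because $\iota$ is a fixed immersion determined by one point and plane: continuing $\omega$ itself forces consistency. I would try to argue this by noting that totally geodesic immersions agreeing to first order at one point agree on the connected domain. This lets me define $\iota$ globally from the single germ.

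Second step: remove the singularities. Near each $p\in Z$, $f$ is a bounded local isometry on a punctured disk with a cone-type metric. It is conformal and therefore holomorphic (or antiholomorphic) in local coordinates, so by Riemann's removable singularity theorem it extends holomorphically with a branch point of order $m$. Then $f:\Sigma\to S^2$ is a nonconstant holomorphic map between compact Riemann surfaces, once the sphere is oriented so that $f$ preserves orientation. It is therefore a finite branched covering. Reversing the orientation of the sphere turns $f$ into an antiholomorphic map.

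The main obstacle is the global single-valuedness of $\iota\circ f$, that is, avoiding monodromy. The totally geodesic sphere through a point tangent to a plane must close up into an immersed round sphere in $N$, and the continuation of $f$ along loops around $Z$ must return consistently. I would handle both by uniqueness of totally geodesic surfaces with given 1-jet, combined with the fact that $\omega$ is globally defined on $\Sigma$.
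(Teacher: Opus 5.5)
Your proposal has a genuine gap at its first step: the existence of the global totally geodesic sphere $\iota$. You define $\iota$ as $\exp_{\omega(x_0)}$ on the ball of radius $\pi/\sqrt K$ in $P_{x_0}$, with the boundary circle collapsed to a point. You then invoke a ``Cartan--Ambrose--Hicks type fact'' to conclude that this is a smooth totally geodesic immersion of curvature $K$. No such general fact exists.

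A small totally geodesic disk through $q=\omega(x_0)$ tangent to $P_{x_0}$ says nothing about $\exp_q$ on a large disk of $P_{x_0}$. For example, perturb a round $S^3$ in a small ball centred on a great sphere through $q$, far from $q$. The germ at $q$ stays totally geodesic, but the exponential image of the whole disk enters the perturbed region and generally fails to be totally geodesic. Likewise, nothing guarantees that the radial geodesics of length $\pi/\sqrt K$ reconverge to one point and close up smoothly. Cartan's criterion $R(P_t,P_t)P_t\subset P_t$ would be needed along every radial geodesic of the whole disk, but it is known only along $\omega(\Sigma)$. Since $N$ is an arbitrary smooth target, the global totally geodesic surface must be manufactured from $\omega$ itself.

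Your continuation step inherits the same circularity. To continue $f$ past a point $x$, you need $\iota$ to be totally geodesic near $f(x)$. That has only been established near $f(x_0)$.

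To close the gap you need the two ingredients the paper supplies.
\begin{itemize}
\item[(i)] At each branch point $p$, the local image of $\omega$ is a smooth totally geodesic disk. The plane field extends across $p$ because conformality gives $h_{\mathbb C}(\eta,\eta)=0$ in $\xi_z=z^{m_p}\eta$. Minimizing curves to $p$ map to target geodesics, so the image lies in the exponential disk $C$ of $P_p$. Finally, $w\circ\omega=z^{m_p+1}$ shows the image fills a neighbourhood of $\omega(p)$ in $C$.
\item[(ii)] Two totally geodesic germs with the same point and the same oriented tangent plane coincide.
\end{itemize}
The paper then takes $\widehat\Sigma$ to be the image of the lift $p\mapsto(\omega(p),P_p)$ in the bundle of oriented two-planes. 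It shows this is a smooth compact oriented surface, and only afterwards identifies it with $S^2_{K^{-1/2}}$ via the Gauss equation. Since $f$ is this lift itself, no monodromy question arises.

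If you want to keep your exponential-map definition, you must instead prove two things. First, every radial segment $t\mapsto\exp_q(tv)$ with $|v|\le\pi/\sqrt K$ is the $\omega$-image of a $\tilde g$-geodesic in $\Sigma$, crossing cone points at cone angle $\pi$. Second, $\iota$ agrees locally with the disk germs of $\omega$ all along these segments, including at cone points and at the antipode. Both steps need (i) anyway.

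A smaller point: even granting a global $\iota$, monodromy of $f$ around a loop produces a rotation $R$ with $\iota\circ R=\iota$. Uniqueness from the $1$-jet gives this identity but not $R=\mathrm{id}$. For that you need an extra argument, for instance that a nontrivial rotation has a fixed point, near which $\iota\circ R=\iota$ contradicts $\iota$ being an immersion.
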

	\begin{proof}
		Apply Lemma~\ref{lem:harmonic-one-form-zeros} to the harmonic $\mathcal E$-valued 1-form $\xi=d\omega$. Its proof gives, near each zero $p$, $\xi_z=z^{m_p}\eta(z)$ with $\eta$ smooth and $\eta(0)\ne0$. Conformality gives $h_{\mathbb C}(\eta,\eta)=0$, so the real and imaginary parts of $\eta$ are orthogonal and have the same positive length. Hence
		\[
		P_z=\operatorname{span}_{\mathbb R}
		\{\operatorname{Re}\eta,-\operatorname{Im}\eta\}
		\]
		extends the oriented image tangent plane smoothly across the zero. The induced metric has the form $|z|^{2m_p}b(z)|dz|^2$, with $b>0$ smooth.
		
		We next show that the local image at a branch point $p$ is a smooth totally geodesic disk. The length metric of $\omega^*h$ extends across $p$; the displayed local form makes a small closed coordinate disk a compact length space. A point sufficiently close to $p$ can therefore be joined to $p$ by a minimizing curve that stays in its interior. Away from $p$ the curve is an induced-metric geodesic, so its image under $\omega$ is a target geodesic. This geodesic extends to $\omega(p)$, with initial velocity in $P_p$ by continuity of the plane field. The local image thus lies in the smooth exponential disk $C=\exp_{\omega(p)}(P_p\cap B^{T_{\omega(p)}N}_\varepsilon(0))$, where the latter ball is taken in the target tangent space. Choose an orientation-compatible isothermal coordinate $w$ on $C$, with $w(\omega(p))=0$. Then $w\circ \omega$ is holomorphic off $p$ and extends holomorphically across $p$. After a change of source coordinate it has the form $w\circ \omega=z^{m_p+1}$. Its image contains a neighborhood of $\omega(p)$ in $C$, whose second fundamental form consequently vanishes, including at $\omega(p)$ by continuity.
		
		Consider the image $\widehat\Sigma=\{(\omega(p),P_p):p\in\Sigma\}$ in the bundle of oriented tangent two-planes of $N$. Each local image disk lifts to a smooth disk in this bundle. If two such disks meet at the same point and oriented plane, they agree near that point: both are determined by the exponential map on that plane. The local form $z^{m_p+1}$ shows that the fibers of $p\mapsto(\omega(p),P_p)$ are discrete and hence finite. Compactness of $\Sigma$ then ensures that a small neighborhood of any point of $\widehat\Sigma$ contains only the disk germs from its preimages, which all agree. Thus $\widehat\Sigma$ is a smooth compact connected oriented surface, and the natural projection $\iota:\widehat\Sigma\to N$ is a totally geodesic immersion.
		
		Equip $\widehat\Sigma$ with $\iota^*h$. The map $f:p\mapsto(\omega(p),P_p)$ is a finite holomorphic branched covering, with $\omega=\iota\circ f$; its local branching order is $m_p=\ord_p(d\omega)$. By the Gauss equation, $\widehat\Sigma$ has constant curvature $K>0$. A closed oriented surface of constant positive curvature is a round sphere, so $\widehat\Sigma$ is isometric to $S^2_{K^{-1/2}}$, which proves the factorization.
	\end{proof}
	For any such factorization of degree $d$, Riemann--Hurwitz and the area formula give
	\begin{equation}\label{eq:gap-rigidity-degree}
		\chi(\Sigma)+\sum_p\ord_p(d\omega)=2d,
		\qquad E(\omega)=\frac{4\pi d}{K}.
	\end{equation}
	In the antiholomorphic case, $E=E_{\bar\partial}$ and $d\omega=\Phi$. For $\Sigma=S^2$, the total branching order is $2d-2$, so the lowest energy corresponds to $d=1$.

	\subsection{Explicit energy gap for Riemannian targets}
	
	The Riemannian energy gap recorded here is classical and we make no claim of originality for these bounds. The spherical bound $E(\omega)\ge 4\pi/K_*$ follows easily from the theory of harmonic spheres by Sacks-Uhlenbeck~\cite{SacksUhlenbeck1981}, while the general surface estimate, including the zero-order contribution, is contained in the estimates of Moore~\cite{Moore2006}. Here we include a unified conformal-metric proof and an explicit discussion of equality for completeness.
	
	For a Riemannian target, we apply the same argument to $\xi=d\omega$. Write $K_N$ for sectional curvature and $K_*:=\sup_N(K_N)_+$, and suppose $0<K_*<\infty$. No lower curvature bound is needed. We first prove the general closed-surface result and then specialize to $S^2$.
	
	\begin{proposition}\label{prop:gap-rigidity-riemannian-surface}
		Every nonconstant harmonic map $\omega:\Sigma\to N$ satisfies
		\begin{equation}\label{eq:gap-rigidity-riemannian-gap}
			E(\omega)\ge\frac{2\pi}{K_*}
			\left(\chi(\Sigma)+\sum_{p\in Z}m_p\right),
		\end{equation}
		where $Z=\{d\omega=0\}$ and $m_p=\ord_p(d\omega)$. Equality holds if and only if $\omega$ is a conformal totally geodesic branched immersion whose induced metric has constant Gauss curvature $K_*$ away from the branch points.
	\end{proposition}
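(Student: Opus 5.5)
The plan is to copy the proof of Theorem~\ref{thm:antiholomorphic-gap-general}, with $\xi=d\omega$ in place of $\Phi$ and the Riemannian Bochner formula in place of \eqref{eq:antiholomorphic-bochner}. Since $\omega$ is harmonic, $d\omega$ is a harmonic $\mathcal E$-valued 1-form ($d^\nabla d\omega=0$ and $\operatorname{div} d\omega=\mathcal T=0$). Lemma~\ref{lem:harmonic-one-form-zeros} then applies to $\xi=d\omega$. It gives a finite zero set $Z$ with orders $m_p$ and the Gauss--Bonnet identity
\[
\int_{\Sigma\setminus Z}K_{\tilde g}\,dV_{\tilde g}=2\pi\Bigl(\chi(\Sigma)+\sum_{p\in Z}m_p\Bigr)
\]
for $\tilde g=e(\omega)g$. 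Harmonicity is conformally invariant, so $\omega$ is harmonic for $\tilde g$. Also $e_{\tilde g}(\omega)\equiv1$ and $\operatorname{Area}_{\tilde g}(\Sigma\setminus Z)=E(\omega)$.

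Next I apply the Eells--Sampson Bochner formula with domain metric $\tilde g$:
\[
0=\Delta_{\tilde g}e_{\tilde g}=|\nabla_{\tilde g}d\omega|^2+K_{\tilde g}|d\omega|_{\tilde g}^2-\sum_{i,j}\langle R^N(\omega_i,\omega_j)\omega_j,\omega_i\rangle ,
\]
with $|d\omega|_{\tilde g}^2=2$. The curvature term is $2K_N(\omega_1\wedge\omega_2)\,|\omega_1\wedge\omega_2|^2$. Since $|\omega_1\wedge\omega_2|^2\le|\omega_1|^2|\omega_2|^2\le\tfrac14(|\omega_1|^2+|\omega_2|^2)^2=1$, it is at most $2K_*$. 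This uses only an upper curvature bound: when $K_N<0$ the term is nonpositive anyway. Hence $0\ge 2K_{\tilde g}-2K_*$, so $K_{\tilde g}\le K_*$. Integrating over $\Sigma\setminus Z$ and inserting the Gauss--Bonnet identity gives \eqref{eq:gap-rigidity-riemannian-gap}. The integrals are finite because $K_{\tilde g}$ is bounded above and its total integral is computed through the limit in the lemma.

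For the equality case, equality forces $K_{\tilde g}\equiv K_*$ on $\Sigma\setminus Z$. Every step of the inequality chain must then be an equality. First, $\nabla_{\tilde g}d\omega=0$. Second, $|\omega_1|=|\omega_2|$ with $\omega_1\perp\omega_2$, i.e.\ $\omega$ is conformal; this uses $K_*>0$, so that the AM--GM and Cauchy--Schwarz steps must be sharp. Third, $K_N(\omega_1\wedge\omega_2)=K_*$. Conformality gives $\tilde g=\omega^*h$, so $\nabla_{\omega^*h}d\omega=0$ and $\omega$ is totally geodesic off $Z$. Its induced curvature is then $K_*$ by the Gauss equation, and $Z$ consists of branch points by the local form \eqref{eq:harmonic-one-form-zero-order}. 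Conversely, for such a branched immersion $\tilde g=\omega^*h$ has $K_{\tilde g}=K_*$ off $Z$, so the Gauss--Bonnet identity gives equality. Lemma~\ref{lem:gap-rigidity-factorization} additionally yields the factorization through $S^2_{K_*^{-1/2}}$.

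The main obstacle is pinning down the equality conditions precisely. One must check that sharpness of the Bochner inequality forces conformality (the curvature term is only compared with $K_*|\omega_1\wedge\omega_2|^2$, which needs $K_*>0$). One must also confirm that the nonpositive-curvature directions cannot contribute, since those would give strict inequality.
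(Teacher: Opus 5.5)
Your proposal is correct and follows the paper's proof essentially step by step. Both arguments use the conformal metric $\tilde g=e(\omega)g=\tfrac12|d\omega|^2g$, the Bochner identity $0=|\nabla_{\tilde g}d\omega|^2+2K_{\tilde g}-2h(R^N(X,Y)Y,X)$, the pointwise bound $K_{\tilde g}\le K_*$ obtained from $|X|^2+|Y|^2=2$, integration against the Gauss--Bonnet identity of Lemma~\ref{lem:harmonic-one-form-zeros}, and the same equality conditions ($|X|=|Y|=1$, $X\perp Y$, $K_N=K_*$, $\nabla_{\tilde g}d\omega=0$). Your remarks on why $K_*>0$ forces conformality, and why nonpositively curved tangent planes would give strict inequality, are correct; they make explicit what the paper leaves implicit.
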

	\begin{proof}
		The differential $\xi=d\omega$ is a harmonic $\mathcal E$-valued 1-form. On $\Sigma\setminus Z$, set $\tilde g=\tfrac12|\xi|^2g$. Then
		\[
		|\xi|_{\tilde g}^2=2,\qquad
		\operatorname{Area}_{\tilde g}(\Sigma\setminus Z)=E(\omega).
		\]
		Harmonicity is conformally invariant in dimension two, so the Bochner formula with respect to $\tilde g$ gives
		\begin{equation}\label{eq:gap-rigidity-riemannian-bochner}
			0=|\nabla_{\tilde g}\xi|_{\tilde g}^2+2K_{\tilde g}
			-2h(R^N(X,Y)Y,X),
		\end{equation}
		where $X=\xi(e_1)$ and $Y=\xi(e_2)$ for a $\tilde g$-orthonormal frame $(e_1,e_2)$. Since $|X|^2+|Y|^2=2$, we obtain
		\begin{equation}\label{eq:gap-rigidity-riemannian-curvature}
			\begin{aligned}
				K_{\tilde g}
				&\le h(R^N(X,Y)Y,X)\\
				&\le K_*\bigl(|X|^2|Y|^2-h(X,Y)^2\bigr)
				\le K_*.
			\end{aligned}
		\end{equation}
		Integrating and applying Lemma~\ref{lem:harmonic-one-form-zeros} to $\xi$ yields
		\[
		2\pi\left(\chi(\Sigma)+\sum_{p\in Z}m_p\right)
		=\int_{\Sigma\setminus Z}K_{\tilde g}\,dV_{\tilde g}
		\le K_*E(\omega).
		\]
		
		Equality holds precisely when $K_{\tilde g}=K_*$ on $\Sigma\setminus Z$. Equality throughout \eqref{eq:gap-rigidity-riemannian-curvature}, together with \eqref{eq:gap-rigidity-riemannian-bochner}, is equivalent to
		\[
		\begin{gathered}
			|X|=|Y|=1,\qquad h(X,Y)=0,\\
			K_N(\operatorname{span}\{X,Y\})=K_*,\qquad
			\nabla_{\tilde g}\xi=0.
		\end{gathered}
		\]
		Thus $\tilde g=\omega^*h$ and the regular part of $\omega$ is conformal and totally geodesic with tangent-plane curvature $K_*$. By the Gauss equation, this equals the curvature of the induced metric. Conversely, for a conformal totally geodesic branched immersion with induced curvature $K_*$, the same equation gives tangent-plane curvature $K_*$, so every inequality is an equality. Lemma~\ref{lem:gap-rigidity-factorization} gives the global factorization $\omega=\iota\circ f$, where $f:\Sigma\to S^2_{K_*^{-1/2}}$ is a conformal branched covering and $\iota:S^2_{K_*^{-1/2}}\to N$ is a totally geodesic isometric immersion. The degree formulas are \eqref{eq:gap-rigidity-degree}.
	\end{proof}
	
	As in the K\"ahler case, the spherical threshold corresponds to degree one.
	
	\begin{corollary}\label{thm:gap-rigidity-riemannian}
		Every nonconstant harmonic sphere $\omega:S^2\to N$ satisfies $E(\omega)\ge4\pi/K_*$, with equality if and only if $\omega$ is a conformal totally geodesic immersion with induced metric of constant Gauss curvature $K_*$.
	\end{corollary}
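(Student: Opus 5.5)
The plan is to reduce the sphere case to Proposition~\ref{prop:gap-rigidity-riemannian-surface} with $\Sigma=S^2$. Given a nonconstant harmonic sphere $\omega$, that proposition gives
\[
E(\omega)\ge\frac{2\pi}{K_*}\Bigl(2+\sum_{p\in Z}m_p\Bigr)\ge\frac{4\pi}{K_*},
\]
since $\chi(S^2)=2$ and every zero order $m_p$ is a positive integer by Lemma~\ref{lem:harmonic-one-form-zeros}. This already proves the inequality. Nonconstancy is needed so that $\xi=d\omega$ is a nonzero harmonic $\mathcal E$-valued one-form, which makes Lemma~\ref{lem:harmonic-one-form-zeros} applicable.

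For the equality case, suppose $E(\omega)=4\pi/K_*$. Both inequalities in the display are then equalities. The second forces $Z=\emptyset$, so $d\omega$ never vanishes. The first is the equality case of Proposition~\ref{prop:gap-rigidity-riemannian-surface}, so $\omega$ is a conformal totally geodesic branched immersion with induced curvature $K_*$. Since there are no branch points, $\omega$ is an honest conformal immersion.

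For the converse, let $\omega$ be a conformal totally geodesic immersion with induced metric of constant curvature $K_*$. Then $Z=\emptyset$, and the equality characterization of Proposition~\ref{prop:gap-rigidity-riemannian-surface} gives $E(\omega)=\frac{2\pi}{K_*}\chi(S^2)=4\pi/K_*$. The same conclusion also follows from the degree formula \eqref{eq:gap-rigidity-degree} with $d=1$: via Lemma~\ref{lem:gap-rigidity-factorization}, an immersion of $S^2$ factors through a branched covering with zero total branching, hence of degree one by Riemann--Hurwitz.

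The only point needing care is the equality case, specifically that $\sum m_p=0$ forces $Z$ to be empty. This follows because each $m_p\ge1$ for $p\in Z$. Everything else is a direct specialization of the general closed-surface result.
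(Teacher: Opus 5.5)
Your proposal is correct and is essentially the paper's argument. You specialize Proposition~\ref{prop:gap-rigidity-riemannian-surface} to $\chi(S^2)=2$ and use $m_p\ge1$ to force $Z=\emptyset$ at equality, so the branched immersion is an honest immersion. You then obtain the converse from the proposition's equality characterization, which the paper states directly as Gauss--Bonnet for the induced metric, with degree one coming from \eqref{eq:gap-rigidity-degree}.
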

	\begin{proof}
		Set $\chi(S^2)=2$ in \eqref{eq:gap-rigidity-riemannian-gap}. At the threshold all zero orders vanish, so the proposition gives the claimed immersion. Conversely, Gauss--Bonnet for the induced metric gives $E(\omega)=4\pi/K_*$. The factorization in the proof of the proposition has degree one by \eqref{eq:gap-rigidity-degree}.
	\end{proof}

\end{document}